\documentclass[]{amsart}
\usepackage{amssymb,amsmath} %theorem,
\usepackage[mathscr]{euscript}

\newcounter{sec}

\newcounter{punct}[sec]

\def\punct{\refstepcounter{punct}{\arabic{sec}.\arabic{punct}.  }}

\newtheorem{theorem}{Theorem}[sec]
\newtheorem{proposition}[theorem]{Proposition}

\newtheorem{lemma}[theorem]{Lemma}

\newtheorem{corollary}[theorem]{Corollary}
\newtheorem{observation}[theorem]{Observation}

\newtheorem{conjecture}[theorem]{Conjecture}

\newtheorem{convention}{Convention}[sec]

\def\COUNTERS{\addtocounter{sec}{1}
              \setcounter{punct}{0}
          \setcounter{equation}{0}
          \setcounter{theorem}{0}
          }

\def\SL{\mathrm {SL}}

\def\GL{\mathrm  {GL}}

\def\Aff{\mathrm  {Aff}}

\def\Gr{\mathrm{Gr}}

\def\OSp{\mathrm {OSp}}
\def\AOSp{\mathrm {AOSp}}

\def\B{\mathfrak B}

\def\phi{\varphi}
\def\epsilon{\varepsilon}
\def\kappa{\varkappa}

\def\aff{\mathrm{aff}}

\def\le{\leqslant}
\def\ge{\geqslant}

\renewcommand{\Im}{\mathop{\rm Im}\nolimits}

\def\la{\langle}
\def\ra{\rangle}

\newcommand{\im}{\mathop{\rm im}\nolimits}
\newcommand{\indef}{\mathop{\rm indef}\nolimits}
\newcommand{\dom}{\mathop{\rm dom}\nolimits}

\newcommand{\Ber}{\mathop{\rm Ber}\nolimits}
\newcommand{\graph}{\mathop{\rm graph}\nolimits}

\newcommand{\Lagr}{\mathop{\rm Lagr}\nolimits}

\def\bF{\mathbf F}

\def\cA{\EuScript A}
\def\cB{\EuScript B}
\def\cC{\EuScript C}
\def\cD{\EuScript D}

\def\cF{\EuScript F}

\def\cH{\EuScript H}
\def\cJ{\EuScript J}

\def\cL{\EuScript L}
\def\cM{\EuScript M}
\def\cN{\EuScript N}
\def\cO{\EuScript O}
\def\cP{\EuScript P}
\def\cQ{\EuScript Q}

\def\cU{\EuScript U}

\def\cW{\EuScript W}

\def\cZ{\EuScript Z}

\def\ccF{\boldsymbol {\cF}}

\def\frB{\mathfrak B}

\def\frD{\mathfrak D}

\def\frL{\mathfrak L}

\def\fra{\mathfrak a}

\def\frc{\mathfrak c}

\def\frg{\mathfrak g}
\def\frh{\mathfrak h}

\def\frk{\mathfrak k}
\def\frl{\mathfrak l}

\def\frn{\mathfrak n}
\def\fro{\mathfrak o}
\def\frp{\mathfrak p}
\def\frq{\mathfrak q}

\def\frs{\mathfrak s}

\def\fru{\mathfrak u}

\def\frx{\mathfrak x}

\def\bfF{\mathbf F}

\def\bfQ{\mathbf Q}
\def\bfR{\mathbf R}

\def\R {{\mathbb R }}
 \def\C {{\mathbb C }}
  \def\Z{{\mathbb Z}}
  
\def\K{{\mathbb K}}

\def\T{\mathbb T}

 \def\ov{\overline}
\def\wt{\widetilde}
\def\wh{\widehat}

\def\F{\mathbf F}

\def\b{\mathfrak b}

\def\sm{\smallskip}

\def\0{{\ov 0}}
\def\1{{\ov 1}}

\def\Diff{\mathrm{Diff}}
\def\SDiff{\mathrm{SDiff}}
\def\SCont{\mathrm{SCont}}
\def\vect{\mathfrak{vect}}
\def\ns{\mathfrak{ns}}
\def\sheis{\mathsf{sheis}}
\def\frheis{\mathfrak{sheis}}
\def\free{{\mathfrak{free}}}
\def\vir{\mathfrak{vir}}
\def\cont{\mathfrak{cont}}
\def\osp{\mathfrak{osp}}

\def\sfS{\mathsf S}

\def\Assoc{{\mathrm{Assoc}}}
\def\CONT{{\mathrm{Cont}}}
\def\Cont{{\mathrm{Cont}}}
\def\VECT{{\mathrm{Vect}}}
\def\Vect{{\mathrm{Vect}}}

\def\vel{{\check{\circ\circ}}}

\begin{document}

%$\frg_{\check{\text{\tiny$\circ\circ$}}}$

%$\frg_{\check{\circ\circ}}$

%$G_{\check{\circ\circ}}$

%$\frg_{\check{\circ\!\circ}}$

%\end{document}

%$\frg_{_o\<_(_)/(_)}}$

%$_o\<_(_)/(_)$
%$_o_\backslash<_(_)/(_)$

%$\frg_\velt$

%$\frg_\vel$

%$\frg_{\text{\tiny{$|$}}\!\!\top}\!\!\text{\tiny{$|$}}}$

%$\frg_\vell$

%$\frg_{\check{\boxed{\circ\circ}}}$

%$\check{\mbox{\text{\aeolicbii} }}$
%$\stackrel{\checkmark}{\mbox{\text{\aeolicbii} }}$
%$\frg_{\stackrel{\checkmark}{\mbox{\text{\scriptsize\aeolicbii} }}}$

%$\check{\circ\circ}$

%$\frg_{\check{\tiny\circ\circ}}$

\def\tto{\rightrightarrows}

%\hyphenation{Fr\'e-chet Po-in-ca-r\'e}

\def\Frechet{Fr\'echet \vphantom{.}}
\def\Poincare{Poincar\'e}

%well defined

%$vect^\R$, $\cont^R$

%$\Cont$ $\SCont$

%$L_\vir$

%of variables

%\newpage

\begin{center}
\bf\Large
On a super-Virasoro group, a semigroup of annuli, and Gauss--Berezin integral 
operators
\sc\large

\bigskip

Yury A. Neretin%
\footnote{Supported by the grants FWF, projects  P19064, P31591,
the grant  NWO.047.017.015, and the grant JSPS-RFBR-07.01.91209.}

\end{center}

{\small
Denote by ${\mathcal A}$ the Grassmann algebra with a countable number of generators, by ${\mathcal A}_{\overline 0}$, ${\mathcal A}_{\overline 1}$
its even and odd parts. We consider supergroups as groups over $\mathcal A$.
 Consider the Neveu--Schwarz Lie superalgebra
$\mathfrak{ns}=
\mathfrak{ns}_{\overline 0}\oplus \mathfrak{ns}_{\overline 1}$. Consider
its Grassmannization $\mathfrak{ns}(\mathcal{A}):=
(\mathfrak{ns}_{\overline 0}\otimes {\mathcal A}_{\overline 0})
\oplus (\mathfrak{ns}_{\overline 1}\otimes {\mathcal A}_{\overline 1})$
and the corresponding supergroup $\mathrm{NS}(\mathcal {A})$.
We describe this group explicitly in different ways.
% We show that each irreducible
%unitarizable representation of $\mathfrak{ns}$ admits the integration to a
%unitary representation of the supergroup $\mathrm{NS}({\mathcal A})$. Under some inequalities for parameters, we show
%that  such representation can be realized by Gaussian operators in a super-Fock space.
 Next, we define a
complexification $\Gamma(\cA)$ of $\mathrm{NS}({\mathcal A})$. It is a semigroup,
whose elements are superannuli of dimension $1|1$ equipped with contact structures; the multiplication is gluing of such superannuli. Under some inequalities for parameters,
we
show that  unitary representations of $\mathrm{NS}({\mathcal A})$ admit extensions to representations
of $\Gamma(\mathcal {A})$. We
do not assume that the reader has prior knowledge of 
Lie superalgebras and supergroups.}

\vspace{22pt}

{\sf

\noindent
1. Introduction \hfill \pageref{s:introduction}

\noindent
2. Preliminaries. Berezin--Kats Grassmanizations
%\\
%\phantom{1. }
 and surplace groups \hfill
\pageref{s:surplace}

\noindent
3. Diffeomorphisms and contactomorphisms 
%\\
%\phantom{1. }
of the supercircle $S^{1|1}$ \hfill
\pageref{s:cont}

\noindent
4. Invariant differential operations and an embedding 
\\
\phantom{1. }
of 
$\cont(S^{1|1}_\bullet)$ to an orthosymplectic superalgebra \hfill
\pageref{s:embedding}

\noindent
5. Unitary  representations of the Neveu--Schwarz supergroup \hfill
\pageref{s:integration}

\noindent
6. Proofs of Theorems \ref{th:2}, \ref{th:3} \hfill
\pageref{s:proof}

\noindent
7. Affine relations and the Potapov transform\hfill
\pageref{s:affine-relations}

\noindent
8. The semigroup of contact superannuli\hfill
\pageref{s:annuli}

\noindent
9. Logarithmic densities on super-annuli\hfill
	\pageref{s:logarithmic}

\noindent
10. Super-Fock space and Gauss--Berezin integral operators\hfill
	\pageref{s:super-fock}
	
\noindent
11. Representations of the semigroup of super-annuli
\hfill \pageref{s:last}	

\noindent
12. Final remarks \hfill\pageref{s:final}}

\section{Introduction%
\label{s:introduction}}

\COUNTERS

{\bf \punct The Virasoro algebra.%
\label{ss:virasoro}} Consider the Lie algebra
$\vect^\R(S^1)$
of $C^\infty$-smooth vector fields on the circle $S^1$, they have the form 
$f(\phi)\frac\partial{\partial \phi}$,
where $\phi\in \R/2\pi\Z$, and $f(\phi)$ is a real-valued function. 
The  'Lie group' corresponding to $\vect^\R(S^1)$ is the group
 $\SDiff(S^1)$
of orientation preserving diffeomorphisms of the circle. 

Denote by $\vect(S^1)$ the complexification of the Lie algebra
 $\vect^\R(S^1)$, it consists
of smooth vector fields $f(\phi)\frac\partial{\partial \phi}$
with complex-valued $f(\phi)$.
For vector fields 
$$
L_\alpha:=ie^{i\alpha \phi} \frac\partial{\partial \phi},\qquad
\text{where $\alpha\in \Z$},
$$
we have $[L_\alpha,L_\beta]=(\alpha-\beta)L_{\alpha+\beta}$.

The Virasoro algebra $\vir$ is the Lie algebra
with basis $L_\alpha$, where $\alpha$ ranges in $\Z$,
 and $\zeta$; the relations are 
 \begin{align}
& [L_\alpha,L_\beta]=(\alpha-\beta)L_{\alpha+\beta}+
\tfrac 1{12}(\alpha^3-\alpha)\delta_{\alpha+\beta,0}\cdot \zeta,
\label{eq:vir1}
\\
&[L_n,\zeta]=0.
\label{eq:vir2}
 \end{align}
 Virasoro met these relations  in the following context.
Consider the space of functions depending of infinite
number of complex variables $z_1$, $z_2$, \dots\vphantom{.}
(bosonic Fock space, see below Subsect. \ref{ss:super-Fock}).
We define {\it bosonic creation and annihilation operators} by
\begin{equation}
T_k f(z)=\sqrt k\, z_k f(z), \quad T_{-k} f(z)=\sqrt k \frac\partial{\partial z_k} f(z),\quad \text{where $k>0$,}
\label{eq:boson-cran}
\end{equation}
and set also $T_0:=0$. For fixed $\mu$, $\nu\in \C$
we define the following operators
\begin{align}
L_\alpha&:=\frac12 \sum_{m,n:\,m+n=\alpha} T_m T_n
+(\mu+i\nu \alpha) T_n \quad\text{for $\alpha\ne 0$,}
\label{eq:L-alpha}
\\
L_0&:=  \sum_{m>0} T_m T_{-m}+\frac12(\mu^2+\nu^2).
\label{eq:L-0}
\end{align}
%where $:\dots:$ denotes the so-called normal ordering of products,
%we write the creation operators before annihilation 
%operators (this is important only for $L_0$, for $\alpha \ne 0$
%operators $T_m$ and $T_n$ in \eqref{eq:L-alpha} commute).
 Then we get a representation of the Virasoro
algebra, the central element $\zeta$ acts as the multiplication
by $c=1+12 \nu^2$. 

\sm

Next, 
consider  Grassmann variables
$\xi_{1/2}$, $\xi_{3/2}$,  $\xi_{5/2}$, \dots,
$$
\xi_r\xi_s=- \xi_s\xi_r, \qquad \xi_r^2=0,
$$
and consider the space of functions in these variables
  (the fermionic Fock space, see Subsect. \ref{ss:super-Fock}).
 We define {\it fermionic creation and annihilation operators}
\begin{equation}
	A_r f(\xi)=\xi_r f(\xi),\quad A_{-r}
	=\frac\partial{\partial \xi_r} f(\xi), \quad \text{where $r>0$.}
	\label{eq:f-c-a}
\end{equation}

The following operators
\begin{align}
L'_\alpha&:=
\frac 14 \sum_{r,s:\,r+s=\alpha} (r-s) A_r A_s
\label{eq:L-alpha'}
\\
L'_0&:= \frac12 \sum_{r>0} r A_r A_{-r}
\label{eq:L-0'}
\end{align}
satisfy the relations \eqref{eq:vir1}--\eqref{eq:vir2}.

\sm

{\bf \punct The Neveu--Schwarz and Ramond Lie superalgebras.%
\label{ss:NS}} The {\it Neveu--Schwarz algebra} \cite{NS} $\frn\frs$ is the Lie superalgebra 
$\ns=\ns_\0\oplus \ns_\1$  with even generators  $\zeta$, $L_\alpha\in \frn\frs_\0$,
where $\alpha\in \Z$,   odd generators $M_r$, where $r\in \frac12+\Z$, and relations
\begin{align}
&[L_\alpha,L_\beta]_s=(\alpha-\beta)L_{\alpha+\beta}+
\tfrac 1{12}(\alpha^3-\alpha)\delta_{\alpha+\beta,0}\cdot \zeta,
\label{eq:NS1}
\\
&[L_\alpha,M_r]_s=(\tfrac m2-r) M_{\alpha+r},
\label{eq:NS1.5}
\\
&[M_r,M_t]_s=2L_{r+t}+\tfrac 13 (r^2-\tfrac14)\delta_{r+t,0}\cdot \zeta,
\label{eq:NS2}
\\
&[L_\alpha,\zeta]_s=0,\qquad [M_r,\zeta]_s=0
\label{eq:NS3}
\end{align}
(so the  even part $\ns_\0$ is the Virasoro algebra).
 
This algebra has a twin, the {\it Ramond algebra} \cite{Ram} --- now
additional generators $M_r$ are enumerated by 
$r\in \Z$ and relations are given by the same formulas \eqref{eq:NS1}--\eqref{eq:NS3}.

\sm

 These superalgebras naturally arise
(Ramond \cite{Ram}, Neveu, Schwarz, \cite{NeeS}, 1971, see also \cite{IK1})
as you try to write mixed bosonic-fermionic expressions similar to generators of
the Virasoro algebra
\eqref{eq:L-alpha} and \eqref{eq:L-alpha'}.
 Namely, we  
consider the space of functions of infinite number of complex variables $z_1$, $z_2$, \dots, and Grassmann variables
$\xi_{1/2}$, $\xi_{3/2}$, \dots.
We fix complex parameters $\mu$, $\nu$ and
 define  generators 
\begin{align}
 L_\alpha&:= \frac 12 \sum_{i,j:\,i+j=\alpha} T_i T_j+ (\mu+i\nu n) T_n
 +\frac 14 \sum_{r,s:\,r+s=\alpha}(r-s) A_r A_s,
 \label{eq:fock-L}
 \\
 L_0&:=  \sum_{j>0} T_j T_{-j}+ \frac12(\mu^2+\nu^2)
+\frac 12 \sum_{r>0} r A_r A_{-r}, 
\label{eq:fock-L0}
 \\
M_r &:= \sum_{\alpha,s: \, \alpha+s=r} T_\alpha A_s+  (\mu+i\nu n)A_r . 
\label{eq:fock-M}
\end{align}
%where $: \dots:$ means a normal order of factors, we write creation operators before annihilation operators. This is important only for
%$L_0$, in this case the expression without $: \dots:$ diverges.
 Then the operators $L_\alpha$, $M_t$ determine a representation
of the Lie superalgebra $\ns$, the central element $\zeta$ acts as a multiplication by $3/2+12\nu^2$. The supercommutator in our case
is
\begin{align*}
[L_\alpha,L_\beta]_s:&=[L_\alpha,L_\beta]=L_\alpha L_\beta-L_\beta L_\alpha,
\quad [L_\alpha,M_r]_s:=[L_\alpha,M_r],
\\
[M_r,M_t]_s:&=\{M_r,M_t\}=M_r M_t+M_t M_r.
\end{align*}
The new expression for $L_\alpha$ is the sum of \eqref{eq:L-alpha} and \eqref{eq:L-alpha'}.

 The Ramond and Neveu--Schwarz Lie superalgebras are substantial objects of  representation theory;
 numerous  deep      statements 
 about the Virasoro algebra have counterparts for the Ramond and Neveu--Schwarz algebras
see Kac \cite{Kac},
Friedan, Qiu, Shenker \cite{Fri},
 Meurman, Rocha-Caridi \cite{MR-C}, Goddard, Olive, Kent \cite{GOK}, Kac, Wakimoto \cite{KW}, Iohara, Koga \cite{IK1}. 
 By some technical reasons
(the correspondence with the preliminary paper  \cite{Ner-super}),  below 
we prefer $\ns$.

Feigin and Leites \cite{FL} observed that these algebras (without center $\zeta$) are algebras of contact vector fields
on $(1|1)$-dimensional supermanifolds, whose underlying even space is the circle $S^1$. On further super-analogs 
of the Virasoro algebra, 
see \cite{KvL}, \cite{GLS}; on their representation, see, e.g., \cite{IK2}, \cite{Dob}.

\sm

{\bf\punct Purposes of the paper.} 
In this paper we follow  DeWitt's \cite{DeW}  approach
(see also Rogers \cite{Rog}) and
consider a supergroups as  usual groups over a Grassmann algebra $\cA$.
% we prefer to consider 
%the Grassmann algebra $\cA$ with infinite number of generators. 
I explain my choice
in Subsect. \ref{ss:formalities}.

We superize work \cite{Ner-holom}.
According \cite{Ner-semigroup}, \cite{Ner-holom}, \cite{Seg2}, 
\cite{Ner-book}, Sect. VII.4-5, any highest weight representation of the group $\SDiff(S^1)$ 
extends to a projective holomorphic representation of the following semigroup $\Gamma$. An element of $\Gamma$ is 
a one-dimensional complex manifold equivalent to an annulus with fixed parametrizations of boundaries. The multiplication is gluing
of annuli according the parametrizations of components of the boundary. We define a similar semigroup
of superannuli and construct their representations corresponding 
the representations \eqref{eq:fock-L}-\eqref{eq:fock-M}
 (under some restrictions for parameters of $\mu$, $\nu\in\R$).

\sm

Difficulties overcoming in the present paper are of functional-analytic
nature (definitions, self-adjointness, and multiplications of unbounded operators
in Hilbert spaces).
On the other hand, our main super-objects are the semigroup of 
contact super-annuli and the semigroup of Gauss--Berezin operators;
they are outside the standard formalism of super-mathematics.
For this reasons, I
do not assume that the reader has prior knowledge of 
Lie superalgebras and supergroups.
 I tried to write a  text admissible for
functional analysts and readers familiar with the representation theory of Lie groups
in the classical sense. We describe  supergroups in different ways  as explicitly as we can (even more than this is formally necessary).

\sm 

{\bf \punct Structure of the paper.} Let $\frg$ be a Lie superalgebra. According Berezin and Kats \cite{BK} (see also \cite{Ber-super})
for any supercommutative algebra%
\footnote{An associative $\Z_2$-graded algebra $\cA=\cA_\0\oplus \cA_\1$ is {\it supercommutative}, if elements of $\cA_\0$ are 
contained in the center, and elements $u$, $v\in \cA_\1$ anticommute, $uv=-vu$.} 
$\cA$, we can construct a certain Lie algebra  $\frg(\cA)$; quite often  this allows to construct the corresponding 
group%
\footnote{Berezin and Kats considered formal groups, but a pass to 'Lie groups' in finite-dimensional case and many infinite-dimensional cases is automatical, see below Sect. \ref{s:surplace}-\ref{s:cont}.}
(which is a kind of 'Lie group over $\cA$'). 

%There are different ways to formalize the notions of  supergroups and supermanifolds.
%For instance, definitions in  \cite{BK}, \cite{DeW}, \cite{Schw} are not equivalent, 
%see comparison in \cite{Schm}. Of course, these approaches are not antipodes, and usually translations are possible.
 
 %Our main object (the semigroup of contact annuli) is outside any 
 %existing formalism of supermanifolds and Lie supergroups.
 %On the other hand, this paper is mainly a functional-analytic work.
 %For these reasons, we
 %do not assume that the reader has prior knowledge of 
 %Lie superalgebras and supergroups and try to
 %minimize formalism.

 Basically, our approach  is similar to  DeWitt \cite{DeW}.
 We prefer to think that $\cA$
 is fixed and is the Grassmann algebra with a countable number of generators  $\fra_j$.
 In particular, we think that a supergroup is a Lie group other $\cA$.
 We explain Berezin--Kats Grassmanizations and descriptions	 of groups over $\cA$
 in Section \ref{s:surplace}.

In Section \ref{s:cont}, we discuss
Lie superalgebra $\cont(S^{1|1}_\bullet)$ of contact vector fields on $(1|1)$-circle
and the corresponding infinite-dimensional  supergroup $\SCont(S^{1|1}_\bullet;\cA)$
of contact super-diffeomorphisms, The notation $S^{1|1}$ in this sentence 
means that we consider the usual circle with the angle coordinate $\phi$
and with additional odd coordinate $\theta$, where $\theta^2=0$. Here there are two
variants for $S^{1|1}$, the trivial strip and the M\"obius strip; they correspond to the Ramond and 
Neveu--Schwarz superalgebras, we indicate by $\bullet$ the Neveu--Schwarz case. 
Our main purpose in Section \ref{s:cont} is to present an explicit description
of the supergroup $\SCont(S^{1|1}_\bullet;\cA)$ corresponding to $\cont(S^{1|1}_\bullet)$
(it is easy and apparently is not new), we present an additional description 
in Subsect. \ref{ss:addendum-cont}.

In Section \ref{s:embedding}, we start from   Kirillov's description
\cite{Kir} of the superagebra $\cont(S^{1|1}_\bullet)$ in terms of
invariant differential operations.
Our purpose is to obtain an  embedding of the supergroup $\SCont(S^{1|1}_\bullet;\cA)$
 to an infinite-dimensional orthosymplectic Lie supegroup
 $\OSp(\cW;\cA)$ in a certain space $\cW[\cA]$. More precisely,
 we consider a group $\AOSp(\cW,\cA)$ of affine orthosymplectic transformations 
 and construct 
  embeddings $\SCont(S^{1|1}_\bullet;\cA)\to \AOSp(\cW,\cA)$ depending on
 two  parameters (corresponding to $\mu$, $\nu$
  in formula \eqref{eq:fock-L}-\eqref{eq:fock-M}).
 This is a starting point for our main construction (Sections \ref{s:annuli}-
 \ref{s:last}).

 In Sections \ref{s:integration}-\ref{s:proof}, we show that  {\it 
any unitary highest weight representation of the Neveu--Schwarz Lie superalgebra $\ns$ can be integrated to a projective representation of the supergroup
$\SCont(S^{1]1}_\bullet;\cA)$}. This statement is known, see
Neeb, Salmasian \cite{NeeS}, we present its independent proof,
which is relatively straightforward%
% This part is a superization of the work by Goodman, Wallach \cite{GW}) 
% about representations of the group of diffeomorphisms of the circle%
\footnote{The analog of the Bott cocycle for the group 
	$\SCont(S^{1]1};\cA)$ was obtained by Radul \cite{Rad}. 
	It is natural to believe that 
 cocycles in highest weight projective representations of $\SCont(S^{1]1}_\bullet;\cA)$ are reduced to this expression and  a cocycle determining
 the universal covering group of $\SCont(S^{1]1}_\bullet;\cA)$. 
 But we do not establish such formal statement.}. We recall that for infinite-dimensional representations of supergroups 
 operators of representations are not bounded in Hilbert spaces, and this requires a care for manipulations with unbounded operators. 
 
Section \ref{s:affine-relations} contains preliminaries
about super-linear and super-affine relations.
 Linear relations  are kind of \vphantom{.}`linear operators',
  their domains 
 can be proper subspaces and  these `operators' can be multi-valued.

In Section \ref{s:annuli} we construct the semigroup complexification
$\Gamma_\bullet(\cA)$
 of $\SCont(S^{1]1}_\bullet;\cA)$.
%{\it We 
%construct the holomorphic continuation
%for the representation of the Neveu--Schwarz algebra determined by the formulas  \eqref{eq:fock-L}--\eqref{eq:fock-L}}. An element of the 
Its elements are annuli equipped with an additional odd coordinate $\theta$,
a contact structure, and two  $\cA$-valued contactomorphisms 
from $S^{1|1}$ to boundaries of the annuli. A multiplication is gluing of annuli along components of boundaries according 
contactomorphisms. 

In Section \ref{s:logarithmic} we realize the semigroup $\Gamma_\bullet(\cA)$
as a semigroup of Lagrangian super-affine relations in the orthosymplectic space $\cW[\cA]$ mentioned above.

In Section \ref{s:super-fock} we define super-Fock space and Berezin symbols
of operators. Next, we define Gauss--Berezin integral operators,
which are super-hybrids of Gaussian operators in the boson Fock
space and  Berezin operators in the fermion Fock space. For each Gauss--Berezin
operator we assign an affine Lagrangian super-linear relation. Heuristically,
a product of Gauss--Berezin operators corresponds to a product
of Lagrangian relations. But formally we meet  difficulties related to
possible unboundedness of Gauss--Berezin operators and 
the operation of product of superaffine relations, which generally is not good (see \cite{Ner-AA}, Lemma 2.2, \cite{Ner-book},  Sect. IV.3). 
 
In Section \ref{s:last}, {\it we construct representations
of the semigroup $\Gamma_\bullet(S^{1|1})$ corresponding to
 the representations \eqref{eq:fock-L}-\eqref{eq:fock-M}
of the Neveu--Schwarz algebra}. By the construction of
Section \ref{s:logarithmic} we have an embedding of $\Gamma_\bullet(\cA)$
to a semigroup of Lagrangian super-affine relations. For such relations
we construct bounded Gauss--Berezin operators in the space of smooth vectors
of the operator $L_0$ defined by \eqref{eq:fock-L0}. In this case, product of operators
really corresponds to product of relations.

\section{Preliminaries. Berezin--Kats Grassmanizations and surplace groups%
\label{s:surplace}}

\COUNTERS

Here we discuss Grassmanizations of Lie superalgebras  in the sense
of Berezin and G.~Kats \cite{BK} and the corresponding
groups.

\sm

{\bf \punct The phantom algebra $\boldsymbol\cA$.} 
Denote by $\cA=\cA(\K)$
(where $\K=\R$ or $\C$)
 the associative algebra  with unit generated by
 a  countable family of elements $\fra_1$, $\fra_2$, \dots
 \vphantom{.} 
 satisfying the  relations
$$
\fra_k\fra_l=-\fra_l\fra_k, \qquad \fra_k^2=0.
$$
We call elements of $\cA$ {\it phantom constants}.
For collection $J=\{j_1,\dots,j_{m}\}$ of pairwise different
numbers we denote
$$
\fra^J:= \fra_{j_1}\dots \fra_{j_m},
$$
We also denote
$$
\dot\fra^J=\fra_{j_1}\dots \fra_{j_m}, \qquad \text{if $j_1<\dots<j_m$, $m\ge 0$.}
$$
Such products form a basis of $\cA$.

 A {\it monomial} is an element
of the form 
$s\cdot \fra^J$
where $s$ is a scalar and all
$\fra_{j_l}$ are pairwise distinct. We say that $m$ is the {\it degree} of this monomial. 

%We say that a monomial is {\it monic} if its has the form $\fra^I$,
%where $i_1<i_2<\dots$. Monic monomial form a basis in $\cA$.

%Denote the set of all monomials by $\cM=\cM(\cA)$ and the set of all monic
% monomials by $\cM^\mon=\cM^\mon(\cA)$.  

 Denote by:
 
 \sm
 
 --- $\cA^m$ the linear span of monomials of degree $m$;
 
 \sm

---  $\cA_\0$, $\cA_\1$ the linear span of monomials of even (respectively,
odd) degrees;

\sm

--- $\cA_+$ the linear span of all monomials of positive degree;

\sm

--- $\cM$, $\cM_\0$, $\cM_\1$  the set of all monomials, the set of all monomials of even degree,  of odd degree;

\sm

--- $\cM_{\0+}$ the set of all monomials of even positive degree.

\sm

We say that an element $\phi\in \cA$ is {\it pure}
if $\phi$ is contained in $\cA_\0$ or in $\cA_\1$.
For pure elements
$\phi$, $\psi$ of $\cA$, we have
\begin{align*}
\phi \in \cA_\0,\, \psi \in \cA
\quad
\Rightarrow \quad
\phi\psi=\psi\phi;
\\
\phi,\psi \in \cA_\1
\quad \Rightarrow \quad
\phi\psi=-\psi\phi.
\end{align*}

The {\it body map}
 (see \cite{DeW}) $\pi_\downarrow:\cA\to \K$ 
 is the homomorphism defined by
$$
\pi_\downarrow \Bigl(\sum_{I:i_1<i_2\dots} c_I\fra^I \Bigr):=c_\varnothing.
$$
Clearly, an element $\omega\in \cA$ is invertible if and only if
$\pi_\downarrow(\omega)\ne 0$. Also, $\ker \pi_\downarrow=\cA_+$.

We define  an involutive automorphism
$\omega\mapsto \omega^\circ$ of $\cA$, whose values
 on the generators are $\fra_k^\circ= -\fra_k$.
Equivalently, for $\omega\in \cA_{\ov j}$ we have
\begin{equation}
\omega^\circ=(-1)^{\ov j}\omega
\label{eqLomega-circ}
\end{equation}

{\bf\punct Superlinear spaces and their Grassmannizations.%
\label{ss:superspaces}}
We say that a {\it superlinear space} $V=V_{\0}\oplus V_{\1}$
is a direct sum of two linear spaces $V_{\0}$, $V_{\1}$. We say that 
$v\in V_{\0}\oplus V_{\1}$ is {\it pure} if $v\in V_\0$ or $v\in V_\1$.
For a pure element $v\in V_{\ov j}$, we define its {\it parity} by
$p(v)=\ov j$ (in particular $p(0)$ is both $\0$ and $\ov 1$).

\begin{convention} {\rm a)} The symbols $\0$ and $\1$ are added and multiplied 
as elements of the field with two elements.

\sm

{\rm b)} If a formula contains an element $v$ of a superspace {\rm(}or 
a superalgebra{\rm)} and its parity $p(v)$,
then all elements $v$, $w$, etc. of the superspace in this formula are pure.
\end{convention}

{\sc Remark.} We keep in mind different types of linear spaces 
$V_\0$, $V_\1$:

\sm

--- finite-dimensional linear spaces;

\sm

--- countable-dimensional linear spaces;

\sm

--- Fr\'echet spaces, i.e., complete metrizable locally convex topological
vector spaces (we need some  explicit examples
and avoid general considerations).
\hfill $\boxtimes$

\sm 

For a superspace $V$, we define its {\it Grassmannization}:
\begin{align}
V[\cA]&:=V\otimes \cA; 
\nonumber\\
 V_\0[\cA]&:=\bigl(V_\0\otimes \cA_\0\bigr)\oplus \bigl(V_\1\otimes \cA_\1\bigr),
 \qquad
  V_\1[\cA]:=\bigl(V_\0\otimes \cA_\1\bigr)\oplus \bigl(V_\1\otimes \cA_\0\bigr).
  \label{eq:VVVVa}
\end{align}

To avoid an ambiguity,
we define tensor products 
$$
V\otimes \cA:=\bigoplus_{i_1<\dots<i_m} W\fra^I
$$
as  sets of all formal finite linear combinations $h=\sum_I  \otimes \dot\fra^I v_I$.  
 We say that a sequence $h_j$ converges to $h$ 
 if all $h_j$ and $h$ are contained in some 
 finite direct sum $V \dot\fra^{J_1}\oplus \dots \oplus V \dot\fra^{J_k}$ and the sequence convergences  in this direct sum. 
 
 We consider $V[\cA]$ as a left $\cA$-module or as a right $\cA$-module 
 (and do not think that it is a bimodule).

\sm

{\bf\punct Lie superagebras.%
\label{ss:lie-superalgebras}}
A {\it Lie superalgebra} $\frg=\frg_\0\,\oplus\, \frg_\1$ is a super-linear space
 equipped with a bilinear operation (a {\it bracket}) $[\cdot,\cdot]_s: \frg\times \frg \to \frg$
 such that for any pure elements $x$, $y$, $z$ we have
 \begin{align*}
& [x,y]_s\in \frg_{p(x)+p(y)}\quad \text{($\Z_2$-grading)};
 \\
& [y,x]_s=-(-1)^{p(x)p(y)}[x,y]_s\quad \text{(super-anticommutativity)};
 \\
&(-1)^{p(x)p(z)}[x,[y,z]_s]_s+ (-1)^{p(y)p(z)}[y,[z,x]_s]_s+ (-1)^{p(z)p(y)}[z,[x,y]_s]_s=0
\\
& \qquad \qquad \qquad \qquad \qquad\qquad \qquad \qquad \qquad \qquad
\text{(super-Jacobi identity).} 
 \end{align*}
 
 If $\frg_\0$, $\frg_\1$ are Fr\'echet spaces, then the bracket 
 must be continuous.
 
 \sm
 
 In particular, 
 the even part $\frg_\0$ is a Lie algebra,
and $\frg_\1$
 is a module over $\frg_\0$. 
 
 \sm

{\sc Examples.} 
a) If $\frg_\1=0$, then we get the usual definition of a Lie algebra.
 
\sm 
 
b) The {\it general Lie superalgebra $\frg\frl(p|q)$}.
 Consider a superlinear space $\C^p\oplus \C^q$,
 we consider its elements as vectors-rows. 
 Assume that a parity of block operators of the form
$\begin{pmatrix}A&0\\0&D\end{pmatrix}$ is $\0$, and a parity of operators
$\begin{pmatrix}0&B\\C&0 \end{pmatrix}$ is $\1$. We define a bilinear bracket
on matrices of order $p+q$
assuming that for pure $X$, $Y$
$$
[X,Y]:=X Y-(-1)^{p(X)p(Y)}YX.
$$
The even part $\frg\frl(p|q)_{\ov 0}$ is $\frg\frl(p)\oplus \frg\frl(q)$.

\sm

c) Consider the block complex $(m+m)$-matrix 
$J_m:=\begin{pmatrix}0&1\\-1&0\end{pmatrix}$
and the block $(n+n)$-matrix $I_n:=\begin{pmatrix}0&1\\1&0\end{pmatrix}$.
The {\it orthosymplectic Lie superalgebra} $\osp(2m|2n)$
 is the subalgebra
in $\frg\frl(2m|2n,\C)$ consisting of matrices 
$\begin{pmatrix}A&B\\C&D\end{pmatrix}$ satisfying the condition
$$
A J_m+J_m A^t=0,\quad D I_n+I_n D^t=0,\quad B^t J_m-I_n C=0.
$$
The even part $\osp(2m|2n,\C)_{\ov 0}$ is a direct sum 
of the symplectic Lie algebra $\frs\frp(2m,\C)$ and
the orthogonal Lie algebra $\fro(2n,\C)$.

More generally, we can consider an arbitrary nodegenerate
skew symmetric matrix $J$ of size $2m$ and nodegenerate
symmetric matrix $I$ of size $l$. If $l=2n$ is even, then we get a
 Lie superalgebra isomorphic to $\frg\frl(2m|2n,\C)$. If $l=2n+1$,
 then again
 all such algebras are isomorphic and we get the algebra
 $\frg\frl(2m|2n+1,\C)$
 in the standard notation.
\hfill $\boxtimes$

\sm

{\bf \punct Grassmannization of Lie superalgebras.} Let $\frg=\frg_\0\oplus\frg_\1$
be a  Lie superalgebra. Consider the space 
\begin{equation*}
\frg(\cA):= (\frg\otimes\cA)_\0:=
\Bigl(\frg_\0\otimes \cA_\0\Bigr)\oplus\Bigl( \frg_\1\otimes \cA_\1\Bigr)
:=\Bigl(\bigoplus\limits_{J:\,p(J)=\0} \dot\fra^J \frg_\0\Bigr)
 \bigoplus
\Bigl(\bigoplus\limits_{J:\,p(J)=\1} \dot\fra^I \frg_\1\Bigr).
\end{equation*}
We define a
bilinear bracket $[\cdot,\cdot]:\frg(\cA)\times \frg(\cA)\to \frg(\cA)$ in the following way.
For pure $X$, $Y\in \frg$ and collections $J$, $I$ such that
$p(X)=p(I)$, $p(Y)=p(J)$ (so $\fra^I X$, $\fra^J Y\in \frg(\cA)$)
 we set
\begin{equation}
[\fra^J X, \fra^I Y]=\fra^I \fra^J [X,Y]_s.
\label{eq:grass}
\end{equation}
Clearly, we get a structure of a Lie algebra (not a superalgebra) 
on $\frg(\cA)$. It is called the {\it Grassmannization of $\frg$}.

\sm

{\bf\punct Remarks.}
1) We  emphasize that  $\frg(\cA)$ is not simply a Lie algebra over $\C$ or $\R$,
the bracket is superbilinear in the following sense:
 for $X\in\frg_{\ov i}$, $Y\in \frg_{\ov j}$ and 
$\omega\in\cA_{\ov i}$, $\theta\in \cA_{\ov j}$
we have
$$
[\omega X, \theta Y]=\omega\theta\,[X,Y],
$$
and we consider representations of such algebras in modules
over $\cA$ (otherwise, $\frg(\cA)$ is a tedious example of
a non-semisimple complex  Lie algebra).

 2) Apparently, structures of Lie superalgebras firstly appeared in the
  paper
 Milnor, Moore \cite{MM}, 1965. 
 
 \sm

3) Grassmannizations were defined by Berezin and
Kats \cite{BK}, see also \cite{Ber-super}, Sect 5.1.
%, they also were considered by DeWitt \cite{DeW}. 

\sm

4)
Specialists in super-mathematics have different points of view to
the supergroups, see \cite{BerL} (and \cite{Ber-super}, \cite{Lei}),
\cite{DeW} (and \cite{Rog}), \cite{Schw}, \cite{VV}, \cite{Mol} and a discussion in \cite{Schm}
and \cite{Rog} (Introduction).
Our approach is similar to DeWitt \cite{DeW}, I explain this choice
in Subsect. \ref{ss:formalities}.

\sm

5) We emphasize that `phantom constants' $\fra_j$ are not `fermions', 
see Sect. \ref{s:super-fock}, and are not odd variables, see Sect. \ref{s:cont}.

\sm

{\bf\punct Example. The Grassmanization of the algebra
$\frg\frl(p|q)$ and the corresponding
supergroup.%
\label{ss:gl-group}}
 The algebra $\frg\frl(p|q;\cA)$ is the Lie algebra of block
matrices $\begin{pmatrix}a& b\\ c&d\end{pmatrix}$, where
matrix elements of $a$, $a$ are contained in $\cA_\0$, and
matrix elements of $b$, $c$ are contained in $\cA_\1$.
The Lie bracket is the usual commutator of matrices.

The corresponding group 
$\GL(p|q;\cA)$ is the group
of invertible block matrices $g=\begin{pmatrix}A&B\\C&D\end{pmatrix}$
of size $p+q$, such that blocks $A$, $D$ are composed of elements 
of $\cA_0$, and $B$, $C$ of elements of $\cA_{\ov 1}$.
The invertibility of such matrix is equivalent to
the invertibility of blocks $A$, $D$ (see, e.g., \cite{Ber-super}, Sect. 3.1, \cite{Rog}, Subsect. 3.4). The matrices 
$A$, $D$ are invertible iff
the matrices  
$A_\downarrow$, $D_\downarrow$ over $\K$ are invertible.

In this section, starting Subsect. \ref{ss:envelope}
we discuss the general construction of groups corresponding to algebras $\frg(\cA)$.
Let us explain this correspondence in our case.

Denote by $\GL_\vel(p|q;\cA)\subset \GL(p|q;\cA)$
 the subgroup of matrices $g$
satisfying $A_\downarrow=1$, $D_\downarrow=1$.
Denote by  $\frg\frl_\vel(p|q;\cA)\subset\frg\frl_\vel(p|q;\cA)$
the subalgebra consisting of matrices $X$ such that $X_\downarrow=0$.
For elements of $\frg\frl_\vel(p|q;\cA)$ we have well-defined
exponents, $\exp X= \sum X^n/n!$. For elements $1+U\in \GL_\vel(p|q;\cA)$
we have well-defined logarithms, $\ln(1+U)=\sum (-1)^{n+1} X^n/n$
(actually, in the both cases sums are finite). Clearly,
these maps
$$
\exp:\,\frg\frl_\vel(p|q;\cA)\to \GL_\vel(p|q;\cA),\quad 
\ln:\, \GL_\vel(p|q;\cA)\to \frg\frl_\vel(p|q;\cA) 
$$
are inverse one to another.

\sm

Recall that the {\it Berezinian} (see \cite{Ber-super}, Sect. 3.1, \cite{Rog}
 (Sect. 3.4)) of a matrix $g\in \GL(p|q;\cA)$
is
$$
\Ber\begin{pmatrix}A&B\\C&D
\end{pmatrix}=
\det (A)\cdot \bigl(\det(D-CA^{-1}B)\bigr)^{-1}\,\in\cA_\0;
$$
the matrices $A$ and $D-CA^{-1}B$ are composed 
of elements of a commutative algebra $\cA_\0$,
hence their determinants are well defined%
\footnote{Recall that the usual formula for determinants 
of block matrices is $\det \begin{pmatrix}a&b\\c&d
\end{pmatrix}=
\det (a)\cdot \bigl(\det(d-ca^{-1}b)\bigr)$.}.
The Berezinian is a homomorphism $\GL(p|q;\cA)\to\cA$,
$$
\Ber(g_1 g_2)=\Ber(g_1)\,\Ber(g_2).
$$

\sm

For an operator $g=\begin{pmatrix}A&B\\C&D
\end{pmatrix}$  in the space of rows $\cA^p\oplus \cA^q$
we define the {\it supertranspose} by the following formula
$$
g^{st}:= 
\begin{cases}
\begin{pmatrix}A^t&C^t\\-B^t&D^t\end{pmatrix},\quad \text{if $p(g)=\0$};
		\\
\begin{pmatrix}A^t&-C^t\\B^t&D^t\end{pmatrix},\quad \text{if $p(g)=\1$}	
\end{cases}.
$$
We also set 
$$(g_1+g_2)^{st}=g_1^{st}+g_2^{st}.$$
This operation satisfies the condition
$$
(g_1 g_2)^{st}=(-1)^{p(g_1)p(g_2)} g_2^{st} g_1^{st}.
$$
In particular supertranspose determines 
an antihomomorphism 
from the group 
$\GL_\vel(p|q;\cA)$ to itself%
\footnote{The identity $(AB)^t=B^t A^t$ is valid for matrices over
a commutative rings $R$. If $R$ is not commutative but it has an
antiinvolution $r\mapsto \ov r$ (i.e., $\ov {r_1+r_2}=\ov r_1+\ov r_2$,
$\ov{\ov r}=r$, $\ov {r_1 r_2}=\ov r_1 \ov r_2$, as for quaternions),
then we have adjoint matrices and $(AB)^*=B^* A^*$. The supertransposition differs from these operations.} 

Notice, that $(g^{st})^{st}\ne g$.

\sm

{\bf\punct Example. The orthosymplectic supergroup.%
\label{ss:osp-group}}
    The Lie superalgebra $\osp(2m|2n,\cA)$ (see Subs. \ref{ss:lie-superalgebras})
is the subalgebra in $\frg\frl(2m|2n,\cA)$ consisting  of matrices $Q$ satisfying the condition
$$
Q\begin{pmatrix} J&0\\ 0&I\end{pmatrix}+
\begin{pmatrix} J&0\\ 0&I\end{pmatrix}Q^{st}=0.
$$    
The corresponding group $\OSp(2p|2q;\cA)$
 (see, e.g.,  \cite{Rog}. Sect 9.2) is the subgroup
in $\GL(p|q;\cA)$ consisting of matrices $g$ satisfying the 
identity  
$$
g\begin{pmatrix} J&0\\ 0&I\end{pmatrix}g^{st}=\begin{pmatrix} J&0\\ 0&I\end{pmatrix}.
$$
We can say this in another way. Consider the form
$\cJ$ in $\cA^{2p}\oplus \cA^{2q}$
defined by
$$
\cJ(v,w)=\begin{pmatrix} v_\0&v_\1\end{pmatrix}
\begin{pmatrix} J&0\\ 0&I\end{pmatrix}
\begin{pmatrix} v_\0^{st}\\v_\1^{st}\end{pmatrix},
$$
here $v_\0^{st}=v_\0^t$, $v_\0^{st}=(v_\1^\circ)^t$.
Then the group $\OSp(2p|2q;\cA)$ consist of elements 
$g\in \GL(p|q;\cA)$ satisfying 
$$
\cJ(v,w)=\cJ(vg,wg).
$$

{\bf\punct Example: the group of affine transformations.}
Consider the group  
$\Aff(p|q;\cA)\subset \GL(p+1|q;\cA)$
consisting of matrices
$$
\begin{pmatrix}
	A&0&B\\
	\phi&1&\psi\\
	C&0&D
\end{pmatrix}.
$$
This subgroup is the stabilizer of the vector
$$
(0,\dots,0,1\bigl|0,\dots,0).
$$ 
It induces affine transformations
$$
\begin{pmatrix}\xi&\eta \end{pmatrix}
\mapsto \begin{pmatrix}\xi&\eta \end{pmatrix}
\begin{pmatrix}a&b\\c&d\end{pmatrix}+
\begin{pmatrix}\phi&\psi\end{pmatrix}
$$

Next,  consider an infinite-dimensional Fr\'echet space
$V=V_\0\oplus V_\1$ and an affine action  a group
$G$ on $V[\cA]$, 
$$
v\mapsto v\rho(g)+\gamma(g).
$$
Notice that
in this case $\rho(g)$ is a linear action and 
 $\gamma(g)$ satisfies to the {\it cocycle condition}
\begin{equation}
\gamma(g_1g_2)=\gamma(g_1)\rho(g_2)+\gamma(g_2).
\label{eq:cocycle-identity}
\end{equation}

For a linear action $\rho(g)$ we can
can shift an origin to a point $r\in V_\0[\cA]$ and get action
\begin{equation}
v\mapsto v\rho(g)+r\rho(g)-r.
\end{equation}
We get an affine action, in particular,
\begin{equation}
\gamma(g):=r\rho(g)-r
\label{eq:trivial-cocycle}
\end{equation}
satisfies the cocycle condition.

\sm

\sm

{\bf\punct Universal enveloping algebras.%
\label{ss:envelope}} For a Lie superalgebra $\frg$ we define 
the {\it universal enveloping algebra} $\cU(\frg)$ as
the associative algebra with generators $i_X$, where
$X$ ranges in pure elements of $\frg$, and relations
\begin{align*}
&i_{s_1 X_1+s_2 X_2}=s_1i_{ X_1}+ s_2i_{ X_2},\quad \text{where 
$p(X_1)=p(X_2)$, $s_1$, $s_2\in \K$.}
\\
&i_X i_Y-(-1)^{p(X)p(Y)}i_Y i_X=i_{[X,Y]_s}.
\end{align*}

Below we write $X$ instead of $i_X$ and identify $\frg$ with a subspace
in $\cU(\frg)$. Recall (see, e.g. \cite{Mus}, \S6.1) that for a finite-dimensional or countable-dimensional
$\frg$  the {\it following '\Poincare--Birkhoff--Witt'
theorem holds}:

\sm

{\it Let $X_1$, $X_2$, \dots be a basis in $\frg_\0$, $Y_1$,  $Y_2$, \dots
be a basis in $\frg_\1$. Then the following elements 
form a basis in $\cU(\frg)$:
\begin{equation}
X_1^{k_1} X_2^{k_2}\dots Y_1^{\epsilon_1}Y_2^{\epsilon_2}\dots,
\label{eq:PBW}
\end{equation}
where
$k_i\ge 0$, $\epsilon_j=0,1$, and all but a finite number of $k_i$
and $\epsilon_j$ equal 0.}

\sm

{\sc Remark.} 
a) In supercase the analog of symmetric basis in the universal enveloping 
algebra also exists, see, e.g., \cite{Mus}, Sect. 6.4.

\sm

b) Formally, '\Poincare--Birkhoff--Witt theorem' is valid for superalgebras, whose dimension
has an arbitrary cardinality,
but it is reasonable only for algebras
having a finite or countable basis. Otherwise, we come to Hamel bases.

\sm

c) May be, for Fr\'echet spaces a definition of enveloping algebras
must be modified. A right definition is not clear, and the benefit of 
such object  is problematic. In any case, our considerations below include only finite sums of products $\sigma\cdot Z_1\dots Z_n$, where $Z_j\in \frg$, 
$\sigma\in\cA$ and do not depend on possible topologizations.
\hfill $\boxtimes$

\sm

Next, set
$$
\cU[\frg,\cA]:=\cU(\frg)\otimes \cA,
$$
its elements are finite sums $\sum \omega_j u_j$, where $u_j\in \cU(\frg)$,
$\omega_j\in\cA$, 
we assume that elements $\omega\in\cA$ and $u\in\cU(\frg)$ commute, $\omega u=u\omega$
We have the tautological embedding  $\frg(\cA)\to \cU[\frg;\cA]_\0$, it sends a commutator to
a commutator.

\sm

{\sc Remark.}
The $\cU[\frg,\cA]$ is not $\cU(\frg(\cA))$.
\hfill $\boxtimes$

\sm

 {\bf \punct Surplace groups $\boldsymbol{G_\vel(\cA)}$.}  
  Consider the Lie algebra
 $$
 \frg_\vel(\cA):=\Bigl(\frg_\0\otimes \cA_{\ov 0+}\Bigr)
 \oplus \Bigl(\frg_\1\otimes \cA_\1\Bigr)\subset  \frg(\cA).
 $$
 Clearly, $\frg_\vel(\cA)$ is an ideal in $\frg(\cA)$ 
and
$$\frg(\cA)/\frg_\vel(\cA)=\frg_{\ov 0},$$ 
  $\frg(\cA)$ is a semidirect product
 $\frg_\0\ltimes \frg_\vel(\cA)$.
 
We write elements of $\frg_\vel(\cA)$ as  finite sums
\begin{equation}
u=\sum\lambda_i X_i+\sum_j\mu_j Y_j,
\label{eq:u}
\end{equation}
 where $X_i\in \frg_\0$, $Y_j\in \frg_\1$ and
monomials $\lambda_i\in \cM_{\0+}$, $\mu_j\in \cM_{\1}$ are pairwise
 non-proportional.

\begin{observation}
Any finitely generated Lie subalgebra in $\frg_\vel(\cA)$ is
nilpotent.
\end{observation}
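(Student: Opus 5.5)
Take a finite set of generators $u_1,\dots,u_k$ of the subalgebra $\frh\subset\frg_\vel(\cA)$ and write each in the form \eqref{eq:u}. Only finitely many monomials $\lambda_i,\mu_j$ occur in these expressions, so only finitely many generators $\fra_1,\dots,\fra_N$ of $\cA$ appear in them. Let $\cA^{(N)}\subset\cA$ be the finite-dimensional subalgebra generated by $\fra_1,\dots,\fra_N$. It is closed under multiplication, so by \eqref{eq:grass} the subalgebra $\frh$ lies in
$$
\frg_\vel(\cA^{(N)}):=\bigl(\frg_\0\otimes \cA^{(N)}_{\0}\cap\cA_+\bigr)\oplus\bigl(\frg_\1\otimes\cA^{(N)}_\1\bigr),
$$
because all coefficients of the generators lie in the augmentation ideal $\cA^{(N)}\cap\cA_+$.

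The key point is the filtration by degree. Let $F_m$ be the span of elements $\omega X$ in $\frg_\vel(\cA^{(N)})$ with $\omega$ a combination of monomials of degree $\ge m$. Then $F_1=\frg_\vel(\cA^{(N)})$, and $F_{N+1}=0$, since any product of more than $N$ of the anticommuting nilpotent $\fra_j$, $j\le N$, vanishes. By \eqref{eq:grass}, $[\fra^J X,\fra^I Y]=\fra^I\fra^J[X,Y]_s$, and the degree of $\fra^I\fra^J$ is $|I|+|J|$ (or the product is zero). Hence $[F_m,F_l]\subset F_{m+l}$. The lower central series therefore satisfies $C^n(F_1)\subset F_n$, so $C^{N+1}=0$. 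Consequently $\frg_\vel(\cA^{(N)})$ is nilpotent of step at most $N$, and so is its subalgebra $\frh$.

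There is no serious obstacle here. The only care needed is to check that the bracket formula \eqref{eq:grass} respects the filtration even with the signs and with $[X,Y]_s$ possibly zero, and to note that bilinearity extends this from monomials to finite sums. One should also note that no topology on $\frg$ matters, because all sums are finite.
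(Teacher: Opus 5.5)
Your proof is correct: restricting to the finitely many generators $\fra_1,\dots,\fra_N$ that occur, and using the degree filtration with $[F_m,F_l]\subset F_{m+l}$ and $F_{N+1}=0$, gives $C^{N+1}(\frh)=0$. The paper records this as an Observation without proof, and your degree-counting argument is the reasoning it takes for granted; the same finiteness is what makes the exponentials and the Campbell--Hausdorff sums in Theorem \ref{th:1} terminate.
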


For any $u\in \frg_\vel(\cA)$ we define it exponential
$$
\exp(u):=\sum_{N=0}^\infty \frac 1{k!} u^k \in \cU[\frg;\cA]_\0,
$$
actually, this sum is finite and has parity $\0$.

\begin{theorem}
\label{th:1}
{\rm a)} The map $\exp: \frg(\cA)\to  \cU[\frg;\cA]_\0$ is injective.
  Its image  $G_\vel(\cA)$
  is a group, say the {\rm surplace group},  with respect to the multiplication
 in $\cU[\frg;\cA]$.

 \sm

{\rm b)} The group $G_\vel(\cA)$ is generated by all elements of the form
\begin{align}
&1+\lambda X,\qquad \text{where $X\in \frg_\0$, $\lambda\in \cM_{\0+}$;}
\label{eq:lambda-X}
\\
&1+\mu Y,\qquad \text{where $Y\in \frg_\1$, $\lambda\in \cM_{\1}$.}
\label{eq:mu-Y}
\end{align}
%Moreover, we can write elements of the type \eqref{eq:lambda-X},
%i.e., any element of $G_+[\cA]$, in the form
%$$
%\prod_p(1+\lambda_p X_p)\cdot \prod_q (1+\mu_q Y_q).
%$$

 {\rm c)} Each product of elements \eqref{eq:lambda-X} has the form
 $\exp(w)$, where $w\in \frg_{\ov 0}(\cA)$.
 The set $\exp\bigl(\frg_\0(\cA)\bigr)$ is a subgroup in $G_\vel(\cA)$.
 
 \sm

{\rm d)} Any element of the group $G_\vel(\cA)$
admits a unique representation in  the form
\begin{equation}
\exp\Bigl(\sum \nu_k X_k\Bigr) \cdot \exp\Bigl(\sum \kappa_m Y_m\Bigr),
\label{eq:exp-exp}
\end{equation}
where $X_k\in \frg_\0$, $\nu_k\in \cM_{\0+}$, $Y_m\in \frg_\1$, $\kappa_m\in\cM_\1$.
\end{theorem}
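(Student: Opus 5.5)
The plan is to reduce everything to finite-dimensional nilpotent Lie algebras, where the Baker--Campbell--Hausdorff formula is a finite polynomial identity. Note that in a) the domain should be read as $\frg_\vel(\cA)$, since the exponential is only defined there.

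Fix finitely many elements $u_1,\dots,u_k\in\frg_\vel(\cA)$. Only finitely many generators $\fra_j$ appear in them, say $\fra_1,\dots,\fra_N$. Let $\frn$ be the span of the elements $\fra^J X$ with $J\subset\{1,\dots,N\}$ nonempty and $p(J)=p(X)$. By \eqref{eq:grass}, $\frn$ is closed under the bracket. It is also nilpotent, because any bracket of more than $N$ of its elements contains a product of more than $N$ generators and so vanishes. This is the Observation.

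The subalgebra $\frn\subset\frg_\vel(\cA)$ is itself a module over $\frg$. Inside $\cU[\frg;\cA]$, all products $w_1\cdots w_m$ with $w_i\in\frn$ and $m>N$ vanish. Hence the associative subalgebra $R_N$ generated by $\frn$ is nilpotent, and in $\C\cdot1+R_N$ both $\exp$ and $\ln(1+r)=\sum(-1)^{n+1}r^n/n$ are finite sums. The identities $\ln\exp u=u$ and $\exp\ln(1+r)=1+r$ hold as formal power series identities in one commuting variable, truncated at order $N$. This gives injectivity of $\exp$, proving the first part of a).

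For the group property, apply the BCH formula in the complete algebra of noncommutative formal power series, then map to $R_N$. Since $R_N$ is nilpotent, the series truncates, so $\exp u\exp v=\exp(H(u,v))$ with $H(u,v)$ a finite Lie polynomial in $u,v$. This $H(u,v)$ lies in $\frn\subset\frg_\vel(\cA)$, and is even there, since $\frg_\vel(\cA)$ is a Lie algebra and Lie brackets of elements of $\frg(\cA)$ stay in $\frg(\cA)$. Also $\exp(u)^{-1}=\exp(-u)$. So $G_\vel(\cA)$ is a group, completing a).

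For c), take $\lambda\in\cM_{\0+}$. Then $\lambda^2=0$ (a repeated generator), so $1+\lambda X=\exp(\lambda X)$. Similarly, for $\mu\in\cM_\1$ we have $\mu^2=0$ and $1+\mu Y=\exp(\mu Y)$. By BCH, products of elements $\exp(w)$ with $w\in\frg_\0\otimes\cA_{\0+}$ stay of this form, because that space is a Lie subalgebra. This gives c).

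For b), one inclusion is immediate from the identities above, since the elements \eqref{eq:lambda-X}, \eqref{eq:mu-Y} lie in $G_\vel(\cA)$. For the converse, I induct on the nilpotency filtration. Write $u=\sum\lambda_iX_i+\sum\mu_jY_j$ as in \eqref{eq:u}. Then
$$
\exp(u)\prod_i\exp(-\lambda_iX_i)\prod_j\exp(-\mu_jY_j)=\exp(u'),
$$
where by BCH $u'$ is a sum of brackets. Each such bracket has $\cA$-coefficients of degree strictly larger than the minimal degree appearing in $u$. Iterating, the process terminates after at most $N$ steps, expressing $\exp(u)$ as a product of generators.

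For d), the existence proof is the same filtration argument. Let $\frn_\0$ denote the even part $\frn\cap(\frg_\0\otimes\cA_{\0+})$. Start with $g=\exp(u)$ and set $a=\sum\nu_kX_k$ equal to the even part of $u$. Then $\exp(-a)g=\exp(u_1)$ with $u_1$ having odd part equal to that of $u$, up to higher-degree corrections. Repeatedly peeling off even parts on the left, and merging them via BCH in the subalgebra $\frn_\0$, converges in finitely many steps to $\exp(a)\exp(b)$ with $b$ odd in $\frg_\1\otimes\cA_\1$.

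\emph{Main obstacle.} I expect the hardest step to be uniqueness in d), and in particular making this peeling well-organized. I would try to prove uniqueness by descending induction on degree. Suppose $\exp(a)\exp(b)=\exp(a')\exp(b')$. Then
$$
\exp(b)\exp(-b')=\exp(-a)\exp(a')\in\exp(\frg_\0\otimes\cA_{\0+}).
$$
Compare the lowest-degree components of $\ln$ of both sides. By PBW \eqref{eq:PBW}, the left side has lowest term $b-b'$, which lies in $\frg_\1\otimes\cA$. The right side's $\ln$ is even in $\frg$, so $b-b'$ vanishes in its lowest degree, and induction on degree gives $b=b'$. Then $a=a'$ follows by injectivity of $\exp$.
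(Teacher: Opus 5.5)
Your proof is correct and follows essentially the paper's route: Campbell--Hausdorff is made finite by nilpotency of the $\fra$-coefficients, terms are peeled off by increasing $\fra$-degree for b) and for existence in d), and your argument $\ln\exp u=u$ supplies the injectivity in a), which the paper's proof does not actually address. For uniqueness in d) you force the odd parts to agree where the paper forces the even part to vanish, which is equally valid; however, the claim that the lowest-degree component of $\ln\bigl(\exp(b)\exp(-b')\bigr)$ is that of $b-b'$ comes not from PBW but from writing $b'=b+\delta$: Campbell--Hausdorff brackets not containing $\delta$ are brackets of $b$ alone and vanish, so every surviving bracket has degree strictly larger than the minimal $\fra$-degree of $\delta$.
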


{\sc Remark.} We use the French sportive term
 '{\it surplace}' (stand track, standstill). Informally, our groups $G_\vel(\cA)$ are groups of actual infinitesimal
transformations, see the next section.
\hfill $\boxtimes$

 \sm

We  formulate some additions to this theorem.

First, for pure elements  $Z_1$, $Z_2\in\frg$, and  $\nu_1$,
 $\nu_2\in\cM^+$ such that
 $p(\nu_j)=p(Z_j)$, we have
\begin{align}
(1+\nu_1 Z_1)(1+\nu_2 Z_2)&=(1+\nu_2 Z_2)(1+\nu_1 Z_1)\,\,
(1+\nu_1\nu_2[Z_1,Z_2]_s)=
\label{eq:reorder-1}
\\
&=(1+\nu_1\nu_2[Z_1,Z_2]_s)\,\, (1+\nu_2 Z_2)(1+\nu_1 Z_1).
\label{eq:reorder-2}
\end{align}
Since $\nu^2=0$, we have
\begin{equation}
(1+\nu Z_1)(1+\nu Z_2)=1+\nu(Z_1+Z_2).
\label {eq:relation-2}
\end{equation}

\begin{proposition}
\label{pr:1}
	{\rm a)} The group with generators \eqref{eq:lambda-X}--\eqref{eq:mu-Y} and relations
	\eqref{eq:reorder-1}--\eqref{eq:relation-2}
	 is $G_\vel(\cA)$.
	
	\sm
	
	{\rm b)} Fix arbitrary linear order $\prec$ on the set of monomials $\dot\fra^I$.
	Then any element of $G_\vel(\cA)$ admits a unique representation
	\begin{equation}
	(1+s_1 \dot \fra^{I_1} Z_1)(1+s_2 \dot \fra^{I_2}  Z_2)\dots (1+s_N \dot \fra^{I_N}  Z_N),
	\label{eq:canonical-decomposition-vel}
	\end{equation}
	such that $s_j Z_j\ne 0$ and $\dot\fra^{I_k}\prec \dot\fra^{I_l}$ for $k<l$,
	$p(Z_m)=p(\fra^{I_m})$.
\end{proposition}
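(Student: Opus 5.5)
The strategy is to define the abstract group $\widetilde G$ by generators \eqref{eq:lambda-X}--\eqref{eq:mu-Y} and relations \eqref{eq:reorder-1}--\eqref{eq:relation-2}, and to compare it with $G_\vel(\cA)$ through the natural map $\widetilde G\to G_\vel(\cA)$. This map is well defined, since the relations hold in $\cU[\frg;\cA]$: \eqref{eq:relation-2} uses $\nu^2=0$, and \eqref{eq:reorder-1} uses $(\nu_1\nu_2)^2=0$, $\nu_j\nu_1\nu_2=0$ and the supercommutation $\nu_1Z_1\nu_2Z_2-\nu_2Z_2\nu_1Z_1=\nu_1\nu_2[Z_1,Z_2]_s$. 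It is surjective by Theorem \ref{th:1}b). The whole proof then reduces to two claims: every element of $\widetilde G$ can be brought to the ordered form \eqref{eq:canonical-decomposition-vel} using only the relations, and the images in $\cU[\frg;\cA]$ of distinct ordered words are distinct. Together these give b) for $G_\vel(\cA)$ and injectivity of $\widetilde G\to G_\vel(\cA)$, which is a).

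For existence I run a sorting algorithm on words in generators. First note that $(1+\nu Z)^{-1}=1-\nu Z$ by \eqref{eq:relation-2}, so inverses are again generators and every element of $\widetilde G$ is a word $\prod(1+\nu_k Z_k)$. Write each $\nu_k=s_k\dot\fra^{I_k}$. Use \eqref{eq:reorder-1} to swap adjacent factors that are out of $\prec$-order. Each swap produces an extra factor whose monomial is $\nu_1\nu_2$, of strictly larger degree; if $I_1\cap I_2\ne\varnothing$ this factor equals $1$ and disappears. Use \eqref{eq:relation-2} to merge adjacent factors with the same monomial, dropping a factor when the merged coefficient $sZ$ vanishes.

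Termination is where the care is needed. Only finitely many generators $\fra_j$ occur in a given word, so only finitely many monomials $\dot\fra^I$ can ever appear. I induct downward on degree: monomials of maximal degree $d$ are central, because their commutator term contains a repeated generator and is $0$, so they can be moved freely. At a given stage, sort the factors of minimal degree first, treating the higher-degree debris inductively. Equivalently, filter by degree and induct on (number of available generators, length of word). I expect writing this termination argument cleanly to be the main obstacle, though it is routine nilpotence bookkeeping.

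For uniqueness I expand an ordered product \eqref{eq:canonical-decomposition-vel} in $\cU[\frg;\cA]=\cU(\frg)\otimes\cA$ and compare coefficients by induction on $N$. Take the $\prec$-minimal monomial $\dot\fra^{I_1}$ among those of minimal degree; I may choose $\prec$ compatible with degree, or simply fix $\prec$ and argue with the leading term. The coefficient of $\dot\fra^{I_1}$ in the expansion is exactly $s_1Z_1\in\frg\subset\cU(\frg)$. This is because other contributions to $\dot\fra^{I_1}$ would come from products $\fra^{I_{k_1}}\cdots\fra^{I_{k_r}}$ with $r\ge2$ equal to $\pm\dot\fra^{I_1}$, and these lie in $\cU^{(r)}$ filtration pieces. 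To separate the degree-one part I use the PBW theorem: the projection $\cU(\frg)\to\frg$ along the PBW basis \eqref{eq:PBW} kills products of length $\ge2$ modulo lower terms. More cleanly, I apply the counit-type projection onto the span of $\frg$ in the symmetrized PBW decomposition (Remark a)). So two ordered words with equal images have the same first factor. Cancelling it by left multiplication with $1-s_1\dot\fra^{I_1}Z_1$ and inducting proves uniqueness. The delicate point here is handling the case where a length-$\ge2$ product of earlier monomials reproduces $\dot\fra^{I_1}$; I would avoid this by using the coefficient extraction at the lowest $\cA$-degree together with the PBW projection onto $\frg$, combined with the grading of $\cU(\frg)$ by filtration degree.
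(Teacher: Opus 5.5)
Your existence argument and your derivation of a) agree with the paper's proof. For existence you sort by level with \eqref{eq:reorder-1}, merge with \eqref{eq:relation-2}, and get termination because every newly created factor has strictly larger level. For a) you combine ``reduction to ordered form using only the relations'' with uniqueness. The gap is in uniqueness for an \emph{arbitrary} order $\prec$.

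You peel off the first factor of an ordered word, asserting that the coefficient of $\dot\fra^{I_1}$ in the expansion is $s_1Z_1$. This is guaranteed only if $\dot\fra^{I_1}$ has minimal degree among the monomials of the word, i.e.\ essentially only when $\prec$ refines the degree. In general the first factor may have high level. Its coefficient is then contaminated by ordered products of lower-level factors. No PBW or symmetrization projection $\cU(\frg)\to\frg$ can remove this contamination, because such products may lie in $\frg$ itself. For instance, in $\ns$ with $\fra_1\fra_2\prec\fra_1\prec\fra_2$, one has $M_{1/2}^2=\tfrac12[M_{1/2},M_{1/2}]_s=L_1$, and hence
\[
(1-\fra_1\fra_2L_1)(1+\fra_1M_{1/2})(1+\fra_2M_{1/2})=1+(\fra_1+\fra_2)M_{1/2}.
\]
This is an ordered word whose first factor is nontrivial, while the coefficient of $\fra_1\fra_2$ in its image is $0$. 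So the induction on $N$ by cancelling the leftmost factor breaks down. Your argument does work when $\prec$ is compatible with degree, and that already suffices for a). But b) is asserted for every $\prec$.

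The repair, which is the paper's argument, is to induct on the level rather than on the position in the word. Consider the degree-$m$ component in $\fra$ of an ordered product $\prod_k(1+\nu_kZ_k)$. It equals $\sum_{\deg I_k=m}s_k\dot\fra^{I_k}Z_k$ plus the sum of the products $\nu_{k_1}Z_{k_1}\cdots\nu_{k_r}Z_{k_r}$ with $r\ge2$, $k_1<\dots<k_r$ and $\sum_i\deg\nu_{k_i}=m$. All factors in that second sum have level $<m$. Since the canonical form prescribes their relative order, the sum is determined by the factors of level $<m$. Thus the level-$1$ factors are read off from the degree-$1$ part. Inductively, the level-$m$ factors are read off from the degree-$m$ part after subtracting the already determined products. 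This uses only two facts: $\cU[\frg;\cA]=\cU(\frg)\otimes\cA$ is free over the basis $\dot\fra^I$, and $\frg\to\cU(\frg)$ is injective. No projection onto $\frg$ is needed.
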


{\bf\punct Proof of Theorem \ref{th:1}.}
 First, recall the {\it Cambell--Hausdorff theorem} (see, e.g., \cite{Ser},
  \cite{Reu}).
   Consider the free Lie algebra 
 $\free_2$ with two generators $P$, $Q$. Its universal enveloping algebra
 $\cU(\free_2)$
 is the free associative algebra $\Assoc_2$ with the same generators.
Denote by  $\ov\Assoc_2$ its completion consisting of formal associative series 
in $P$, $Q$.
  Then 
 there exist elements $\gamma_{m,n}\in\free_2$ of degree $m$ in $P$
  and $n$ in $Q$ such that the following identity in  $\ov\Assoc_2$
  holds:
 $$
 \exp(P)\,\exp(Q)=\exp\Bigl(P+Q+\sum_{m\ge 1, n\ge 1} \gamma_{m,n}(P,Q)\Bigr).
 $$
 
{\sc Statement a).} Let $u$, $v\in \frg_\vel(\cA)$.
 We   substitute $P=u$, $Q=v$ to the Cambell--Hausdorff formula.
 The sum 
 $$w=P+Q+\sum_{m\ge 1, n\ge 1} \gamma_{m,n}(u,v)$$
 is finite and is an element of $\frg(\cA)$,
  and therefore $\exp(u)\exp(v)=\exp(w)$, i.e., $G_\vel(\cA)$
  is closed with respect to multiplications.
  
  \sm
  
 {\sc Statement b).} We have
 $$
 1+\lambda X=\exp(\lambda X)\in G_\vel(\cA),
 \qquad
 1+\mu Y=\exp(\mu Y)\in G_\vel(\cA).
 $$ 
 
  So it is sufficient to prove that any $\exp(u)$ can be decomposed 
  as a product of elements of the form \eqref{eq:lambda-X}--\eqref{eq:mu-Y}.
  Decompose $u$ into a sum $u_1+u_2+\dots$, where $u_k$ is homogeneous
  in $\fra_1$, $\fra_2$, \dots of degree $k$. 
   Let
  $u_r$ be the  first nonzero term,
  $$
  u_r=\sum_{I:\# I=r} c_{I} \dot\fra^I Z_I,
  $$
  where $Z_I$ are pure elements of $\frg$.
  Let $c_J\ne 0$. Then
  $$
 (1- c_J\, \dot\fra^J Z_J) \exp\Bigl(\sum_{I}  c_{I} \,\dot \fra^I Z_I +u_{r+1}+\dots\Bigr)
  $$
  has the form
  $$
  1+\sum_{I:I\ne J}  c_{I}\, \dot\fra^I Z_I
  +\Bigl\{\text{terms of degree $\ge r+1$} \Bigr\}.
  $$
  By a), this expression has the form $\exp(u')$, and $u'$ has 
  the form
  $$
  u'=\sum_{I:I\ne J}  c_{I}\, \dot\fra^I Z_I+u'_{r+1}+u'_{r+2}\dots
  $$
  where  $u'_l$ has degree $l$. Repeating the same argument, we come to
  $$
  \exp(u)=\prod_I (1+c_I\dot\fra^I Z_I)\cdot \exp\bigl(\wt u_{r+1}+\wt u_{r+2}+\dots\bigr),
  $$
where $\wt u_l$ are homogeneous of degree $l$. 
 
We repeat the same argument to $\wt u_{r+1}$, etc., and come to a desired 
decomposition.

\sm

{\sc Statement  c).} We apply the statement a) to the Lie algebra $\frg_\0$.

\sm

{\sc Statement d).} {\it Existence.}
Consider an element 
$$
R=\exp\Bigl(\sum_{\lambda_j\in \cM_{\0+}:\, |\deg \lambda_j|\ge 2N} \lambda_j
 X_j+\sum \mu_k Y_k \Bigr)
\in G_\vel(\cA);
$$
here $X_j\in \frg_\0$, $Y_k\in \frg_\1$ as above.
By the Campbell--Hausdorff formula,
$$
\exp\Bigl(-\sum_{\lambda_s\in \cM_{\0+}: |\deg \lambda_j|= 2N}\lambda_j
 X_j  \Bigr)\cdot R
$$
has the form
$$
\exp\Bigl(\sum_{\lambda'_r\in \cM_{\0+}: |\deg \lambda'_j|> 2N} 
\lambda'_r  X'_r+\sum_{\deg \mu_k<2N} \mu_k Y_k+
 \sum_{\deg \mu'_k>2N} \mu'_k Y'_k\Bigr)
$$

Repeating this transformation, we represent $R$
as
$$
R=\exp(w_2)\exp(w_4)\dots \exp\Bigl
(\sum%_{\deg \mu'_k>2N}
 \wt\mu_t \wt Y_t
\Bigr),
$$
where each $w_{2N}$ is a sum of elements $\lambda X$ with $\deg\lambda=2N$.
It remains to refer to statement c).

\sm

{\it Uniqueness.} It is sufficient to show that a product
\begin{equation}
\exp\Bigl(\sum \lambda_k X_k\Bigr)\cdot\exp \Bigl(\sum \kappa_m Y_m\Bigr)
\label{eq:exp-X-exp-Y}
\end{equation}
can not have the form $\exp \bigl(\sum \kappa_m Y_m\bigr)$.

Denote the minimal degree in $\fra$ of non-zero summands
$\lambda_k X_k$ by $2p$.
 We reorder this sum,
$$
\sum \lambda_k X_k =
\sum_{\lambda_s\in \cM_{\0+}: |\deg \lambda_j|= 2p}\lambda_s X_s+
\sum_{\lambda_t\in \cM_{\0+}: |\deg \lambda_t|\ge 2p+2}\lambda_t X_t
$$
assuming that the first sum is non-zero.
 Applying the Campbell--Hausdorff formula to \eqref{eq:exp-X-exp-Y}
 we get an expression of the form 
 \begin{equation*}
\exp\Bigl( \sum_{ |\deg \lambda_j|= 2p}\lambda_s X_s
+ \sum_{|\deg \delta_\tau|\ge 2p+2}\delta_\tau X_\tau+\sum \mu_\sigma Y_\sigma\Bigr),
\end{equation*}
and the first sum in brackets is nonzero.

\sm

{\bf\punct Proof of  Proposition \ref{pr:1}.}
{\sc Statement b).} 
Decompose an element $L\in G_{\vel}(\cA)$ into a product 
of the form $\prod (1+t_k \dot\fra^{J_k} Q_{J_k})$. 
We say that the {\it level} of a factor $(1+\nu Z)$ is the degree 
of $\nu$.   If we transpose two factors of our product  according (\ref{eq:reorder-1})
with levels $\alpha$ and $\beta$, then the additional factor has
level $\alpha+\beta>\max (\alpha,\beta)$. Consider all factors of level 1 and put them into a correct $\succ$-order. Then additional
factors arising after this operation have order $\ge 2$.  Next, consider factors of level 2 and put them into a correct $\succ$-order 
between elements of order 1. Etc.

Conversely, consider two representations of $L$ in the desired form. Then  factors of level 1  are fixed. If  factors of level 1 are fixed, then factors of level
2 are fixed, etc.

\sm

{\sc Statement a).} In proof of Statement b) we used only transformations \eqref{eq:reorder-1}, \eqref{eq:relation-2}.
In particular, this allow to reduce a product $L$, which equals 1, to the canonical form.

\sm

{\bf \punct Functoriality.} Consider a homomorphism
$\frg\to \frh$ of Lie  superalgebras. It generates a homomorphism
of the corresponding enveloping algebras $\cU(\frg)\to \cU(\frh)$,
a homomorphism of their Grassmannizations 
$\cU[\frg;\cA]\to \cU[\frh;\cA]$, and a homomorphism 
$$
G_\vel(\cA)\to H_\vel(\cA).
$$

In particular, a representation $\rho$ of a superalgebra  $\frg$
in a superspace  $V$ generates a representation of the surplace group $G_\vel(\cA)$ in $V[\cA]$.
Namely, we represent each $g\in G_\vel(\cA)$ as a product of generators 
of form \eqref{eq:lambda-X}, \eqref{eq:mu-Y} and set
$$
\rho(1+\lambda X)=1+\lambda\rho(X);
\qquad
\rho(1+\mu Y)=1+\mu \rho(Y).
$$

\sm

{\bf \punct Central extensions and projective representations.} Consider a superalgebra $\frg=\frg_\0\oplus \frg_\1$, let $\tau\in \frg_\0$
be a central element. Denote $\frg':=\frg/\C \xi$.
Then we can consider $G_\vel(\cA)$ as a center
extension of $G'_\vel(\cA)$ in the following sense.

Fix a linear section $\upsilon:\frg_\0'\to \frg_\0$ 
and extend this map to $\frg_\1$
assuming $\upsilon(Y)=Y$ for $Y\in \frg_\1$.
 So, $\frg$ is a direct sum of the subspaces $\upsilon(\frg')\oplus \C\xi$.  
\begin{align*}
[\upsilon(X_1), \upsilon(X_2)]_s&=\upsilon([X_1, X_2]_s)+ c(X_1,X_2)\xi,\quad  [\upsilon(X),\upsilon(Y)]_s=\upsilon([X,Y]_s),
\\
 [\upsilon(Y_1),\upsilon(Y_2)]_s&=\upsilon([Y_1.Y_2]_s)+\sigma(Y_1,Y_2)\xi,
\end{align*}
where
 $c:\frg_\0'\times \frg_\0'\to \C$, $\sigma:\frg_\1\times \frg_\1\to \C$ are certain bilinear functions.
Keeping in mind Proposition \ref{pr:1}, we fix canonical decompositions  \eqref{eq:canonical-decomposition-vel}
for all elements of $G'_\vel(\cA)$. Applying $\upsilon$ to these decompositions, we
define the map $\Upsilon: G_\vel'(\cA)\to G_\vel(\cA)$ 
 \begin{multline*}
 	\Upsilon\Bigl((1+s_1 \dot \fra^{I_1} Z_1)(1+s_2 \dot \fra^{I_2}  Z_2)\dots (1+s_N \dot \fra^{I_N}  Z_N)\Bigr)=
 	\\=
 	(1+s_1 \dot \fra^{I_1} \upsilon(Z_1))(1+s_2 \dot \fra^{I_2} 
 	 \upsilon(Z_2))\dots (1+s_N \dot \fra^{I_N}  \upsilon(Z_N)).
 \end{multline*}

\sm

Denote by $\frx$ the one-dimensional Lie algebra generated
by $\xi$, by $X_\vel(\cA)$ the corresponding surplace group.

 \begin{proposition}
 	$$\Upsilon(g)\Upsilon(h) =C(g,h) \Upsilon(gh),$$
 	where 
 	$C(g,h)$ is an polynomial expression of the form
 	$$
 	C(g,h)=1+ \sum_{j=1}^N K_j(g,h)\xi^j.\qquad \text{where}\qquad K_j(g,h)\in \cA_{\0+}.
 	$$ 
 	Equivalently, 	$C(g,h)$ is a product of elements of the form $1+\lambda \xi$, where $\lambda\in \cM_{\0+}$.
 	
 So, $G_\vel(\cA)$ is a central extension of $G_\vel'(\cA)$ by
the group $X_\vel(\cA)$. 
 \end{proposition}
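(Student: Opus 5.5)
The plan is to compare the two products $\Upsilon(g)\Upsilon(h)$ and $\Upsilon(gh)$ inside $G_\vel(\cA)$, using the rewriting procedure from the proof of Proposition \ref{pr:1}.

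\textbf{Step 1: lift the rewriting.} Write $g$ and $h$ in the canonical form \eqref{eq:canonical-decomposition-vel}. Then $\Upsilon(g)\Upsilon(h)$ is the concatenated product of factors $(1+s\dot\fra^I\upsilon(Z))$. In the proof of Proposition \ref{pr:1}b), the concatenation for $g$ and $h$ in $G'_\vel(\cA)$ is brought to the canonical form of $gh$ by finitely many moves. These moves are:
\begin{itemize}
\item transpositions \eqref{eq:reorder-1}, each creating a commutator factor;
\item mergings \eqref{eq:relation-2}, which combine factors with equal monomials;
\item deletion of factors that become $1$.
\end{itemize}
We perform the same sequence of moves on the lifted product in $G_\vel(\cA)$. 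Each move there is an identity, since relations \eqref{eq:reorder-1}, \eqref{eq:relation-2} hold in $G_\vel(\cA)$ for any pure elements.

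\textbf{Step 2: track the discrepancy at each move.}
\begin{itemize}
\item \emph{Transposition.} The lifted move produces $(1+\nu_1\nu_2[\upsilon Z_1,\upsilon Z_2]_s)$. The brackets listed before the statement give
\[
[\upsilon Z_1,\upsilon Z_2]_s=\upsilon([Z_1,Z_2]_s)+c\,\xi,
\]
with $c$ equal to $c(Z_1,Z_2)$ or $\sigma(Z_1,Z_2)$, and $c=0$ in the mixed case. Apply \eqref{eq:relation-2} with the common coefficient $\nu=\nu_1\nu_2$:
\[
1+\nu(\upsilon W+c\xi)=(1+\nu\,\upsilon W)(1+c\nu\xi),
\]
where $W=[Z_1,Z_2]_s$. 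The first factor is the lift of the factor created downstairs. The second factor is central, because $\xi$ is central in $\frg$ and so $1+c\nu\xi$ commutes with everything in $\cU[\frg;\cA]$. It also has $c\nu\in\cA_{\0+}$: the product $\nu_1\nu_2$ is even, since it multiplies an element of $\frg_\0$, or comes from two odd monomials, or is a product of even monomials. Moreover $\nu_1\nu_2\neq0$ only when it is a nonzero multiple of a monomial of positive degree.
\item \emph{Merging, and the linear section.} Merging uses linearity of $\upsilon$ on $\frg'_\0$ and on $\frg_\1$, so no discrepancy arises. There is one subtlety: $\upsilon(sZ)=s\upsilon(Z)$ must be compatible with rescaling the coefficient. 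This holds because $\upsilon$ is linear.
\end{itemize}
We move every central factor to the front. After all moves,
\[
\Upsilon(g)\Upsilon(h)=\prod_k(1+\lambda_k\xi)\cdot\Upsilon(gh),
\qquad \lambda_k\in\cM_{\0+}\ \text{(up to scalars)}.
\]
The last factor is exactly $\Upsilon(gh)$, because the downstairs rewriting ends at the canonical form of $gh$, and the lifts were made factor by factor.

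\textbf{Step 3: the polynomial form of $C$.} The factors $1+\lambda\xi$ commute and satisfy $\lambda^2\neq0$ in general, so we expand their product. It is a finite product, hence
\[
C=1+\sum_j K_j\xi^j, \qquad K_j\in\cA_{\0+},
\]
where $K_j$ are elementary symmetric polynomials in the $\lambda_k$. Conversely, any such expression lies in the surplace group $X_\vel(\cA)$ of the abelian algebra $\C\xi$. That group is $\exp(\cA_{\0+}\xi)$, which by Theorem \ref{th:1}b) is generated by elements $1+\lambda\xi$. This gives the stated equivalence. Centrality of $X_\vel(\cA)$, together with the surjection $G_\vel\to G'_\vel$ induced by functoriality with kernel $X_\vel(\cA)$, yields the central-extension statement.

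\textbf{Main obstacle.} The main difficulty is Step 1's claim that the final lifted product is literally $\Upsilon(gh)$. Downstairs, a factor may become $1$ after merging ($sZ=0$ in $\frg'$); upstairs it is then $1$ as well, because $\upsilon$ is linear. Also, commutator factors created downstairs may have $[Z_1,Z_2]_s$ whose image in $\frg'$ is zero; upstairs these are handled by the splitting of Step 2, leaving only central factors. I expect to check carefully that the sequence of moves never uses a relation of $G'_\vel(\cA)$ outside \eqref{eq:reorder-1}--\eqref{eq:relation-2}; Proposition \ref{pr:1}a) guarantees this.
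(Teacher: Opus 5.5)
Your proposal is correct and follows the paper's own argument: run the reduction of $gh$ to canonical form via \eqref{eq:reorder-1}--\eqref{eq:relation-2} simultaneously on the lifted product, and split each lifted commutator factor as $(1+\nu\,\upsilon(W))(1+c\nu\xi)$, where the second factor is central. The paper writes out only the even--even case and calls the rest similar, so your bookkeeping of mergings and of the linearity of $\upsilon$ is a welcome addition.

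One correction to an aside you do not need: each $\lambda_k$ is a multiple of a monomial of positive degree, so $\lambda_k^2=0$ (contrary to your remark) and $\prod_k(1+\lambda_k\xi)=\exp\bigl((\sum_k\lambda_k)\xi\bigr)$, which forces $K_j=K_1^j/j!$. Hence your claimed converse, that every $1+\sum_j K_j\xi^j$ with $K_j\in\cA_{\0+}$ lies in $X_\vel(\cA)$, is false; for example $1+\fra_1\fra_2\fra_3\fra_4\xi^2$ does not. This does not affect your proof, since you obtain the product form of $C(g,h)$ directly.
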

 
 {\sc Proof.}  Let us reduce $gh$ to the canonical form \eqref{eq:canonical-decomposition-vel}
 using the relations \eqref{eq:reorder-1}-\eqref{eq:reorder-2} and repeat all steps for 
 $\Upsilon(g)\Upsilon(h)$. For $X_1$, $X_2\in \frg_\0'$, $\lambda_1$, $\lambda_2\in\cM_{0+}$,
    we have 
 \begin{multline*}
 	\bigl(1+\lambda_1\upsilon(X_1)\bigr)\bigl(1+\lambda_2\upsilon(X_2)\bigr)  =\bigl(1+\lambda_2\upsilon(X_2)\bigr)\bigl(1+\lambda_1\upsilon(X_1)\bigr)
 	\times\\\times 
 \bigl(1+\lambda_1\lambda_2[\upsilon(X_1),\upsilon(X_2)]\bigr).
 \end{multline*}
 We transform the last factor to
 \begin{multline*}
 \Bigl(1+\lambda_1\lambda_2\bigl(\upsilon ([X_1,X_2])+c(X_1,X_2)\xi\bigr)\Bigr)=
  \bigl(1+\lambda_1\lambda_2 \upsilon([X_1,X_2])\bigr) 
  \times\\ \times \bigl(1+c(X_1,X_2)\lambda_1\lambda_2\xi\bigr),
 \end{multline*}
and we split off the cental factor $\bigl(1+c(X_1,X_2)\lambda_1\lambda_2\xi\bigr)$.
Examination of other cases of the relations  \eqref{eq:reorder-1}-\eqref{eq:reorder-2}
is similar.
\hfill $\square$

\sm

Let $\rho$ be a representation of $\frg$, where $\xi$ acts by a scalar operator $s\cdot 1$. 
Then we can  consider  
$\rho$ as a projective representation $\rho'$ of the surplace group $G'_\vel(\cA)$,
$$
\rho'(g)\rho'(h)=C(g,h)\rho'(gh),
$$ 
where $C(g,h)\in \cA_\0$, $C(g,h)_\downarrow=1$.

\sm

{\bf\punct Groups $\boldsymbol{G(\cA)}$.%
\label{ss:supergroups}} If $\frg$ is finite-dimensional, then we 
have a connected Lie group $G_\0$ corresponding to $\frg_\0$
(recall that his group
is defined up to a covering). 
 We have the adjoint action of the Lie group $G_\0$  by automorphisms of 
 $\frg$, therefore it acts on $\frg(\cA)$, $\cU[\frg;\cA]$,
 and
  $G_\vel(\cA)$. We consider
the semidirect product
$$
G(\cA):=G_\0\ltimes G_\vel(\cA)
$$
(for different $G_\0$ we get different 
$G[\cA]$).

\sm

{\sc Examples.} a) For the Lie superalgebra
$\frg=\frg\frl(p|q)$, we have $\frg_\0=\frg\frl(p)\oplus \frg\frl(q)$.
We set%
\footnote{Notice  that $\SL(n,\R)$ has a two-sheeted covering group.
A covering group also exists for $\GL(n,\C)$, since we can cover 
the quotient group
$\GL(n,\C)/\SL(n,\C)$, which is the multiplicative group
$\C^\times$ of the complex field. It seem that in the  both cases
the resulting groups $G(\cA)$ 
are not interesting. But in the next example a covering group
is a natural object.}
 $G_\0=\GL(p,\K)\times \GL(q,\K)$ and come to the
 the group $\GL(p|q;\cA)$
considered in Subsect. \ref{ss:gl-group}.
Indeed, by functoriality, we have a homomorphism from
the abstract surplace group $\GL_\vel(p|q,\cA)$
to the group $\GL_\vel(p|q,\cA)$ defined above. But the exponential
map establishes of one-to-one correspondence of both groups
with the Lie algebra $\frg\frl_\vel(p|q,\cA)$. So the homomorphism is
an isomorphism. 

\sm

b) It is to show that  the similar statement is valid
for the group $\OSp(2p|2q,\cA)$,
see Subsect. \ref{ss:osp-group}.
 \hfill $\boxtimes$
 
 \sm
 
 Next, let $\frg=\frg_\0\oplus \frg_\1$ be a Fr\'echet Lie superalgebra,
 let a group $G_\0$ corresponds to $\frg_\0$ in any reasonable sense%
 \footnote{Below we discuss explicit cases, and possible general formalities
 are not interesting for us.},
 and $G_\0$ has a well-defined adjoint action on $\frg$.
 Then we can repeat the same construction.

\sm

{\bf\punct A partial complexification.}
Below we need the following situation, which slightly differs from
Subsect. \ref{ss:supergroups}.
 We have an infinite-dimensional real
  Lie superalgebra $\frg=\frg_\0\oplus \frg_\1$,
  and and
an infinite-dimensional Lie group $G_\0^\R$
corresponding to the Lie algebra $\frg_\0$.
We consider the complexification $\frg^\C$ of $\frg$
and the corresponding surplace group $G^\C_\vel(\cA)$.
 Then we have a semidirect
product
$$
G_\0\ltimes G^\C_\vel(\cA).
$$

\vspace{22pt}

\begin{center}
\bf \large Addendum to Section \ref{s:surplace}. 
Characterization of surplace groups in terms of the coproduct
\end{center}

Here we present an additional description of surplace groups
(cf. \cite{Reu}, Theorem 3.2, \cite{Ner-braid}, Theorem 2.3),
which actually is not used in the present work.

\sm

{\bf\punct The coproduct in $\boldsymbol{\cU(\frg)}$.}
Consider the linear space 
$\cU(\frg)\otimes \cU(\frg)$ equipped with a structure of an associative 
algebra defined by the formula:  
$$
(u\,\ov\otimes\, w)\cdot (u'\,\ov\otimes\, w')=(-1)^{p(w)p(u')} (uu'\,\ov\otimes\, ww'),
\qquad u,v,u,v \in \cU(\frg).
$$
Denote this algebra by $(\cU\,\ov\otimes\, \cU)(\frg)$

The coproduct $\Delta:\cU(\frg)\to (\cU\,\ov\otimes\, \cU)(\frg)$ 
is the homomorphism of associative algebras
defined on generators by
$$
\Delta (X):=X\,\ov\otimes\,  1+1\,\ov\otimes\, X, \qquad\text{where $X\in \frg$,}
$$
see \cite{Mus}, Subsect. A.2.2.

 Consider an element
\begin{equation}
E(I;J):=
X_{i_1} \dots X_{i_p} Y_{j_1}\dots Y_{j_q}
\label{eq:EEE}
\end{equation}
of the \Poincare--Birkhoff--Witt basis,
$i_1\le i_2\le \dots\le i_p$, $j_1<j_2<\dots<j_q$.
Then we have the following shuffle formula:
$$
\Delta E(I;J)=\sum E(I';J')\,\ov\otimes\, E(I'';J'')
$$
where the summation is taken over all collections $(I';J')$ having the following form:
$$
(i_{\alpha_1},\dots, i_{\alpha_r}; j_{\beta_1},\dots, \beta_s),
$$
where
$$
\alpha_1<\dots <\alpha_r,\,\, \beta_1<\dots<\beta_s,
\qquad\text{and $0\le \alpha_r\le p$, $0\le \beta_s\le q$,} 
$$
a collection $(I'',J'')$ is complementary to $(I',J')$
in $(I,J)$. 

Since some $j_\mu$ can coincide, the sum \eqref{eq:EEE}
can have similar summands. But

\sm

---  {\it for different
$(I_1;J_1)$, $(I_2;J_2)$ there are no similar summands
in $\Delta E(I_1;J_1)$ and $\Delta E(I_2;J_2)$}.

\sm 
 
{\bf \punct A characterization of surplace groups.}
Next, we
  define the algebra 
  $$(\cU\,\ov \otimes\, \cU)[\frg,\cA]:=
  \bigl(\cU\,\ov \otimes\, \cU(\frg)\bigr)\otimes \cA,$$
  assuming that elements of $\cA$ commute with elements of
  $(\cU\ov \otimes \cU)[\frg;\cA]$. We define 
  a parity of $\mu\cdot Q\,\ov\otimes\, R\in (\cU\,\ov\otimes\, \cU)[\frg;\cA]$
  as $p(\mu)+p(Q)+p(R)$.

 We extend $\Delta$ to a homomorphism
 from $\cU[\frg,\cA]$ to 
  $(\cU\,\ov \otimes\, \cU)[\frg,\cA]$.

\begin{proposition}
 Consider   $r\in \cU[\frg;\cA]_\0$ such that $r_\downarrow=1$. Then $r\in G_\vel(\cA)$ if and only if 
 $\Delta(r)=r\,\ov\otimes\, r$.
\end{proposition}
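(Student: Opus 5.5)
We prove the two implications separately. The forward direction uses that $\exp$ of a primitive element is group-like; the converse is an adaptation of the classical fact that group-like elements of a completed enveloping algebra are exponentials of primitive ones.

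\emph{Necessity.} By Theorem \ref{th:1}~b), $G_\vel(\cA)$ is generated by the elements $1+\lambda X$ and $1+\mu Y$ of the forms \eqref{eq:lambda-X}--\eqref{eq:mu-Y}. Since $\Delta$ is a homomorphism, the set of $r$ with $\Delta(r)=r\,\ov\otimes\, r$ is closed under products. So it suffices to check group-likeness on generators. For $\nu Z$ with $p(\nu)=p(Z)$ we have
\[
\Delta(1+\nu Z)=1\,\ov\otimes\,1+\nu Z\,\ov\otimes\,1+\nu\,1\,\ov\otimes\, Z .
\]
On the other hand,
\[
(1+\nu Z)\,\ov\otimes\,(1+\nu Z)=1\,\ov\otimes\,1+\nu Z\,\ov\otimes\,1+\nu\,1\,\ov\otimes\, Z+\nu^2(\dots).
\]
Here we move the phantom constants to the front; the sign rule in $(\cU\,\ov\otimes\,\cU)[\frg,\cA]$ is arranged so that this is legitimate. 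The last term vanishes because $\nu$ is a monomial and hence $\nu^2=0$. Thus both expressions agree.

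\emph{Sufficiency.} Write $r=1+n$ with $n\in\cU[\frg;\cA]_\0$ and $n_\downarrow=0$. Only finitely many generators $\fra_j$ occur in $r$, so $n$ is nilpotent and $u:=\ln r=\sum(-1)^{k+1}n^k/k$ is a finite sum in $\cU[\frg;\cA]_\0$ with $u_\downarrow=0$. Set $N:=n\,\ov\otimes\,1$ and $N':=1\,\ov\otimes\, n$. Because $n$ is even, $N$ and $N'$ commute in $(\cU\,\ov\otimes\,\cU)[\frg,\cA]$, and we have $r\,\ov\otimes\, r=(1+N)(1+N')$. Applying the finite logarithm, and using that $\Delta$ commutes with these polynomial operations, we get
\[
\Delta(u)=\ln\Delta(r)=\ln(1+N)+\ln(1+N')=u\,\ov\otimes\,1+1\,\ov\otimes\, u .
\]
So $u$ is primitive.

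It remains to show that a primitive element of $\cU[\frg;\cA]$ lies in $\frg\otimes\cA$; by parity, $u$ then lies in $\frg(\cA)$, indeed in $\frg_\vel(\cA)$. Consequently $r=\exp(u)\in G_\vel(\cA)$. To see this, expand
\[
u=\sum_I \dot\fra^I u_I,\qquad u_I\in\cU(\frg).
\]
Since the $\dot\fra^I$ are linearly independent and $\Delta$ is $\cA$-linear up to signs that depend only on $I$, the element $u$ is primitive if and only if each $u_I$ is primitive in $\cU(\frg)$. (One should check that these signs are uniform on each component.)

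The main step is therefore the super analogue of the statement that the primitive elements of $\cU(\frg)$ are exactly $\frg$; in characteristic $0$ this holds for Lie superalgebras. I would prove it via the shuffle formula for $\Delta E(I;J)$ and the remark that distinct PBW monomials produce no similar summands. Write $w=\sum c_{(I;J)}E(I;J)$ and look at the components of $\Delta w$ of the form $E(I';J')\,\ov\otimes\, E(I'';J'')$ with both factors of degree at least $1$. For a PBW monomial of total degree at least $2$, the split with $(I';J')$ equal to the first letter has a nonzero coefficient in the shuffle formula, since repeated even letters only multiply it by a positive integer. That coefficient cannot cancel against contributions from other monomials. Hence all coefficients $c_{(I;J)}$ of degree at least $2$ vanish, and the degree-$0$ term vanishes as well because $\Delta(1)\ne 1\,\ov\otimes\,1+1\,\ov\otimes\,1$.

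I expect this primitive-elements step, together with the sign bookkeeping needed to factor phantom constants out of $\ov\otimes$, to be the main technical point. The remaining steps are formal consequences of nilpotency.
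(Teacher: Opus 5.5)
Your proof is correct, and it follows a different route from the paper's.

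The paper never takes a logarithm. It chooses a phantom monomial $\dot\fra^I$ of minimal degree occurring in $r-1$. It then compares $\Delta(r)$ with $r\,\ov\otimes\, r$ modulo the ideal $\cN(I)$ generated by the other monomials of degree $\ge |I|$, which shows that the coefficient of $\dot\fra^I$ is primitive in $\cU(\frg)$. The shuffle formula, with its absence of similar summands, rules out PBW components of degree $\ge 2$ in that coefficient. The degree-one part $S_1$ is then removed by multiplying by $\exp(-\dot\fra^I S_1)$. An induction over monomials, which terminates because only finitely many $\fra_j$ occur, writes $r$ as a product of generators $1+\nu Z$, and membership in $G_\vel(\cA)$ follows from Theorem \ref{th:1}~b).

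You instead linearize globally. The finite logarithm turns the group-like element $r$ into a primitive $u$ in one step. Comparing coefficients in the basis $\dot\fra^I$ reduces the problem to primitives of $\cU(\frg)$. You then obtain $r=\exp(u)$ with $u\in\frg_\vel(\cA)$ directly, which is the definition of $G_\vel(\cA)$ as the image of $\exp$, so the converse does not use Theorem \ref{th:1}~b).

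Your route costs two extra checks: that $n\,\ov\otimes\,1$ and $1\,\ov\otimes\, n$ commute (true because $n$ is even), and that $\ln$ is additive on commuting nilpotents. In exchange, the degree-by-degree bookkeeping modulo $\cN(I)$ and the induction disappear. You also treat a possible constant PBW term explicitly, which the paper passes over. The key input is the same in both arguments: the shuffle formula with no similar summands, which forces the degree $\ge 2$ PBW components of a primitive element to vanish in characteristic $0$.
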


{\sc Proof.}
Consider the set $G_\vel'$ consisting of even $r$ satisfying $\Delta r=r\,\ov\otimes\, r$. Clearly, this set is closed with respect to multiplication, 
and generators $1+\nu Z$ of $G_\vel(\cA)$ are contained
in $G_\vel'$. Therefore,
$G_\vel(\cA)\subset G_\vel'$.

Let us show that any $r\in G_\vel'$ is contained in $G_\vel(\cA)$.
For a monomial $\dot\fra^I$ denote by $\cN(I)$ the ideal
in $\cA$ generated by all monomials of degree $\ge |I|$
except $\dot \fra^I$. Consider  monomials $\dot\fra^J$
which are actually  contained in the expression for $r$,
consider a monomial of the minimal degree, say $\dot\fra^I$,
and consider the corresponding term 
$\dot\fra^I\sum c(I;J)E(I;J)$, there $c(I,J)\ne 0$ 
are complex coefficients.
Then 
\begin{multline}
\Delta(r)=\\
=\Delta(1+\dot\fra^I\sum c(I;J)E(I;J))=
1\ov\otimes 1+\dot\fra^I\sum c(I;J)\, \Delta E(I;J)
\quad \bigl(\mathrm{mod} \,\cN(I)\bigr);
\label{eq:Delta1}
\end{multline}
\begin{multline}
\Delta (r)
=r\ov\otimes r
=\\ =1\ov\otimes 1+\dot\fra^I
\bigl(\sum c(I;J)E(I;J)\bigr)\ov\otimes 1+1\ov\otimes \bigl(\sum c(I;J)E(I;J)\bigr)\quad \bigl(\mathrm{mod} \,\cN(I)\bigr).
\label{eq:Delta2}
\end{multline}
Split   $\sum c(I;j)E(I;J)$ as $S_1+S_2$, where 
$S_1=\sum c_\alpha Z_\alpha$ is a subsum
consisting of terms of degree 1.

First, assume that $S_1\ne 0$.
We have $\exp(-S_1)\in G_\vel(\cA)$
and 
$$
\exp(-S_1)r=\exp(S_2) \quad \bigl(\mathrm{mod} \,\cN(I)\bigr).
$$
So, we can avoid this case.

Second assume that $S_1=0$, $S_2\ne 0$.
Since we have no cancellations in the sum
$\sum c(I;J)\Delta E(I;J)$, the expression \eqref{eq:Delta1}
contains terms of the form $\dot\fra^I Z\ov\otimes E(K,L)$,
where $Z=X_i$ or $Y_j$ and $|K|+|L|>0$. But such terms
can appear in \eqref{eq:Delta2}. We come to a contradiction.
 \hfill $\square$

\section{Diffeomorphisms and contactomorphisms 
of the supercircle $S^{1|1}$\label{s:cont}}

\COUNTERS

Here we discuss various descriptions of the group of
contactomorphisms of a supercicle $S^{1|1}$.

\sm 

{\bf\punct  Group of diffeomorphisms of the circle.}
Consider the circle $S^1=\R/2\pi \Z$ with a coordinate $\phi\in\R/2\pi\Z$. 
Denote $\partial_\phi:=\frac \partial{\partial\phi}$.

Denote by $\Diff(S^1)$ the group  of $C^\infty$-smooth  diffeomorphisms of the circle, by $\SDiff(S^1)$ its connected subgroup consisting of orientation preserving diffeomorphisms.  
The universal covering group $\SDiff^\sim(S^1)$ of $\SDiff(S^1)$
can be realized as the group
of diffeomorphisms of the line $\R$ satisfying the condition
\begin{equation}
q(\psi+2\pi)=q(\psi)+2\pi.
\label{eq:Diff-covering}
\end{equation}
Sometimes, we prefer to think 
that $\SDiff^\sim(S^1)$ consists of diffemorphisms
of $\R$ satisfying $q(\psi+\pi)=q(\psi)+\pi$.

Its center $\cZ$ of $\SDiff^\sim(S^1)$
 is isomorphic to $\Z$, it consists of shifts $\psi\mapsto \psi+2\pi k$;
the quotient $\SDiff^\sim(S^1)/\cZ$ is $\SDiff(S^1)$.  

We also consider the two-sheeted covering $\SDiff^{(2)}(S^1)$ of $\SDiff(S^1)$, 
we realize it as the  subgroup  
in $\SDiff(S^1)$ consisting of diffeomorphisms satisfying
$$
q(\phi+\pi)=q(\phi)+\pi.
$$
%So, we also consider $\SDiff^\sim(S^1)$ as the group of diffeomorphisms
%of $\R$ satisfying $q(\psi+\pi)=q(\psi)+\pi$.

\sm

Denote by $\vect^\R(S^1)$ the {\it Lie algebra of
$C^\infty$-smooth
real vector fields} $a(\phi) \partial_\phi$
on $S^1$, this algebra is the Lie algebra of the group $\SDiff(S^1)$
(and of its coverings $\SDiff^{(2)}(S^1)$, $\SDiff^{\sim}(S^1)$).

Denote by $\vect(S^1)$ the {\it complexification of the Lie
algebra $\vect^\R(S^1)$}.
%it consists of differential operators $a(\phi) \partial_\phi$, where 
%$a(\phi)$ are $C^\infty$=smooth complex-valued functions on the circle
 Denote 
 $$L_n:=e^{in\phi}i\partial_\phi.$$
  Then
$$
[L_n,L_m]=(n-m) L_{m+n}.
$$

Recall that the  cotangent vector bundle $T^*(S^1)$ over $S^1$ admits  tensor powers $(T^*)^{\otimes \lambda} (S^1)$ of arbitrary complex degree $\lambda$.
 More precisely, we consider formal expressions of the form
 $u(\phi)\, (d\phi)^\lambda$, where $u(\phi)$ is a smooth function. The group
 $\SDiff(S^1)$  acts on such expressions  
by the formula
$$
T_\lambda (q)\, u(\phi) (d\phi)^\lambda=u(q(\phi))\,q'(\phi)^\lambda \,
(d\phi)^\lambda.
$$ 
Vector fields $r(\phi)\partial_\phi$ act on densities by
$$
\bigl(r(\phi) \partial_\phi\bigr)\, \bigl(u(\phi) (d\phi)^\lambda\bigr)=
\bigl(r(\phi) u'(\phi)+\lambda r(\phi)' u(\phi)\bigr)(d\phi)^\lambda.
$$   
Densities of degree $\lambda=-1$ are vector fields.

On the other hand, we can consider densities $f(\phi)+\ln d\phi$.
defined up an addition of a constant function.
Precisely, fix real $\mu$, $\nu$, consider an element  $\wt q\in \SDiff^\sim(S^1)$
covering a given $q\in \SDiff(S^1)$. Then the formula
$$
A(q)f(\phi)=f(q(\phi))+\mu(\wt q(\phi)-\phi)+\nu \ln q'(\phi) 
$$
determines an affine action of $\SDiff(S^1)$ in $C^\infty(S^1)/\R$.

\sm

{\bf \punct Surplace diffeomorphisms of the  circle.}
Now we apply the construction of the previous section 
to the Lie algebra $\vect(S^1)$.
 
 Consider the space $C^\infty[S^1;\cA]:=C^\infty(S^1)\otimes \cA$, we regard it as a right $\cA$-module.
 Elements of $C^\infty[S^1;\cA]$ are finite sums
$$
f(\phi;\fra)=\sum_J f_J(\phi)\dot\fra^J,
$$
where $f_J(\phi)$ are smooth complex-valued functions on the circle.

An $\cA$-valued diffeomorphism is a finite expression of the form
\begin{equation*}
q(\phi;\fra)=q_\varnothing(\phi)+\sum_{I:\, p(I)=\0,\,\#I>0}  q_I(\phi) \dot\fra^I,
\end{equation*}
where $q_\varnothing \in\Diff(S^1)$, other coefficients $q_I(\phi)$ are complex-valued smooth functions on the circle.
For $f\in C^\infty[S^1;\cA]$ and an $\cA$-valued diffeomorphism we define
\begin{multline}
f(q(\phi;\fra);\fra)=f(q_\varnothing(\phi);\fra)+ \\ +
\sum_{n\ge 1}\frac 1{n!}
 \partial_\phi^n f(\phi;\fra)\Bigr|_{\phi\mapsto q_\varnothing(\phi)} 
 \Bigl(\sum_{I:\, p(I)=\0,\#I>0}  q_I(\phi) \dot\fra^I \Bigr)^n
\label{eq:q-phi}
\end{multline}
(so, we consider $\fra_j$ as a kind of actual infinitesimals). In the same way, we define
a composition of $\cA$-valued diffeomorphisms $r$ and $q$: we write
$$
r\bigl(q(\phi;\fra);\fra\bigr)
$$
and decompose this expression  according \eqref{eq:q-phi}.

We say that  {\it surplace diffeomorphisms} are transformations of the form:
\begin{equation}
q(\phi;\fra)=\phi+\sum_{I:\, p(I)=\0, \#I>0}  q_I(\phi) \dot\fra^I
\label{eq:surplace}
\end{equation}
with complex-valued $q_I\in C^\infty(S^1)$. We denote the group
of all $\cA$-valued diffeomorphisms by $\Diff(S^1;\cA)$,
the subgroup of surplace diffeomorphisms by $\SDiff_\vel(S^1;\cA)$.
We have
$$
\Diff(S^1;\cA)/\SDiff_\vel(S^1;\cA)\simeq \Diff(S^1).
$$

Next, consider the surplace group $\VECT_\vel(\cA)$ corresponding to the Lie algebra $\vect(S^1)$ (it is considered as a superalgebra with trivial odd part). The homomorphism of $\vect(S^1)$ to the Lie algebra 
of bounded operators in $C^\infty(S^1)$ generates the action 
of the group $\VECT_\vel(S^1;\cA)$  in $C^\infty(S^1;\cA)$ by operators
\begin{equation}
f\mapsto \bigl(1+\lambda_1 a_1(\phi)\partial_\phi\bigr)\dots \bigl(1+\lambda_k a_k(\phi)\partial_\phi\bigr)  f(\phi;\fra),
\label{eq:vect-surplace}
\end{equation}
where $\lambda_j\in \cM_{\0+}$.
We have
$$
\bigl(1+\lambda a(\phi)\partial_\phi\bigr) f(\phi;\fra)= f(\phi;\fra)+\lambda a(\phi) \partial_\phi f(\phi;\fra),
$$
This transformation corresponds to the surplace diffeomorphism
\begin{equation}
q(\phi;\fra)=\phi+ \lambda a(\phi).
\label{eq:elementary}
\end{equation}

\begin{lemma}
\label{l:surplace}
 $\VECT_\vel(S^1;\cA)=\SDiff_\vel(S^1;\cA)$.
\end{lemma}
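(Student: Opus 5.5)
We read the statement as follows. Each element of $\VECT_\vel(S^1;\cA)$ acts on $C^\infty[S^1;\cA]$ by the operators \eqref{eq:vect-surplace}. The claim is that this action is exactly substitution $f(\phi;\fra)\mapsto f(q(\phi;\fra);\fra)$, in the sense of \eqref{eq:q-phi}, by a surplace diffeomorphism $q$, and that every surplace diffeomorphism arises this way. The plan has three steps: (i) show that the action of a generator is a substitution, (ii) check compatibility with composition, and (iii) prove surjectivity by induction on the degree in $\fra$.

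\emph{Step (i), generators.} For a generator $1+\lambda a(\phi)\partial_\phi$ with $\lambda\in\cM_{\0+}$ we have $\lambda^2=0$. Hence the Taylor expansion \eqref{eq:q-phi} for $q=\phi+\lambda a(\phi)$ reduces to $f+\lambda a\,\partial_\phi f$. So this generator acts as substitution by the elementary surplace diffeomorphism \eqref{eq:elementary}.

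\emph{Step (ii), composition.} Substitution is compatible with composition: if $T_q f=f\circ q$, then $T_{q_1}T_{q_2}f=f\circ q_2\circ q_1$. This is the usual associativity of composition of formal Taylor expansions over the commutative ring $\cA_\0$; the sums are finite because all corrections are nilpotent. It follows that the operators \eqref{eq:vect-surplace} are substitutions by compositions of elementary surplace diffeomorphisms. In particular, each element of $\VECT_\vel$ gives an element of $\SDiff_\vel$, and the assignment is a (anti)homomorphism.

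This assignment is injective. Indeed, the diffeomorphism $q$ is recovered as $T_q(\phi)$, applied to the function $\phi$, or more precisely to $e^{i\phi}$ to stay within periodic functions. By Theorem \ref{th:1}a), elements of $\VECT_\vel$ are determined by their images as operators; equivalently, $\exp(u)$ acting on $C^\infty$ determines $u$ by looking at the lowest-degree term. So I would identify $\VECT_\vel(S^1;\cA)$ with its image in $\SDiff_\vel(S^1;\cA)$.

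\emph{Step (iii), surjectivity.} We induct on the lowest degree of $\fra$ appearing in $q-\phi$. Let $q(\phi)=\phi+\sum_{\#I=r}q_I\dot\fra^I+(\text{higher})$. Composing with the inverse elementary maps $\phi-q_I(\phi)\dot\fra^I$ for $\#I=r$ kills all degree-$r$ terms. This works because products of degree-$\ge r$ nilpotents have degree $\ge 2r>r$, so the corrections only land in degree $>r$. Since the expressions are finite sums, only finitely many monomials in finitely many generators $\fra_j$ occur, so degrees are bounded and the induction terminates. This mirrors the proof of Theorem \ref{th:1}b).

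\emph{Main obstacle.} I expect the only delicate point to be the careful bookkeeping for the associativity of substitution in \eqref{eq:q-phi}. I would handle it by noting that all expressions are polynomial in finitely many nilpotents, so formal Taylor composition identities apply verbatim.
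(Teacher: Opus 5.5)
Your proposal is correct and follows the paper's proof. Surjectivity is proved the same way: cancel the lowest-degree part of $q-\phi$ with elementary factors $1-\dot\fra^I q_I\partial_\phi$ and iterate. Injectivity also matches: a nontrivial element of $\VECT_\vel(S^1;\cA)$ already changes a test function such as $f=e^{i\phi}$ in its lowest $\fra$-degree term $\sum_I \dot\fra^I a_I f'$.

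One point to tidy: injectivity comes from this lowest-degree (trivial-kernel) argument, not from Theorem~\ref{th:1}a) or from recovering $q$ from $T_q$. Recovering $q$ from $T_q$ instead shows that the map to $\SDiff_\vel(S^1;\cA)$ is well defined, and your step (ii) supplies the compatibility with composition that the paper takes for granted.
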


\sm

{\sc Proof.} We must verify that the homomorphism $\VECT_\vel(S^1;\cA)
\to \Diff_\vel(S^1;\cA)$ is bijectivite.

\sm

1) {\it Surjectivity.} Let us show that any surplace diffeomorphism 
$q(\phi;\fra)$
can be realized as an element of $\Vect_\vel(S^1;\cA)$. 
Consider a minimal degree $2k=\#I$, which actually appears in an expression \eqref{eq:surplace},  
$$
q(\phi;\fra)=\phi+\sum_{\#I=2k}  q_I(\phi) \dot\fra^I+ \sum_{\#J>2k}  q_J(\phi) \dot\fra^J.
$$
We multiply this diffeomorphism by
$$
\prod_{\#I=2k} \bigl(1-\dot\fra^I q_I\partial_\phi \bigr),
$$
the expression for the product does not contain terms of degree $2k$. We repeat the same argument, etc.

\sm

2) {\it Injectivity.} Consider an element \eqref{eq:vect-surplace}
of $\VECT_\vel(S^1;\cA)$. Without loss of generality we can assume 
that degrees of monomials $\lambda_j$ in \eqref{eq:vect-surplace} (nonstrictly) increase.
Let $\lambda_1$, \dots, $\lambda_p$ be (pairwise distinct) monomials of minimal degree.
Let $f(\phi)\in C^\infty(S^1)$, 
Then our transformation sends
$f$ to
$$ f(\phi)+
\sum_{j=1}^p  \lambda_j a_j(\phi) f'(\phi)
+\text{terms of higher degree in $\fra$.}
$$  
If $f'(\phi)$ has only finite number of zeroes, then the last expression $\ne f(\phi)$,
and so our transformation is not identical.
\hfill $\square$

\sm

{\bf \punct The supercircle $S^{1|1}$ and its diffeomorphisms.}
Now consider $(1|1)$-circle $S^{1|1}$ with an even coordinate 
$\phi\in \R/2\pi \Z$  and odd coordinate $\theta$, $\theta^2=0$.
We assume that $\theta$ commutes with phantom generators $\fra_j$.

This means that we consider the space $C^\infty[S^{1|1};\cA]$ of 'functions' of the form
$$
\sum_{I}  f_I(\phi)\fra^I+\sum_J   g_J(\phi)\fra^J \theta ,  
$$
where $f_I$, $g_J$ are smooth functions on the circle.

The Lie superalgebra 
$\vect(S^{1|1})=\vect(S^{1|1})_\0\oplus \vect(S^{1|1})_\1$ of vector
fields is defined in the following way:
$\vect(S^{1|1})_\0$ consists of vector fields
$$
a(\phi)\,\partial_\phi+b(\phi)\theta\,\partial_\theta,
$$
the odd part
$\vect(S^{1|1})_\1$ consists of vector fields
$$
c(\phi)\theta\, \partial_\phi+ d(\phi)\, \partial_\theta.
$$

A  {\it change of variables}  is a transformation
of the form
\begin{align}
\phi\mapsto \bfQ(\phi,\theta;\fra)= Q(\phi;\fra)+ q(\phi;\fra) \theta;
\label{eq:diff-1}
\\
\theta\mapsto \bfR(\phi,\theta;\fra)= R(\phi;\fra)\theta+r(\phi;\fra),
\label{eq:diff-2}
\end{align}
where parities with respect to $\fra_j$ satisfy the conditions 
\begin{equation}
 p(Q)=\0,\quad p(R)=\0,\quad p(q)=\1,\quad p(r)=\1,
 \label{eq:diff-plus1}
 \end{equation}
 and the functions $Q(\phi;\fra)_\downarrow$,
$R(\phi;\fra)_\downarrow$
take real values.
 Such substitution determines a linear transformation
of $C^\infty[S^{1|1};\cA]$. 

If   $\partial_\phi Q(\phi;\fra)_\downarrow\ne 0$,  
$R(\phi;\fra)_\downarrow\ne 0$ for all $\phi$,
then this transformation is invertible, we say that it is a
{\it (super)diffeomorphism of $S^{1|1}$}. We denote the group of all diffeomorphisms 
 by $\Diff(S^{1|1};\cA)$. We also consider the
  subgroup $\SDiff(S^{1|1};\cA)$ of {\it orientation preserving
  diffeomorphisms}, i.e.,
  diffeomorphisms satisfying the conditions
\begin{equation} 
 \partial_\phi Q(\phi;\fra)_\downarrow> 0,\quad
 R(\phi;\fra)_\downarrow>0
 .
 \label{eq:diff-plus2}
 \end{equation}

For $\SDiff(S^{1|1};\cA)$ we define the universal covering 
group $\SDiff^\sim(S^{1|1};\cA)$. We consider the group orientation preserving
diffeomorphisms of the superline $\R^{1|1}$, i.e., changes of variables \eqref{eq:diff-1}--\eqref{eq:diff-2}, where $\phi\in\R$, satisfying
\eqref{eq:diff-plus1}, \eqref{eq:diff-plus2}.  
The subgroup $\SDiff^\sim(S^{1|1};\cA)$ consists of such diffeomorphisms
commuting with the shift
$$
(\phi,\theta)\mapsto (\phi+2\pi,\theta).
$$
Equivalently,
\begin{align}
 Q(\phi+\pi;\fra)&=Q(\phi;\fra)+2\pi,& \quad   R(\phi+2\pi;\fra)&=R(\phi;\fra);
 \label{eq:symmetry1}
 \\
q(\phi+2\pi;\fra)&=q(\phi;\fra),& \quad r(\phi+2\pi;\fra)&=r(\phi;\pi).
\end{align}

\sm

%The corresponding Lie superagebra is generated by even vector fields of the form
%$a(\phi)\partial_\phi$, $b(\phi)\theta \partial_\theta$ and odd vector fields of the form
%$c(\phi) \theta \partial_\phi$,  $e(\phi)\partial_\theta$.

{\bf\punct  Affine actions of the group 
$\boldsymbol{\SDiff^\sim(S^{1|1};\cA)}$.%
\label{ss:affine-diff}}
For an element $g=(\bfQ,\bfR)\in \Diff^{\sim}(S^{1|1},\cA)$ 
given by \eqref{eq:diff-1}-\eqref{eq:diff-2}
we define two `cocycles'
\begin{align*}
\gamma_1[g]&:=	\bfQ(\phi,\theta;\fra)-\phi;\\
\gamma_2[g]&:=\ln \Ber
\begin{pmatrix}
\partial_\phi \bfQ& \partial_\theta \bfQ\\	
\partial_\phi \bfR& \partial_\theta \bfR
\end{pmatrix}.
\end{align*} 

\begin{lemma}
	Functions  $\gamma_1[g](\phi,\theta;\fra)$, $\gamma_2[g](\phi,\theta;\fra)$ are well defined, $2\pi$-periodic in $\phi$ (so, they are elements
of $C^\infty[S^{1|1},\cA]_\0$),
	 and 
	 satisfy
	the identity
\begin{equation}
\gamma[g_1\circ g_2](\phi,\theta;\fra)=\gamma[g_1](g_2(\phi,\theta;\fra))+ 
\gamma[g_2](\phi,\theta;\fra).
\end{equation}
\end{lemma}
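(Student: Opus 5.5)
We treat the two cocycles separately, using the substitution rule \eqref{eq:q-phi} extended to functions of $(\phi,\theta)$. For $(\bfQ,\bfR)$ we expand $f(\bfQ,\bfR)$ in Taylor series around $\bfQ_\downarrow=Q_\downarrow(\phi)$. Nilpotent corrections terminate because only finitely many $\fra_j$ occur, and $\theta^2=0$. With this rule, composition of changes of variables is associative and is compatible with multiplication and differentiation. This is the same check as for Lemma \ref{l:surplace}, done termwise in the nilpotent parts.

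\emph{Well-definedness and periodicity.} For $\gamma_1$ we use \eqref{eq:symmetry1}, which gives $\bfQ(\phi+2\pi,\theta)=\bfQ(\phi,\theta)+2\pi$. Hence $\bfQ-\phi$ is $2\pi$-periodic, and it is even since $Q$ is even and $q\theta$ is a product of two odd factors. For $\gamma_2$, the entries $\partial_\phi\bfQ=Q'+q'\theta$ and $\partial_\theta\bfR=R$ are even, while $\partial_\theta\bfQ=-q$ (or $q$, depending on the sign convention for the left derivative) and $\partial_\phi\bfR=R'\theta+r'$ are odd. So the Jacobian is an even block matrix over the supercommutative algebra $C^\infty[S^{1|1};\cA]$, and the Berezinian formula of Subsect. \ref{ss:gl-group} applies. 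We get
$$\Ber=(Q'+q'\theta-q R^{-1}(R'\theta+r'))\,R^{-1}.$$
Its body is $Q'_\downarrow/R_\downarrow>0$ by \eqref{eq:diff-plus2}. Therefore $\ln$ is defined as $\ln(\text{body})+\ln(1+\text{nilpotent})$, the latter a finite series. All ingredients are $2\pi$-periodic, so $\gamma_2$ is periodic.

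\emph{Cocycle identity.} For $\gamma_1$ this is immediate:
$$\bfQ_1(g_2(\phi,\theta))-\phi=\bigl[\bfQ_1(g_2)-\bfQ_2\bigr]+\bigl[\bfQ_2-\phi\bigr],$$
and the first bracket equals $\gamma_1[g_1]$ evaluated at $g_2(\phi,\theta)$. For $\gamma_2$ we need two facts. The first is the super chain rule: the Jacobian matrix of $g_1\circ g_2$ equals $J(g_2)\cdot J(g_1)\circ g_2$, with the order fixed by the row convention of the paper. The second is multiplicativity of $\Ber$ (stated in Subsect. \ref{ss:gl-group} for matrices over $\cA$, and valid over any supercommutative algebra, here $C^\infty[S^{1|1};\cA]$). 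Taking $\ln$ is additive, because $\ln(ab)=\ln a+\ln b$ for even elements with positive bodies (everything commutes in the even part). Finally, substitution commutes with $\ln$ and with $\Ber$, since it is an algebra homomorphism.

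The main obstacle is the super chain rule: checking that the formal Taylor substitution obeys $\partial_\phi(f\circ g)=(\partial_\phi \bfQ)(\partial_\phi f)\circ g+(\partial_\phi\bfR)(\partial_\theta f)\circ g$ and the analogous formula for $\partial_\theta$, with the correct signs for odd derivatives. I would verify it first on the generators $f=\phi$ and $f=\theta$. Then I would use the Leibniz rule: both sides are superderivations of $f$ into the same module, so they agree everywhere, with the Taylor expansion reducing $f$ to polynomials in nilpotents times smooth functions of $\phi$.
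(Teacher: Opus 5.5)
Your proposal is correct and follows the paper's own argument: periodicity from the covering condition on $Q$, the telescoping (trivial-cocycle) identity for $\gamma_1$, positivity of the body $Q'_\downarrow/R_\downarrow$ to define $\ln\Ber$, and the chain rule plus multiplicativity of $\Ber$ for $\gamma_2$; the only difference is that the paper cites Berezin's formula (2.2.4) for the chain rule, whereas you propose to verify it yourself. One minor point: for the paper's $J$ (rows indexed by $\bfQ,\bfR$), the paper writes the chain rule as $J[g_1\circ g_2]=(J[g_1]\circ g_2)\cdot J[g_2]$, while your order $J[g_2]\cdot(J[g_1]\circ g_2)$ is what your function-level (left-derivative) chain rule gives for the transposed matrix; since $\Ber$ is multiplicative with even, hence commuting, values, this does not affect the conclusion.
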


{\sc Proof.}  For   $\bfQ=Q(\phi;\fra)+q(\phi;\fra)\theta$,
we represent
$$
Q(\phi;\fra)=Q_\downarrow(\phi)+\sum_{I:  p(I)=\0,\#I>0} Q_I(\phi) \dot\fra^I,
$$
here $Q_\downarrow(\phi)\in \SDiff^\sim(S^1)$ and
other $Q_I(\phi)$ are $2\pi$-periodic. By \eqref{eq:Diff-covering},
$Q(\phi)-\phi$ is $2\pi$-periodic. The expression $q(\phi;\fra)$
also is $2\pi$-periodic. 

The expression $\gamma_1$  has the form of a trivial cocycle%
\footnote{It is trivial in the space of functions on $\R$ and became nontrivial
on the space of functions on the circle.}
\eqref{eq:trivial-cocycle}  and satisfies the cocycle identity \eqref{eq:cocycle-identity}.

\sm

Notice that  
$$
\Ber
\begin{pmatrix}
\partial_\phi \bfQ& \partial_\theta \bfQ\\	
\partial_\phi \bfR& \partial_\theta \bfR
\end{pmatrix}_\downarrow
=\Ber
\begin{pmatrix}
	\partial_\phi Q_\downarrow(\phi)&0\\0&R_\downarrow
\end{pmatrix}.
$$
Therefore,
$$
\Ber(\cdot)= Q'_\downarrow(\phi) R_\downarrow(\phi)^{-1}
 \bigl(1 + S(\phi,\theta;\fra)\bigr),
$$ 
where $Q_\downarrow'(\phi)>0$, $R_\downarrow(\phi)>0$ and $S_\downarrow=0$. Therefore 
$$\ln(\Ber(\cdot))=\ln Q_\downarrow'(\phi)-\ln R_\downarrow(\phi)+
\ln (1 + S(\phi,\theta;\fra))
$$
is well defined.

The Jacobi matrix 
$$
J[\gamma]:=
\begin{pmatrix}
	\partial_\phi \bfQ& \partial_\theta \bfQ\\	
	\partial_\phi \bfR& \partial_\theta \bfR
\end{pmatrix}
$$
satisfies the usual chain identity, see, e.g., \cite{Ber-super}, formula (2.2.4),
$$
J[g_1\circ g_2]= (J[g_1]\circ g_2)\cdot J[g_2]
$$
The Berezinian is multiplicative, so $\ln\Ber(\cdot)$ satisfies
the cocycle identity.
\hfill $\square$

\sm

{\bf \punct Contact  diffeomorphisms.}
Consider {\it the standard contact form} on $S^{1|1}$:
$$
\Omega:= d\phi +\theta d\theta,
$$

We say that a diffeomorphism $g$ is {\it contact} if 
it sends the form $\Omega$ to an expression of the form
  $h(\phi,\theta;\fra)\cdot \Omega$. 

\begin{proposition}
 A diffeomorphism \eqref{eq:diff-1}--\eqref{eq:diff-2} is contact if and only if 
 \begin{equation}
 \begin{cases}
  R^2=\partial_\phi Q+ r\cdot \partial_\phi r;
  \\
  q+r R=0.
  \end{cases}
  \label{eq:system}
 \end{equation}
\end{proposition}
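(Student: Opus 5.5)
The plan is to compute directly the pullback of $\Omega=d\phi+\theta\,d\theta$ under the substitution \eqref{eq:diff-1}--\eqref{eq:diff-2}. We then compare it with an arbitrary expression $h\cdot\Omega$, writing $h=h_0(\phi;\fra)+h_1(\phi;\fra)\theta$ with $h_0$ even and $h_1$ odd in $\fra$. First we fix the sign conventions for differentials of odd objects. Here $\theta$ and the odd phantom coefficients $q,r$ anticommute with each other, $d$ acts as an odd derivation, and $d\phi$ commutes with everything. In particular $df(\phi;\fra)=\partial_\phi f\,d\phi$ for functions not involving $\theta$, and $\theta\,d\theta=d\theta\,\theta$ has the form needed for $\Omega$. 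Everything is a finite polynomial in $\theta$ with coefficients in $C^\infty[S^1;\cA]$, so all manipulations are algebraic.

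The key computation runs as follows. We have $d\bfQ=(\partial_\phi Q+\partial_\phi q\,\theta)\,d\phi \pm q\,d\theta$ and $d\bfR=(\partial_\phi R\,\theta+\partial_\phi r)\,d\phi+R\,d\theta$. Then
$$
\bfR\,d\bfR=(R\theta+r)\bigl((\partial_\phi R\,\theta+\partial_\phi r)\,d\phi+R\,d\theta\bigr)
=\bigl(r\,\partial_\phi r+\theta(\ldots)\bigr)d\phi+\bigl(R^2\theta+rR\bigr)d\theta ,
$$
using $\theta^2=0$ and $R$ even. Adding $d\bfQ$ gives an expression $A\,d\phi+B\,d\theta$ with
$$
A=\partial_\phi Q+r\,\partial_\phi r+\theta\cdot(\text{odd stuff}),\qquad B=(q+rR)+R^2\theta .
$$
The sign in front of $q$ must be made consistent with the convention; it is absorbed into the stated normalization $q+rR=0$.

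Next we impose $A\,d\phi+B\,d\theta=h\,d\phi+h\,\theta\,d\theta$. Since $d\phi$ and $d\theta$ are independent over $C^\infty[S^{1|1};\cA]$, this gives $A=h$ and $B=h\theta=h_0\theta$. Splitting $B=h_0\theta$ into $\theta$-free and $\theta$-linear parts yields $q+rR=0$ and $h_0=R^2$. The $\theta$-free part of $A=h$ gives $\partial_\phi Q+r\,\partial_\phi r=h_0=R^2$. The $\theta$-linear part of $A$ merely defines $h_1$, so it imposes no condition. Conversely, if \eqref{eq:system} holds, we define $h_0=R^2$ and take $h_1$ to be the $\theta$-coefficient of $A$; this shows that the diffeomorphism is contact.

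The only real obstacle is bookkeeping of signs. One must order the odd quantities $q$, $r$, $\partial_\phi r$ and $\theta$ consistently, and decide whether $\theta$ sits to the left or right of the coefficient functions. Note that $r\,\partial_\phi r=-\partial_\phi r\cdot r$, so the precise form of the first equation depends on this convention. I would fix the convention once, with $\theta$ written to the right as in \eqref{eq:diff-1}--\eqref{eq:diff-2}, and verify it on the test case $r=0$. In that case the conditions reduce to $q=0$ and $R^2=\partial_\phi Q$, which is the classical condition for a diffeomorphism to preserve $d\phi+\theta\,d\theta$ up to a factor.
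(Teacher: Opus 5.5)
Your route is the paper's: pull back $\Omega$, write the result as $A\,d\phi+B\,d\theta$, compare with $h\,d\phi+h\theta\,d\theta$, and use $h\theta=h_0\theta$ to read off $q+rR=0$, $h_0=R^2$ and $h_0=\partial_\phi Q+r\,\partial_\phi r$. Making the converse explicit, as you do, is fine. The gap is the one step you defer. You leave $\pm q\,d\theta$ open and say the sign ``is absorbed into the stated normalization $q+rR=0$''. That chooses the sign to fit the conclusion, and the sign is exactly what the statement asserts.

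The convention you declare is also inconsistent. If $d$ is an odd derivation and $c\in\cA_\1$ is a phantom constant, then $d(c\phi)=-c\,d\phi$ while $d(\phi c)=d\phi\,c$, so $d\phi$ cannot commute with everything. Run the odd-derivation convention consistently, with $d\phi$ odd. You then get $d(q\theta)=dq\,\theta-q\,d\theta$ and $dr=d\phi\,\partial_\phi r=-\partial_\phi r\,d\phi$. The same matching gives $q-rR=0$ and $R^2=\partial_\phi Q-r\,\partial_\phi r$. Both equations flip, so \eqref{eq:system} is not the right criterion in that convention.

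The proposition holds for the convention the paper actually uses (``the usual manipulations with symbols $d$''). There $d$ obeys the unsigned Leibniz rule, phantom constants pass through it, and coefficients stand to the left of $d\phi$ and $d\theta$. Thus $d(q\theta)=\partial_\phi q\,d\phi\,\theta+q\,d\theta$ and $dr=\partial_\phi r\,d\phi$; the latter is what you already used in your formula for $\bfR\,d\bfR$. Fix this at the outset. Then $B=q+rR+R^2\theta$, matching the paper's displayed pullback, and the rest of your argument goes through verbatim.

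Whether $\theta$ commutes with the $\fra_j$ (as the paper stipulates) or anticommutes with odd phantoms only changes the sign of the $R\,\partial_\phi r\,\theta$ term in $A$. That term merely determines $h_1$, so this choice is harmless.

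Your proposed test case $r=0$ cannot detect the sign problem. Every term whose sign is in question ($q\,d\theta$, $r\,\partial_\phi r$, $rR$) vanishes there. A test that does distinguish $q+rR=0$ from $q-rR=0$ is the paper's $\gamma_\1(\mu b)\colon(\phi,\theta)\mapsto(\phi+\mu b\,\theta,\ \theta-\mu b)$, which has $q=\mu b$, $r=-\mu b$, $R=1$.
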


{\sc Proof.}
Substituting \eqref{eq:diff-1}--\eqref{eq:diff-2} to $\Omega$, we get
%\begin{align*}
% dt\mapsto \partial_\phi Q\, dt+ \partial_\phi q \theta \,dt+ q\,d\theta;
% \\
% \theta d\theta\mapsto (R\theta+r) (\partial_\phi R \theta \,dt+ \partial_\phi r\,dt+R\,d\theta;
%\end{align*}
\begin{equation*}
 \Bigl[ \partial_\phi Q+ \partial_\phi q\cdot \theta + R\cdot \partial_\phi r\cdot \theta+ 
 r \cdot \partial_\phi R\cdot\theta + r\cdot \partial_\phi r\Bigr]\,d\phi 
 +
 \bigl(q+R^2\theta +r R\bigr)\,d\theta.
\end{equation*}
On the other hand, we wait an expression
$$
h(\phi,\theta;\fra) \,d\phi+ h(\phi,\theta;\fra)\theta\,d\theta.
$$
Therefore, $[\dots]=h$ and 
$h(\phi,\theta;\fra)\theta=\bigl(q+R^2\theta +r R\bigr)$.
Hence $q+rR=0$.
So, $h(\phi,\theta;\fra)\theta =R^2\theta$, i.e.,
$$ (\partial_\phi Q+r\cdot \partial_\phi r)\theta=R^2\theta,$$
and this implies our statement.
\hfill $\square$

\sm

System \eqref{eq:system} of equations is easy to solve in terms of $Q$, $r$,
\begin{equation}
\begin{cases}
 R=\bigl(\partial_\phi Q+r \partial_\phi r\bigr)^{1/2};
 \\
 q=-r\bigl(\partial_\phi Q+r \partial_\phi r\bigr)^{1/2}.
\end{cases}
\label{eq:solution}
\end{equation}

Recall that $q_\downarrow=0$, $r_\downarrow=0$ 
and
$\partial_\phi Q_\downarrow$, $\partial_\phi R_\downarrow$ are  real and non-vanishing.
Therefore 
$$(\partial_\phi Q)_\downarrow= (R_\downarrow)^2>0.$$
  Therefore for each $Q$ we have two variants of a choice of $R$.
Denote by
$$\SCont(S^{1|1};\cA)$$
 the connected component of the group of all contactomorphisms of
  $S^{1|1}$, it is defined by the conditions
$$ R_\downarrow>0.$$
Under this condition, the square root is canonically defined.
Indeed, denote $U:=\partial_\phi Q_\downarrow$. Then
$$
\bigl(\partial_\phi Q+r \partial_\phi r\bigr)^{1/2}
:=\sqrt U\cdot
\Bigl[1+ 
\Bigl\{(U^{-1/2}\cdot\partial_\phi Q-1)+
(U^{-1/2}\cdot r \partial_\phi r\Bigr\}\Bigr]^{1/2}.
$$
We have $\{\dots\}_\downarrow=0$, and therefore we 
can apply the binomial formula,
$$[1+H]^{1/2}:=\sum_{k>0} \frac{(-1/2)_k}{k!}(-H)^k.$$

\sm

{\bf\punct Surplace contactomorphisms.}
A vector field $\upsilon$ is called {\it contact} if the Lie derivative $\cL_\upsilon \Omega$ has the form $H\cdot \Omega$
for some function $H$. It is easy to see that the Lie algebra 
$\cont(S^{1|1})$ 
has the  even 
part $\cont_\0(S^{1|1})$ consisting of vector fields
\begin{equation}
L(a):=
a(\phi)\partial_\phi+\tfrac 12 (\partial_\phi a)\,\theta \partial_\theta,
\label{eq:L(a)}
\end{equation}
 the odd part $\cont_\1(S^{1|1})$ consists of vector fields
\begin{equation}
M(b):=
b(\phi) \theta \partial_\phi- b(\phi) \partial_\theta.
\label{eq:M(b)}
\end{equation}
The Lie algebra $\cont_\0(S^{1|1})$ is nothing more than  $\vect(S^1)$.

We define the group $\SCont_\vel(S^{1|1};\cA)$ of %surplace
contact
diffeomorphisms satisfying the condition $Q(\phi,\fra)_\downarrow=\phi$, $R(\phi,\fra)_\downarrow=1$.

\begin{theorem}
 The group  $\SCont_\vel(S^{1|1};\cA)$
 of surplace contactomorphisms 
  coincides with the surplace  group 
 $\CONT_\vel(S^{1|1};\cA)$ corresponding to the Lie superalgebra $\cont(S^{1|1})$. 
\end{theorem}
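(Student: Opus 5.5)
The plan is to mimic the proof of Lemma \ref{l:surplace}: construct a homomorphism $\CONT_\vel(S^{1|1};\cA)\to \SCont_\vel(S^{1|1};\cA)$ and check that it is bijective, using degree filtrations in the phantom generators $\fra_j$.

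\emph{The homomorphism.} The Lie superalgebra $\cont(S^{1|1})$ acts on $C^\infty(S^{1|1})$ by the differential operators $L(a)$, $M(b)$ from \eqref{eq:L(a)}, \eqref{eq:M(b)}. By functoriality, $\CONT_\vel(S^{1|1};\cA)$ acts on $C^\infty[S^{1|1};\cA]$ through products of the generators $1+\lambda L(a)$ and $1+\mu M(b)$.

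The first task is to see that each generator acts as a change of variables. For $\lambda\in\cM_{\0+}$ we have $\lambda^2=0$, so $1+\lambda L(a)$ acts on functions $f(\phi,\theta)$ exactly as the substitution
$$\phi\mapsto \phi+\lambda a(\phi),\qquad \theta\mapsto \theta+\tfrac12\lambda a'(\phi)\theta.$$
Indeed, the Taylor expansion \eqref{eq:q-phi} stops at the linear term, and the operator is an even derivation. Similarly, for $\mu\in\cM_\1$ the operator $1+\mu M(b)$ corresponds to
$$\phi\mapsto\phi+\mu b\theta,\qquad \theta\mapsto\theta-\mu b.$$
One must check here that $\mu M(b)$ is an even derivation, so that $1+\mu M(b)$ is multiplicative on $C^\infty[S^{1|1};\cA]$ with $\mu^2=0$. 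This requires care with the signs, because $\mu$ is odd and $\theta$ commutes with the $\fra_j$.

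Both substitutions satisfy \eqref{eq:system} modulo $\lambda^2=0$, $\mu^2=0$; this can be checked directly, or by noting that $\cL_\upsilon\Omega\in C\cdot\Omega$. They also have body equal to the identity. Hence every product of generators lies in $\SCont_\vel(S^{1|1};\cA)$, and composition of substitutions corresponds to multiplication of the operators.

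\emph{Surjectivity.} Take a surplace contactomorphism $g$. By \eqref{eq:solution}, $g$ is determined by $Q$ and $r$, where $Q=\phi+\sum Q_I\dot\fra^I$ with $\#I$ even and positive, and $r=\sum r_J\dot\fra^J$ with $\#J$ odd. Let $k$ be the minimal degree of a monomial appearing in $Q-\phi$ or in $r$. Multiply $g$ by
$$\prod_{\#I=k}\bigl(1-\dot\fra^I L(Q_I)\bigr)\quad\text{if $k$ is even},\qquad \prod_{\#J=k}\bigl(1+\dot\fra^J M(r_J)\bigr)\quad\text{if $k$ is odd}.$$
The degree-$k$ parts of $Q-\phi$ and $r$ combine additively, and the correction terms have higher degree. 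After this step the minimal degree is larger than $k$. Since only finitely many generators $\fra_j$ occur in $g$, the process terminates.

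\emph{Injectivity.} This is the step I expect to be the main obstacle, because the odd generators complicate the argument used in Lemma \ref{l:surplace}. Write an element in the canonical form \eqref{eq:canonical-decomposition-vel} of Proposition \ref{pr:1}, and let $k$ be the minimal level of a factor. Modulo terms of degree $>k$, the corresponding substitution is
$$\phi\mapsto\phi+\sum\lambda_i a_i+\sum\mu_j b_j\theta,\qquad \theta\mapsto\theta-\sum\mu_j b_j+\ldots$$
Here the monomials are pairwise distinct and the coefficients are nonzero functions. Reading off the coefficient of $\dot\fra^I$ in $\bfQ-\phi$ or in $\bfR-\theta$ then shows that the substitution is not the identity, so the kernel is trivial.

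The delicate point is that the factors of minimal level combine additively modulo higher order, which follows from \eqref{eq:reorder-1}. Together with the injectivity of $a\mapsto L(a)$ and $b\mapsto M(b)$, this is what the argument needs.
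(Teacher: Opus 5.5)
Your proposal is correct and follows essentially the same route as the paper: the same generator-to-substitution correspondence $1+\lambda L(a)\mapsto\gamma_\0(\lambda a)$, $1+\mu M(b)\mapsto\gamma_\1(\mu b)$, surjectivity by removing lowest-degree terms, and injectivity via the lowest-level factors of the canonical form of Proposition~\ref{pr:1}. You read off the coefficients of $\bfQ-\phi$ and $\bfR-\theta$ where the paper applies the element to a test function $f(\phi)$, which is equivalent.

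One point is in your favour. You always remove the minimal degree occurring in either $Q-\phi$ or $r$. The paper instead uses two phases: first it makes $Q=\phi$, then it kills $r$ by odd factors, and it asserts that $Q$ stays equal to $\phi$. That assertion can fail, because an odd correction generally reintroduces even terms such as $\mu b\, r$ into $Q-\phi$ at higher degree. So an interleaved induction like yours is actually what the argument needs.
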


{\sc Proof.} We must show the bijectivity
of the homomorphism $\CONT_\vel(S^{1|1};\cA)\to \SCont_\vel(S^{1|1};\cA)$.

\sm

1){\it The surjectivity.}
Elements $(1+\lambda L(a))$ correspond to diffeomorphisms
$$
\gamma_\0(\lambda a):\,(\phi,\theta)\mapsto \bigl(\phi +\lambda a,\theta+ \tfrac12 \lambda\, (\partial_\phi a) \, \theta\bigr),
$$
elements $(1+\mu  M(b))$ correspond to
$$
\gamma_{\1}(\mu b):\, (\phi, \theta)\mapsto \bigl(\phi+ \mu b\,\theta, \theta-\mu b\bigr).
$$

Let $g\in \SCont_\vel(S^{1|1};\cA)$. Repeating the proof of Lemma \ref{l:surplace} we multiply $g$ by elements $\gamma_\0(\lambda a)$
we can get an element of $\SCont_\vel(S^{1|1};\cA)$ satisfying $Q(\phi)=\phi$. This supercontactomorphism 
satisfies the  equations
\begin{equation}
 R=\bigl( 1+r \partial_\phi r\bigr)^{1/2}, \qquad
 q=-r\bigl( 1+r \partial_\phi r\bigr)^{1/2}.
 \label{eq:Rq11}
\end{equation}
We expand 
$$
r(\phi;\fra)=\sum_{m\ge 1} \Bigl(\sum _{I:\,\#I=2m+1} r_I(\phi) \dot\fra^I\Bigr),
$$
take the first $m$, say $l$, for which the corresponding summand is nonzero. We multiply
$r$ by
$$
\prod_{I:\, \#I=2l+1} \gamma_\1(-\fra^I r_I(\phi)) 
%\bigl(1+ r_I \dot\fra^I \theta \bigr)
$$
and come to a superdiffeomorphism with $Q(\phi)=\phi$ and new $r$ of the form $r=\sum_{m>l}\bigl(\dots\bigr)$.
Repeating this argument, we come to $r=0$. By \eqref{eq:Rq11}, $R=1$, $q=0$.

\sm

2) {\it The injectivity.}
Consider a nontrivial element
$$
H=\prod_p \gamma_\0(\dot\fra^{J_p} a_p(\phi))\cdot \prod_q \gamma_\1(\dot\fra^{K_q} b_q(\phi))\,\,\in \CONT_\vel(S^{1|1};\cA)
$$
(all monomials $\dot\fra^{J_p}$, $\dot\fra^{K_q}$
are different, and all $a_p$, $b_q$ are non-zero, such decomposition is possible by Proposition \ref{pr:1}).
Let $f(\phi)$ does not depend on $\theta$, $\fra$ and has a finite numbers of zeroes.
 Consider the minimal degree $m$
of monomials $\dot\fra^{J_p}$, $\dot\fra^{K_q}$, which appear in this product. If $m$ is even, then
$$
Hf(\phi)=f(\phi)+ \Bigl(\sum_{I_t:\,\#I_t=m} \dot\fra^{I_t} a_t(\phi)\Bigr)
\, \partial_\phi f(\phi) + 
\Bigl\{\text{terms of degree $>m$ in $\fra$}\Bigr\}. 
$$
So we get a non-identity operator $C^\infty(S^1)\to C^\infty[S^{1|1};\cA]$.

If $m$ is odd, then
$$
Hf(\phi)=f(\phi)+ \Bigl(\sum_{J_s:\,\#I_s=m} \dot\fra^{J_s} b_s(\phi)\Bigr) \theta
\, \partial_\phi f(\phi) + \Bigl\{\text{terms of degree $>m$ in $\fra$}\Bigr\}, 
$$
and again we get a non-identity operator.
\hfill $\square$

\sm 

The {\it Ramond algebra} (see Subsect. \ref{ss:NS}) is a center extension of $\cont(S^{1|1})$. 
Indeed, in notation \eqref{eq:L(a)}--\eqref{eq:M(b)},
 we take  the following vector fields
 \begin{align*}
 L_\alpha:=L(e^{i\alpha\phi})\in \cont_\0(S^{1|1}),\qquad \text{ where $\alpha\in\Z$};
 \\
 M_r:=M(e^{ir\phi})\in \cont_\1(S^{1|1}), \qquad
 \text{where $r\in \Z$.}
  \end{align*}
  Then we get commutation relations 
 \eqref{eq:NS1}--\eqref{eq:NS2} with $\zeta=0$.
 More precisely, we consider the subalgebra in
 $\cont(S^{1|1})$ consisting of finite sums $\sum a_k L_k+\sum b_n M_n$,
 the Ramond algebra is its central extension. 
 
 Our topic is the Neveu--Schwarz algebra, 
we consider the corresponding twin $\cont(S^{1|1}_\bullet)$
of $\cont(S^{1|1}_\bullet)$ 
 in the next subsection.
 
 \sm

{\bf\punct The superalgebra $\boldsymbol{\cont(S^{1|1}_\bullet)}$ and the group
$\boldsymbol{\SCont(S^{1|1}_\bullet;\cA)}$.%
}
Consider the contactomorphism $\sigma$ of $S^{1|1}$ defined by
\begin{equation}
\sigma: (\phi,\theta)\mapsto (\phi+\pi,-\theta)\in \Cont(S^{1|1};\cA).
\label{eq:cont-bullet}
\end{equation}
The super-group $\SCont(S^{1|1}_\bullet;\cA)$ consists of elements of 
$\SCont(S^{1|1}_\bullet;\cA)$ commuting with $\sigma$. Let us describe it 
in more details.

 We consider the subalgebra $\cont(S^{1|1}_\bullet)$
in $\cont(S^{1|1})$, whose even part $\cont_\0(S^{1|1}_\bullet)$
 consists of vector fields
\begin{equation}
L(a):=
a(\phi)\partial_\phi+\tfrac 12 (\partial_\phi a)\,\theta \partial_\theta,
\quad \text{where $a(\phi+\pi)=a(\phi)$,}
\label{eq:L(a)-sim}
\end{equation}
and the odd part $\cont_\1(S^{1|1}_\bullet)$ consists of vector fields
\begin{equation}
M(b):=
b(\phi) \theta \partial_\phi- b(\phi) \partial_\theta,
\quad \text{where $b(\phi+\pi)=-b(\phi)$.}
\label{eq:M(b)-sim}
\end{equation}

We choose the following basis  in $\cont(S^{1|1}_\bullet)$:
\begin{equation}
L_n=L\bigl(\tfrac i 2 e^{2in\phi} \bigr), \qquad M_r:=M\bigl(\tfrac i2 e^{2ir\phi} \bigr),\quad\text{where $n\in \Z$, $r\in \tfrac 12+\Z$.}
\label{eq:LMbasis}
\end{equation}
It is easy to see that these vector fields satisfy  the supercommutation relations \eqref{eq:NS1}--\eqref{eq:NS3} of the 
{\it Neveu--Schwarz
algebra} $\ns$ without $\zeta$,
i.e.,  we get the Lie superalgebra $\ns/\C\zeta$. The Lie superalgebra 
$\cont(S^{1|1};\cA)$ is a completion of $\ns/\C\zeta$.

\sm

Next, we  consider the corresponding group
$\SCont(S^{1|1}_\bullet;\cA)$, it consists
of elements
$$\phi\mapsto Q(\phi;\fra)+ q(\phi;\fra) \theta,
\quad
\theta\mapsto R(\phi;\fra)\theta+r(\phi;\fra).
$$
of
$\SCont(S^{1|1};\cA)$ commuting with $\sigma$.
They satisfy  the following symmetry conditions:
\begin{align}
 Q(\phi+\pi;\fra)&=Q(\phi;\fra)+\pi,& \quad   R(\phi+\pi;\fra)&=R(\phi;\fra);
 \\
q(\phi+\pi;\fra)&=-q(\phi;\fra),& \quad r(\phi+\pi;\fra)&=-r(\phi;\pi).
\end{align}
Such diffeomorphisms preserve the space of functions of the form
$$
f(\phi;\fra)+ g(\phi;\fra)\theta, \quad 
\text{where $f(\phi+\pi;\fra)=f(\phi;\fra)$, 
 $g(\phi+\pi;\fra)=-g(\phi;\fra)$.}
$$
In particular, our group contains the
$\SDiff^{(2)}(S^1)$.

\sm

{\sc Remark.}
We do not discuss the notion of supermanifolds,
see \cite{Lei}, \cite{Ber-super}, \cite{DeW}, \cite{Rog}.
According Batchelor theorem (see \cite{Rog}, Sect. 8.1), 
any $C^\infty$-supermanifold of dimension $m|n$ can be obtained in the following
way. We consider a $C^\infty$-manifold $M$, $n$-dimensional real vector bundle $L$
over  $M$, and take the direct sum of bundles $\oplus_{j=0}^n \Lambda^j L$,
where $\Lambda^j$ denotes the $j$-th exterior power.
For $M=S^1$ we have two bundles, trivial one and the M\"obius bundle.
We prefer to consider sections of the M\"obius bundle as
function on the circle satisfying $f(\phi+\pi)=-f(\phi)$
and this leads to our group $\SCont(S^{1|1}_\bullet;\cA)$.
\hfill $\boxtimes$

\section{Invariant differential operations and an embedding of 
$\cont(S^{1|1}_\bullet)$ to an orthosymplectic superalgebra%
	\label{s:embedding}}

\COUNTERS

{\bf \punct The description of $\boldsymbol{\cont(S^{1|1}_\bullet)}$ in  terms of invariant differential operations.}
Denote by $C^\infty_\pm(S^1)$ the space of $C^\infty$-smooth
functions on the circle $S^1=\R/2\pi\Z$ satisfying
$$
f(\phi+\pi)=\pm f(\phi).
$$

We need the following realization of the 
 Lie superalgebra 
 $$\cont(S^{1|1}_\bullet)=\cont_\0(S^{1|1}_\bullet)\oplus\cont_\1(S^{1|1}_\bullet)$$  
 proposed by Kirillov \cite{Kir}:
 
\sm 
 
 ---  the even part $\cont_\0(S^{1|1}_\bullet)$
  consists of smooth vector fields $a(\phi)\partial_\phi$
on the circle with $a(\phi)\in C^\infty_+(S^1)$, i.e., $a(\phi+\pi)=a(\phi)$; 

\sm

--- the odd part $\cont_\1(S^{1|1}_\bullet)$ consists of smooth densities $b(\phi)(d\phi)^{-1/2}$
 such that $b(\phi)\in C^\infty_-(S^1)$.

\sm

The supercommutator
 $[\cdot,\cdot]_s:\cont(S^{1|1}_\bullet)\times \cont(S^{1|1}_\bullet) \to \cont(S^{1|1}_\bullet)$ is defined 
as follows:

\sm

--- the bracket $\cont_\0\times \cont_\0 \to \cont_\0$
 is the commutator of vector fields: %\eqref{eq:skobka}, $[\cdot,\cdot]_s:=[\cdot,\cdot]$;
\begin{equation*}
[a_1 \partial_\phi,a_2\partial_\phi]_s=(a_1 \cdot \partial_\phi a_2- \partial_\phi a_1\cdot a_2)\,\partial_\phi;
%\label{eq:skobka}
\end{equation*}

\sm

--- the bracket $\cont_\0\times \cont_\1 \to \cont_\1$ 
is the natural action of vector fields on densities of
degree
$-1/2$,
\begin{multline*}
[a(\phi)\partial_\phi,b(\phi)(d\phi)^{-1/2})]_s=-[b(\phi)(d\phi)^{-1/2}), a(\phi)\partial_\phi]_s
=\\=
\bigl(b_1(\phi)\cdot \partial_\phi p(\phi)-\tfrac12 \partial_\phi a(\phi)\cdot  b(\phi)  \bigr)(d\phi)^{-1/2};
\end{multline*}

--- the bracket $\cont_\1\times \cont_\1 \to \cont_\0$
 is the product of densities
$$
[b_1(\phi)(d\phi)^{-1/2}),b_2(\phi)(d\phi)^{-1/2})]_s:=
2 b_1(\phi)b_2(\phi) (d\phi)^{-1}= 2 b_1(\phi)b_2(\phi) \partial_\phi.
$$

This bracket determines a structure of a Lie superalgebra on $\cont(S^{1|1}_\bullet)$,
 the commutation relations correspond to commutation relations
between vector fields $L(a)$, $M(b)$ of forms  \eqref{eq:L(a)-sim}, \eqref{eq:M(b)-sim}.

\sm

{\sc Remark.} These operations are 'natural', i.e., they are invariant
with respect to the two-sheeted covering $\SDiff^{(2)}(S^1)$
of $\SDiff(S^1)$.
\hfill $\boxtimes$

\sm 

The correspondence 
with the definition  above is given by
\begin{align*}
	\cont_\0 (S^{1|1}_\bullet): \qquad 
a(\phi)\partial_\phi+ \frac 12 (\partial_\phi a(\phi))\theta\partial_\theta
\,\,\longleftrightarrow \,\,
a(\phi)\partial_\phi;
\\
\cont_\1 (S^{1|1}_\bullet): \qquad 
b(\phi)\theta\partial_\phi-b(\phi)\partial_\theta
\,\,\longleftrightarrow \,\,
b(\phi) (d\phi)^{-1/2}.
\end{align*}

\sm

{\bf \punct The  orthosymplectic form.%
\label{ss:cont-osp}}
Consider a superlinear space $\cW:=\cW_\0\oplus \cW_1$
whose elements have the form
\begin{equation*}
f(\phi)\oplus g(\phi)(d\phi)^{1/2},
%\label{eq:f-oplus-g}
\end{equation*}
where
 $f\in C^\infty_+(S^1)$ is defined up to addition of constant functions, $g\in C^\infty_-(S^1)$.
We equip this space with the orthosymplectic form
\begin{align}
\Bigl\{ f_1(\phi)\oplus g_1(\phi) (d\phi)^{1/2}, \, f_2(\phi) \oplus g_2(\phi) (d\phi)^{1/2}\Bigr\}
=\\=
\frac 1{8\pi}\int_{S^1}(f_1\,df_2-f_2\,df_1)+ \frac{1}{2\pi} \int_{S^1}g_1(\phi) g_2(\phi) d\phi.
\label{eq:orthosymp}
\end{align}
In particular, the function $1\oplus 0$ is contained in the kernel of the form. This form 
is invariant with respect to the natural action of the group $\SDiff^{(2)}(S^1)$.
Expanding our functions into Fourier series, we get
\begin{multline*}
\Bigl\{
\sum p_n e^{2in\phi}\oplus \sum q_n e^{(2m+1)i\phi} (d\phi)^{1/2},\,
\sum p_n' e^{2in\phi}\oplus \sum q_n' e^{(2m+1)i\phi} (d\phi)^{1/2} 
\Bigr\}
=\\=
\sum_{n\in\Z} n p_n p'_{-n} +\sum_{m\in\Z} q_m q'_{-m}.
\end{multline*}

\sm 

For any $f(\phi)\oplus g(\phi)(d\phi)^{1/2}\in \cW$ we assign the `function' $F(\phi,\theta)=f(\phi)+g(\phi)\theta$.
In this language, the orthosymplectic form \eqref{eq:orthosymp}  can be written as
$$
\{F_1,F_2\}=\iint F_1(\phi,\theta)\cdot \bigl(\partial_\theta+\theta\partial_\phi\bigr) F_2(\phi,\theta)\,
\,d\phi\,d\theta,
$$
where $\int R(\theta)d\theta$ is the Berezin integral
(see \cite{Ber-super}, Sect 2.2, \cite{Rog}, Sect. 11.1)
$$
\int  \bigl(u(\phi)+v(\phi)\theta)\,d\theta:=v(\phi).
$$

We also define the inner product in $\cW$:
\begin{multline}
\bigl\la
\sum p_n e^{2in\phi}\oplus \sum q_n e^{(2n+1)i\phi} (d\phi)^{1/2},\,
\sum p_n' e^{2in\phi}\oplus \sum q_n' e^{(2n+1)i\phi} (d\phi)^{1/2}
\bigr\ra
=\\=
\sum_n |n|\, p_n \ov p'_{n} +\sum q_m \ov q'_{m},
\label{eq:inner}
\end{multline}
notice that $\la 1\oplus 0,1\oplus 0\ra=0$, and the form
is positive definite on the space $\bigl(C^\infty_+(S^1)/\C\cdot 1\bigr)
\oplus C^\infty_-(S^1)$. 

We also can write this inner product in the integral form,
\begin{multline}
\Bigl\la  f_1(\phi)\,\oplus\, g_1(\phi) (d\phi)^{1/2},
 f_2(\phi)\,\oplus\, g_2(\phi) (d\phi)^{1/2}
\Bigr\ra
=\\=
\frac 1{4\pi}\int_0^{2\pi} \mathrm {v.p.} \int_0^{2\pi} 
\cot\Bigl(\frac{\phi-\psi}2\Bigr) f'_1(\psi)\,d\psi\, \ov {f_2(\phi)} \,d\phi+\\
+\frac 1{2\pi}\int_0^{2\pi} g_1(\phi)\ov{g_2(\phi)}\,d\phi,
\label{eq:inner-2}
\end{multline}

{\sc Remark.} The inner product is invariant with respect 
to the natural action of the group $\SL(2,\R)\subset \Diff^{(2)}(S^1)$.
It is not $\Diff^{(2)}$-invariant, but it is 'almost invariant', see
\cite{Ner-book}, Sect. VII.2-3, and this allows our further story.
\hfill $\boxtimes$

\sm

{\bf\punct The embedding $\boldsymbol{\cont(S^{1|1}_\bullet)\to\osp(\cW)}$.}
We consider the action of the Lie superalgebra  $\cont(S^{1|1})$
 in $\cW$.
Vector fields $a(\phi)\partial_\phi$ act on
pairs \{a function, a density\} in the natural way:
\begin{multline*}
\bigl(a(\phi)\partial_\phi\oplus 0\bigr)\cdot \bigl(f(\phi)\oplus g(\phi) (d\phi)^{1/2}\bigr):=\\:=
a(\phi)\partial_\phi\cdot f(\phi)\,\oplus\, a(\phi)\partial_\phi\cdot g(\phi) (d\phi)^{1/2}
=\\=a(\phi)  f'(\phi)\oplus \Bigl(a(\phi)  g' (\phi)+\tfrac12  a'(\phi) g(\phi)\bigr)(d\phi)^{1/2}. 
\end{multline*}
An element $0\oplus b(\phi) (d\phi)^{-1/2}$ acts
as
\begin{multline*}
\bigl(0\oplus b(\phi) (d\phi)^{-1/2}\bigr)
\cdot
\bigl( f(\phi)\oplus g(\phi)(d\phi)^{1/2}\bigr):=\\:=
(b(\phi) (d\phi)^{-1/2}) \, \bigl(g(\phi)(d\phi)^{1/2}\bigr)\,\oplus\,
\bigl(df(\phi)\bigr) \, \bigl(b(\phi) (d\phi)^{-1/2}\bigr)
=\\=
  b(\phi) g(\phi)\oplus b(\phi)f'(\phi) (d\phi)^{1/2}.
\end{multline*}
It is easy to verify that these operators preserve the orthosymplectic form $\{\cdot,\cdot\}$ and so 
we have a homomorphism of Lie superalgebras 
$$\cont(S^{1|1}_\bullet)\to \osp(\cW).$$

\sm 

If we regard $\cW$ as the space of functions $F(\phi,\theta)$, then our action
is the action of $\cont(S^{1|1}_\bullet)$ discussed above

\begin{corollary}
The group 
$\SCont(S^{1|1}_\bullet;\cA)=\SDiff^{(2)}\ltimes \SCont_\vel(S^{1|1}_\bullet;\cA)$
 is contained
in the orthosymplectic group $\OSp(\cW;\cA)$ of the space $\cW[\cA]$.
\end{corollary}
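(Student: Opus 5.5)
The plan is to treat the two factors of the semidirect product separately and then use the functoriality of surplace groups for the nilpotent part.

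\emph{The factor $\SDiff^{(2)}(S^1)$.} This group acts on $\cW$ naturally: $f\mapsto f\circ q$ on functions modulo constants, and $g(\phi)(d\phi)^{1/2}\mapsto g(q(\phi))\,q'(\phi)^{1/2}(d\phi)^{1/2}$ on half-densities. The condition $q(\phi+\pi)=q(\phi)+\pi$ preserves $C^\infty_\pm(S^1)$, and constants go to constants. The form \eqref{eq:orthosymp} is invariant, as remarked in Subsect. \ref{ss:cont-osp}. Indeed, $\int f_1\,df_2$ is invariant under change of variables, and $g_1g_2\,d\phi$ is the product of two half-densities, hence a $1$-density, so its integral is invariant. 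These operators are even and have coefficients in $\K$, so they extend $\cA$-linearly to $\cW[\cA]$ and preserve the $\cA$-valued form. Thus $\SDiff^{(2)}(S^1)\subset\OSp(\cW;\cA)$.

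\emph{The factor $\SCont_\vel(S^{1|1}_\bullet;\cA)$.} By the theorem on surplace contactomorphisms, applied in its $\bullet$-version by restricting to elements commuting with $\sigma$, this group coincides with the surplace group $\CONT_\vel(S^{1|1}_\bullet;\cA)$. Hence it is generated by the elements $1+\lambda L(a)$ and $1+\mu M(b)$ with $\lambda\in\cM_{\0+}$ and $\mu\in\cM_\1$. The homomorphism of Lie superalgebras $\cont(S^{1|1}_\bullet)\to\osp(\cW)$ constructed above yields, by functoriality, a homomorphism into the surplace group $\OSp_\vel(\cW;\cA)$ given by $1+\nu Z\mapsto 1+\nu\rho(Z)$. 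I would check directly that each such generator preserves the $\cA$-valued form. Let $Z\in\osp(\cW)$ be pure with $p(\nu)=p(Z)$ and $\nu^2=0$. Then
\[
\{v(1+\nu Z),w(1+\nu Z)\}=\{v,w\}+\nu\bigl(\text{sign-corrected }\{vZ,w\}\pm\{v,wZ\}\bigr)+\nu^2(\dots),
\]
up to the signs produced by moving $\nu$ past odd vectors. The middle term vanishes because $Z$ is superskew with respect to $\{\cdot,\cdot\}$, and the last term vanishes since $\nu^2=0$. So every generator, and hence every product of generators, lies in $\OSp(\cW;\cA)$. Injectivity of the resulting map is already implied by the faithfulness argument in the injectivity part of that theorem, since the action on $\cW$ contains the action on functions $F(\phi,\theta)$.

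\emph{Assembling the semidirect product.} I would then check that the action of $\SDiff^{(2)}$ by conjugation on the generators agrees with the geometric action: $q\,(1+\nu Z)\,q^{-1}=1+\nu\,\mathrm{Ad}_q Z$. This holds because the map $\cont(S^{1|1}_\bullet)\to\osp(\cW)$ is built from natural operations, which are $\SDiff^{(2)}$-equivariant by the Remark after Kirillov's description. Consequently the two embeddings combine into a homomorphism of the semidirect product into $\OSp(\cW;\cA)$.

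The main obstacle is the sign bookkeeping in the odd case $\mu M(b)$. There one needs the Grassmannized form to be defined with the correct parity convention, as in Subsect. \ref{ss:osp-group} with the $v^\circ$ twist, so that superskewness of $\rho(M(b))$ translates into exact invariance. I would first settle this case by the explicit formula $(f,g)\mapsto(f+\mu bg,\;g+\mu bf')$, computing $\{\cdot,\cdot\}$ directly: the cross terms $\int\mu bg\,df_2$ and $\int g_1\,\mu bf_2'$ should cancel after an integration by parts and anticommutation of $\mu$ with the odd components.
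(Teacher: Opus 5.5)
Your proposal is correct and follows essentially the paper's own, largely implicit, argument: the Lie superalgebra homomorphism $\cont(S^{1|1}_\bullet)\to\osp(\cW)$, combined with functoriality of surplace groups and the identification $\SCont_\vel=\CONT_\vel$, handles the nilpotent part; the $\SDiff^{(2)}$-invariance of the form handles the body; and equivariance assembles the semidirect product. One caution for your explicit check of $1+\mu M(b)$: with the coefficients $\frac{1}{8\pi}$ and $\frac{1}{2\pi}$ as printed in \eqref{eq:orthosymp}, the two cross terms come with weights $\frac{1}{4\pi}$ and $\frac{1}{2\pi}$ and do not cancel, so work instead with the Berezin-integral version $\iint F_1(\partial_\theta+\theta\partial_\phi)F_2\,d\phi\,d\theta=\int f_1\,df_2+\int g_1g_2\,d\phi$, whose equal weights are exactly what invariance under the odd generators requires.
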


According Subsect. \ref{ss:affine-diff} we also get a two parametric family of affine orthosymplectic
actions of $\SCont(S^{1|1}_\bullet;\cA)$ in $\cW[\cA]$,
\begin{multline*}
T_{\mu,\nu}(g)F(\phi,\theta;\fra)=
F\bigl(\bfQ(\phi,\theta;\fra),\bfR(\phi,\theta;\fra);\fra)+
\\
+
\mu \bigl(\bfQ(\phi,\theta;\fra)-\phi\bigr)
+\nu \ln\Ber\begin{pmatrix}
	\partial_\phi \bfQ& \partial_\theta \bfQ\\	
	\partial_\phi \bfR& \partial_\theta \bfR
\end{pmatrix},
\end{multline*}
where $g$ is given by \eqref{eq:diff-1}--\eqref{eq:diff-2}.

\sm

{\bf \punct Addendum. A description of  
$\boldsymbol{\CONT_\vel(S^{1|1}_\bullet)}$ as a subgroup  in 
$\boldsymbol{\OSp_\vel(\cW,\cA)}$.%
\label{ss:addendum-cont}} 
We use description given in  Theorem \ref{th:1}.d (keeping its notations).

\sm 

1) The group consisting of elements $\exp(\sum \lambda_j X_j)$ 
is $\SDiff^{(2)}_\vel(S^{1};\cA)$, subspaces $\cW_\0$, $\cW_\1$ are invariant.
In $\cW_\0$ the group $\SDiff_\vel(S^{1};\cA)$ acts as in the space of functions
$f(\phi;\fra)$. In the subspace $\cW_\1$ we have the action
$$
g(\phi;\fra)\,\theta\,\mapsto\, g\bigl(q(\phi;\fra)\bigr)\, q'(\phi;\fra)^{-1/2}\, \theta.
$$ 
The group $\Diff^{(2)}(S^1)$ acts by the same formulas.

\sm

2) The case $\exp(\sum \mu_j Y_j)$,
 where $Y_j\in \cont_\1$ and $\mu_j\in \cM_\1$  is more interesting.
We write elements of $Y_k\in \cont_\1^\sim$ as matrices
$$
Y_k=\begin{pmatrix}
   0& b_k(\phi)\\b_k(\phi)\partial_\phi&0
  \end{pmatrix},
$$
which are applied  to columns $\begin{pmatrix}
                           f\\g
                          \end{pmatrix}$.
                          
\begin{proposition}
Let $\mu_k\in \cM_{\ov1}$. Denote
\begin{equation*}
S:=\sum_{k<l} \mu_k\mu_l
\bigl(b_k\cdot\partial_\phi b_l-\partial_\phi b_k\cdot b_l\bigr),
\qquad 
 T:=\sum \mu_j b_j.
\end{equation*}
Then
\begin{equation}
\exp
\bigl(\sum \mu_k Y_k\bigr)
=\begin{pmatrix}
1&T\\
A(S)\,T\,\partial_\phi&B(S)
\end{pmatrix},
\label{eq:odd-contact}
\end{equation}
where 
\begin{align}
A(S)=\sum_{m=0}^\infty \frac 1{(2m+1)!} S^m= \frac {\sinh\sqrt S}{\sqrt S}
\label{eq:sh}
\\
B(S)=\sum_{m=0}^\infty \frac 1{(2m)!} S^m=\cosh \sqrt S.
\label{eq:ch}
\end{align}
\end{proposition}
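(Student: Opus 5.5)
Set $Y:=\sum_k \mu_k Y_k$; this is an even element of $\frg(\cA)$. Acting on columns $\begin{pmatrix} f\\ g\end{pmatrix}$, it is the operator matrix
$$
Y=\begin{pmatrix} 0& T\\ T\partial_\phi & 0\end{pmatrix},\qquad T=\sum\mu_j b_j .
$$
Here $T$ denotes multiplication by an odd element. One has to be careful with the order of $\mu_k$ and $\partial_\phi$, but $\mu_k$ are constants, so $\mu_k b_k\partial_\phi=\mu_k(b_k\partial_\phi)$.

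The plan is to compute $Y^2$ and show that it is block diagonal. Then we use
$$
\exp Y=\sum_m \frac{(Y^2)^m}{(2m)!}+\sum_m \frac{(Y^2)^m}{(2m+1)!}\,Y,
$$
which is legitimate because the sum is finite, all $\mu$'s being nilpotent. This reduces the statement to identifying the two diagonal blocks of $Y^2$ and their powers.

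\emph{Upper-left block.} This block of $Y^2$ is $T\cdot T\partial_\phi=T^2\partial_\phi$. Since $T^2=\sum_{k,l}\mu_k\mu_l b_kb_l$ with $\mu_k\mu_l$ antisymmetric and $b_kb_l$ symmetric, we get $T^2=0$. Hence the upper-left block of every $(Y^2)^m$ with $m\ge1$ vanishes. Consequently the upper row of $\exp Y$ is $(1,\ T)$: only $m=0$ survives in both series, and the odd series contributes $Y$, whose upper row is $(0,T)$.

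\emph{Lower-right block.} This block of $Y^2$ is the operator $g\mapsto T\,\partial_\phi(Tg)=T(\partial_\phi T)g+T^2\partial_\phi g=T(\partial_\phi T)\,g$, again using $T^2=0$. So it is multiplication by
$$
T\,\partial_\phi T=\sum_{k,l}\mu_k\mu_l\, b_k\partial_\phi b_l=\sum_{k<l}\mu_k\mu_l\bigl(b_k\partial_\phi b_l-\partial_\phi b_k\, b_l\bigr)=S .
$$
Thus the lower-right block of $(Y^2)^m$ is $S^m$, an even multiplication operator which commutes with everything relevant. The even series therefore gives the lower-right entry $\sum S^m/(2m)!=B(S)$. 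The odd series gives $\sum \frac{1}{(2m+1)!}(Y^2)^mY$, whose lower-left entry is $S^m\,T\partial_\phi$ for $m\ge1$ (the factor $(Y^2)^m$ acts on the lower component only by $S^m$). Together with the $m=0$ term this is $A(S)\,T\,\partial_\phi$. The remaining entry of the odd series, upper-left, is $0$ for all $m$, which is consistent with the upper row found above.

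The main care point is sign bookkeeping. First, the matrix convention (columns with odd entries) must make $Y^2$ genuinely the composition computed above, with the off-diagonal odd scalars placed to the left of $b_k$ and $\partial_\phi$. Second, one must check that $S$ is even and nilpotent and commutes with $T$ up to the terms killed by $T^2=0$, since $S T\partial_\phi$ versus $T\partial_\phi S$ is ambiguous. I would take the form $(Y^2)^mY$, which yields exactly $S^mT\partial_\phi$. The expected obstacle is precisely verifying the symmetrization identity for $T\partial_\phi T$ with the correct sign given the anticommuting $\mu$'s. Everything else is formal.
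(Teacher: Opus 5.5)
Your argument is correct and is essentially the paper's: both show that $Y^2=\mathrm{diag}(0,S)$ and then split $\exp Y$ into even and odd powers. The only difference is that the paper computes $Y^2$ as $\sum_{k<l}\mu_k\mu_l(Y_kY_l-Y_lY_k)$ via pairwise commutators, while you read it off directly from $T^2=0$ and $g\mapsto T\,\partial_\phi(Tg)=T(\partial_\phi T)\,g$.

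The ordering ambiguity you worry about is vacuous: since $S=T\,\partial_\phi T$ with $T$ and $\partial_\phi T$ odd, one has $ST=TS=0$ and $S^2=0$. Hence $Y^3=0$ and the formula collapses to $\exp Y=\begin{pmatrix}1&T\\ T\partial_\phi&1+\tfrac12 S\end{pmatrix}$, which is consistent with the stated $A(S)$ and $B(S)$.
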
                          

Notice that $\sqrt S$ makes no sense but actually it is not present in
the series.

\sm            
                          
{\sc Proof.}
We have
$$
Y_1 Y_2-Y_2 Y_1=\begin{pmatrix}
                 0&0\\0& b_1\partial_\phi b_2-\partial_\phi b_1\cdot b_2
                \end{pmatrix}.
$$
Keeping in mind that
$$
\Bigl(\sum \mu_j Y_j\Bigr)^2=\sum_{k<l} \mu_k \mu_l\, (Y_k Y_l-Y_l Y_k),
$$
we get
\begin{multline*}
 \exp\Bigl(\sum \mu_j Y_j\Bigr)=
 \sum_{m=0}^\infty\frac 1{2m!} \Bigl(\sum_{k<l} \mu_k \mu_l (Y_k Y_l-Y_l Y_k)\Bigr)^{m}+\\+
  \sum_{m=0}^\infty\frac 1{(2m+1)!} \sum_{k<l} \bigl(\mu_k \mu_l (Y_k Y_l-Y_l Y_k)\bigr)^{m}\cdot \Bigl(\sum \mu_j Y_j\Bigr),
\end{multline*}
and we come to our statement.
\hfill $\square$

\section{Unitary  representations of the Neveu--Schwarz supergroup\label{s:integration}}

\COUNTERS

{\bf \punct  Unitary highest weight representations of the Virasoro algebra
$\boldsymbol\vir$.} For generalities on representations of Virasoro algebra,
see \cite{KR}, \cite{Ner-aff}. Initial formalities are similar
to the theory of Verma modules over semisimple Lie algebra,
see \cite{Dix}, Sect. 7.1. 

We keep the notation of Subsect. \ref{ss:virasoro}.
Let $(h,c)\in \C^2$. 
Recall that a  module $M$ over $\vir$ has highest weight 
$(h,c)$ if there exists a vector $v\in V$ such that

\sm

1) $L_j v=0$ for $j>0$;

\sm

2) $L_0 v=hv$, $\zeta v =c v$;

\sm

3) $v$ is cyclic, i.e., $M$ has no proper submodule containing $v$.

\sm

This implies that vectors of the form 
\begin{equation}
L_{-1}^{k_1} \, L_{-2}^{k_2}\dots L_m^{k_m} v
\label{eq:basis-verma}
\end{equation}
%(only finite number of powers are nonzero) 
span $M$.
A highest weight module is $\R$-graded, the degree of a
vector \eqref{eq:basis-verma} is $h+\sum jk_j$,
and we have
$$
L_0\cdot L_{-1}^{k_1} L_{-2}^{k_2}\dots L_m^{k_m} v
= \bigl(h+\sum_j jk_j\bigr) L_{-1}^{k_1} L_{-2}^{k_2}\dots L_m^{k_m} v
.$$

 For each highest weight
$(h,c)$ there is a unique module $M(h,c)=M_\vir(h,c)$ with highest weight $(h,c)$ ({\it Verma module}) such that vectors 
\eqref{eq:basis-verma} are linearly independent. All $\vir$-modules with highest weight $(h,c)$ are quotients
of $M(h,c)$. 
For $(h,c)\in\C^2$ being in a general position the module $M(h,c)$ is irreducible, the exceptional set
is a union of a countable family of quadrics given by Kac determinant formula, see \cite{Kac}, \cite{KR}, Sect. 8.3, see also some comments
in \cite{Ner-aff}.

For each $(h,c)$ there exists a unique  
 irreducible 
module $L(h,c)=L_\vir(h,c)$ with highest weight $(h,c)$, it is the quotient of $M(h,c)$ by the maximal
proper submodule, 

\sm

If $h$, $c$ are real, then there is a unique nondegenerate Hermitian form
({\it Shapovalov form}) $\la\cdot,\cdot\ra$ on $L(h,c)$ such that
 $L_k^*=L_{-k}$ with respect to this form.
 A module $L(h,c)$ is {\it unitarizable} if this form is positive definite.
 The domain of unitarizability is the quadrant
 $$h\ge 0,\qquad c\ge 1$$
  and a finite collection of points
 on each level 
 $$c=1-\frac{6}{(p+2)(p+3)},$$
  where $p=1$, 2, 3, \dots (discrete series),
 see%
 \footnote{For $c=0$ ($p=1$) we have only one-dimensional 
module $L(0,0)$.} 
 \cite{Ner-disser}, \cite{Fri0}, \cite{GOK}.
 If  a unitarizable module $L(h,c)$ satisfies $c>1$ and $h>0$  or $c=1$, $h\ne k^2$, then 
 $L(h,c)=M(h,c)$. Otherwise,
 we have degenerations. 
 
\sm 
 
 Any module $L(h,c)$ can be integrated to a unitary projective 
 representation
 $\rho_{h,c}$
 of the group $\SDiff(S^1)$. So, for operators of representation
we have
\begin{equation}
\label{eq:diff-projective}
\rho_{h,c}(g_1)\rho_{h,c}(g_2)=\sigma(g_1,g_2)\, \rho_{h,c}(g_1g_2), 
\end{equation}
where $g_1$, $g_2$ range in  $\SDiff(S^1)$, and
the  cocycle  $\sigma(g_1,g_2)$ takes values in the 
unit circle 
$\T$ in $\C$.
See \cite{Seg1}, \cite{Ner-highest}, \cite{GW}, \cite{Ner-book},
Sect. VII.2-3.

Below we need the following simple remarks about our cocycles.
We can  choose another operators $\rho'_{h,c}(g)= \gamma(g)\rho_{h,c}(g)$,
where $\gamma(g)\in \T$. Then we get an {\it equivalent cocycle}
\begin{equation}
\sigma'_{h,c}(g_1,g_2)= \frac{\gamma(g_1)\gamma(g_2)}{\gamma(g_1g_2)}
\,\sigma_{h,c}(g_1,g_2).  
\label{eq:sigma-prime}
\end{equation}

\begin{lemma}
{\rm a)}
Consider a unitary projective representation of $\SDiff(S^1)$.
 Fix a representative of an equivalence class
of cocycles. Then this representative  
    uniquely determines operators $\rho(g)$. 

\sm

{\rm b)} If $L(h,c)$ and $L(h',c)$ are unitary and $h-h'\in\Z$,
 then the corresponding cocycles are equivalent. 
\end{lemma}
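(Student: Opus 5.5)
Part a) is a uniqueness statement, and the natural tool is irreducibility together with the algebraic structure of $\SDiff(S^1)$. Suppose $\rho$ and $\rho'$ are two choices of operators with the same cocycle $\sigma$. Since both are lifts of one projective representation, $\rho'(g)=\gamma(g)\rho(g)$ with $\gamma(g)\in\T$; this uses Schur's lemma for the irreducible representation $L(h,c)$. By \eqref{eq:sigma-prime} the equality of cocycles forces
$$\gamma(g_1g_2)=\gamma(g_1)\gamma(g_2),$$
so $\gamma$ is a character $\SDiff(S^1)\to\T$. The group $\SDiff(S^1)$ is perfect: it is simple, by Herman--Thurston, or more elementarily it is generated by commutators. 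Hence every character is trivial, $\gamma\equiv1$, and $\rho'=\rho$. The only nontrivial input is the perfectness of $\SDiff(S^1)$. If I want to avoid quoting Thurston, I would instead argue via the Lie algebra: $\vect(S^1)$ is perfect ($[L_n,L_0]=nL_n$, and $L_0=\tfrac12[L_1,L_{-1}]$), so any measurable (hence continuous) character of the connected group has zero differential and is trivial. Continuity of $\gamma$ follows from strong continuity of the operators on, say, the highest weight vector.

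Part b) requires a concrete normalization of the cocycle. I would use the standard construction of $\rho_{h,c}$: restrict to $\SL(2,\R)^\sim$ and its compact subgroup of rotations, or equivalently write $\rho_{h,c}$ on the universal cover $\SDiff^\sim(S^1)$, where the representation becomes genuine up to the Bott--Thurston cocycle depending only on $c$. The key observation is that the rotation by $2\pi$, which generates the center $\cZ$ of $\SDiff^\sim(S^1)$, acts on $L(h,c)$ by $\exp(2\pi i L_0)$. Because $L_0$ has spectrum $h+\Z_{\ge0}$, this operator equals the scalar $e^{2\pi i h}$.

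So the procedure is as follows. First, realize $L(h,c)$ and $L(h',c)$ as representations of the same central extension $\widetilde{\SDiff}$, namely $\SDiff^\sim(S^1)$ extended by $\R$ via the Bott cocycle with coefficient $c$; this depends only on $c$. Second, note that the center element $2\pi\in\cZ$ acts by $e^{2\pi ih}$ and $e^{2\pi ih'}$ respectively, and these coincide when $h-h'\in\Z$. Third, conclude that the two representations factor through the same quotient group, and therefore induce the same cocycle class on $\SDiff(S^1)$.

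I expect the first step to be the main obstacle: one must know that the $h$-dependence of the cocycle enters only through the action of $\cZ$. For that I would invoke the construction of \cite{Ner-book}, Sect.~VII.2-3, or argue via \cite{GW}. There, the integrated representations of the universal central extension of $\SDiff^\sim$ are genuine, so the obstruction to descending is exactly the central character.
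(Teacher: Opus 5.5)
Your part a) is the paper's argument: two lifts differ by $\gamma(g)\in\T$, equal cocycles make $\gamma$ a character, and simplicity of $\SDiff(S^1)$ forces $\gamma\equiv1$. Schur's lemma is unnecessary, since two choices of operators for one projective representation differ by scalars by definition, as in \eqref{eq:sigma-prime}. Your fallback through perfectness of $\vect(S^1)$ is less free than it looks. Differentiating a merely continuous character needs either automatic smoothness or generation of $\SDiff(S^1)$ by one-parameter subgroups, because its exponential map is not locally onto. The latter is usually deduced from simplicity anyway.

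For b) you take a genuinely different and valid route.

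\emph{The paper's route.} It uses tensor products. Cocycles multiply under $\otimes$, and $L(h_1,c_1)\otimes L(h_2,c_2)$ contains $L(h_1+h_2,c_1+c_2)$ once. For nondegenerate factors it also contains every $L(h_1+h_2+n,c_1+c_2)$ with multiplicity $p(n)$, so all of these carry one class. This settles $c>2$, $h>0$. The cases $h=0$ or $c\le 2$ are reached by tensoring with an auxiliary $L(h',c')$, $c'>2$, and cancelling its class.

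\emph{Your route.} You work on $\wh G=\SDiff^\sim(S^1)\times_B\R$. The kernel of $\wh G\to\SDiff(S^1)$ is $\cZ\times\R$, and it is central because the Bott cocycle vanishes when either argument is a rotation. With a common section, the cocycle on $\SDiff(S^1)$ is a fixed $(\cZ\times\R)$-valued cocycle followed by the central character. That character is determined by $c$ on $\R$. On the generator of $\cZ$ it equals $e^{2\pi i(h+\kappa c)}$, where $\kappa$ is a universal constant depending on how the rotation subgroup is lifted. So your $e^{2\pi i h}$ is right only up to a $c$-dependent phase, which is harmless since $c$ is fixed.

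\emph{Comparison.} Your version proves more: the class depends only on $c$ and $h \bmod \Z$. It also avoids tensor-product decompositions, Kac-determinant nondegeneracy and the degenerate-case bookkeeping. The price is the input you flagged, namely genuine integrability of each unitary $L(h,c)$ to $\wh G$. That is a heavier black box than the projective integrability (Goodman--Wallach) on which the paper's argument rests. You should also state explicitly that the projective representation of $\SDiff(S^1)$ induced from $\wh G$ is $\rho_{h,c}$. This follows from uniqueness of the integration of the $\vir$-action.
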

 
{\sc Proof.}  a) The group $\SDiff(S^1)$
is simple as an abstract group (W.~Thurston, see \cite{Ban}, Theorem 2.1.1). In particular, it has no homomorphisms
to  $\T$.
Therefore, for any function $\gamma:\SDiff(S^1)\to \T$ the expression
 $\frac{\gamma(g_1)\gamma(g_2)}{\gamma(g_1g_2)}$ is not the identical
 1, and $\sigma'$ and $\sigma$ in \eqref{eq:sigma-prime}
 are different.
 
 \sm 

b) Let $\pi$ and $\tau$ are unitary projective representation of 
$\SDiff(S^1)$ with cocycles $\mu$ and $\nu$. Then their tensor product
 $\pi \otimes \tau$ has cocycle $\mu\nu$.

Now let $L(h_1,c_1)$, $L(h_2,c_2)$ be unitarizable
$\ne L(0,0)$.
Clearly, 
$$
L(h_1,c_1)\otimes L(h_2,c_2)= \bigoplus_{n=0}^\infty 
\tau_{h_1,c_1,h_2,c_2}(n) \,
L(h_1+h_2+n, c_1+c_2),
$$
with non-negative multiplicities $\tau_{h_1,c_1,h_2,c_2}(n)$ while
$\tau_{h_1,c_1,h_2,c_2}(0)=1$

 Clearly, for nondegenerate  $L(h_1,c_1)$, $L(h_2,c_2)$ we have
 $\tau_{h_1,c_1,h_2,c_2}(n)=p(n)$,
where  $p(n)$ is the number of partitions of $n$, these numbers are positive.
Therefore, all modules $L(h_1+h_2+n, c_1+c_2)$ have the same cocycles.
Hence, for $c>2$ and for any $h>0$ all modules $L(h,c)$, $L(h+1,c)$,
$L(h+2,c)$, \dots have the same cocycles. Taking products
$L(0,c_1)\otimes L(0,c_2)$ we see that $\sigma_{0,c_1+c_2}$
is equivalent to some $\sigma_{N,c_1+c_2}$. So, the statement
is proved for $c>2$.

Now, let $c\le 2$. Let $h'>0$, $c'>2$.
Then $L(h,c)\otimes L(h',c')$ and $L(h,c)\otimes L(h'+m,c')$
expand as certain direct sums of modules $L(h+h'+j,c+c')$ and
$L(h+h'+m+j,c+c')$ respectively. All such summands have the same cocycles,
therefore $L(h,c)$ and $L(h+m,c)$ have the same cocycles.
\hfill $\square$

\sm

{\bf \punct Unitarizable representations of Lie superalgebras.}
Let $\frg=\frg_\0\oplus \frg_\1$ be a real Lie superalgebra,
let $G_\0$ be  the real simply connected group
corresponding $\frg_\0$. 
Let $H=H_\0\oplus H_\1$ be a superspace, and summand be  Hilbert spaces.
Let we have a unitary representation
$\Lambda=\Lambda_\0\oplus \Lambda_\1$ of $G_\0$ in $H_\0\oplus H_\1$.
We say that we have an {\it unitarizible representation
$\lambda$ of the superalgebra} $\frg$ if we have dense subspaces
$\cH_\0\subset H_\0$,  $\cH_\1\subset H_\1$,
and a representation $\lambda$ of $\frg$ in $\cH_\0\oplus \cH_\1$
such that

\sm 

--- for all $X\in \frg_\0$ operators $i\lambda(X)$ are essentially self-adjoint in $H_\0\oplus H_\1$
and correspond to the representation $\Lambda$ of $G_\0$;

\sm

--- for all  $Y\in \frg_\1$ the operators $\lambda(Y)$
send $H_\0\to H_\1$, $H_\1\to H_\0$
and operators $e^{\pi i/4} \lambda(Y)$
are essentially self-adjoint. 

\sm 

{\sc Remarks.} a) We term `{\it unitary representation}' 
is commonly recognized, but it is strange since representations are not 
unitary. I think that the term {\it `$*$-representations'}
is better.

\sm

b) Let $Y\in\frg_\1$, $\lambda(Y)\ne 0$. Then 
$$i\lambda([Y,Y]_s)=2i \lambda(Y)^2$$
is selfadjoint and postive (semi)definite, so its spectrum is contained
oh a half-line. On the other hand, $[Y,Y]_s\in \frg_\0$.
Semiboundedness of spectra of generators of Lie algebra is a rare phenomenon,
 for reductive Lie algebras such spectra appear only
for highest weight
representations (this remark is contained at least in \cite{FrH}). For Lie superalgebras
there is a nice theory of highest weight unitary representations 
(see Kac \cite{Kac}, Furutsu, Nishiyama \cite{FuNi}, Jakobsen \cite{Jak}
with numerous continuations). This theory
is
parallel to  the theory of holomorphic seies of representations of semisimple Lie groups
(Harish-Chandra, F.~A.~Berezin, R.~Howe, N.~Wallach, M.~Vergne, S.~G.~Gindikin,
G.~I.~Olshanski,   et  al.). 
The present paper with \cite{Ner-super} is a continuation of these parallels.

\sm

c) The remaining part of theory of unitary representations
of semisimple groups does not have counterparts for Lie superalgebras
(as far as I know, no one can invent a definition, which allows something else, for instance, unitary principal series).
\hfill $\boxtimes$
 
\sm

{\bf \punct Highest weight modules over $\boldsymbol\ns$.}
Theory of Verma modules over Lie superalebras are 
parallel to the classical theory of semisimple Lie algebras,
see \cite{Mus}, Chapter 9. 

Consider the Neveu--Schwarz superagebra $\ns$, see Subsect. \ref{ss:NS}. 
Let $h$, $c\in \C$.
We say that a module  $V$ over $\ns$ is a
  {\it module with a highest weight $(h,c)$}
(see \cite{KW})
if there is a nonzero vector $v\in V$ such that

\sm

1) $L_{j} v=0$ for $j>0$, $M_{r} v=0$ for $r>0$;

\sm

2) $L_0 v= hv$, $\zeta v=cv$.

\sm

3) The vector $v$ is cyclic, i.e., $M$ has no proper submodules containing $v$.

\sm

This implies that vectors
of the form 
\begin{equation}
L_{-1}^{k_1} L_{-2}^{k_2}\dots M_{-1/2}^{\epsilon_{1/2}} M_{-3/2}^{\epsilon_{3/2}}\dots v,
\label{eq:LLLv}
\end{equation}
where $k_j\ge 0$, $\epsilon_r=0$, $1$ (only finite number of powers are nonzero) generate space $W$. A highest weight module
is graded by eigenvalues of $L_0$, the degree of a vector \eqref{eq:LLLv} is
$$h+\sum_{j>0} j k_j + \sum_{r>0} r \epsilon_r .$$
Dimensions of homogeneous subspaces are finite.
 The central element $\zeta$ acts as a multiplication by $c$.
 There is a unique module $M(h,c)=M_\ns(h,c)$ ({\it Verma module}) with highest weight $(h,c)$ 
 such that vectors \eqref{eq:LLLv} are linear independent, all other highest weight modules
 are quotients of the Verma module. Modules $M(h,c)$ are irreducible for all $(h,c)\in\C^2$
 except a countable family of quadrics, which was written by Kac \cite{Kac}.

For each $(h,c)$ there is a unique irreducible module
%\footnote{Here the usual arguments related to Verma modules (which also were used in Subsect. \ref{ss:virasoro})
%	 remain to be valid, see, e.g., \cite{Dix}, \cite{Mus}.}
 $L(h,c)=L_\ns(h,c)$ with highest weight $(h,c)$.
For  real $(h,c)$ there is a unique Hermitian form ({\it Shapovalov form}) $\la\cdot,\cdot\ra$ on $L(h,c)$ such that 
generators  satisfy the condition 
$$
L_k^*=L_{-k}, \qquad M^*_r=M_{-r}.
$$
This form is nondegenerate. We say that
 $L(h,c)$ is {\it unitary}, if the Shapovalov form is 
positive definite. Conditions of unitarity were obtained in
\cite{Fri}, \cite{GOK}. The domain of unitarity includes the angle
$h\ge 0$, $c\ge 3/2$ and a finite number of points $(h,c)$
on each level
 $$c=\frac 32\Bigl(1-\frac{8}{(N+2)(N+4)}\Bigr)$$
  ('discrete series'). For $h>0$, $c>3/2$ representations
$L(h,c)$ are nondegenerate in the following sense: vectors \eqref{eq:LLLv} are linear independent. On conditions
of nondegeneracy, see \cite{KW}, \cite{MR-C}.
For a unitary module $L(h,c)$ we consider its completion $\cH=\cH(h,c)$ with respect to the Shapovalov
form.

\sm

Restricting unitary module $L_\ns(h,c)$ to the Virasoro  algebra $\vir$ (whose generators are $L_\alpha$ and $\zeta$) we get a direct
sum of highest weight modules of $\vir$
of the form $L_\vir(h+j/2,c)$, multiplicities in this sum are finite.

\sm

{\bf \punct Integration of $\boldsymbol{L_\ns(h,c)}$%
\label{ss:integration}.}
Denote by $\cH=\cH(h,c)$ the completion of an unitarizable  module
$L_\ns(h,c)$ by the positive definite inner product.
Consider the  operator 
$$\cL:=1+L_0$$
 in 
$\cH$ defined on $L(h,c)$. Homogeneous subspaces of $L(h,c)$ are the eigenspaces of $\cL$, so
$\cL$ is essentially self-adjoint on the domain $L(h,c)$. Denote by $\cL$ its closure.
For $k\ge 0$ denote by $\cH_k$ the
set of $v\in L(h,c)$ such that
$\la \cL^N v,v\ra<\infty$.
We equip $\cH_k$ with the norm
\begin{equation}
 \| v\|^2_k=\la \cL^k v,v\ra,
 \label{eq:seminorm}
\end{equation}
so $\cH_{2k}$ is the domain of $\cL^k$.
We get a scale of Hilbert spaces,
\begin{equation}
\dots \supset \cH_{k-1}\supset  \cH_{k} \supset \cH_{k+1}\supset\dots 
\label{eq:cH-k} 
\end{equation}
 Denote
\begin{equation}
\cH_\infty:=\cap_{k=0}^\infty \cH_k
\label{ss:cH-infty}
\end{equation}
and equip it with the seminorms \eqref{eq:seminorm}.
Clearly, $\cH_\infty$ is a Fr\'echet space.

According Goodman, Wallach \cite{GW}, Corollary 2.2, the representation of 
$\vir$ in $L(h,c)$ lifts to a unitary projective representation
$\rho_{h,c}(g)$ of $\SDiff(S^1)$ in $\cH$. The subspace 
$\cH_\infty$ is invariant with respect to $\SDiff(S^1)$.
The representation of $\vir$ extends to a projective representation
of $\vect(S^1)$ in $\cH_\infty$ (the subspace $\cH$ is $\vect(S^1)$-invariant,
and we have a continuous representation of $\vect(S^1)$
in this space). For  $X\in \vect^\R$ operators $iX$  are essentially
self-adjoint on $\cH_\infty$.

\begin{theorem}
\label{th:2}
{\rm a)} The representation of $\ns$ in $L_\ns(h,c)$ extends to a continuous projective representation of the Lie superalgebra
algebra $\cont(S^{1|1}_\bullet)$ in $\cH_\infty$.

\sm

{\rm b)} For $M(b)= b(\phi) (\theta\partial_\phi-\partial_\theta)
\in \cont(S^{1|1}_\bullet)$   
with real $b(\phi)$, the operator $e^{i\pi/4}M(b)$ is essentially 
self-adjoint
on $\cH_\infty$.
\end{theorem}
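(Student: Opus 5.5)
The plan is to work with the Fourier expansion of $b$ in the basis \eqref{eq:LMbasis} and to control the operators $M_r$ by powers of $\cL=1+L_0$ on the scale \eqref{eq:cH-k}. The whole argument rests on one key estimate. For a unitary module $L_\ns(h,c)$, the anticommutator relation \eqref{eq:NS2} and $M_r^*=M_{-r}$ give, for $v\in L(h,c)$,
$$
\|M_r v\|^2+\|M_{-r}v\|^2=\la [M_r,M_{-r}]_s v,v\ra
=\la \bigl(2L_0+\tfrac13(r^2-\tfrac14)c\bigr)v,v\ra .
$$
Since both terms on the left are nonnegative, this yields $\|M_r v\|\le C(1+|r|)\,\|\cL^{1/2}v\|$. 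It is the exact analogue of the Goodman--Wallach bound $\|L_n v\|\le C(1+|n|)^{3/2}\|\cL v\|$.

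Next I will propagate the estimate along the scale. Because $[L_0,M_r]=-rM_r$, the operator $M_r$ shifts $L_0$-degrees by $r$. Hence $\cL^{k}M_r=M_r(\cL-r)^{k}$ on $L(h,c)$, and together with the key estimate this gives
$$
\|M_r v\|_k\le C_k(1+|r|)^{k+1}\|v\|_{k+1}.
$$
For smooth $b\in C^\infty_-(S^1)$ the Fourier coefficients $b_r$ decay faster than any power. Therefore $M(b)=\sum_r b_r M_r$ converges on $L(h,c)$ in every norm $\|\cdot\|_k$ and extends by continuity to a continuous operator $\cH_\infty\to\cH_\infty$. The even part is already handled by Goodman--Wallach, as quoted before Theorem~\ref{th:2}: $\vect(S^1)$ acts projectively and continuously on $\cH_\infty$. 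The supercommutation relations between $L(a)$ and $M(b)$ hold on the dense subspace $L(h,c)$ for finite Fourier sums. Both sides of each relation are continuous in the Fr\'echet topology of $\cH_\infty$ and in $(a,b)$, so the relations pass to the limit. The only projective correction is the central term in \eqref{eq:NS2}. This proves a).

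For b), take $b$ real-valued. Then $\ov{b_r}=b_{-r}$, and since $M_r^*=M_{-r}$ the operator $M(b)$ is formally skew- or self-symmetric with respect to the inner product; after multiplying by the phase $e^{i\pi/4}$ required by the convention of the paper, $Y:=e^{i\pi/4}M(b)$ is symmetric on $\cH_\infty$. To prove essential self-adjointness I will use Nelson's commutator theorem with the comparison operator $N=\cL$. By the key estimate, $\|Yv\|\le C\|\cL^{1/2} v\|\le C\|\cL v\|$. The commutator $[Y,\cL]=e^{i\pi/4}\sum_r r\,b_r M_r$ is again an operator of the same type, now with symbol $i\,b'/2$ up to normalization, and hence
$$
|\la [Y,\cL]v,v\ra|\le C'\|\cL^{1/2}v\|^2 .
$$
These are exactly Nelson's hypotheses. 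Since $\cL$ is essentially self-adjoint on $L(h,c)\subset\cH_\infty$, the operator $Y$ is essentially self-adjoint on $L(h,c)$, and therefore also on $\cH_\infty$. An alternative route is to note that $Y^2=\tfrac{i}{2}[M(b),M(b)]_s$ is, up to a scalar, the operator $L(b^2)$ plus a constant, whose self-adjointness is known, but Nelson's theorem is more direct.

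The main obstacle is bookkeeping rather than conceptual: the conventions linking the basis \eqref{eq:LMbasis}, the phase $e^{i\pi/4}$ and the Hermitian structure must fit together so that $Y$ is genuinely symmetric. The polynomial growth in $r$ in the commutator estimate must also be absorbed by the rapid decay of $b_r$. The key estimate of the first paragraph carries all the analytic content.
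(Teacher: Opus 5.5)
Your proof is correct. Its overall architecture matches the paper's: a bound for $M_r$ in the scale $\cH_k$, summation of the Fourier series of $b$, then Nelson's commutator theorem with comparison operator $\cL$. The genuine difference is how you obtain the basic bound.

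\textbf{How the paper gets the bound.} It proves Lemma~\ref{l:norms} by restricting $L_\ns(h,c)$ to the $\osp(2|1)$-subalgebras $\frg^{(p)}$ spanned by $L_{\pm(2p+1)}$, $L_0$, $M_{\pm(p+1/2)}$. It then decomposes into the modules $V(\lambda)$ and computes the ratios $\|M^{(p)}_{\pm1/2}u\|_N/\|u\|_{N+1}$ on the explicit basis $z^n,\omega z^n$, uniformly in $n$ and in the weights $\lambda$.

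\textbf{How you get it.} You replace all of this by the single identity $\|M_rv\|^2+\|M_{-r}v\|^2=\langle(2L_0+\tfrac{c}{3}(r^2-\tfrac{1}{4}))v,v\rangle$. It uses only positivity of the form and \eqref{eq:NS2}. You then transport it along the grading, $f(\cL)M_r=M_rf(\cL-r)$. Carried out with $\lambda-r\le(1+|r|)\lambda$ for eigenvalues $\lambda\ge1$, and $\lambda-r\le\lambda$ when $r>0$, this reproduces exactly the paper's growth rates in $\|\cdot\|_{N+1\to N}$: $O(|r|)$ for $r>0$ and $O(|r|^{N/2+1})$ for $r<0$. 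Your cruder $(1+|r|)^{k+1}$ already suffices.

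\textbf{What each route buys.} Yours is shorter, is the direct super-analogue of the Goodman--Wallach argument, and avoids the $\osp(2|1)$ model with its weight bookkeeping. The paper's computation gives more explicit information on individual $\osp(2|1)$-weight vectors, which the theorem does not need. For Nelson's second hypothesis you also do not need the ``same type'' observation about the commutator. The key estimate alone gives $|\langle[Y,\cL]v,v\rangle|\le\|[Y,\cL]v\|\,\|v\|\le C\|v\|_1^2$.

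\textbf{A caveat about the statement, not your argument.} Take the paper's conventions as written, $M_r=M(\tfrac{i}{2}e^{2ir\phi})$ and $M_r^*=M_{-r}$. Then a real $b$ gives a formally skew-symmetric $M(b)$, so the symmetric multiple is $iM(b)$, not $e^{i\pi/4}M(b)$. You were right to flag this bookkeeping, and your Nelson argument applies verbatim to whichever multiple is actually symmetric.
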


This statement follows from  \cite{NeeS}, see Definition 7.2, Lemma 7.6,
Theorem 7.14, Corollary 4.17.

Therefore we get a representation of the group 
$\Cont_\vel^\sim(S^{1|1}_\bullet;\cA)$
 in the space  $\cH_\infty[\cA]$.

\begin{theorem}
	\label{th:3}
 The projective representation $\rho_{h,c}$ of $\SDiff(S^1)$ and the representation of $\Cont_\vel(S^{1|1}_\bullet;\cA)$ in $\cH(\cA)$
 determines a projective representation of the semidirect product 
 $\SCont(S^{1|1}_\bullet;\cA)$.
\end{theorem}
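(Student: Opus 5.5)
The group $\SCont(S^{1|1}_\bullet;\cA)$ is the semidirect product $\SDiff^{(2)}(S^1)\ltimes\SCont_\vel(S^{1|1}_\bullet;\cA)$, with $\SDiff^{(2)}(S^1)$ acting on the surplace part by conjugation. So I would define the representation on products,
$$
T(q\cdot u):=\rho_{h,c}(q)\,\tau(u),\qquad q\in \SDiff^{(2)}(S^1),\ u\in \SCont_\vel(S^{1|1}_\bullet;\cA),
$$
where $\tau$ is the representation of the surplace group on $\cH_\infty[\cA]$ given by functoriality from Theorem \ref{th:2}a. Each $u$ is written as a product of generators $1+\lambda L(a)$, $1+\mu M(b)$, and $\tau$ sends it to $\prod(1+\lambda L(a))(1+\mu M(b))$. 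By Proposition \ref{pr:1}, $\tau$ is well defined up to the central cocycle $C(g,h)$.

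For this to be a projective representation, the one identity to check is the intertwining relation
$$
\rho_{h,c}(q)\,\tau(u)\,\rho_{h,c}(q)^{-1}=\kappa(q,u)\,\tau(quq^{-1}),
$$
where $\kappa(q,u)\in\cA_\0$ is a scalar factor with $\kappa_\downarrow=1$. Since $\tau$ is multiplicative on generators modulo central factors, it is enough to check this on generators. For these it reduces to the infinitesimal statement
$$
\rho_{h,c}(q)\,\lambda(Z)\,\rho_{h,c}(q)^{-1}=\lambda(\mathrm{Ad}(q)Z)+\beta(q,Z)\cdot 1
$$
on $\cH_\infty$, for $Z\in\cont(S^{1|1}_\bullet)$ and a scalar $\beta$. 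Here $\beta$ is nonzero only for even $Z$, where it is the Schwarzian-type term. Once this holds, $1+\nu\lambda(Z)$ is carried to $1+\nu\lambda(\mathrm{Ad}(q)Z)+\nu\beta$, which is a central factor times the image of the conjugated generator. Associativity then follows formally from the semidirect product structure.

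The main work, and the step I expect to be the obstacle, is proving this conjugation covariance on $\cH_\infty$.

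For even $Z$, it is the standard property of the Goodman--Wallach integration: $\cH_\infty$ is $\rho_{h,c}(\SDiff)$-invariant, and $\mathrm{Ad}$-covariance holds up to the central cocycle. This extends to $\SDiff^{(2)}$ via its action on $\pi$-periodic fields.

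For odd $Z=M(b)$, I would argue as follows.

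1. Set $X(q):=\rho(q)M(b)\rho(q)^{-1}-M(\mathrm{Ad}(q)b)$.
2. Show $X(q)$ vanishes along one-parameter subgroups $q_t=\exp(tY)$ by differentiating in $t$. The derivative equals $[\lambda(Y),X(q_t)]$ plus terms that cancel because of the bracket relation $[L(a),M(b)]_s=M(\dots)$ in $\cont(S^{1|1}_\bullet)$, which holds on $\cH_\infty$ by Theorem \ref{th:2}a. Differentiation is justified by continuity of the action on the Fréchet space $\cH_\infty$ and the bounds $\|M(b)v\|_k\le C\|v\|_{k+1}$ coming from the $\cL$-scale.
3. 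This gives $X(q_t)=0$ by uniqueness for the linear ODE $\dot X=[\lambda(Y),X]$, $X(0)=0$. For this I would use the bound $\|\rho(q_t)v\|_k\le C_k\|v\|_k$ locally uniformly in $t$.
4. Since $\SDiff^{(2)}$ is connected and generated by exponentials near the identity (it is simple, so generated by any neighborhood of $1$), $X(q)=0$ follows for all $q$.

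Finally, well-definedness of $T$ on the semidirect product requires the decomposition $g=q\cdot u$ to be unique. It is, since $q$ is the body $g_\downarrow$. Multiplicativity up to $\cA_\0$-valued factors with body in $\T$ then follows from the covariance identity together with \eqref{eq:diff-projective}.
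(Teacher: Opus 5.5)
Your proposal is correct, but it proves the key step by a different route from the paper.

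\textbf{Where the two routes agree.} Your reduction matches the paper's. Everything comes down to the covariance identity $\rho(g)\rho(Z)\rho(g)^{-1}=\rho(Z^{[g]})+\tau(g,Z)$, and the even case is quoted from Goodman--Wallach.

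\textbf{The paper's route for odd $Z$.} The argument is algebraic and representation-theoretic.
\begin{enumerate}
\item It forms the conjugated representation $\rho^{[g]}(Z)=\rho(g)^{-1}\rho(Z^{[g]})\rho(g)$.
\item It observes that this is again a unitary highest weight representation of $\ns$, with the same even part and the same highest vector.
\item Uniqueness of $L_\ns(h,c)$ then gives a unitary intertwiner $U(g)$ preserving the finite-dimensional $L_0$-eigenspaces.
\item It concludes that $U(g)$ is scalar, because $\Diff^{(2)}(S^1)\supset\SL(2,\R)$ has no nontrivial finite-dimensional unitary projective representations.
\end{enumerate}

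\textbf{Your route for odd $Z$.} You argue analytically. You differentiate along one-parameter subgroups, using the bracket $[L(a),M(b)]_s=M(\cdot)$ on $\cH_\infty$ from Theorem \ref{th:2}a. You then pass to the whole group by generation.

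\textbf{What each approach buys.} Your covariance step uses neither irreducibility nor the highest weight structure. It therefore works for any representation whose even part integrates smoothly on a common invariant domain on which the odd generators act continuously $\cH_{k+1}\to\cH_k$. The price is analytic input: strong smoothness of $t\mapsto\rho(q_t)v$ in every $\|\cdot\|_k$ with locally uniform bounds (which Goodman--Wallach supply), plus a global generation statement. The paper needs no differentiation. However, it relies on its rather quickly asserted claim that $g\mapsto U(g)$ is itself projective.

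\textbf{Two points to tighten.}

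First, step 4 as written does not justify generation. Any connected group is generated by any neighbourhood of $1$, whether simple or not. But the exponential map of $\SDiff(S^1)$ is not locally surjective, so exponentials do not form a neighbourhood. The correct argument is as follows. The subgroup generated by all $\exp(Y)$ is normal, since $q\exp(Y)q^{-1}=\exp(\mathrm{Ad}(q)Y)$. It is nontrivial, so Thurston's simplicity theorem makes it all of $\SDiff(S^1)$. For $\SDiff^{(2)}(S^1)$, which is not simple, add that the kernel of the covering is generated by the rotation $\exp(\pi\,\partial_\phi)$, and $\partial_\phi$ is a $\pi$-periodic field.

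Second, avoid appealing to uniqueness for the ODE $\dot X=[\lambda(Y),X]$ in a Fr\'echet space. Show directly that
\[
t\mapsto \rho(q_t)^{-1}M(\mathrm{Ad}(q_t)b)\rho(q_t)v
\]
has zero derivative for $v\in\cH_\infty$. Your bounds $\|M(b)v\|_k\le C\|v\|_{k+1}$ and $\|\rho(q_t)v\|_k\le C_k\|v\|_k$ are exactly what this requires.
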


\section{Proofs of theorems \ref{th:2}, \ref{th:3}\label{s:proof}}

\COUNTERS

{\bf\punct Unitary highest weight representations of the Lie superalgebra 
$\boldsymbol{\osp(2|1)}$.} We consider the 5-dimensional Lie superalgebra $\fru_0$  
with even
generators
$L_{-1}$, $L_0$, $L_1$, odd generators $M_{1/2}$, $M_{-1/2}$, and relations
\begin{align}
 [L_0,L_{-1}]&=L_{-1},\, [L_0,L_1]=-L_1, \, [L_{-1},L_1]=L_{-2},
 \label{eq:osp21-1}
 \\
 [L_{-1}, M_{1/2}]&=-M_{-1/2}, \, [L_1,M_{-1/2}]=M_{1/2}, \\
 [L_0,M_{1/2}]&=-\tfrac12 M_{-1/2},\, [L_0,M_{-1/2}]=\tfrac 12 M_{1/2},\\
 [M_{1/2},M_{1/2}]&=2L_1,\, [M_{1/2},M_{-1/2}]=2L_0,\, [M_{-1/2},M_{-1/2}]=2L_{-1}.
 \label{eq:osp21-4}
\end{align}
It is a subalgebra in the Neveu--Schwarz algebra (see relations \eqref{eq:NS1}, \eqref{eq:NS2}). On the other hand, 
this Lie superagebra is isomorphic to  $\osp(2|1)$, on its representations, see, e.g., \cite{FH}. The even part is the Lie algebra $\frs\frl(2)$.

We present a description of its irreducible unitary highest
weight modules $V(\lambda)$. A highest weight module  is a  module with a cyclic vector $v$ satisfying 

\sm

--- $L_0 v=\lambda v$, where $\lambda\in\C$;

\sm

--- $L_{1} v=0$, $M_{1/2} v=0$.

\sm

The usual arguments on Verma modules (\cite{Dix}, \cite{Mus}) imply that an irreducible highest weight module is uniquely determined by $\lambda$.

A representation is '{\it unitary}' if it is equipped with a non-degenerate positive definite Hermitian form
$\la\cdot,\cdot\ra$ such that the following conditions of formal adjointness hold:
$$L_0^*=L_0,\qquad  L_{-1}^*=L_1, \qquad M^*_{1/2}=M_{-1/2}.$$
 Then $\lambda$ is real and $\ge 0$. If $\lambda=0$, then the module is one-dimensional. Other modules can be realized
in the space of polynomials in an even variable $z$ and an odd variable $\omega$: $\omega^2=0$, $\omega z=z\omega$. The generators act by
\begin{align*}
 L_1=\partial_z,\qquad L_0=z\partial_z+\lambda+ \tfrac12 \omega\partial_\omega, \qquad L_{-1} = z^2\partial_z+2\lambda z+z\omega\partial_\omega
 \\
 M_{1/2}=\partial_\omega +\omega \partial_z,\qquad M_{-1/2}=z\partial_\omega +\omega z\partial_z+2\lambda\omega.
\end{align*}
Vectors $z^n$, $\omega z^m$ form an orthogonal basis, their inner squares are
\begin{equation}
 \la z^n, z^n\ra=\frac{n!}{(2\lambda)_n}, \qquad \la \omega z^n, \omega z^n\ra= \frac{n!}{(2\lambda)_{n+1}}.
 \label{eq:squares}
\end{equation}
A verification of commutation relations and adjointness conditions is trivial. The restriction
of this representation to $\frs\frl(2)$ is a direct sum of two highest weight representations with bases $z^n$ and $\omega z^n$ 
respectively and with 
 highest vectors $1$, $\omega$.
 
Action of $M_{\pm 1/2}$ on basis vectors is given by
\begin{align}
 M_{1/2} z^n&=n\,\omega z^{n-1},&\qquad M_{1/2}\omega z^n&=z^n;
 \label{eq:1/2}
 \\
 M_{-1/2} z^n&=(n+2\lambda)\omega z^n ,&\qquad M_{-1/2}\omega z^n&= z^{n+1}.
 \label{eq:-1/2}
\end{align}

{\bf \punct Estimates of norms of $\boldsymbol{M_{\pm(p+1/2}}$.}
Let $A$ be an operator $\cH_\infty\to \cH_\infty$. Denote
$$\|A\|_{s\to t}:=\sup\limits_{\|v\|_s=1} \|Av\|_t
$$
(such a supremum can be infinite).

\begin{lemma}
\label{l:norms}
We have the following estimates for operators $M_{\pm(p+1/2}$, where $p>0$, in $\cH_\infty$:
\begin{align}
 \|M_{p+1/2}\|_{N+1\to N}&\le A(c,h,N) p;
 \label{eq:norm-1}
 \\
  \|M_{-p-1/2}\|_{N+1\to N}&\le B(c,h,N) p^{N/2+1},
  \label{eq:norm-2}
\end{align}
where $A(c,h,N)$, $B(c,h,N)$ are  constants independent on $p$.
\end{lemma}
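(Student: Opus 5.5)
Throughout, $\cL=1+L_0$. The approach is to reduce both estimates to a bound in the base norm $\|\cdot\|_0$, and then move between levels $N$ and $N+1$ using the grading.

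\textbf{Step 1: the base case.} We use the anticommutator relation \eqref{eq:NS2} with $t=-r$ and the adjointness $M_r^*=M_{-r}$. For $v\in L(h,c)$ this gives
$$
\|M_r v\|^2+\|M_{-r}v\|^2=\la (M_{-r}M_r+M_rM_{-r})v,v\ra
=\la \bigl(2L_0+\tfrac c3(r^2-\tfrac14)\bigr)v,v\ra .
$$
Take $r=p+\tfrac12$, so that $r^2-\tfrac14=p(p+1)$. Then
$$
\|M_{\pm r}v\|_0^2\le \bigl(2+\tfrac c3\,p(p+1)\bigr)\,\|v\|_1^2 .
$$
Hence
$$
\|M_{\pm(p+1/2)}\|_{1\to 0}\le C\,p ,
$$
which is the case $N=0$ of both \eqref{eq:norm-1} and \eqref{eq:norm-2}.

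\textbf{Step 2: passing to higher $N$.} The space $L(h,c)$ is a direct sum of $L_0$-eigenspaces, and $M_{\pm r}$ shifts degree by $\mp r$. So it suffices to compare $\cL$ on a vector $v$ of degree $d$ with $\cL$ on $M_{\pm r}v$, which has degree $d\mp r$. Since $\cL$ is diagonal, vectors of different degrees are orthogonal in every $\|\cdot\|_k$. Write $v=\sum_d v_d$. Then
$$
\|M_{\pm r}v\|_N^2=\sum_d (1+d\mp r)^{N}\,\|M_{\pm r}v_d\|_0^2 ,
$$
because each $M_{\pm r}v_d$ lies in a single degree, and distinct $d$ give distinct output degrees.

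For $M_{p+1/2}$ (degree decreasing) we have $1+d-r\le 1+d$. Using Step 1,
$$
\|M_{p+1/2}v\|_N^2\le \sum_d (1+d)^N\,C^2p^2(1+d)\,\|v_d\|_0^2=C^2p^2\|v\|_{N+1}^2 ,
$$
where the inner bound is valid because $1+d\ge 1+h\ge 1$. This gives \eqref{eq:norm-1}.

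For $M_{-p-1/2}$ (degree increasing) we use
$$
1+d+r\le (1+d)(1+r)\le (1+d)(2p+2).
$$
Then
$$
\|M_{-r}v\|_N^2\le (2p+2)^N\, C^2p^2\,\|v\|_{N+1}^2 ,
$$
so
$$
\|M_{-r}\|_{N+1\to N}\le C'p^{N/2+1}.
$$
This is \eqref{eq:norm-2}.

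\textbf{Step 3: extension to $\cH_\infty$.} These bounds hold on the dense subspace $L(h,c)$. The operators therefore extend by continuity to $\cH_{N+1}\to\cH_N$, and in particular to $\cH_\infty$.

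\textbf{Main obstacle.} The only delicate point is the bookkeeping of the $N$-dependence of the constant in the degree-raising case. The factor $(1+d+r)^N/(1+d)^N$ must be bounded uniformly in $d$ by a power of $p$, and this works because $d\ge h\ge0$. A finer estimate is not needed, since the statement permits the exponent $N/2+1$.
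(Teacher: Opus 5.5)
Your proof is correct, but it follows a genuinely different and much shorter route than the paper's.

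\textbf{The paper's route.} The paper restricts $L(h,c)$ to the $\osp(2|1)$-subalgebras $\frg^{(p)}$ spanned by $L_{\pm(2p+1)}$, $L_0$, $M_{\pm(p+1/2)}$. It decomposes the restriction into irreducible unitary highest weight modules $V(\lambda)$ with $\lambda=\sigma(p)+\frac{h+\xi}{2p+1}$. It realizes each of these in the explicit model in $z,\omega$ with the norms \eqref{eq:squares}. Finally, it bounds the weighted-shift ratios $\|M^{(p)}_{\pm 1/2}e\|_N/\|e\|_{N+1}$ on basis vectors $e$, uniformly in $n$ and $\xi$.

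\textbf{Your route.} You bypass this structure entirely. The single identity
\[
\|M_rv\|^2+\|M_{-r}v\|^2=\bigl\langle \bigl(2L_0+\tfrac c3(r^2-\tfrac14)\bigr)v,v\bigr\rangle
\]
gives the $1\to 0$ bound by positivity. This identity is exactly the relation $[M^{(p)}_{1/2},M^{(p)}_{-1/2}]=2L^{(p)}_0$, which the paper uses only to identify $\frg^{(p)}$ with $\osp(2|1)$. You then use the grading to pass to general $N$.

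\textbf{What the two arguments share.} Your second step is in substance the same as the paper's. In both, the factor $p^{N/2}$ in \eqref{eq:norm-2} comes from the ratio of $\cL$-eigenvalues before and after the shift. You bound that ratio by $1+r$; the paper bounds it by $S(0)\le p\,\frac{h+5/2}{h+1}$.

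\textbf{Trade-off.} Your version needs no input from the representation theory of $\osp(2|1)$ or from the structure of the restriction of $L(h,c)$ to $\frg^{(p)}$. It uses only the defining relations, positivity of the Shapovalov form, and the $L_0$-grading, so it is both shorter and more robust. The paper's version yields exact, component-by-component formulas for the action of $M_{\pm(p+1/2)}$, which is more than the lemma needs.

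\textbf{Implicit hypotheses.} You use $h\ge 0$, which gives $1+d\ge 1$ and $d\ge 0$. You use $c\ge 0$, needed for your displayed inequality as written. You use $p\ge 1$, for $\sqrt{2+\frac c3p(p+1)}\le Cp$. All three are automatic in the unitary setting of the lemma.
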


{\sc Proof.}
1) {\it Reduction of the question to $\osp(2|1)$.}
 Fix $p=1$, 2, \dots. Denote
$$
\sigma(p)=\frac c6 \frac{p(p+1)}{2p+1}.
$$
Consider the 5-dimensional subalgebra $\frg^{(p)}$ in $\ns$ with generators
\begin{align*}
L^{(p)}_{\pm 1}&:=\frac{1}{2p+1} L_{\pm(2p+1)}, \qquad M_{\pm 1/2}^{(p)}:=\frac 1{(2p+1)^{1/2}} M_{\pm(p+1/2)},
\\
L^{(p)}_0&:= \frac{1}{2p+1} L_0+\sigma(p).
\end{align*}
It is easy to see that these generators satisfy to commutation relations
\eqref{eq:osp21-1}--\eqref{eq:osp21-4}.

Restricting our $L(h,c)$ to $\frg^{(p)}$, we get a countable direct sum of highest weight representations of 
$\osp(2|1)\simeq \frg^{(p)}$, their highest weights $\lambda$ have the form
\begin{equation}
\lambda=\sigma(p)+ \frac{h+\xi}{2p+1}, \qquad\text{where $\xi=0$,
$\frac12$, 1, $\frac32$, 2, $\dots$.}
\label{eq:lambda-in}
\end{equation}
So we need uniform estimates of norms $\|\cdot\|_{N+1\to N}$ of operators $M_{\pm 1/2}^{(p)}$ in $\frg^{(p)}$-modules $V(\lambda)$. 
The restriction of $\cL$ to such module has the form
$$
\cL=1+L_0=1+(2p+1)\bigl( L_0^{(p)}-\sigma(p)\bigr).
$$

We realize each module of this type as in the previous subsection

\sm

{\it Estimates of norms for $M_{1/2}^{(p)}$.} These generator
 sends
$$
\dots \to \C \omega z^n \to \C z^n\to \C \omega z^{n-1}\to\dots,
$$
so it is a weighted shift. One dimensional subspaces $\C z^n$, $\C\omega z^n$ are eigenspaces of $\cL$. 
Therefore, it is sufficient to obtain  upper estimates  for
$$
\frac{\|M_{1/2}^{(p)} z^n\|_N} {\| z^n\|_{N+1}},
 \qquad
 \frac{\|M_{1/2}^{(p)}\omega z^n\|_N}{ \|\omega z^n\|_{N+1}}.
 $$
These estimates must be uniform in 
$\xi$ (see formula \eqref{eq:lambda-in}) and $n$. 
 
Keeping in  mind \eqref{eq:1/2}, \eqref{eq:squares}, we get
\begin{multline*}
\frac {\|M_{1/2}^{(p)} z^n\|_N^2} {\|z^n\|_{N+1}^2}= \frac {n^2\, \|\omega z^{n-1}\|_N^2} {\| z^n\|_{N+1}^2}
=\\=
\frac{\bigl(1+(2p+1)(n+\lambda-\frac12 -\sigma(p)\bigr)^N}{\bigl(1+(2p+1)(n+\lambda- \sigma(p))^{N+1}} 
\cdot n^2 \cdot \frac{(n-1)!}{(2\lambda)_n} \cdot \frac {(2\lambda)_n}{n!}
=\\=
\Bigl(\frac{1+(2p+1)(n+\lambda-\frac12 -\sigma(p)}{1+(2p+1)(n+\lambda- \sigma(p))}\Bigr)^N
\cdot \frac{1}{2p+1}\cdot\frac {n}{n+ \lambda-\sigma(p)+\frac{1}{2p+1}}.
\end{multline*}
For $n=0$ we have zero. So, let $n\ge 1$.
By \eqref{eq:lambda-in}, the first factor and the third factor are $<1$ and the whole expression
is $\le 1/(2p+1)$.

Next, 
\begin{multline*}
 \frac {\|M_{1/2}^{(p)}\omega z^n\|_N^2} {\|\omega z^n\|_{N+1}^2}= 
  \frac {\| z^n\|_N^2} {\|\omega z^n\|_{N+1}^2}=
 \\=\frac{\bigl(1+(2p+1)(n+\lambda -\sigma(p)\bigr)^N}
 {\bigl(1+(2p+1)(n+\lambda+\frac12- \sigma(p))^{N+1}} 
% \frac {\|M_{1/2}^{(p)} z^n\|_N^2} {\|z^n\|_{N+1}^2}
 \cdot \frac{n!}{(2\lambda)_n}\cdot\frac{(2\lambda)_{n+1}}{n!}
=\\= 
 \Bigl(\frac{1+(2p+1)(n+\lambda -\sigma(p)}
 {1+(2p+1)(n+\lambda+\frac12- \sigma(p))}\Bigr)^N
 \cdot
 \frac{n+2\lambda}
 {1+(2p+1)(n+\lambda+\boxed{1/2}- \sigma(p))}.
\end{multline*}
The first factor $< 1$. Let us estimate the second factor.
Let us remove boxed $1/2$. This operation enlarges our expression.
Then we estimate
\begin{multline}
 \frac{n+2\lambda}
 {1+(2p+1)(n+\lambda- \sigma(p))}
 = \frac{n+ 2\bigl(\frac{h+\xi}{2p+1}+\sigma(p)\bigr)}
 {1+(2p+1)\bigl(n+\frac{h+\xi}{2p+1} \bigr) }
 =\\ 
 \frac{n}{1+(2p+1)\bigl(n+\frac{h+\xi}{2p+1} \bigr)\bigr] }
 +
 \frac{2\cdot\frac{h+\xi}{2p+1}}{1+(2p+1)\bigl(n+\frac{h+\xi}{2p+1} \bigr) }
 +
  \frac{\sigma(p)}{1+(2p+1)\bigl(n+\frac{h+\xi}{2p+1} \bigr) } 
  \le\\\le 
  \frac{n}{(2p+1)n}+\frac{2\cdot\frac{h+\xi}{2p+1}}{h+\xi}+
  \frac{\sigma(p)}{1}\le C\cdot p.
  \label{eq:estimates}
\end{multline}

%The last factor is monotone (decrease or increase) in the interval $n\in [0,+\infty)$.
%At $+\infty$ we have 1, for $n=0$ it is $>2\lambda/\lambda=2$. So the whole expression
%is $\le 2/(2p+1)$.

Since $M_{p+1/2}=(2p+1)^{1/2} M^{(p)}_{1/2}$, we get \eqref{eq:norm-1}.

\sm

{\it Estimates of norms for
 $M_{-1/2}^{(p)}$.}
\begin{multline*}
  \frac {\|M_{-1/2}^{(p)} z^n\|_N^2} {\|z^n\|_{N+1}^2}=  \frac {(n+2\lambda)^2\|\omega z^n\|_N^2} {\|z^n\|_{N+1}^2}
  =\\=
    \frac{\bigl(1+(2p+1)(n+\lambda+\frac12 -\sigma(p)\bigr)^N}
 {\bigl(1+(2p+1)(n+\lambda- \sigma(p))\bigr)^{N+1}}
 \cdot \frac{(n+2\lambda)^2\cdot n! \cdot (2\lambda)_{n}}
 {(2\lambda)_{n+1}\cdot n!}
   =\\= 
   \Bigl(\frac{1+(2p+1)(n+\lambda+\frac12 -\sigma(p)}
 {1+(2p+1)(n+\lambda- \sigma(p))}\Bigr)^N
%\Bigl(\frac{n+\lambda+\frac12 +\frac 1{2p+1}-\sigma(p)}
%{n+\lambda +\frac 1{2p+1}-\sigma(p)} \Bigr)^{N}
\cdot
 \frac{n+2\lambda}{{1+(2p+1)(n+\lambda- \sigma(p))} }.
\end{multline*}
The second factor was estimated in \eqref{eq:estimates}.
 The expression $S(n)$ in the big brackets $(\dots)^{N}$ decreases in $n$. So its maximum for fixed $\lambda$ is achieved 
at $n=0$, i.e., we have 
\begin{multline}
S(0)=
\frac{1+(2p+1)(n+\lambda+\frac12 -\sigma(p)}
 {1+(2p+1)(n+\lambda- \sigma(p))}
%\Bigl(\frac{n+\lambda+\frac12 +\frac 1{2p+1}-\sigma(p)}
%{n+\lambda +\frac 1{2p+1}-\sigma(p)}
%\frac{1/(2p+1)+\lambda+\frac12 -\sigma(p)}
% {1/(2p+1)+\lambda- \sigma(p)}
%\frac{\lambda+\frac12 +\frac 1{2p+1}-\sigma(p)}{\lambda +\frac 1{2p+1}-%\sigma(p)}
=
\frac{1+(2p+1)(\frac12 +\frac{h+\xi}{2p+1})}
 {1+(2p+1)\frac{h+\xi}{2p+1}} 
 \le \\\le
\frac{1+(2p+1)(\frac12 +\frac{h}{2p+1})}
 {1+(2p+1)\frac{h}{2p+1}} 
 =\frac{p+\frac32+h}{1+h}\le p\cdot \frac{h+\frac52}{h+1}, 
 \label{eq:estimates2}
\end{multline}
and we get an uniform estimate.
%This function in $\lambda$ is decreasing. The minimal possible value of $%\lambda$ is $\lambda=\sigma(p)+h/(2p+1)$.
%So, the expression in the big brackets is
%$$
%+(\dots)^N\le\biggl(\frac{\frac12 + \frac{h+A}{2p+1}}{\frac{h+A}{2p+1}} %\biggr)^N\le (2p+1)^N \Bigl(\frac{\frac12+h+1}{h+1}\Bigr)^N. 
%$$

It remains to  estimate
\begin{multline*}
  \frac {\|M_{-1/2}^{(p)}\omega z^n\|_N^2} {\|\omega z^n\|_{N+1}^2}=\frac {\| z^{n+1}\|_N^2} {\|\omega z^n\|_{N+1}^2}
=\\=
   \Bigl(\frac{1+(2p+1)(n+\lambda+1 -\sigma(p)}
 {1+(2p+1)(n+\lambda+\frac12- \sigma(p))}\Bigr)^N
%\Bigl(\frac{n+\lambda +1-\sigma(p)}{n+\lambda +\frac12+\frac 1{2p+1}-\sigma(p)} \Bigr)^{N}
\cdot \frac 1{2p+1}\cdot \frac{n+1}{n+\lambda +\frac12+\frac 1{2p+1}-\sigma(p) }.
\end{multline*}
The last factor is monotonic in $n\in[0,\infty)$. The value at $\infty$ is 1, the value at $n=0$ is 
$$
\frac{1}{\lambda +\frac12+\frac 1{2p+1}-\sigma(p) }
\le\frac{1}{1/2}  = 2.
$$
We estimate the first factor as in \eqref{eq:estimates2}
and come to the desired statement. 
%The factor $(\dots)^N$ decreasing as a function in $n$. For $n=0$ we have 
%$$
%\Bigl(\frac{\lambda +1-\sigma(p)}{\lambda +\frac12+\frac 1{2p+1}-\sigma(p)} %\Bigr)^{N}\le 2^N.
%$$
%This implies the estimate \eqref{eq:norm-2}.
\hfill $\square$

\sm

{\bf\punct Proof of Theorem \ref{th:2}.} %Denote
%by $\cH(N)$ the completion of $\cH$ with respect to a norm $\|\cdot\|_N$.
%For each $N$ we have natural embedding $\cH(N+1)\to \cH(N)$ with a dense image and $\cH_\infty=\cap \cH(N)$.
{\sc Statement a.}
Let 
$$b(\phi)=\sum c_k e^{i(2k+1)\phi}\in C^\infty_-(S^1),
$$
therefore, for each $N$ we have $c_k=o(k^{-N})$.

 By Lemma \ref{l:norms}, for each $N$
the series $ \sum c_k M_k$ uniformly converges as a series of operators $\cH_{N+1}\to\cH_N$ between Hilbert spaces. 
The sum continuously depends on $b\in C^\infty_-(S^1)$.
Therefore, this series %pointwise 
converges in $\cH_\infty$.  By the continuity, we get a continuous projective  representation of the Lie superalgebra 
$\cont(S^{1|1}_\bullet)$. 

\sm

{\sc Statement b.}
Next, let $b$ be real. Then for any $N$ the corresponding operator $e^{i\pi/4} M(b)$ is continuous as an operator $\cH_{N+1}\to \cH_{N}$, 
therefore it is continuous as an operator $\cH_{N+1}\to \cH_{N-1}$. The same holds for the commutator $[\cL, e^{i\pi/4} M(b)]=[L_0,e^{i\pi/4} M(b)]=i e^{i\pi/4} M(b')$.
Indeed, such a commutator has the same form with another
$b$: 
$$\bigl[i\partial_\phi, 
b(\phi)(\theta\partial_\phi-\partial_\theta)\bigr]=
b'(\phi)(\theta\partial_\phi-\partial_\theta).
$$

By  Nelson's commutator theorem, see \cite{RS}, Theorem X.36, we get self-adjointness of $e^{i\pi/4} M(b)$.

\sm

{\bf \punct Proof of Theorem \ref{th:3}.}
Let $Z$ range in  $\cont(S^{1|1}_\bullet)$, $g$ in $\SDiff^{(2)}(S^1)$. Denote by $Z\mapsto Z^{[g]}$ the natural action of  $\Diff^{(2})(S^1)$
on $\ns$. Denote operators of our representation corresponding to $Z$ by $\rho(Z)$ (instead of more usual $Z$).
  Our statement
reduces to the following lemma: 

\begin{lemma}
For any $g\in \SDiff(S^1)$, $Z\in \ns$,
the operators $\rho(g)$,  $\rho(Z)$ in the space   $\cH$ satisfy
the property 
 \begin{equation}
 \rho(g) \rho(Z) \rho(g)^{-1}=\rho(Z^{[g]})+\tau(g,Z),
 \label{eq:gZ}
 \end{equation}
  where $\tau(g,Z)$ is a scalar operator
\end{lemma}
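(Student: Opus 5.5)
We split the statement according to the parity of $Z$ and treat the even and odd parts separately.

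\emph{Even $Z$.} For even $Z=L(a)$, i.e.\ $Z\in\vect(S^1)$ realized as a vector field on $S^1$, the identity \eqref{eq:gZ} is the standard covariance property of the projective representation $\rho_{h,c}$ of $\SDiff(S^1)$ integrating the Virasoro action (Goodman--Wallach \cite{GW}). The scalar $\tau(g,Z)$ is then the Schwarzian-type term coming from the central extension. We take this as given. By restriction, $\cH$ splits as a direct sum of $\vir$-modules $L_\vir(h+j/2,c)$, and $\rho(g)$ acts diagonally on this sum. By the lemma on cocycles, all these summands can be given equivalent cocycles, so a single $\rho(g)$ is defined. On each summand \eqref{eq:gZ} holds with the same scalar $\tau$, since that scalar depends only on $c$.

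\emph{Odd $Z$.} For odd $Z=M(b)$ we set $\tau=0$ and must show
\[
\rho(g)\rho(M(b))\rho(g)^{-1}=\rho\bigl(M(b^{[g]})\bigr),
\]
where $b^{[g]}$ is the natural action of $g$ on $(-1/2)$-densities. We argue by infinitesimal generation first. Take $g_t=\exp(tX)$ for a real vector field $X$. Put
\[
F(t):=\rho(g_t)^{-1}\rho\bigl(M(b^{[g_t]})\bigr)\rho(g_t)
\]
and apply it to a vector of $\cH_\infty$. Differentiating in $t$ gives
\[
F'(t)=\rho(g_t)^{-1}\Bigl(-[\rho(X),\rho(M(b_t))]+\rho\bigl(M(\tfrac{d}{dt}b_t)\bigr)\Bigr)\rho(g_t).
\]
This vanishes by the superalgebra relation $[L(a),M(b)]_s=M(\cdot)$. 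Indeed, $[\vir,M]$ has no central term, which holds for Fourier modes by \eqref{eq:NS1.5} and extends to all $b$ by the continuity established in Theorem~\ref{th:2}a). Hence $F$ is constant, and \eqref{eq:gZ} holds for $g$ in one-parameter subgroups. Products of exponentials generate $\SDiff(S^1)$ (it is simple, or just use that exponentials generate a nontrivial normal subgroup). Compatibility with products then follows because the odd relation has zero scalar. The even relation for the product introduces only scalars, which cancel in conjugation.

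\emph{Main obstacle: differentiation on $\cH_\infty$.} To differentiate legitimately we need three facts:
\begin{itemize}
\item $\rho(g_t)$ preserves $\cH_\infty$;
\item $\rho(g_t)$ is strongly continuous and differentiable in the Fréchet topology of $\cH_\infty$;
\item $t\mapsto\rho(M(b_t))$ is continuous as a map $\cH_{N+1}\to\cH_N$, uniformly on compacts.
\end{itemize}
The first two come from Goodman--Wallach: $\rho(g)$ is bounded on each $\cH_k$ with locally uniform bounds. The third follows from Lemma~\ref{l:norms}, because $b_t$ depends smoothly on $t$ in $C^\infty_-(S^1)$ and the Fourier coefficients decay fast enough to absorb the polynomial growth $p^{N/2+1}$. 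With these in hand, a product-rule argument for $\langle F(t)v,w\rangle$ with $v,w\in\cH_\infty$ completes the proof.
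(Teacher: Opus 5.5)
Your argument is correct, but it takes a different route from the paper. The paper never differentiates in $t$. It forms the twisted representation $\rho^{[g]}(Z)=\rho(g)^{-1}\rho(Z^{[g]})\rho(g)$. By the Goodman--Wallach even case, this agrees with $\rho$ on $\vir$, so it is again a unitary highest weight $\ns$-module with the same highest vector and weight $(h,c)$. The Shapovalov form then gives a unitary intertwiner $U(g)$ between $\rho^{[g]}$ and $\rho$. It preserves the finite-dimensional $L_0$-eigenspaces and is unique up to a scalar (implicitly by irreducibility). So $g\mapsto U(g)$ is a projective unitary representation of $\Diff^{(2)}(S^1)$ on these eigenspaces. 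It is therefore scalar, because $\SL(2,\R)\subset\Diff^{(2)}(S^1)$ has no nontrivial finite-dimensional unitary projective representations. Hence $\rho^{[g]}=\rho$, which in particular gives $\tau(g,M(b))=0$.

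That route is algebraic and needs no control of $t\mapsto\rho(g_t)$ on the scale $\cH_k$. It does rely on irreducibility and on a rigidity theorem, and tacitly on enough continuity of $g\mapsto U(g)$ to invoke Bargmann's theorem. Your route replaces rigidity by analysis:
\begin{itemize}
\item the Goodman--Wallach bounds for $\rho(g)$ on each $\cH_k$;
\item the estimates of Lemma~\ref{l:norms};
\item the identity $[\rho(X),\rho(M(b))]=\rho([X,M(b)])$ on $\cH_\infty$ from Theorem~\ref{th:2};
\item generation of the group by exponentials.
\end{itemize}
In exchange it never uses irreducibility, so it applies verbatim to reducible unitary positive-energy $\ns$-representations. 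It also identifies $\rho(g)$ explicitly, up to one global scalar, with products of the one-parameter groups $e^{t\overline{\rho(X)}}$.

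One remark in your even-part paragraph needs correcting. The paper's cocycle lemma only covers $h-h'\in\Z$. For the summands $L_\vir(h+j/2,c)$ with $j$ odd, the conclusion actually fails on the diffeomorphism group of the Virasoro circle $\R/\pi\Z$. There the shift $\phi\mapsto\phi+\pi$ is the identity. Yet any continuous projective integration must represent it by a multiple of $e^{-2\pi i L_0}$ (recall $\partial_\phi=-2iL_0$), and this operator equals $e^{-2\pi i h}(-1)^j$ on $L_\vir(h+j/2,c)$, which is not a scalar on $\cH$. So a coherent $\rho(g)$ on all of $\cH$ exists only for the double cover $\SDiff^{(2)}(S^1)$. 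That is in any case the group acting on $C^\infty_-(S^1)$, and it is the one the paper actually uses in its proof.

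Moreover, for odd $Z$ the relative phases of $\rho(g)$ on different $\vir$-summands are not harmless, since $\rho(M(b))$ maps between summands. Your differentiation step is valid precisely because $\rho(g_t)$ is a scalar multiple of $e^{t\overline{\rho(X)}}$ on the whole of $\cH$. So take Goodman--Wallach's integration on all of $\cH$ as the definition of $\rho(g)$, rather than assembling it summand by summand.
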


{\sc Proof.}
Denote by $[\cH_\infty]_n\subset \cH_\infty$ and $[L(h,c)]_n\subset L(h,c)$ (finite-dimensional) subspaces
of vectors $w$ satisfying $Lw=(h+n)w$. By definition, $[\cH_\infty]_n\simeq [L_\ns(h,c)]_n$.

By Goodman, Wallach \cite{GW}, Theorem 4.2, the property \eqref{eq:gZ}
 holds for
$Z\in \cont_\0(S^{1|1}_\bullet)\simeq \vect(S^1)$. 
 So we must prove the similar statement for $Y$ ranging in $\cont_\1^\sim$.
The map $Z\mapsto \rho(Z^{[g]})$ is a unitary  representation of $\ns$.

Consider the  projective representation
$$
\rho^{[g]}(Z):=\rho(g)^{-1} \rho(Z^{[g]}) \rho(g)
$$
of $\cont(S^{1|1}_\bullet)$.
%For 
%$X\in \cont_\0^\sim(S^{1|1})=\vect(S^1)$
% we have $\rho^{(g)}(X)=\rho(X)+\tau(g,X)$.
Its restriction to $\vect(S^1)$
is the the same projective representation of $\vect(S^1)$.
 Therefore, 
 $\rho^{[g]}$ is a   representation
 of $\ns$ with highest weight $(h,c)$ and the same highest vector
$v$. In particular, $\rho^{(g)}$ is a representation of $\ns$. So, 
we have a map, say $\tau_g$, from
 $L(h,c)$ to $\cH_\infty$
defined by conditions 
\begin{multline*}
 L_{-1}^{k_1} L_{-2}^{k_2}\dots M_{-1/2}^{\epsilon_{1/2}} M_{-3/2}^{\epsilon_{3/2}}\dots v\mapsto
 \\
 \mapsto
 L_{-1}^{k_1} L_{-2}^{k_2}\dots \rho^{[g]}(M_{-1/2})^{\epsilon_{1/2}} \rho^{[g]}(M_{-3/2})^{\epsilon_{3/2}}\dots v.
\end{multline*}
It sends $[L(h,c)]_n\to [\cH_\infty]_n$. By the construction, it preserves the Shapovalov form. So we have a family of unitary
maps $[L(h,c)]_n\to [L(h,c)]_n$ and therefore $[\cH_\infty]_n \to [\cH_\infty]_n$. So, we have a unitary operator,
say $U(g)$, intertwining the representations $\rho_{h,c}^{[g]}$ and $\rho_{h,c}$, it is defined up to a scalar factor.
Therefore, for any $g_1$, $g_2\in \Diff^{(2)}(S^1)$, we have
$$
U(g_1) U(g_2)= c(g_1,g_2) U(g_1g_2), \text{where $c(g_1,g_2)$ is a scalar,}
$$
i.e., we get a projective unitary representation of $\Diff^{(2)}(S^1)$. The finite-dimensional subspaces
$[\cH_\infty]_n$ are $\Diff^{(2)}$-invariant. But $\Diff^{(2)}(S^1)$ has no nontrivial finite-dimensional representations%
\footnote{The group $\Diff^{(2)}(S^1)$ contains $\SL(2,\R)$. According the general Bargmann's theorem \cite{Bar} about projective unitary
representations of Lie groups, any projective unitary representation of $\SL(2,\R)$ is a linear representation of its universal covering
group $\SL(2,\R)^\sim$. Classification of unitary representations of $\SL(2,\R)^\sim$ is well known after Puka\'nszky \cite{Puk}. A unique finite-dimensional unitary representation is the trivial one-dimensional representation.}.
Therefore, $U(g)$
are scalar operators.
\hfill $\square$

%%%%%%%%%%%%%%%%%%%%%%%%%%%%%%%%%%%%%%%%%%%%%%%%%%%%%

%%%%%%%%%%%%%%%%%%%%%%%%%%%%%%%%%%%%%%%%%%%%%%%%%%%%%

%%%%%%%%%%%%%%%%%%%%%%%%%%%%%%%%%%%%%%%%%%%%%%%%%%%%%

%%%%%%%%%%%%%%%%%%%%%%%%%%%%%%%%%%%%%%%%%%%%%%%%%%%%%

%%%%%%%%%%%%%%%%%%%%%%%%%%%%%%%%%%%%%%%%%%%%%%%%%%%%%

%%%%%%%%%%%%%%%%%%%%%%%%%%%%%%%%%%%%%%%%%%%%%%%%%%%%%

\section{Affine relations and the Potapov transform%
	\label{s:affine-relations}}

\COUNTERS

For examination of the semigroup of super-annuli
we need some preliminaries on superlinear and superaffine relations.
{\it In this section, we discuss relations in  finite-dimensional
modules}, keeping in mind the space $\cW[\cA]$, which was
considered in Sect. \ref{s:embedding}.

\sm

{\bf\punct  Relations.} Let $X$, $Y$ be sets. 
A {\it relation} $H: X\tto  Y$
is the subset in $X\times Y$. For relations $H: X\tto  Y$,
 $G: Y\tto  Z$
we define their product $G\circ H: X\tto  Z$
as set of all $(x,z)\in X\times Z$, for which there exists $y\in Y$ such that $(x,y)\in H$,
$(y,z)\in Y\times Z$.

\sm 

{\sc Example.} For a map $\phi:X\to Y$ its graph $\graph(\phi)$ is a relation.
Product of maps corresponds to product of relations.
\hfill $\boxtimes$

\sm

{\bf\punct Relations in modules.} 
Let $B$ be a ring, $V$, $W$ be $B$-modules
(below $B=\C$ or $B=\cA$). A {\it $B$-relation} $V\tto W$ is a submodule 
in $V\oplus W$. 

For a $B$-relation $L:V\tto W$ we define the following $B$-modules.

\sm 

--- the {\it kernel} $\ker L\subset V$ is the intersection $V\cap L$;

\sm 

--- the {\it image} $\im L\subset W$ is the image of the projection $L\to W$;

\sm 

--- the {\it domain} $\im L\subset V$ is the image of the projection $L\to V$;

\sm 

--- the {\it indefinity} $\ker L\subset V$ is the intersection $W\cap L$;

\sm 

A product of $B$-relations is a $B$-relation.

%For  relations $L:V\tto W$,
%$M:W\tto Y$ we define their {\it  product} $M\circ L:V\tto Y$ as the set of all
%$v\oplus y$ such that there exists $w\in W$ such that $v\oplus w\in L$, $w\oplus y\in Y$.
%This product always is welld efined.

\sm

{\bf \punct Linear relations.}
Let $V$, $W$ be complex spaces.
A {\it linear relation} is a $\C$-relation between finite-dimensional complex linear spaces.  
Below we consider linear relations $L:V\tto W$ between topological vector spaces, then
$L$ is assumed to be closed.

\sm

%$L:V\tto W$ is a linear subspace in $V\oplus W$. If $V$, $W$ are topological vector spaces, then
%$L$ is assumed to be closed.
A product of linear relations $(L,M)\mapsto M\circ L$, where $L:V\tto W$, $M:W\tto Y$, 
is a continuous operation at points $(L,M)$
satisfying the {\it conditions of transversality}:
\begin{equation}
	\indef L\cap \ker M=0,\qquad \im L+\dom M=W.
	\label{eq:transversality}	
\end{equation}
see \cite{Ner-AA}, Lemma 2.2.

\sm 

{\bf \punct Affine relations.}	An {\it affine relation} $P:L\tto M$ is an affine subspace
in $L\oplus M$. We write such subspaces as $P=p+ L$, where $L:V\to W$ is a linear
relation and $p\in V\oplus W$ is a vector.
We say that $L$ is a {\it directrix} of $P$. 
It can be replaced by any vector $p'=p+ (v\oplus w)$.
where $v\oplus w\in L$. We write a vector $p$ in the form $p=p_V\oplus p_W\in V\oplus W$.

\begin{lemma}
	{\rm a)}  Under the  conditions of transversality \eqref{eq:transversality} 
	a product of $(p+L):V\tto W$ and  $(q+M):W\tto Y$ has the form $r+L\circ M$.
	
	\sm
	
	{\rm b)} 
	Precisely, we choose elements $l_W\in \Im L$, $m_W\in \dom M$ such that 
	\begin{equation}
		p_W+ l_W= q_W+ m_W.
		\label{eq:plqw}
	\end{equation}
	Choose $l_V\in V$ such that
	$l_V\oplus l_W\in L$ and  $m_Y\in Y$ such that $m_W\oplus m_Y\in M$. Then
	\begin{equation}
		r=r_V\oplus r_Y:=(p_V+l_V)\oplus (q_Y+ m_W).
		\label{eq:rr}
	\end{equation}
\end{lemma}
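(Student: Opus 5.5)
The plan is to verify directly from the definition of a product of relations that the product $(q+M)\circ(p+L)$ is nonempty, that it contains the explicit vector $r$ of \eqref{eq:rr}, and that subtracting $r$ gives exactly the linear relation $M\circ L$. This is the directrix written $L\circ M$ in statement a), in the paper's order of composition.

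First step: construct $r$ and show it lies in the product. By the second transversality condition in \eqref{eq:transversality}, $\im L+\dom M=W$. Hence $q_W-p_W\in W$ can be written as $l_W-m_W$ with $l_W\in\im L$ and $m_W\in\dom M$, which is precisely \eqref{eq:plqw}. By the definitions of image and domain there exist $l_V$ with $l_V\oplus l_W\in L$ and $m_Y$ with $m_W\oplus m_Y\in M$. Set $w_0:=p_W+l_W=q_W+m_W$. Then $(p_V+l_V)\oplus w_0\in p+L$ and $w_0\oplus(q_Y+m_Y)\in q+M$. So $r=(p_V+l_V)\oplus(q_Y+m_Y)$ lies in the product. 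I read the second summand of \eqref{eq:rr} as $q_Y+m_Y$; the printed $m_W$ should be a misprint, since $m_W\in W$ while $r_Y$ must lie in $Y$.

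Second step: show that the product is exactly $r+M\circ L$.

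\emph{Inclusion $\subseteq$.} Let $v\oplus y$ lie in the product, with intermediate vector $w$, so that $v\oplus w\in p+L$ and $w\oplus y\in q+M$. Subtracting the two memberships of the first step gives $(v-r_V)\oplus(w-w_0)\in L$ and $(w-w_0)\oplus(y-r_Y)\in M$. Hence $(v-r_V)\oplus(y-r_Y)\in M\circ L$.

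\emph{Inclusion $\supseteq$.} Conversely, take $a\oplus c\in M\circ L$, witnessed by $b$ with $a\oplus b\in L$ and $b\oplus c\in M$. Adding these to the memberships of the first step shows that $r+(a\oplus c)$ lies in the product, with intermediate vector $w_0+b$.

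Third step: conclude. The product is therefore an affine relation with directrix $M\circ L$, which is statement a), and $r$ is the explicit base point of statement b). A different choice of $l_W$, $m_W$, $l_V$, $m_Y$ changes $r$ only by an element of $M\circ L$, by the same subtraction argument, consistent with the freedom in choosing $p'$ noted before the lemma.

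The only genuine obstacle is nonemptiness, and this is exactly where $\im L+\dom M=W$ is used. The other condition, $\indef L\cap\ker M=0$, is not needed for the set-theoretic identity. Together with the first condition, it is what makes the product of directrices closed and continuous, as in \cite{Ner-AA}, Lemma 2.2. In the topological setting I would invoke that lemma for closedness of $M\circ L$, so that $r+M\circ L$ is again a closed affine relation.
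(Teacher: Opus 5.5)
Your proof is correct and is essentially the paper's argument: the paper also uses $\im L+\dom M=W$ to obtain $l_W$, $m_W$, forms the base points $p'=(p_V+l_V)\oplus(p_W+l_W)\in p+L$ and $q'=(q_W+m_W)\oplus(q_Y+m_Y)\in q+M$ with equal middle coordinates, and shows $r+M\circ L\subset(q+M)\circ(p+L)$, leaving the reverse inclusion to ``inversion of our considerations,'' which you carry out explicitly. Your corrections ($r_Y=q_Y+m_Y$ and directrix $M\circ L$) agree with what the paper's own proof actually uses.
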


{\sc Proof.} By \eqref{eq:transversality} we can find $l_W$, $m_W$
satisfying \eqref{eq:plqw}. Set 
$$p':=(p_V+l_V)\oplus (p_W+ l_W),   \qquad q'=(q_W+ m_W)\oplus (q_Y+ m_Y).$$ 
Therefore  \eqref{eq:rr} is contained in $(q+M)\circ(p+L)$. 
Let $v\oplus y\in M\circ L$. We take $w\in W$ such $v\oplus w\in L$, $w\oplus y\in M$.
Then 
\begin{align*}
	p'+v\oplus w=(p'_V+v)\oplus(p'_W+w)  \in p+L;
	\\
	q'+w\oplus y=(q'_W+w)\oplus(q'_Y+y)\in q+M.
\end{align*} 
But $p'_W=q'_W$, $p'_V=r_V$, $q'_Y=r_Y$. Therefore, $r+M\circ L\subset (q+M)\circ(p+L)$. 
The inverse inclusion follows from inversion of our considerations.
\hfill $\boxtimes$

\sm

\sm

{\bf \punct Super-Grassmannians.} Let $V=V_\0\oplus V_\1$ be a  superlinear space of dimension $p|q$.
A subspace $L$ of dimension $r|s$ in $V[\cA]$ is a submodule generated by vectors 
$u_1$, \dots, $u_r\in V[\cA]_\0$ and $v_1$, \dots, $v_s\in V[\cA]_\1$ such that

\sm

--- $(u_1)_\downarrow$, \dots, $(u_r)_\downarrow\in V_\0$ are linear independent;

\sm 

--- $(v_1)_\downarrow$, \dots, $(v_s)_\downarrow\in V_\1$ are linear independent.

\sm

We denote the set of all such subspaces 
by $\Gr_{p|q}^{r|s}(\cA)=\Gr^{r|s}(V;\cA)$ ({\it super-Grassmannian}). The map $\pi_\downarrow:L\mapsto L_\downarrow$ sends $\Gr_{p|q}^{r|s}[\cA]$
to the product of the usual Grassmannians $\Gr_p^r(V_\0)\times \Gr_q^s(V_\1)$.
We also have
maps
$$
\pi_{\0\downarrow}: \Gr^{r|s}(V;\cA)\to \Gr^r(V_\0),
\qquad 
\pi_{\1\downarrow}: \Gr^{r|s}(V;\cA)\to \Gr^s(V_\1).
$$ 

\sm 

{\it Let $L\in \Gr^{r|s}(V;\cA)$, $\Gr^{\rho|\sigma}(V;\cA)$ satisfy the following conditions 
	of transversality}  
$$
\pi_{\0\downarrow} (L)+\pi_{\0\downarrow}(M)=V_\0,
\qquad
\pi_{\1\downarrow} (L)+\pi_{\1\downarrow}(M)=V_\1.
$$  
{\it Then}
$$
L\cap M\in \Gr_{p|q}^{r+\rho-p|s+\sigma-q}(V;\cA).
$$
See \cite{Ner-super}, Lemma 7.1.

\sm 

{\sc Remark.} Generally, an intersection of two subspaces is an $\cA$-submodule
but  it is not necessary a subspace.
\hfill $\boxtimes$

\sm

{\bf \punct Lagrangian super-Grassmannian.} Consider the superspace 
$V= \C^{2p|2q}$ and decompose it as
\begin{multline}
	V=V_\0\oplus V_\1=(V_\0^-\oplus V_\0^+)\oplus (V_\1^-\oplus V_\1^+)
	\simeq\\ \simeq \C^{2p}\oplus\C^{2q}
	=
	(\C^p\oplus\C^p)\oplus(\C^q\oplus \C^q).
	\label{eq:VVVV}
\end{multline}
Equip $V[\cA]$ with an orthosymplectic form $\Lambda_V(\cdot,\cdot)$ defined by the matrix
$$
\Lambda=
\left(
\begin{array}{cc|cc}
	0&1&0&0\\
	-1&0&0&0\\
	\hline 
	0&0&0&1\\
	0&0&1&0
\end{array}
\right).
$$

We save that a subspace $L\subset V[\cA]$ is {\it isotropic} if the form $\Lambda_V$ is zero on $L$.
A {\it Lagrangian subspace} is a maximal isotropic subspace. Equivalently,
it is an isotropic subspace of dimension $p|q$. 

Denote by $\Lagr(V;\cA)$ the {\it Lagrangian Grassmannian}, i.e., the set of all Lagrangian subspaces in $V$.
We have the map $\pi_\downarrow$ from $\Lagr(V;\cA)$
to the product of  Lagrangian Grassmannians
$\Lagr(V_\0)\times \Lagr(V_\1)$. The first factor is the usual Lagrangian Grassmannian
in $\C^{2p}$. The second factor is the set of maximal isotropic subspaces in the space $\C^{2q}$
equipped with the nondegenerate bilinear form 
$\begin{pmatrix}
	0&1\\1&0
\end{pmatrix}$. The second factor $\Lagr(V_\1)$ consists of two connected components
and the first factor is connected.
So the Grassmannian $\Lagr(V,\cA)$ consists of two components.

\sm 

Consider the following complementary Lagrangian subspaces 
$$
V^+=V_\0^+\oplus  V^+_\1, \qquad V^-=V_\0^- \oplus V^-_\1
$$
Consider an operator 
$$
S=\begin{pmatrix}
	A&B\\C&D
\end{pmatrix}:V^+[\cA]\to V^-[\cA],
$$
where $A$, $D$ are composed of elements $\in \cA_\0$ and $B$, $C$ are composed of elements $\in\cA_\1$.
Consider its graph  $\graph(S)$, which is a subspace in $V[\cA]$.
Then (see \cite{Ner-super}, Proposition 7.3)

\sm 

{\it  The subspace $\graph(S)$ is Lagrangian if and only if the matrix $S$ satisfies the following conditions}
$$
A=A^t, \qquad D=-D^t, \qquad B+C^t=0.
$$ 

\sm

Denote by $\cO(V,\cA)\subset \Lagr(V;\cA)$ the set consisting of subspaces $\graph(S)$.
Clearly, it is an open dense subset in one of connected components $\Lagr(V;\cA)$.

Consider the natural basis
$$
e_1,\dots,e_p,f_1,\dots,f_p\in V_\0,\qquad g_1,\dots,g_q,h_1,\dots,h_q\in V_\1.
$$
So 
$$
\Lambda_V(e_j,f_j)=1=-\Lambda_V(f_j,e_j),\qquad \Lambda_V(g_k,h_k)=1=\Lambda_V(h_k,g_k),
$$
other pairs of vectors are orthogonal.

We define the following operators $s_j$, $\sigma_k\in \OSp(V,\cA)$ of transposition of basis elements:
\begin{align}
	s_j^V\, e_j=-f_j,\quad s_j^V\, f_j=e_j;
	\label{eq:sef}
	\\
	\sigma^V_k\, g_k=h_k,\quad \sigma^V_k\, h_k=g_k,
	\label{eq:sigmasf}
\end{align}
other basis elements are fixed. Then the collection of charts 
\begin{multline*}
	s_{\alpha_1}^V\dots s_{\alpha_m}^V \sigma_{\beta_1}^V\dots \sigma_{\beta_n}^V \cO(V;\cA),
	\\
	\qquad \text{where $m\ge 0$, $n\ge 0$ and $1\le \alpha_1<\dots<\alpha_m\le p$, $1\le \beta_1<\dots <\beta_n\le q$,}
\end{multline*}
cover the whole Lagrangian Grassmannian $\Lagr(V;\cA)$.

\sm

{\bf \punct Superlinear relations.}
Let $V$, $W$ be superspaces. A superlinear relation $P:V[\cA]\tto W[\cA]$ is a subspace $P\subset V[\cA]\oplus W[\cA]$.

\sm 

{\sc Remark.}
The graph of a superlinear operator $V[\cA]\to W[\cA]$ is a superlinear relation
$V[\cA]\tto W[\cA]$.
\hfill $\boxtimes$

\sm

We say that super-linear relations $P:V[\cA]\tto W[\cA]$, $Q:W[\cA]\tto Y[\cA]$ are {\it transversal}
if

\sm 

--- $\pi_{\0\downarrow}(P):V_\0\tto W_\0$ is transversal to $\pi_{\0\downarrow}(Q):W_\0\tto Y_\0$;

\sm 

---  $\pi_{\1\downarrow}(P):V_\1\tto W_\1$ is transversal to $\pi_{\1\downarrow}(Q):W_\1\tto Y_\1$.

\sm

{\it Under the conditions of transversality a product of relations $P$, $Q$ is a superlinear relation and}
\begin{equation}
	\dim Q\circ P=\dim Q+\dim P-\dim W.
	\label{eq:dimQP}
\end{equation}
See \cite{Ner-super}, Theorem 8.2.

%\sm 
%
%{\sc Remark.} Without conditions of transversality
%a product of superlinear relations can be not a superlinear relation.
%\hfill $\square$

\sm

--- {\it  Under the conditions of transversality a product of Lagrangian superlinear relations
	$L:V[\cA]\tto W[\cA]$, $M:W[\cA]\tto Y[\cA]$ 
	is a Lagrangian lenear relation} see \cite{Ner-super},
	 Theorem 8.4.

\sm 

{\bf \punct  Superaffine relations.}
We define a {\it super-affine relation} $P:V[\cA]\tto W[\cA]$ as a
relation of the form $\xi+ P$, where $L$ (the {\it directrix} of $P$) is
a
superlinear relation $V[\cA]\tto W[\cA]$ and 
$p\in (V[\cA]\oplus W[\cA])_\0$. If directrices
of $P:V[\cA]\tto W[\cA]$, $Q:W[\cA]\tto Y[\cA]$ are transversal,
then a product $Q\circ P$ is a superaffine relation,  its  dimension is
\eqref{eq:dimQP}.

\sm 

{\bf\punct The Potapov transform of superlinear relations.}
For each superspace $V$ we fix
decomposition $V_\0$, $V_\1$ into direct sums
\begin{equation}
	V_\0=V_\0^+\oplus V_\0^-,\qquad V_\1=V_\1^+\oplus V_\1^-
	\label{eq:VVplus}
\end{equation}
Set
$$
V^+:=V_\0^+\oplus V_\1^+,\qquad V^-:=V_\0^-\oplus V_\1^-.
$$
Denote by $V^\diamond$ the space $V$ equipped with this decomposition.

Consider an operator 
\begin{multline}
	\Sigma:=\begin{pmatrix}A&B\\C&D\end{pmatrix}
	= \begin{pmatrix}
		A_{11}&A_{12}&B_{11}&B_{12}\\
		A_{21}&A_{22}&B_{21}&B_{22}\\	
		C_{11}&C_{12}&D_{11}&D_{12}\\
		C_{21}&C_{22}&D_{21}&D_{22}
	\end{pmatrix}:\\
	V^+[\cA]\oplus W^-[\cA]\to V^-[\cA]\oplus W^+[\cA]
	\label{eq:potapov}
\end{multline}
such that blocks with superscripts $11$, $22$ are composed of elements $\cA_\0$ and blocks 
with superscripts $12$ and $21$ are composed of elements of $\cA_\1$.

Let $L:V[\cA]\tto W[\cA]$  be the graph of $S$; by the definition,
we have
\begin{equation}
	\dim L=(\dim V_\0^+ +\dim W_\0^-)\,\bigr|\, (\dim V_\1^+ +\dim W_\1^-).
	\label{eq:dim}
\end{equation}
Then we say that $\Sigma=\Sigma[L]$ is the 
{\it Potapov tranform} of $L$.  
Denote by $\cO(V^\diamond,W^\diamond)$ the set of superlinear relations $V[\cA]\tto W[\cA]$,
for which a Potapov transform is well defined. It is an open dense subset in 
in the set of superlinear relations of dimension \eqref{eq:dim}

\sm

--- {\it Let $L\in \cO(V^\diamond,W^\diamond)$, $M\in \cA(W^\diamond,Y^\diamond)$.
	Let
	$$
	\Sigma[L]=\begin{pmatrix}
		A&B\\C&D
	\end{pmatrix}, \qquad \Sigma[M]=\begin{pmatrix}
		P&Q\\R&T
	\end{pmatrix}
	$$
	Let   $(1-PD)$, $(1-DP)$ be  invertible.
	Then $L$, $M$ are transversal, $\Sigma(M\circ L)$ is well defined
	and is given by the formula
	\begin{multline}
		\Sigma(M\circ L)= \begin{pmatrix}
			A&B\\C&D
		\end{pmatrix} \odot 	\begin{pmatrix}
			P&Q\\R&T
		\end{pmatrix}
		= \\ =
		\begin{pmatrix}
			A+B(1-PD)^{-1}PC& B(1-PD)^{-1}Q\\
			R(1-DP)^{-1} C& T+RD(1-PD)^{-1}Q
		\end{pmatrix}
		\label{eq:PiPi}
	\end{multline}
} See Theorem IV.3.3 in \cite{Ner-book}.

\sm

{\sc Remark.} 
Notice that invertibility of $(1-PD)$ is equivalent to the invertibility of 
$(1-P_\downarrow D_\downarrow)$. For the finite-dimensional case (which is discussed now)
inveribilities of $(1-PD)$ and $(1-DP)$ are equivalent. For an infinite-dimensional
case this is not so, but in the situation discussed below
operators $P_\downarrow$, $D_\downarrow$ are compact in the sense of
Hilbert spaces.
\hfill $\boxtimes$

\sm 

--- {\it  Let $g\in \begin{pmatrix}
		a&b\\c&d
	\end{pmatrix}$
	be an operator in $V^\diamond[\cA]$. Consider its graph
	$\graph(g)$.
	Let $a$ be invertible, then}
\begin{equation}
	\Sigma(\graph(g))=\begin{pmatrix}
		a^{-1}&-a^{-1}b\\ca^{-1}&d-ca^{-1}b
	\end{pmatrix}.
	\label{eq:Pi-matritsa}
\end{equation}

\sm

We also consider Potapov transforms for affine relations.
Namely, we consider
affine maps 	
$V^+[\cA]\oplus W^-[\cA]\to V^-[\cA]\oplus W^+[\cA]$,
then their graphs are affine relation.

\sm 

{\it 
Let $\wt L:V[\cA]\tto W[\cA]$, $\wt M:W[\cA]\tto Y[\cA]$
be affine relations, let they have Potapov transforms,
say
\begin{align}
\begin{pmatrix}
v_-& w_+
\end{pmatrix}=
\begin{pmatrix}
v_+& w_-
\end{pmatrix}
\begin{pmatrix}
A&B\\C&D\end{pmatrix}+
\begin{pmatrix}\gamma&\delta
\end{pmatrix};
\label{eq:pot1}
\\
\begin{pmatrix}
w_-& y_+
\end{pmatrix}=
\begin{pmatrix}
w_+& y_-
\end{pmatrix}
\begin{pmatrix}
P&Q\\R&T
\end{pmatrix}+
\begin{pmatrix}
\pi&\kappa
\end{pmatrix}
\label{eq:pot2}
\end{align}
If $(1-DP)$ is invertible, then
\begin{multline}
\begin{pmatrix}
v_-& y_+
\end{pmatrix}=\begin{pmatrix}
v_+& y_-
\end{pmatrix} \left[ \begin{pmatrix}
			A&B\\C&D
		\end{pmatrix} \odot 	\begin{pmatrix}
			P&Q\\R&T
		\end{pmatrix}\right]
		+\\+
		\begin{pmatrix}(\delta P+\pi)(1-DP)^{-1}C+\gamma
		&(\pi D+\delta) (1-RD)^{-1}Q+\kappa\vphantom{\Bigl|}
		\end{pmatrix},
		\label{eq:affine-product}
\end{multline}
where $\odot$-product is defined by \eqref{eq:PiPi}.}

\sm

Proof is straightforward, we eliminate $w_+$ and $w_-$ from 
equations \eqref{eq:pot1}--\eqref{eq:pot2}.

\sm 

{\bf \punct Lagrangian superlinear relations.}
Consider superspaces $V$, $W$ equiped with orthosymplectic  forms
$\Lambda_V(\cdot,\cdot)$, $\Lambda_W(\cdot,\cdot)$. 
Equip $V\oplus W$ with the orthosymplectic form
$$
\Lambda_{V\ominus W}(v\oplus w, v'\oplus w')=\Lambda_V(v,v')-\Lambda_W(w,w').
$$
We say that a {\it super-linear relation $V[\cA]\tto W[\cA]$ is Lagrangian} if 
it is  Lagrangian in $V[\cA]\oplus W[\cA]$ with respect to $\Lambda_{V\ominus W}$.

\sm

--- {\it If Lagrangian relations $P:V[\cA]\tto W[\cA]$, $Q:W[\cA]\tto Y[\cA]$
	are transversal then their product is Lagrangian,} see \cite{Ner-super}, see \cite{Ner-super}, Theorem 8.4.

\sm

For each orthosymplectic space $V$ we fix the decomposition \eqref{eq:VVVV}.
Potapov transform \eqref{eq:potapov}
 of a Lagrangian relation satisfies natural conditions of symmetry,
 $A_{11}^t=A_{11}^t$, $D_{11}^t=D_{11}^t$, $A_{22}^t=-A_{22}^t$,
 $D_{22}^t=-D_{22}^t$,  
 $A_{12}^t=-A_{21}^t$, $B_{12}^t=C_{21}^t$, $B_{21}^t=-C_{12}$

\sm

We say that an {\it affine Lagrangian relation} is an affine Lagrangian subspace
shifted by an even element of $V[\cA]\oplus W[\cA]$.
In this case we define {\it Potapov transform} is an affine map
$$
V^+[\cA]\oplus W^-[\cA]\to V^-[\cA]\oplus W^+[\cA].
$$

\sm

As above denote by $\cO^{\aff}(V^\diamond,W^\diamond;\cA)$ the set of affine superlinear relations having well-defined Potapov transforms.
The whole Lagrangian Grassmannian is the union of charts 
$$
s_{\alpha_1}^W\dots s_{\alpha_m}^W \sigma_{\beta_1}^W\dots \sigma_{\beta_n}^W 
L
s_{\gamma_1}^V\dots s_{\gamma_k}^V \sigma_{\delta_1}^V\dots \sigma_{\delta_l}^V,
\quad\text{ where $L$ ranges in  $\cO^\aff(V^\diamond,W^\diamond;\cA)$,} 
$$
and 
$\alpha_1<\dots<\alpha_m$, $\beta_1<\dots<\beta_n$, $\gamma_1<\dots<\gamma_k$,
$\delta_1<\dots<\delta_l$ are fixed. The involutions $s$ and $\sigma$ are
defined by \eqref{eq:sef}--\eqref{eq:sigmasf}. 

%%%%%%%%%%%%%%%%%%%%%%%%%%%%%%%%%%%%%%%%%%%%%%%%%%%%%

\section{The semigroup of contact superannuli\label{s:annuli}}

\COUNTERS

{\bf \punct The semigroup $\boldsymbol{\Gamma}$ of annuli.%
\label{ss:Gamma}} We intend superize the following construction. 
Consider a one-dimensional compact complex manifold $P$
with boundary
 homeomorphic to an annuli (we call such manifolds '{\it abstract annuli'}). 
 Any abstract annulus is conformally equivalent to some {\it standard annulus} $A\le |z|\le B$,
all biholomorphic automorphisms of the standard annulus
have the form
\begin{equation}
z\mapsto e^{i\theta} z,\qquad
\text{or} \qquad z\mapsto AB e^{i\theta} z^{-1}.
\label{eq:automorphisms}
\end{equation}
 They are analytic on the boundary, 
 so there are canonical structures of  oriented real  analytic %smooth 
 one-dimensional manifolds  on components of the boundary
 of the surface $P$.
 
 Recall that $B/A>1$ is the unique conformal invariant of an annulus.
 
 \sm 
 
 The rotation  of the standard annulus of $\phi$ is a unique biholomorphic
 involution sending  each component of the boundary to itself. So, this 
 transformation is canonically defined to any abstract annulus.
 In particular, we have a canonically defined rotation $\kappa$ of
 180 degrees.

\sm 
 
 We define the semigroup $\Gamma$ (the complexification of the group
 $\SDiff(S^1)$)
 of diffeomorphisms of the circle, see \cite{Ner-semigroup}, 
 \cite{Ner-holom}, \cite{Seg2}) in the following way.
 An element of $\Gamma$ is a triple $\cP=(P,p_-,p_-)$,
 where: 
 
 \sm 
 
 --- $P$ is a one-dimensional complex manifold homeomorphic to an annuli, we say that one component $(\partial P)_+$ of
 the boundary is an {\it entry} and another component $(\partial P)_-$ is an
  {\it exit};
 
 \sm 
 
 --- $p_+$ and $p_-$ are smooth parametrizations $S^1\to P$ of 
 $(\partial P)_+$ and $(\partial P)_-$ such that $P$
  lies on the left side from the path $p_+$ and the right  side  side from $p_-$.

\sm

Two elements $\cP=(P,p_-,p_-)$, $\cP'=(P',p'_-,p_-)$ coincide iff there exists a biholomorphic map $h:P\to P'$
such that $p'_\pm=h\circ p_\pm$.

For two elements $\cP=(P,p_-,p_-)$, $\cQ=(Q,q_-,q_-)\in\Gamma$ we define their product $\cQ \cP$
by gluing ({\it welding}) of the exit $(\partial P)_+$ and the entry $(\partial Q)_-$.
Precisely, for each $\phi\in S^1$ we identify $p_-(\phi)\in P$ with $q_+(\phi)\in Q$.
We extend the complex structure to the curve of gluing%
\footnote{Let us glue two annuli by a homomorphism $\gamma$. A complex structure
admits an extension under very weak conditions (see Bers \cite{Bers}),
for smooth $\gamma$ this is more-or-less obvious, see \cite{Ner-book}, Subsect, VII.4.4, VII.6.4.}
 and
 get a one-dimensional complex manifold with parametrized entry $p_+(\phi)$ and exit $q_-(\phi)$).

Any irreducible highest weight representation of the Virasoro algebra admits a canonical
integration to a representation of the semigroup $\Gamma$, see \cite{Ner-holom}, \cite{Ner-fermion}.

\sm

{\bf\punct   Contact super-annuli.} We say that a {\it super-annulus} $P[\theta]$
is  an annulus $P$  on a complex plane  equipped with  an additional Grassmann coordinate $\theta$,
$\theta^2=1$, commuting with $\fra_j$.
We allow changes of variables of the type \eqref{eq:diff-1}--\eqref{eq:diff-2}.

 Consider an even 1-form
\begin{equation}
U(z;\fra)\,dz+ V(z;\fra)\,\theta\,d\theta,
\label{eq:UzVtheta}
\end{equation}
there $U$, $W$ are even in  $\fra$. We say that it is non-degenerate
if the $\C$-valued functions $U_\downarrow$, $V_\downarrow$ are non-vanishing. A {\it contact form}%
\footnote{A general definition (see \cite{Schw}). Let $x_1$, \dots, $x_{2p+1}$ be even variables, $\theta_1$, \dots, $\theta_q$
be odd variables. Let $\ell$ be an even one-form in $x$, $\theta$. Let $t>0$ (or $t\ne 0$) be an additional even variable.
We say that $\ell$ is a {\it contact form} if $\omega:=d(t\,\ell)$  is a symplectic form, i.e., its matrix is nondegenerate
and $d\omega=0$ (in our case $d\omega=dd(t\ell)=0$  automatically).
} 
is a nondegenerate 1-form defined up to a multiplication by a function $A(z;\fra)$, which is even in  $\fra$.

To be definite, assume that the annulus surrounds the point $z=0$. The first statement of the following lemma is well known,
but we need its proof.

\begin{lemma}
	\label{l:canonical}
{\rm a)} Locally any contact form can be reduced
to the expression 
\begin{equation}
\frac{dz}{iz}+ \theta d\theta.
\label{eq:canonical}
\end{equation}

{\rm b)}	Any contact transformation preserving the form \eqref{eq:canonical}
and fixing the coordinate $z$ 
is
$(z,\theta)\mapsto (z,\theta)$ or $(z,\theta)\mapsto (z,-\theta)$.
%
%Let us allow to change  the variable $\theta$ fixing $z$ (as \ref{eq:diff-2}), 
%$$
%\theta\mapsto R(z;\fra) \theta + r(z;\fra)
%$$
\end{lemma}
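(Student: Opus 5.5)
The plan for a) is to normalize the form in two stages, working locally on a simply connected piece of the annulus and using changes of variables of type \eqref{eq:diff-1}--\eqref{eq:diff-2}. Write the form as \eqref{eq:UzVtheta} (a general even 1-form would also carry terms $u\theta\,dz+v\,d\theta$, and the same argument absorbs them). It is defined up to an even factor $A(z;\fra)$, so we first divide by $V$ (invertible, since $V_\downarrow\ne0$) and reduce to $W(z,\theta;\fra)\,dz+\theta\,d\theta$. We then make an even change $z\mapsto Q(z;\fra)$ and odd change $\theta\mapsto R\theta+r$, and require the pullback to be proportional to $\frac{dz}{iz}+\theta d\theta$. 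Exactly as in the proof of the Proposition characterizing contact diffeomorphisms (the system \eqref{eq:system}), comparing the $d\theta$ and $dz$ coefficients gives a system of the form
$$
R^2=\frac{\partial_z Q}{iQ}\cdot(\text{factor from }W)+r\,\partial_z r,\qquad q+rR=0 .
$$
We solve it order by order in the grading by degree of monomials in $\fra$. At the body level this is a classical problem: find a local holomorphic $Q_\downarrow$ with $W_\downarrow(Q_\downarrow)Q_\downarrow'=\frac{1}{iz}$ (up to the factor), i.e. integrate a nonvanishing holomorphic 1-form locally, and take $R_\downarrow=\sqrt{\cdot}$ on a simply connected neighbourhood. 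Each higher-degree correction is then a linear inhomogeneous equation of the form (holomorphic derivative of an unknown) $=$ (known lower-order terms), which is solvable locally by integration. The nilpotent terms are handled with the binomial series for square roots, as was done for \eqref{eq:solution}.

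For b) we take a transformation $z\mapsto z$, $\theta\mapsto R(z;\fra)\theta+r(z;\fra)$ with $p(R)=\0$, $p(r)=\1$. We substitute it into \eqref{eq:canonical}, which gives
$$
\frac{dz}{iz}+(R\theta+r)\bigl(\partial_zR\,\theta\,dz+\partial_z r\,dz+R\,d\theta\bigr).
$$
This must equal $h\cdot\bigl(\frac{dz}{iz}+\theta d\theta\bigr)$. The $d\theta$-coefficient gives $R^2\theta+rR=h\theta$, so $rR=0$, hence $r=0$ because $R$ is invertible, and $h=R^2$. The $dz$-coefficient then reads $\frac1{iz}+R\,\partial_zR\,\theta=\frac{R^2}{iz}$. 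Its $\theta$-free part forces $R^2=1$, and since $R_\downarrow^2=1$ with $R$ even, the only solutions in $\cA_\0$ (via the square-root expansion) are $R=\pm1$. If instead "preserving the form" is read up to a factor, one also gets $R^2$ constant and still $r=0$, and the same conclusion follows once $h$ is normalized.

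I expect the main obstacle to be the order-by-order solvability in a): keeping track of the parity bookkeeping and showing that the recursive equations at each degree really have the form "derivative $=$ known". Locally this is straightforward, so the difficulty is mainly notational.
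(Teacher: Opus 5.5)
Your proof is correct. Part b) is essentially the paper's argument: with $Q=z$, $q=0$, the relation $q+rR=0$ and the invertibility of $R$ give $r=0$, and then $R^2=1$ forces $R=\pm1$.

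There is one small slip. The $dz$-coefficient has no term $R\,\partial_zR\,\theta$, since $R\theta\cdot\partial_zR\,\theta\,dz=R\,\partial_zR\,\theta^2dz=0$. After $r=0$ the $dz$-equation is just $\frac1{iz}=\frac h{iz}$, so $h=1$ at once. Your closing remark about reading ``preserving'' up to a factor is therefore superfluous: that is already the reading you used.

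For a) your route is genuinely longer than the paper's.

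\begin{itemize}
\item \emph{Your route.} You normalize the $\theta\,d\theta$-coefficient first by dividing by $V$. This leaves an arbitrary $dz$-coefficient, so you must reparametrize $z$ by integrating a holomorphic 1-form and then solve a recursion degree by degree in $\fra$.
\item \emph{The paper's route.} It works in the opposite order. Multiplying $U\,dz+V\,\theta\,d\theta$ by the admissible factor $(iUz)^{-1}$ makes the $dz$-coefficient exactly $\frac1{iz}$. The substitution $\theta\mapsto R(z;\fra)\theta$ then changes only the second term, by the factor $R^2$ (again because $\theta^2=0$). So $R=W^{-1/2}$ with $W=V/(iUz)$ finishes the proof. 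There is no integration, no change of $z$ and no recursion; the only non-algebraic input is a branch of a square root.
\end{itemize}

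Your own setup gets there too: after $\theta\mapsto R\theta$, use the freedom of the factor once more and take $Q=z$, $R=(izW)^{1/2}$ with your $W=U/V$.

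The paper needs exactly this form of the proof. On the whole annulus, keeping $z$, the only obstruction to reaching \eqref{eq:canonical} globally is whether $W^{-1/2}$ is single-valued. That is the dichotomy (trivial versus M\"obius bundle) used right after the lemma. Reparametrizing $z$ through an integral of $W\,dz$ would add period conditions on the annulus and hide this.

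Finally, your aside that extra terms $u\,\theta\,dz+v\,d\theta$ are absorbed ``by the same argument'' is not substantiated. It is also not needed, since the lemma concerns forms of type \eqref{eq:UzVtheta}.
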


{\sc Proof.}
a)
Multiplying \eqref{eq:UzVtheta} by $A=(iUz)^{-1}$ we get an expression
\begin{equation}
	\frac{dz}{iz}+ W(z;\fra)\,\theta\,d\theta.
	\label{eq:zW}
\end{equation}
Applying a transformation
$
\theta\mapsto R(z;\fra) \theta
$, 
we come to
$$
\frac{dz}{iz}
+ W(z;\fra) R(z;\fra)^2 \,\theta\,d\theta
$$
and choose 
\begin{equation}
R=W^{-1/2}.
\label{eq:root}
\end{equation}

b) So, we consider transformations \eqref{eq:diff-1}--\eqref{eq:diff-2} with $Q(z;\fra)=z$,
$q(z,\fra)=0$. The conditions \eqref{eq:solution}  reduce to
$$
R:=(1+\partial_z r\cdot r)^{1/2},\qquad 0=-r (1+\partial_z r\cdot r)^{1/2}.
$$
But $(1+\partial_z r\cdot r)^{1/2}=1+\frac 12 \partial_z r\cdot r+\dots\ne 0$, therefore
$r=0$.
\hfill $\square$

\sm 

For a contact structure on the whole supper-annulus we have two possibilities:

\sm

1. The square root \eqref{eq:root} is a single-valued function on $P$. Then a contact form is reduced 
to \eqref{eq:canonical}.

\sm 

2. The square root has two branches on $P$ (actually, we are interested in this case). 

\sm 

We have two equivalent ways to describe the contact manifold in the case 2.

\sm 

One way is to consider the M\"obious bundle over the annulus; our supermanifold is the
bundle of Grassmann algebras on fibres.

\sm 

We prefer another language.
%Then we cover the annulus $P$
% by two simply connected domains. In both domains we write the contact structure in the form \eqref{eq:canonical} but the transition 
%maps on intersection of these domains are $(z,\theta)\mapsto (z,-\theta)$.
We  consider
the 
two-sheeted covering $P^\sim$ of $P$, it is equipped with the automorphism $\pi$ of permutation of branches. Next consider
the super-annulus $\wh P$ equipped with the standard contact form and the automorphism  $\wh \pi$
sending $(u,\theta)\mapsto (\pi(u),-\theta)$.

\sm

{\bf\punct     The semigroup $\boldsymbol{\Gamma(\cA)}$  of super-annuli.}
In this  subsection we describe supersemigroup
corresponding to the Ramond
superalgebra, in the next subsection to the  Neveu--Schwarz superalgebra.

An element  $\wh \cP=(\wh P,\frp,\wh p_+, \wh p_-)$
 of $\Gamma(\cA)$
  is an element 
 $\cP=(P,p_+,p_-)\in \Gamma$ equipped with the following additional structures:

\sm

--- $\wh P$ is a one-dimensional complex manifold $P$ equivalent to an annulus equipped with
an additional Grassmann coordinate $\theta$;

\sm

--- $\frp$ is a contact structure on $\wh P$ equivalent to
 $\frac{dz}{iz}+\theta\, d\theta$ smooth up to the boundary of $P$;

\sm

---  restrictions $\frp_\pm$ of the contact structure $\frp$ to
$(\partial P)_\pm$
 determine structures of $(1|1)$-dimensional contact
 manifolds, say $(\partial \wh P)_\pm[\theta]$.
We consider the supercircle $S^{1|1}$ equipped with the standard
 contact form $d\phi+\theta\,d\theta$.
 Then  $p_\pm$  are $\cA$-contactomorphisms
$S^{1|1}\to (\partial \wh P)_\pm[\theta]$.

% $p_\pm$ are contactomorphisms from the supercircle $S^{1|1}$ equipped with  the form $d\phi+\theta d\theta$
%to components of the boundary of $P$ equipped with restrictions of the contact structure $\frp$.
%  such that $P$ lies on the left side from $p_+$ and right side from $p_-$.
 
 \sm 
 
Two elements $\wh \cP=(\wh P,\frp,p_+, p_-)$, 
$\wh \cP'=(\wh P',\frp',p'_+, p'_-)$ 
coincide if there is a contactomorphism $h:(\wh P,\frp)\to (\wh P',\frp')$
such that $h\circ p_\pm=p'_\pm$. 

\sm 

Notice that an element $\cP$ determines a trivial bundle over the annulus whose fiber is the algebra $\C(\theta)\otimes \cA$.
The contactomorphisms $p_\pm$ identify fibers over points of the boundaries with fibers over points of $S^1$.
Now, for $\cP=(\wh P,\frp,p_+, p_-)$, $\cQ=(\wh Q,\frk,q_+, q_-)$ we define their product
gluing annuli as above and the corresponding bundles fiber-wise.

\sm 

{\bf\punct The supersemigroup   $\boldsymbol{\Gamma_\bullet(\cA)}$.}
Consider an element $\cQ=(\wh Q,\frq,q_+, q_-)\in \Gamma(\cA)$. 
As we have seen in Subsect. \ref{ss:Gamma}, the annulus $Q$ admits a unique
holomorphic involution $\kappa$, $\kappa^2=1$ preserving the entry and exit.
By Lemma \ref{l:canonical}, the map 
\begin{equation}
\iota:(z,\theta)\mapsto (z,-\theta)
\label{eq:kappa-iota}
\end{equation}
is canonically defined 
for an superannulus equipped with a contact structure. Denote
by $\frq$ the composition of involutions $\kappa$ and $\iota$.

On the other hand, we have an involutive element 
$$
\sigma: (\phi,\theta)\mapsto (\phi+\pi,-\theta)\in \SCont(S^{1|1};\cA),
$$ 
 see  \eqref{eq:cont-bullet}.
 
The supersemigroup $\Gamma_\bullet(\cA)$
consists of elements  $(\wh Q,\frq,q_+, q_-)\in \Gamma(\cA)$
satisfying the condition
$$
\kappa\circ \frq_\pm=\frq_\pm \circ \sigma.
$$

%%%%%%%%%%%%%%%%%%%%%%%%%%%%%%%%%%%%%%%%%%%%%%%%%%%%%%%%%%%
%%%%%%%%%%%%%%%%%%%%%%%%%%%%%%%%%%%%%%%%%%%%%%%%%%%%%%%%%%%
%%%%%%%%%%%%%%%%%%%%%%%%%%%%%%%%%%%%%%%%%%%%%%%%%%%%%%%%%%%
%%%%%%%%%%%%%%%%%%%%%%%%%%%%%%%%%%%%%%%%%%%%%%%%%%%%%%%%%%%
%%%%%%%%%%%%%%%%%%%%%%%%%%%%%%%%%%%%%%%%%%%%%%%%%%%%%%%%%%%

\sm

{\bf \punct Representations of the semigroup $\boldsymbol{\Gamma_\bullet(\cA)}$.} Consider a standard ring $C_t:e^{-t}\le |z|\le 1$ with standard
contact form $dz+\theta d\theta$ and the involutive contactomorphism
\eqref{eq:kappa-iota}. Consider  $\cA$-contactomorphisms 
$c_\pm: S^{1|1}_\bullet\to (\partial P)_\pm$ defined by
$$c_+:(e^{i\phi},\theta)\to (e^{-t}e^{i\phi},\theta),
 \qquad  c_-:(e^{i\phi},\theta)\to (e^{i\phi},\theta).$$
This determines an element  
$\cC_t=(C_t,\frc, c_\pm)\in \Gamma_\bullet(\cA)$.

Let $\wh\cP=(P,\frp,\pi_\pm)\in\Gamma_\bullet(\cA)$. Without loss of generality we can assume that $P$ is an annulus $C_t$.
Then we have   $\cA$-contactomorphisms $p_\pm$
 from
$S^{1|1} \to (\partial C)_\pm$.
We can think that a contactomorphism of the circle is a limit element of $\Gamma_\bullet(\cA)$ corresponding to infinitely narrow 
annulus.

So we can decompose any element of $\Gamma_\bullet(\cA)$ as a product 
\begin{equation}
\wh\cP=
p_-\circ \cC_t\circ p_+
\label{eq:canonical-decomposition}
\end{equation}
 of a standard annulus and two 
$\cA$-contactomorphisms of the supercircle $S^{1|1}_\bullet$. 

\begin{conjecture}
Consider a highest weight representation $\rho_{h,c}$
 of $\ns$ and the corresponding space $\cH_\infty[\cA]$. Assign:
 
\sm 
 
 --- to any $ g\in \Diff^{(2)}(S^1)$ the operator $\rho_{h,c}(g)$,
 
 \sm 
 
 --- to $1+\mu Z$, where $Z\in \ns_{\ov j}$ and
 $\mu\in\cM_+$ {\rm(}$p(\mu{\rm)}=\ov j${\rm)}, the operator $1 +\mu \rho_{h,c}(Z)$
 
 \sm 
 
 --- to $\cC_t$ the operator $\exp\{t L_0\}$.
 
 \sm
 
 Then we get a projective representation of the semigroup 
 $\Gamma_\bullet(\cA)$.
\end{conjecture} 

\begin{theorem}
	\label{th:integration-gamma}
The statement above holds for the representation of $\ns$ defined in Subsect.  \eqref{ss:NS}
by \eqref{eq:fock-L}--\eqref{eq:fock-M} with $\mu$, $\nu\in\R$. This means that modules $L_\ns(h,c)$
for $c\ge \frac32$, $h\ge \frac1{24}(c-\frac32)$ can be integrated to $\Gamma_\bullet(\cA)$.
\end{theorem}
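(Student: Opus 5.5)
The plan follows the roadmap of the introduction: realize $\Gamma_\bullet(\cA)$ as a semigroup of Lagrangian super-affine relations in $\cW[\cA]$, then assign Gauss--Berezin operators in the super-Fock space to these relations.

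\textbf{Step 1.} For $\wh\cP\in\Gamma_\bullet(\cA)$, consider the space of holomorphic ``logarithmic super-densities'' on $\wh P$. These are $F(z,\theta)=f(z)+g(z)\theta$, with $f$ defined up to constants and $g$ antiperiodic, i.e.\ sections over the M\"obius double cover. They transform under contactomorphisms by the affine rule $T_{\mu,\nu}$ of Section~\ref{s:embedding}, with $\mu$ multiplying $\bfQ-\phi$ and $\nu$ multiplying $\ln\Ber$. Pulling back such $F$ to the entry and the exit by $p_\pm$ gives a pair of elements of $\cW[\cA]$. The set of such pairs is a super-affine relation $\Lambda(\wh\cP):\cW[\cA]\tto\cW[\cA]$, and I will check that it is Lagrangian for the form $\{\cdot,\cdot\}$ of \eqref{eq:orthosymp}. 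Isotropy follows from Stokes' formula with the Berezin integral, since the form is $\iint F_1(\partial_\theta+\theta\partial_\phi)F_2$. Maximality follows from solving the $\bar\partial$-problem on the annulus. By the decomposition \eqref{eq:canonical-decomposition}, this reduces to three cases. For $\cC_t$ the relation is the graph of multiplication of Fourier modes by $e^{-2nt}$ and $e^{-(2m+1)t}$. For the contactomorphisms $p_\pm$ it is the graph of the affine operator $T_{\mu,\nu}(p_\pm)$ from Corollary of Section~\ref{s:embedding}.

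\textbf{Step 2.} Show that gluing of annuli corresponds to the product of relations, $\Lambda(\wh\cQ\wh\cP)=\Lambda(\wh\cQ)\circ\Lambda(\wh\cP)$. One inclusion is immediate, because densities on the two pieces that agree on the seam glue to a density on the glued annulus. For the other inclusion and for well-definedness, I will use Potapov transforms with respect to the splitting of $\cW$ into positive and negative Fourier modes. The body of $\Lambda(\cC_t)$ has Potapov transform with $D_\downarrow,P_\downarrow$ strictly contractive, and in fact Hilbert--Schmidt on the smooth part, so $1-PD$ is invertible and \eqref{eq:PiPi}, \eqref{eq:affine-product} apply. Here I will use the Remark after \eqref{eq:PiPi} that invertibility of $1-PD$ is checked on the body.

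\textbf{Step 3.} In the super-Fock space realizing \eqref{eq:fock-L}--\eqref{eq:fock-M}, define for each $\wh\cP$ the Gauss--Berezin operator. Its kernel is $\exp$ of the quadratic-plus-linear form given by the Potapov matrix and the affine shift of $\Lambda(\wh\cP)$, with the bosonic part as in the Virasoro case and the fermionic Berezin part in odd variables. Then verify three things. First, for $\cC_t$ this operator is $e^{tL_0}$ up to a scalar. Second, for surplace generators $1+\mu Z$ it coincides with $1+\mu\rho(Z)$, by differentiating along one-parameter families and matching the quadratic expressions \eqref{eq:fock-L}--\eqref{eq:fock-M}. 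Third, for $g\in\Diff^{(2)}(S^1)$ it coincides with $\rho_{h,c}(g)$, using the known Virasoro result of \cite{Ner-holom} and Lemma on cocycles. The reality of $\mu,\nu$ is needed for unitarity here and gives the stated range of $(h,c)$ through $h=\tfrac12(\mu^2+\nu^2)$ and $c=\tfrac32+12\nu^2$.

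\textbf{Step 4.} Prove the multiplication law: the product of Gauss--Berezin operators equals, up to a scalar in $\cA_\0$, the operator of the product relation. This is a Gaussian--Berezin integration, a formal Gauss integral over bosonic variables combined with Berezin integration over fermions, whose output is exactly the $\odot$-product \eqref{eq:PiPi} together with the shift \eqref{eq:affine-product}. \emph{I expect this to be the main obstacle}, because the operators are unbounded and the formal computation must be justified. I would first show that for $\wh\cP$ with nondegenerate annulus ($t>0$) the operators are bounded from $\cH_\infty$ to $\cH_\infty$, since $e^{tL_0}$ smooths and the surplace factors are finite sums of polynomials in $\rho(Z)$, which are continuous on $\cH_\infty$ by Lemma~\ref{l:norms}. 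Then I would compare both sides on the dense set of coherent states, where the Gaussian integrals converge absolutely because $\|D_\downarrow\|<1$, and extend by continuity.
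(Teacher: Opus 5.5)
Your architecture matches the paper's: the relations $\Delta_{\mu,\nu}(\wh\cP)$ (your $\Lambda(\wh\cP)$), the gluing theorem, Gauss--Berezin operators attached to these relations, and boundedness on $\ccF^\infty[\cA]$ via the decomposition into a classical annulus and surplace factors. The gap is in the multiplication law of Step 4, which rests on two claims you do not establish.

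\emph{Claim (i).} Every $\Lambda(\wh\cP)$ has a Potapov transform, so that its operator has the form $\lambda\exp\{\dots\}$ with quadratic-plus-linear exponent. You take this for granted; the paper does not assume it (see below).

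\emph{Claim (ii).} For every pair $\wh\cQ,\wh\cP$ the operator $1-PD$ is invertible, so that the super-Gaussian integral \eqref{eq:convolution} returns exactly the $\odot$-product \eqref{eq:PiPi}, \eqref{eq:affine-product}.

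Your only reason for (ii) is strict contractivity of the blocks of $\Lambda(\cC_t)$. That covers only products in which one factor is $\cC_t$ itself. A general element is $p_-\circ\cC_t\circ p_+$, and once you compose with the graph of a contactomorphism, the reflection blocks of its Potapov transform that enter $1-PD$ are not small. On the even part this is harmless, since $\ov{\mathrm{ASp}}$-morphisms have strictly contractive diagonal Potapov blocks. On the odd part, Proposition~\ref{pr:long} gives only the boundedness (c4). If $1-PD$ is singular, the Berezin integration makes the vacuum coefficient of the product vanish, so the product is no longer a narrow-sense Gauss--Berezin operator. This is exactly the warning in Remark c) of Subsect.~\ref{ss:Gauss-Berezin}. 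Coherent-state estimates using $\|D_\downarrow\|<1$ do not address it.

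The paper's proof is built so that neither (i) nor (ii) is needed.
\begin{itemize}
\item It admits Gauss--Berezin operators in the broad sense \eqref{eq:GB-general}, namely $\frD_{i_1}\cdots\frD_{i_\alpha}R\,\frD_{j_1}\cdots\frD_{j_\beta}$, with charts as in Proposition~\ref{pr:atlas-GB}. Proposition~\ref{pr:long}(b) only places $\Delta_{\mu,\nu}(\wh\cP)$ in some such chart.
\item It never evaluates \eqref{eq:convolution}. If $P,R$ are bounded on $\ccF^\infty[\cA]$ and intertwine creation--annihilation operators according to affine relations, then $PR$ intertwines them according to the composite relation.
\item By the Friedrichs-type uniqueness (Lemma~\ref{l:commuting} and part c) of the theorem following Theorem~\ref{th:operator-relation}), an $l$-operator with the same intertwining relations as a Gauss--Berezin operator is a scalar multiple of it.
\item Combined with $\Delta_{\mu,\nu}(\wh\cQ)\circ\Delta_{\mu,\nu}(\wh\cP)=\Delta_{\mu,\nu}(\wh\cQ\wh\cP)$, this gives $\frB[\wh\cQ]\,\frB[\wh\cP]\propto\frB[\wh\cQ\wh\cP]$ (Proposition~\ref{pr:product-good-relations}).
\end{itemize}
The only input is transversality of the two relations. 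It is automatic for narrow-sense factors (Corollary~\ref{cor:one-side}), and it is carried through the charts by writing the factors as $\frD\cdots R'$ and $T'\frD\cdots$ (Corollary~\ref{cor:unit-not-in-kernel}). Scalars in $\cA_\0$ are irrelevant for a projective representation, so no infinite-dimensional determinants appear.

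To keep your route, you would need a separate proof that all $\Lambda(\wh\cP)$ and all their composites stay in the big cell. Contractivity of $\cC_t$ does not supply this.

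Also, in Step 2 you have the two inclusions swapped. Restricting a density on the glued annulus is the trivial direction. Gluing densities that agree on the seam is the direction that needs the removable-singularity argument, which is all the paper uses there.
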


The proof of  Theorem \ref{th:integration-gamma} is contained in   Sect. 
\ref{s:last}.

%%%%%%%%%%%%%%%%%%%%%%%%%%%%%%%%%%%%%%%%%%%%%%%%%%%%%

%%%%%%%%%%%%%%%%%%%%%%%%%%%%%%%%%%%%%%%%%%%%%%%%%%%%%

%%%%%%%%%%%%%%%%%%%%%%%%%%%%%%%%%%%%%%%%%%%%%%%%%%%%%

%%%%%%%%%%%%%%%%%%%%%%%%%%%%%%%%%%%%%%%%%%%%%%%%%%%%%

%%%%%%%%%%%%%%%%%%%%%%%%%%%%%%%%%%%%%%%%%%%%%%%%%%%%%

\section{Logarithmic densities on super-annuli%
\label{s:logarithmic}} 

\COUNTERS

{\bf \punct  Logarithmic densities on superannuli.%
	\label{ss:logarithmic}} Fix $\mu$, $\nu\in \C$.
Let $U\subset\C$ be a topological annulus
 surrounding 0 (which is not contained in $U$).
We say that a logarithmic density of type $(\mu,\nu)$ in $U$ is a formal expression
$$
f(z,\theta)-i \mu \ln (z)+ \nu \ln(dz\,d\theta)
$$ 
defined up an addition of a complex valued constant function.
Under a contact  change of variables 
$$
z=\bfQ(u,\xi;\fra)=Q(u;\fra)+ q(u;\fra)\xi , \qquad 
\theta=\bfQ(R,\xi;\fra)=R(u,\fra)\xi+r (u;\fra),
$$  
see \eqref{eq:diff-1}-\eqref{eq:diff-2}, the expression 
changes to
\begin{multline}
	\Bigl(f\bigl((Q(u;\fra),q(u;\fra)\xi\bigr)
	- i \mu\ln(Q(u;\fra)/u)+
	\nu \ln \Ber 
	\begin{pmatrix}
		\partial_u \bfQ&\partial_\theta \bfQ\\
		 \partial_u \bfR&\partial_\theta \bfR
	\end{pmatrix}\Bigr)
	- \\- i \mu \ln (u)+ \nu \ln(dz\,d\theta)
	\label{eq:annulus-logarithmic}
\end{multline}
(according the usual manipulations with symbols $d$).

\sm
 
 {\bf\punct Logarithmic densities and affine relations.}
 Consider an element $\cQ=(\wh Q, \frq, q_+,q_-)\in \Gamma_\bullet(\cA)$. 
 We wish to construct an affine relation $\Delta_{\mu,\nu}(\cQ)$ between
 two copies of the space 
 $\cW[\cA]$.
 %\simeq 
 %C_-^\infty(S^1;\cA)\oplus C_+^\infty(S^1;\cA)$.
 
 Consider a holomorphic logarithmic density $F$ of weight $(\mu,\nu)$ on the contact ring $(\wh Q, \frq)$,
 which is smooth up to a boundary. We consider its inverse images
 $$
 f_\pm(\phi,\theta)+\frac{1}{2\pi}\phi+ \mu\ln (d\phi\,d\theta)
 $$ 
 under the $\cA$-contactomorphisms $q_\pm:S^{1|1}_\bullet\to \wh Q$.
 So we get an element
 $$
 f_+\oplus f_-\in \cW[\cA]\oplus \cW[\cA]
 % C^\infty(S^{1|1}_\bullet; \cA)\oplus C^\infty(S^{1|1}_\bullet; \cA).
 $$ 
 Considering all density we obtain a relation 
 $$\Delta_{\mu,\nu}(\cQ):\cW[\cA]\tto \cW[\cA].
 % \,C^\infty(S^{1|1}_\bullet; \cA)\looparrowright
 % C^\infty(S^{1|1}_\bullet; \cA).
 $$
 
 \begin{theorem}
 	For two elements $\cP$, $\cQ\in \Gamma_\bullet(\cA)$ we have
 	$$\Delta_{\mu,\nu}(\cP)\circ \Delta_{\mu,\nu}(\cQ)=\Delta_{\mu,\nu}(\cP\cQ).$$
 \end{theorem}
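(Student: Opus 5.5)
The plan is to prove the two inclusions $\Delta_{\mu,\nu}(\cP\cQ)\subset\Delta_{\mu,\nu}(\cP)\circ\Delta_{\mu,\nu}(\cQ)$ and $\Delta_{\mu,\nu}(\cP)\circ\Delta_{\mu,\nu}(\cQ)\subset\Delta_{\mu,\nu}(\cP\cQ)$ separately. Both follow the classical argument for the semigroup $\Gamma$ (\cite{Ner-holom}, \cite{Ner-book}, Sect. VII.4--6); the super-structure enters only through the transformation law \eqref{eq:annulus-logarithmic} and through the phantom expansion in $\cA$.

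\textbf{Step 1: the easy inclusion.} Let $\cR=\cP\cQ$ be obtained by gluing the exit of $\wh Q$ to the entry of $\wh P$ along $q_-(\phi,\theta)\sim p_+(\phi,\theta)$. Take a holomorphic logarithmic density $F$ of type $(\mu,\nu)$ on $\wh R$. Its restrictions $F|_{\wh Q}$ and $F|_{\wh P}$ are holomorphic densities on the two pieces. Their pull-backs under $q_-$ and $p_+$ coincide, because they are the pull-back of the same density by the same $\cA$-contactomorphism of $S^{1|1}_\bullet$. The transformation law \eqref{eq:annulus-logarithmic} is a cocycle rule, by the Lemma of Subsect. \ref{ss:affine-diff}, so the pull-back is well defined. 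Hence $(f_+^{Q},f_-^{P})$ lies in the composite relation. The additive constant ambiguity is harmless, since constants lie in the kernel of the form on $\cW$.

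\textbf{Step 2: the hard inclusion.} Suppose $G$ on $\wh Q$ and $H$ on $\wh P$ are holomorphic densities, smooth up to the boundary, whose pull-backs to $S^{1|1}_\bullet$ along the gluing contour agree in $\cW[\cA]$, i.e. up to a constant. We adjust the constant so that they agree exactly. We must show that $G\cup H$ is a holomorphic density on $\wh R$.

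To do this, expand everything in phantom monomials $\dot\fra^I$ and in $\theta$. Each coefficient is then an ordinary (logarithmic) function or a half-density on the glued body annulus $R_\downarrow$. In local coordinates the contactomorphisms act by finite Taylor expansions of the type \eqref{eq:q-phi}. Because the expansions are finite, we can argue by induction on degree in $\fra$.
\begin{itemize}
\item At degree $0$ the statement is the classical one: two holomorphic functions on the two sides of a smooth contour that agree on it glue to a holomorphic function. This is Morera's theorem; the boundary values are smooth since the gluing map is smooth and the complex structure extends (\cite{Ner-book}, VII.4.4).
\item At higher degree, after subtracting the already-glued lower terms, each new coefficient satisfies the same matching condition. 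The mismatch introduced by the nilpotent part of $q_-$ and $p_+$ is expressed through derivatives of lower-degree terms, which are already holomorphic across the contour. Morera then applies again.
\end{itemize}
The $\theta$-odd components are handled identically: they are sections of the M\"obius bundle, i.e. of the $(d\phi)^{1/2}$-type, and the twist is consistent with the $\sigma$-equivariance defining $\Gamma_\bullet(\cA)$.

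\textbf{Step 3: bookkeeping.} We must check that the logarithmic terms $-i\mu\ln z$, written in the problem statement as $\frac1{2\pi}\phi$, and $\nu\ln(dz\,d\theta)$ are multivalued in exactly the way that is consistent under gluing. The monodromy of $\ln z$ around the annulus adds, and the cocycles $\gamma_1,\gamma_2$ match on the seam. We must also check that the composition is exactly the set-theoretic product of relations, with no extra closure.

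The main obstacle I expect is Step 2. The difficulty is to make the inductive "gluing in each phantom degree" rigorous. One has to show that the correction terms coming from the nilpotent parts of the boundary contactomorphisms are themselves boundary values of holomorphic objects. The approach I would try first avoids doing this directly: use the decomposition \eqref{eq:canonical-decomposition} $\cQ=q_-\circ\cC_t\circ q_+$. This reduces the claim to the body gluing of two standard annuli through an $\cA$-contactomorphism $g$ of $S^{1|1}_\bullet$, which is then absorbed into the affine action $T_{\mu,\nu}(g)$ on $\cW[\cA]$. That action is known to be compatible with relations by the cocycle identity.
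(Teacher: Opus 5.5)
\textbf{Assessment.} Your two inclusions match the paper's two-line proof: restriction gives $\Delta_{\mu,\nu}(\cP\cQ)\subset\Delta_{\mu,\nu}(\cP)\circ\Delta_{\mu,\nu}(\cQ)$, and gluing plus removability of the seam singularity gives the converse. The gap is in how you set up the phantom-degree induction in Step 2, and your fallback does not repair it.

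\textbf{Why Step 2 fails as stated.} If $G$ and $H$ are placed on the body-glued annulus $R_\downarrow$ in their own coordinates, the nilpotent parts of $q_-$ and $p_+$ do not cancel. Take $\mu=\nu=0$ and $\theta$-independent $G,H$. Let $q_-(\phi)=e^{i\phi}$ and $p_+(\phi,\theta)=\bigl(e^{i(\phi+\lambda a(\phi))},(1+\tfrac12\lambda a'(\phi))\theta\bigr)$, with $\lambda\in\cM_{\0+}$ and $a\in C^\infty_+(S^1)$. At degree $0$ the matching gives $G_0=H_0$ on the seam, so $F_0=G_0\cup H_0$ is holomorphic. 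At degree $\lambda$ it gives $G_1-H_1=ia(\phi)e^{i\phi}F_0'(e^{i\phi})$. This is a nonzero jump, even though the lower-degree part is already holomorphic across the seam. So ``Morera then applies again'' is false in this frame.

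\textbf{Why the fallback is circular.} The reduction via \eqref{eq:canonical-decomposition} only absorbs the outer parametrizations into $T_{\mu,\nu}$. You are left with proving $\Delta_{\mu,\nu}(\cC_s)\circ\graph T_{\mu,\nu}(g)\circ\Delta_{\mu,\nu}(\cC_t)=\Delta_{\mu,\nu}(\cC_s\circ g\circ\cC_t)$ for a seam map $g$ with nontrivial nilpotent part. That is the original statement again.

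\textbf{The fix.} This step is implicit in the paper's remark that the glued form is holomorphic off the curve and continuous on it. Work in holomorphic coordinates of the superannulus $\wh R$ underlying $\cP\cQ$.
\begin{enumerate}
\item Since $\cP\cQ\in\Gamma_\bullet(\cA)$, $\wh Q$ and $\wh P$ sit in $\wh R$ via holomorphic contact embeddings $\iota_Q,\iota_P$ with $\iota_Q\circ q_-=\iota_P\circ p_+=:s$.
\item Define $F_Q,F_P$ by $F_Q\circ\iota_Q=G$ and $F_P\circ\iota_P=H$, including the cocycle terms of \eqref{eq:annulus-logarithmic}. The matching condition becomes $F_Q\circ s=F_P\circ s$, with one and the same $s$.
\item Now your induction closes. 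At degree $I$, the correction terms on both sides are the same expression in $\partial_z^nF_{Q,J}$, resp.\ $\partial_z^nF_{P,J}$ ($|J|<|I|$), evaluated along $s$. These agree because the lower-degree parts already glue holomorphically. Hence $F_{Q,I}=F_{P,I}$ on the seam, and Morera applies.
\end{enumerate}
In the example, $\iota_Q=z+\lambda\alpha$ and $\iota_P=z+\lambda\beta$, where $\alpha-\beta=iae^{i\phi}$ on the seam (obtained by splitting the Fourier series). Indeed $(G_1-\alpha F_0')-(H_1-\beta F_0')=0$ there.

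Splitting the nilpotent part of the seam map into boundary values of holomorphic objects on the two sides is exactly the worry in your last paragraph. But it is the content of the claim that the glued object is again a superannulus. The paper assumes this when it defines the product in $\Gamma_\bullet(\cA)$; this theorem does not need to prove it.
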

 
 {\sc Proof.} Embedding $\supset$ is obvious.
 
 On other hand, after gluing of two logarithmic forms we get a  a logarithmic form
 we get an expression, which is holomorphic outside the line on curve and continuous 
 on this curve. This singularity is removable and we get a holomorphic form.
 \hfill $\square$

\sm

{\bf\punct  Properties of relations $\boldsymbol{\Delta_{\mu,\nu}(\cP)}$.}
Thus, we consider superaffine relations $\Delta_{\mu,\nu}(\cP)$ in the 
\Frechet space $\cW[\cA]$.
We intend to show that they  satisfy certain good properties,
which allow assign a Gauss--Berezin operator for each $\Delta_{\mu,\nu}(\cP)$.

Recall that elements  of $\cW_\0$ are  series 
$\sum_{n\in 2\Z\setminus 0} a_n e^{i n\phi}$,
elements of $\cW_\1$ are series $\sum_{n\in 2\Z+1} b_k e^{i k\phi}$.
In both cases Fourier coefficients rapidly decrease
$$
a_n=o(|n|^{-M}),\quad b_k=o(|k|^{-M}),\quad 
\text{as $n$, $k\to \infty$ for all $M>0$.}
$$
We equip $\cW$ with the natural $C^\infty$-topology, 
on the other hand we define the inner 
\eqref{eq:inner}. Norms in formulas below are norms with respect to this inner product.

We decompose $\cW_{\ov j}=\cW_{\ov j}^+\oplus \cW_{\ov j}^-$, 
 where $\cW_{\ov j}^+$ (resp. $\cW_{\ov j}^+$) consists of 
Fourier series
$\sum_{n>0}$ (resp. $\sum_{n<0}$). Denote by $\cO^\aff(\cW, \cW;\cA)$
 the set
of closed affine relations $\cW[\cA]\tto\cW[\cA]$, whose Potapov transform
exists  and is continuous in the topology of $\cW[\cA]$.

\begin{proposition}
	\label{pr:long}
	 Let $\nu$, $\nu\in\R$.
	
	\sm 
	
	{\rm a)}
	Any $\Delta_{\mu,\nu}(\cP)\subset \cW[\cA]\tto \cW[\cA]$
	is a shifted Lagrangian  subspace.
	
	\sm
	
{\rm b)}   
Any $\Delta_{\mu,\nu}(\cP)$
is contained in some chart	
$$
\sigma_{\beta_1}^\cW\dots \sigma_{\beta_n}^\cW 
L
\sigma_{\delta_1}^\cW\dots \sigma_{\delta_l}^\cW,
\quad\text{ where $L$ ranges in  $\cO^\aff(\cW,\cW;\cA)$,
} 
$$
and 
$\beta_1<\dots<\beta_n$, 
$\delta_1<\dots<\delta_l$ are fixed.

\sm 

{\rm c)} The matrix $\Sigma$ of the Potapov transform,
 see \eqref{eq:potapov},
satisfies conditions:

\sm 

--- {\rm (c1)} $\left\|\begin{pmatrix}
	A_{11}&B_{11}\\ C_{11}& D_{11}
\end{pmatrix}_\downarrow \right\|\le 1$;

\sm

--- {\rm (c2)} $\|(B_{11})_\downarrow \|<1$,  $\|(C_{11})_\downarrow \|<1$;

\sm 

--- {\rm (c3)} Matrices  $(B_{11})_\downarrow$,  $(C_{11})_\downarrow$
have rapidly decreasing matrix elements%
\footnote{We say that coefficients of a matrix $Q]\{q_{ij}\}$, where $i$, $j>0$
if for any $M>0$ we have $q_{ij}=o(|i|+|j|)^M$ as
$|i|+|j|\to \infty$.};

\sm

--- {\rm  (c4)} $\left\|\begin{pmatrix}
	A_{22}&B_{22}\\ C_{22}& D_{22}
\end{pmatrix}_\downarrow\right\|<\infty $.

\sm 

--- {\rm (c5)} $(B_{22})_\downarrow $,  $(C_{22})_\downarrow$
have rapidly decreasing matrix elements.

\sm

--- {\rm d)} The vector of shift 
$\begin{pmatrix}\gamma&\delta \end{pmatrix}$ in
\eqref{eq:pot1} has rapidly decreasing coefficients. 
\end{proposition}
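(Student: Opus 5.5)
Our plan is to use the canonical decomposition \eqref{eq:canonical-decomposition}, $\wh\cP=p_-\circ\cC_t\circ p_+$, together with the multiplicativity $\Delta_{\mu,\nu}(\cP\cQ)=\Delta_{\mu,\nu}(\cP)\circ\Delta_{\mu,\nu}(\cQ)$ established above. This reduces everything to two kinds of factors: graphs of the affine orthosymplectic operators $T_{\mu,\nu}(g)$, for $g\in\SCont(S^{1|1}_\bullet;\cA)$, and the relation $\Delta_{\mu,\nu}(\cC_t)$ of a standard annulus.

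\textbf{Lagrangian property.} For $\cC_t$ we compute $\Delta_{\mu,\nu}(\cC_t)$ explicitly. A holomorphic logarithmic density on $e^{-t}\le|z|\le 1$ is, up to the $\mu\ln z$ and $\nu\ln(dz\,d\theta)$ terms (which only contribute to the shift), a Laurent series $\sum a_n z^{n/2}+\theta\sum b_k z^{k/2}$, with the half-integer powers forced by the M\"obius condition $\kappa\circ\frq_\pm=\frq_\pm\circ\sigma$. Restricting to the two boundary circles gives the relation $a_n\mapsto e^{-tn/2}a_n$, and similarly for $b_k$. In the splitting $\cW^\pm$ this is a diagonal Potapov transform: contractions $e^{-t|n|/2}$ in the blocks $A,D$ and zero blocks $B,C$. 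It is visibly Lagrangian for the form \eqref{eq:orthosymp}, since the form pairs $n$ with $-n$.

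For a contactomorphism $g$ the graph of $T_{\mu,\nu}(g)$ is Lagrangian by the Corollary embedding $\SCont$ into $\OSp(\cW;\cA)$. The shift is even because $\gamma_1,\gamma_2$ are even cocycles. Transversality of the factors follows from the bodies: the body of $\Delta(\cC_t)$ has the compact contractions $e^{-t|n|/2}$, so $1-PD$ is invertible. The super-Lagrangian product theorem of Sect.~\ref{s:affine-relations} (Theorem 8.4 of \cite{Ner-super}) then gives a).

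\textbf{Chart and Potapov formula.} To get the chart and explicit formula b), we write the graph of $g$ in Potapov form \eqref{eq:Pi-matritsa}, using $a=$ the $\cW^+\to\cW^+$ block of $g$. For $g_\downarrow\in\SDiff^{(2)}(S^1)$ the even block $a_\downarrow$ is Fredholm of index zero, and its off-diagonal part is smoothing, since Fourier coefficients of $e^{in q(\phi)}$ at negative frequencies decay rapidly. It may fail to be invertible, however. This is exactly why we allow the finitely many twisting involutions $\sigma^\cW_\beta$, and analogously $s_\alpha$ for the even part (the paper lists only $\sigma$'s, so we expect the even block to be invertible after the normalization inherent in the relation). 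Composing these with a finite-rank correction makes the relevant block invertible. We then multiply the three Potapov transforms by \eqref{eq:PiPi} and \eqref{eq:affine-product}. Nilpotent $\cA$-corrections do not affect invertibility, since that is decided on bodies, so it suffices to work with $\pi_\downarrow$ throughout.

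\textbf{Norm bounds.} For (c1)–(c2) we use the classical fact about $\Gamma$ (\cite{Ner-book}, Sect.~VII.4–5): the body of $\Delta(\cP)$ restricted to the even part is the standard Lagrangian relation of the annulus for the Dirichlet-type inner product \eqref{eq:inner}. Its Potapov transform is a contraction, and it is a strict contraction off the diagonal blocks because the annulus has positive modulus. For the odd part we argue the same way with the $L^2$ inner product of half-densities, and the Hardy-space restriction operators between the boundary curves. Rapid decay (c3), (c5), d) comes from smoothness: $B,C$ arise from Cauchy-kernel/Grunsky-type operators, in which holomorphic functions in one boundary are restricted to a strictly interior curve, producing exponential decay after composing with smooth $p_\pm$. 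The shift vector is built from $\mu(\bfQ-\phi)$ and $\nu\ln\Ber$, which are smooth, so their Fourier coefficients are rapidly decreasing. Finally, (c4) holds because the $22$ block is a product of bounded operators.

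\textbf{Main obstacle.} We expect the hardest step to be (c2), namely the strict inequality together with the precise normalization that makes (c1) hold in the norm \eqref{eq:inner}. The norm \eqref{eq:inner} is only almost invariant under $\SDiff^{(2)}$, so the contraction property must come from the annulus (Grunsky inequality) rather than from the group factors. We would first try to prove (c1) directly from the Green formula $\iint_P|\partial F|^2\ge0$ applied to a holomorphic density $F$. This gives $\|f_-\|^2-\|f_+\|^2\ge0$ in the form $\Lambda_{\cW\ominus\cW}$ twisted by $i$, and that inequality is exactly the contractivity of the Potapov transform.
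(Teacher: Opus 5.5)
\textbf{The gap: chart bookkeeping for (b).} The problem lies behind (b), and hence behind reading off (c)--(d) from \eqref{eq:PiPi} and \eqref{eq:affine-product}. Your argument separates two conditions, and only the first is fine.

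\emph{Transversality for (a).} Here you only need transversality in the sense of \eqref{eq:transversality}. That is automatic, because two of your three factors are graphs of invertible operators.

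\emph{Loop invertibility for (b)--(d).} The explicit product formula needs a stronger, chart-dependent condition: each loop operator $1-PD$ must be invertible. ``Compact contractions'' gives only that $1-PD$ is Fredholm.

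The first product with $\Delta_{\mu,\nu}(\cC_t)$ is harmless. That relation preserves every Fourier mode, so its blocks mapping $\cW^\mp$ to $\cW^\pm$ inside one copy vanish, and these are exactly the blocks that enter the loop.

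The second product has loop $1-P'RDQ$. Here $Q,R$ are the transfer blocks of the annulus, and $P'$, $D$ are within-copy blocks of the twisted Potapov transforms of $\graph T_{\mu,\nu}(p_-)$ and $\graph T_{\mu,\nu}(p_+)$.
\begin{itemize}
\item On $\cW_\0$ this is fine. The Potapov transform of the graph of a real symplectic operator is unitary for \eqref{eq:inner}, so $\|P'RDQ\|\le\|Q\|\,\|R\|<1$.
\item On $\cW_\1$ the blocks involve $a^{-1}$, cf.\ \eqref{eq:Pi-matritsa}. They become arbitrarily large as $a_\downarrow$ approaches a degenerate operator, so $1-P'RDQ$ need not be invertible.
\end{itemize}
Nothing in your plan produces one choice of outer $\sigma$-twists that makes both graphs, and their product with the annulus, computable at once. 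Heuristically, as $t\to0$ you would need good charts for $g_-$ and $g_+$ to be good for their composite, which fails in general.

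\emph{The even block.} Separately, (b) allows only $\sigma$-twists, so you must show the even block never degenerates. You leave this as an expectation and even say it may fail. It does hold: $g_\downarrow$ commutes with conjugation and preserves $\{\cdot,\cdot\}$, hence preserves the indefinite form $\sum n|p_n|^2$. So its $\cW_\0^+\to\cW_\0^+$ block $\Phi$ satisfies $\Phi^*\Phi\ge1$ and $\Phi\Phi^*\ge1$. Only the fermionic block, which is merely a contraction, can degenerate.

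\textbf{How the paper avoids this.} The paper's proof is arranged precisely to sidestep the issue.
\begin{itemize}
\item It refines your decomposition to \eqref{eq:canonical-decomposition-1}, peeling the surplace parts $\pi_\pm\in\Cont_\vel(S^{1|1}_\bullet;\cA)$ off the body diffeomorphisms.
\item It treats $(p_-)_\downarrow\circ\cC_t\circ(p_+)_\downarrow\in\Gamma_\bullet$ as one classical element, whose relation is quoted from the boson/fermion theory. On $\cW_\0$ it is a morphism of $\ov{\mathrm{ASp}}$. On $\cW_\1$ it is a morphism of $\ov{\mathbf{GD}}$ which, taken as a whole, lies in some $\sigma$-twisted chart.
\item By Lemma~\ref{l:potapov-surplace}, the surplace factors have Potapov transforms with body $\left(\begin{smallmatrix}1&0\\0&1\end{smallmatrix}\right)$. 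By Lemma~\ref{l:1001}, this survives conjugation by the $\sigma_j$.
\end{itemize}
So the twists can be pushed to the outer ends, and every loop $1-PD$ has body $1$ and is invertible. The body of $\Sigma(\Delta_{\mu,\nu}(\wh\cP))$ is literally that of the classical factor, so (c1)--(c5) are inherited with no further estimate.

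You already cite \cite{Ner-book}, Sect.~VII.4--5, for the body estimates, so adopting this split costs you nothing. Your explicit diagonal formula for $\Delta_{\mu,\nu}(\cC_t)$ and your Green-formula argument for (c1) then become illustrations rather than load-bearing steps.
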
 

We establish this statement in the rest of this section.

\sm

{\bf \punct Potapov transforms for surplace contactomorphisms.}
We refine decomposition \eqref{eq:canonical-decomposition}
and represent elements $\wh\cP\in\Gamma_\bullet(\cA)$
as 
\begin{equation}
	\wh\cP=
\pi_-\circ (p_+)_\downarrow\circ \cC_t\circ 
 (p_+)_\downarrow\circ \pi_+,
 \label{eq:canonical-decomposition-1}
\end{equation}
where $\pi_\pm\in \Cont_\vel(S^{1|1}_\bullet;\cA)$,
$(p_\pm)_\downarrow\in\Diff^{(2)}(S^1)$, and
$$ \pi_-\circ (p_-)_\downarrow=p_-, \qquad (p_+)_\downarrow\circ \pi_+=p_+.$$ 

\begin{lemma}
	\label{l:potapov-surplace}
	For any $\pi\in \Cont_\vel(S^{1|1}_\bullet;\cA)$
	its Potapov transform is  well defined and continuous,
	and $\Xi(\pi)_\downarrow=
	\begin{pmatrix}1&0\\0&1\end{pmatrix}$.	
\end{lemma}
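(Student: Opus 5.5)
The plan is to view $\pi$ as an affine orthosymplectic operator $T_{\mu,\nu}(\pi)$ on $\cW[\cA]$ and to compute the Potapov transform of its graph by formula \eqref{eq:Pi-matritsa}. The affine action $T_{\mu,\nu}$ of Section~\ref{s:embedding} makes each $\pi\in\Cont_\vel(S^{1|1}_\bullet;\cA)$ act as $F\mapsto F\circ\pi+\gamma(\pi)$. Here $\gamma(\pi)=\mu(\bfQ-\phi)+\nu\ln\Ber(\cdot)$, and its linear part $g=\begin{pmatrix}a&b\\c&d\end{pmatrix}$ is taken with respect to $\cW=\cW^+\oplus\cW^-$. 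The relation $\Delta_{\mu,\nu}(\pi)$, viewed as the limit of an infinitely narrow annulus, is just $\graph(T_{\mu,\nu}(\pi))$. So it suffices to check two things: that $a$ is invertible as a continuous operator on $\cW^+[\cA]$, and that the entries $a^{-1}$, $-a^{-1}b$, $ca^{-1}$, $d-ca^{-1}b$ together with the shifted vector are continuous in the $C^\infty$-topology.

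First I use the surplace structure. By Theorem~\ref{th:1} and Proposition~\ref{pr:1}, $\pi$ is a finite product of factors $1+\lambda L(a)$ and $1+\mu M(b)$ with $\lambda,\mu$ of positive degree. Equivalently, by Theorem~\ref{th:1}d, $\pi=\exp(u)$ with $u\in\cont(S^{1|1}_\bullet)\otimes\cA_+$, and the exponential series is finite. Each generator acts on $\cW[\cA]$ by a differential operator with smooth coefficients (the formulas of Section~\ref{s:embedding}, or the explicit matrix \eqref{eq:odd-contact}). Hence $g=1+N$, where $N$ is a finite sum $\sum_I \dot\fra^I N_I$, each $N_I$ is a continuous operator $\cW\to\cW$ (a differential operator, composed with the projection modulo constants), and every monomial has positive degree. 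This gives $g_\downarrow=1$ immediately, and in particular $a_\downarrow=1$, $b_\downarrow=0$, $c_\downarrow=0$, $d_\downarrow=1$.

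Next I invert $a$. Write $a=1+n$ with $n$ nilpotent, because all its coefficients lie in $\cA_+$ and only finitely many generators $\fra_j$ occur. Then $a^{-1}=\sum_{k\le K}(-n)^k$ is a finite sum of compositions of continuous operators: compositions of differential operators with the Riesz projections onto $\cW^\pm$, which are continuous on $C^\infty$. All four blocks of \eqref{eq:Pi-matritsa} are therefore well defined and continuous. Their bodies are $1,0,0,1$, which gives $\Xi(\pi)_\downarrow=\begin{pmatrix}1&0\\0&1\end{pmatrix}$. For the affine part, the shift vector is $\gamma(\pi)$ with its components rearranged by $a^{-1}$. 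It is a finite $\cA_+$-combination of smooth functions, so the affine Potapov map is also continuous.

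The main obstacle is bookkeeping rather than analysis. One has to confirm that the projections $\cW\to\cW^\pm$ and the quotient by constants commute appropriately with the differential operators, so that the block entries $a,b,c,d$ really are continuous on $C^\infty$. This follows because the Szeg\H{o}-type projection is continuous on $C^\infty(S^1)$. One also has to treat the parity signs of odd blocks correctly. Neither issue affects invertibility, which rests solely on nilpotency.
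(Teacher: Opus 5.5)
Your proof is correct, but it is organized differently from the paper's.

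The paper writes $\pi=\prod_j(1+\lambda_jZ_j)$ and notes that the blocks of a single factor $1+\lambda Z$ with respect to $\cW^+[\cA]\oplus\cW^-[\cA]$ are continuous. This uses only continuity of the projections onto $\cW^\pm[\cA]$; you do not need them to commute with the differential operators either. It then applies \eqref{eq:Pi-matritsa} factor by factor, where inversion is trivial since $\lambda^2=0$ gives $(1+\lambda\alpha)^{-1}=1-\lambda\alpha$. Finally it assembles the transform of the product by iterating the composition law \eqref{eq:PiPi}.

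You instead multiply out first, get $g=1+N$ with $N$ nilpotent, and use \eqref{eq:Pi-matritsa} once, inverting $a=1+n$ by a finite Neumann series.

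What your route buys is self-containedness. It never needs the hypothesis of \eqref{eq:PiPi} that $1-PD$ be invertible, which is slightly delicate in the paper as written. In the normalization of \eqref{eq:Pi-matritsa}, the one in which the body $\begin{pmatrix}1&0\\0&1\end{pmatrix}$ is stated, one would get $(1-PD)_\downarrow=0$. The hypothesis holds only in the row convention of \eqref{eq:pot1}, where the identity has transform $\begin{pmatrix}0&1\\1&0\end{pmatrix}$ and $P_\downarrow=D_\downarrow=0$. You also handle the affine shift explicitly, which the paper leaves implicit.

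The paper's factor-by-factor route, in turn, is the same mechanism it reuses in the proof of Proposition~\ref{pr:long}, where transforms of several factors are multiplied by \eqref{eq:PiPi}.
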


{\sc Proof.} Let $Z\in \cont_{\ov j}(S^{1|1};\cA)$. Consider its block decomposition 
$\begin{pmatrix}\alpha&\beta\\\gamma&\delta \end{pmatrix}$
with respect to the decomposition $\cW^+[\cA]\oplus \cW^-[\cA]$.
Notice that projections $\cW[\cA]\mapsto \cW^\pm[\cA]$ 
are continuous in both $C^\infty$ and Hilbert topologies. So, blocks are continuous operators 
$\cW^\pm[\cA]\to \cW^\pm[\cA]$. Therefore blocks
of the matrix 
$$
\begin{pmatrix}
	1&0\\0&1
\end{pmatrix}+\lambda
 \begin{pmatrix}\alpha&\beta\\\gamma&\delta \end{pmatrix},
 \qquad \lambda \in \cM_+, 
$$
are bounded. So we can evaluate Potapov transform  by formula
\eqref{eq:Pi-matritsa}. After this, we can evaluate Potapov transform
of a product $\prod (1+\lambda_j Z_j)$ using the formula 
\eqref{eq:PiPi}. 
\hfill $\square$

\sm

We also need the following remark:

\begin{lemma}
	\label{l:1001}
Let $P\in \cO^\aff(\cW,\cW;\cA)$ and $\Sigma(P)_\downarrow=\begin{pmatrix}
	1&0\\0&1
\end{pmatrix}$. Then 
$$\sigma^\cW_j P \sigma^\cW_j\in \cO^\aff(\cW,\cW;\cA).$$	
\end{lemma}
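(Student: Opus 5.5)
The plan is to work directly with the definition of the Potapov transform: the graph of an affine map $V^+[\cA]\oplus W^-[\cA]\to V^-[\cA]\oplus W^+[\cA]$. Conjugation by $\sigma^\cW_j$ interchanges the odd basis vectors $g_j\in\cW_\1^+$ and $h_j\in\cW_\1^-$. It is therefore natural to compute what this does to the Potapov transform of $P$. It amounts to a partial Legendre-type exchange in one odd coordinate, on each side of the relation. Write $\Sigma(P)=\begin{pmatrix}A&B\\C&D\end{pmatrix}$ with $\Sigma(P)_\downarrow=1$. In particular $A_\downarrow=1$, $D_\downarrow=1$, $B_\downarrow=C_\downarrow=0$.

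First I treat the conjugation on the input side $P\sigma^\cW_j$, and then the output side $\sigma^\cW_j(\cdot)$ symmetrically. The graph of $P$ is described by the equation $(v_-,w_+)=(v_+,w_-)\Sigma+(\gamma,\delta)$. Replacing $v$ by $v\sigma_j$ swaps the $j$-th coordinate of $v_+$ (an odd input variable) with the $j$-th coordinate of $v_-$ (an odd output variable). To obtain a Potapov transform of the new relation I must solve the equation for the new set of output variables. The key point is that this requires inverting a single scalar entry of $\cA_\0$, namely the diagonal entry of $A$ in position $(g_j,h_j)$. Because $\Sigma(P)_\downarrow=1$, that entry has body $1$ and is invertible. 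Hence the solution exists, and the new matrix is obtained from the old one by a finite-rank rational (Schur-complement-type) formula. Concretely, I separate the $j$-th row and column of $\Sigma$ as a bordered matrix $\begin{pmatrix}a&\beta\\\gamma'&\Sigma'\end{pmatrix}$ with $a\in\cA_\0$ invertible. The new transform is then $\begin{pmatrix}a^{-1}&-a^{-1}\beta\\\gamma'a^{-1}&\Sigma'-\gamma'a^{-1}\beta\end{pmatrix}$, exactly as in \eqref{eq:Pi-matritsa}. The shift vector is transformed accordingly.

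The next step is to check that the resulting operator is again continuous in the $C^\infty$ topology of $\cW[\cA]$. Each new block is a sum of three kinds of terms: old blocks, products of a row functional $\beta$ with a column vector $\gamma'$, and multiplication by $a^{-1}\in\cA$. The row and column are the images and restrictions of continuous operators on the single basis vectors $g_j$, $h_j$. So $\gamma'$ is a smooth vector and $\beta$ is a continuous functional, and the rank-one correction is continuous. The new shift is likewise a combination of smooth vectors. I also verify that the new relation is the graph of this map and is closed. This is automatic, since the graph is the image of the old closed graph under the continuous isomorphism $\sigma_j$. Finally, the body of the new transform is again the identity, because $a_\downarrow=1$ and $\beta_\downarrow=\gamma'_\downarrow=0$. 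This last fact is convenient, since it lets me iterate the step. Doing the input side and then the output side gives $\sigma^\cW_jP\sigma^\cW_j\in\cO^\aff(\cW,\cW;\cA)$.

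I expect the main obstacle to be bookkeeping rather than analysis: tracking signs coming from odd entries and the super-structure (the $\cA_\1$-valued blocks $B_{12},C_{21}$, and so on), and making sure the pivot really is an even element with body $1$. I would handle this by noting that $\sigma_j$ preserves parity, so the pivot lies in an even-even diagonal position of the odd block, which is a $22$-block. There one has $\Sigma_\downarrow=1$, so the body of the pivot is $1$. Since all sums over $\cA$-monomials are finite, no convergence issues arise beyond the continuity already discussed.
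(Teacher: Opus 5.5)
\textbf{The gap: your single-entry pivot has body $0$, so the one-sided step fails.}

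The paper states this lemma without proof. The meaning of the hypothesis is fixed by Lemma~\ref{l:potapov-surplace}, where $\Sigma(\pi)$ is computed via \eqref{eq:Pi-matritsa}, and by the lemma's use in the proof of Proposition~\ref{pr:long} for $P=\Delta_{\mu,\nu}(\pi_\pm)$. It says that $P_\downarrow=\graph(\mathrm{id})$.

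You use the convention $(v_-,w_+)=(v_+,w_-)\Sigma+(\gamma,\delta)$ of \eqref{eq:potapov} and \eqref{eq:PiPi}. In that convention the identity graph has transform $\begin{pmatrix}0&1\\1&0\end{pmatrix}$. So in fact $A_\downarrow=D_\downarrow=0$ and $B_\downarrow=C_\downarrow=1$, and your pivot, the $(h_j,g_j)$ entry of $A$, has body $0$.

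Your reading $A_\downarrow=1$ is not a harmless change of convention: it is impossible for any Lagrangian $P$. Put $w_-=0$ and $v_-=v_+A_{22}$ in the odd part. Isotropy then gives, at the body level,
\[
\Lambda_V(v,v')=v_+(A_{22}+A_{22}^t)(v'_+)^t=0 .
\]
Hence $(A_{22})_\downarrow$ is skew-symmetric for the pairing $h_k\leftrightarrow g_k$, and its diagonal entry, which is exactly your pivot, vanishes. As written, your argument proves a statement that is vacuous for the relations the lemma is applied to.

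Geometrically, $(P\sigma_j)_\downarrow=\graph(\sigma_j)$ lies outside the chart. For $(x,y)=(x,x\sigma_j)$ the two input coordinates $x_{h_j}$ and $y_{g_j}$ are equal, so they cannot serve as free variables. The same holds for $\sigma_jP$. Thus the one-sided step, and with it your iteration, breaks down at the first move.

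\textbf{The repair: do both exchanges at once.} Passing from $P$ to $\sigma_jP\sigma_j$ trades the old inputs $v_{h_j},w_{g_j}$ for the old outputs $v_{g_j},w_{h_j}$. So the pivot is the $2\times2$ block
\[
\Pi=\begin{pmatrix}A_{h_jg_j}&B_{h_jh_j}\\ C_{g_jg_j}&D_{g_jh_j}\end{pmatrix},\qquad \Pi_\downarrow=\begin{pmatrix}0&1\\1&0\end{pmatrix}.
\]
All four entries lie in $22$-blocks, hence in the commutative algebra $\cA_\0$. Since $\det\Pi$ has body $-1$, the block $\Pi$ is invertible.

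The bordered Schur-complement formula, the $2\times2$ analogue of \eqref{eq:Pi-matritsa}, then gives the transform of $\sigma_jP\sigma_j$:
\begin{itemize}
\item $\Pi^{-1}$ sits in the pivot positions;
\item the two pivot rows and the two pivot columns are multiplied by $\Pi^{-1}$;
\item the complementary block is corrected by a rank-two term;
\item the shift vector is corrected in the same way.
\end{itemize}

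Your continuity argument for the finite-rank correction applies verbatim. The body of the result is again the transform of the identity graph, in accordance with $(\sigma_jP\sigma_j)_\downarrow=\graph(\sigma_j^2)=\graph(\mathrm{id})$.
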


\sm 

{\bf \punct Potapov transform for elements of 
$\boldsymbol{\Gamma_\bullet(\cA)}$.%
\label{ss:standard}}
Now consider the middle part
$$
\cP:=(p_-)_\downarrow\circ \cC_t\circ 
(p_+)_\downarrow \in \Gamma_\bullet
$$
of the product \eqref{eq:canonical-decomposition-1}.

In this case we have affine relations $\Delta_{\mu,\nu}(\cP):\cW\tto \cW$.
Such relation splits into a direct sum of an affine  relation
$$
\Delta^\0_{\mu,\nu}(\cP):\cW_\0\tto \cW_\0,
$$
and a linear relation
$$
\Delta^\1(\cP):\cW_\1\tto \cW_\1.
$$
The first relation is Lagrangian with respect to the symplectic form
$$
\{f_1,f_2\}_\0:=\frac{1}{8\pi}\int_{0}^{2\pi} 
\bigl(
f_1(\phi)\,d f_2(\phi)- f_2(\phi)\,d f_1(\phi)
\bigr)
$$
in $\cW_\0$.
The second relation is Lagrangian (maximal isotropic)
with respect to the orthogonal form
$$
\la g_1,g_2\ra_\1= \int_0^{2\pi} g_1(\phi)(\phi)\, g_2(\phi)\,d\phi.
$$
Properties of relations $\Delta^\0_{\mu,\nu}(\cP)$ and $\Delta^\1$
were investigated in \cite{Ner-boson}, \cite{Ner-fermion}, see also exposition
in \cite{Ner-book}, Sect. VII.5.

The relations $\Delta^\0_{\mu,\nu}(\cP)$ are morphisms of the category
$\ov {\mathrm{ASp}}$ (see \cite{Ner-book}, Sect. VI.4), its Potapov transform
$$
\left(
\begin{array}{cc|c}
P&Q&u\\R&T&v
\end{array}
\right)
$$
satisfies the following conditions:

\sm 

--- $\left\|\begin{pmatrix} P&Q\\R&T\end{pmatrix} \right\|\le 1$,
this matrix is symmetric;

\sm

--- $\|P\|<1$, $\|T\|<1$ and matrix elements of these 
matrices rapidly decrease.

\sm 

--- coordinates of $u$, $v$ rapidly decrease.

\sm 

The relations $\Delta^\1 (\cP)$ are morphisms of the category
$\ov {\mathbf{GD}}$, see \cite{Ner-fermion}.
 In this case, for some $\alpha_1<\dots<\alpha_m$,
$\gamma_1<\dots <\gamma_l$ the relation
\begin{equation*}
\sigma^{\cW_\0}_{\alpha_1}\dots \sigma^{\cW_\0}_{\alpha_m}
\,
\Delta^\0 (\cP)
\,
 \sigma^{\cW_\0}_{\gamma_1}\dots \sigma^{\cW_\0}_{\gamma_l}
 \label{eq:atlas}
\end{equation*}
has a well-defined Potapov transform 
$S=\begin{pmatrix}A&B\\C&D \end{pmatrix}$,
it satisfies the conditions (see \cite{Ner-book})

\sm 

--- $S$ is skew-symmetric; it is bounded in the both  Hilbert  and $C^\infty$-topologies ;

\sm 

--- matrix elements of $A$ and $D$ rapidly decrease.

\sm 

{\bf\punct Proof of Proposition \ref{pr:long}.}
Clearly,
\begin{multline}
\Delta_{\mu,\nu}\bigl(
\pi_-\circ (p_-)_\downarrow\circ \cC_t\circ 
(p_+)_\downarrow\circ \pi_+,\bigr)
=\\=
\Delta_{\mu,\nu}(\pi_-)\,
\Delta_{\mu,\nu}
\bigl( 
(p_+)_\downarrow\circ \cC_t\circ 
(p_+)_\downarrow 
\bigr)\,
\Delta_{\mu,\nu}(\pi_+).
\label{eq:DeltaDelta}
\end{multline}
Potapov transforms of 3 factors were discussed in the previous 
two subsections.
If
 a Potapov transform for a middle factor is  well defined, 
 then we simply multiply super-affine relations by   formula
 \eqref{eq:PiPi} and get the desired list of properties. 

 Otherwise, a Potapov transform of $\Delta^\1(\bigl( 
 (p_-)_\downarrow\circ \cC_t\circ 
 (p_+)_\downarrow 
 \bigr))$ is not well defined. Then we choose 
 $\sigma_{\alpha_i}$, $\sigma_{\gamma_j}$ such that
 \eqref{eq:atlas} has a Potapov transform and 
 represent \eqref{eq:DeltaDelta} in the form
 \begin{multline*}
\Bigl(\prod_{i} \sigma_{\alpha_i}\cdot 
 	\Delta_{\mu,\nu}(\pi_-)\cdot \prod_{i} \sigma_{\alpha_i}   \Bigr)\,
\Bigl(\prod_{i} \sigma_{\alpha_i}\cdot  	\Delta_{\mu,\nu}
 	\bigl( 
 	(p_+)_\downarrow\circ \cC_t\circ 
 	(p_+)_\downarrow 
 	\bigr) \cdot \prod_{j} \sigma_{\gamma_j}  \Bigr)
 	\times \\ \times	
 	\Bigl( \prod_{j} \sigma_{\gamma_j} \cdot 
 	\Delta_{\mu,\nu}(\pi_+)\cdot \prod_{j} \sigma_{\gamma_j} \Bigr).
 \end{multline*}
 We apply Lemma \ref{l:1001} and multiply our affine relations using formula
 \eqref{eq:PiPi}.
  \hfill $\square$

\section{Super-Fock space and Gauss--Berezin integral operators%
\label{s:super-fock}}

\COUNTERS

Here we define superanalogs of Gaussian integral  operators 
and show that they are enumerated by superaffine Lagrangian relations. 

%Consider unitary representation \eqref{eq:NS1}--\eqref{eq:NS3} of
%the superalgebra $\ns$. Let us extend it to a representation of
%$\cont^\diamond(S^{1|1};\cA)$. By Theorem \ref{th:3}, it can be integrated to

  \sm

{\bf \punct The super-Fock space.%
\label{ss:super-Fock}}
Let $z_j$,  $\ov z_j$, $u_j$, $\ov u_j$ be even variables.
Consider  monomials $z^\alpha:=z_1^{\alpha_1} z_2^{\alpha_2}\dots$, where products are actually finite.
We define   the {\it bosonic Fock space} $F$
 as a Hilbert space  with orthogonal basis $z^\alpha$,
  their norms are defined by
$$
\|z^\alpha\|^2=\alpha!:=\prod \alpha_j!
$$ 
Denote by $\ov F$ the space of formal series in $z^\alpha$, by $F^0$ the space of polynomials.
So, $F^0\subset F\subset \ov F$.

Next, let $\xi_j$, $\ov \xi_j$, $\eta_j$, $\ov\eta_j$
 be Grassmann variables, all these variables anticommute
 and they commute with even variables. We define the {\it fermionic Fock space} $\Lambda$ as a Hibert space  
with an orthonormal basis consisting of monomials 
$\dot\xi^I:=\xi_{i_1}\dots\xi_{i_k}$, where $i_1<\dots<i_k$.
Denote by $\ov\Lambda$ the space of formal series in these monomials, by $\Lambda$ the space of polynomials.
So, $\Lambda^0\subset \Lambda\subset \ov \Lambda$.

We consider spaces 
$$
\F^0:=F^0\otimes \Lambda^0, 
\quad \F:=F\otimes \Lambda,\quad \ov\F:=\ov F\otimes \ov\Lambda,
$$
we have respectively the tensor product of countable-dimensional
spaces, of Hilbert spaces, in the last case we consider the space of formal series in the variables $z_k$, $\xi_j$.

Consider  
 the corresponding variants
of 'super-Fock' space: 
$$\F^0[\cA]\subset \F[\cA]\subset \ov\F[\cA],$$
%(our main object below is the space $\F[\cA]$ consisting
%of smooth vectors of the Neveu--Schwartz algebra).
Their elements are sums
$$
f(z,\xi;\fra)=
\sum c_{\alpha,I, K} z^\alpha \dot\xi^I \fra^K
$$
We assume 
$$\xi_i\fra_j=-\fra_j \xi_k.$$
So, we can multiply elements of our spaces by elements of $\cA$
from the left and from the right.

By $\bfF[\cA]_\0$ and $\bfF[\cA]_\1$  etc. we denote the subspace of 
functions satisfying
$$
f(z,-\xi;-\fra)=f(z,\xi;\fra)\quad\text{and}\quad
f(z,-\xi;-\fra)=-f(z,\xi;\fra)\quad\text{respectively}.
$$

We define the following map 
$$
\sfS f(z,\xi;\fra):=f(z,\xi;-\fra).
$$ 
So,
$$
\sfS^2=1,\qquad
\sfS \lambda f=\lambda^\circ \sfS f \qquad\text{for any $\lambda\in \cA$.}
$$

\sm

{\bf\punct Operators and Berezin symbols.}
Let  $\xi_m$, $\ov\xi_m$, $\eta_j$, $\ov\eta_j$ be
pairwise anti-commuting variables, let they anticommute with $\fra_j$.
Let $I:\,i_1<\dots <i_m$. We denote
$$
\dot\xi^I:= \xi_{i_1}\dots \xi_{i_m},\qquad
\ddot {\ov \eta}^I:=
\ov\eta_m \dots \ov \eta_1=(-1)^{m(m-1)/2}\, \dot{\ov\eta}^I. 
$$

We say that an {\it operator} in a super-Fock space
 is a morphism of right $\cA$-modules. 
We define the {\it Berezin symbol} (cf. \cite{Ber-second}, Sect. 2-3, 
see also \cite{Ner-book}, Subset. V.3.3, II.1.9) of an operator
$R:\F^0[\cA]\to \ov\F[\cA]$
as the formal series 
$$
K_R(z,\xi;\ov u, \ov \eta;\fra):=\sum_{\alpha,I} R(z^\alpha\dot\xi^{I}) 
\cdot
\frac 1{\alpha!}
\ov  u^{\alpha}\ddot{\ov \eta}^{I}.
$$

In fact, we are interested in expressions, which are even or odd with respect to the total collection
$\{\xi_m, \ov\eta_j, \fra_k\}$. The following statement is obvious

\sm

--- {\it The map $R\mapsto K_R$ is a one-to-one correspondence between the space of 
	operators $R:\F^0[\cA]\to \ov\F[\cA]$
and the space of formal series of the form
$$
\sum_{\alpha,\beta,I,J} c_{\alpha,\beta,I,J}(\fra) z^\beta \xi^J \ov u^\alpha \ov \eta^I,
$$
where coefficients $c_{\alpha,\beta,I,J}(\fra)$ are contained in $\cA$.} 
%Formally, for $f(z,\xi;\fra)=\sum b_{\beta,I} z^\beta\xi^I$
%we have
%$$
%R f(z,\xi;\fra)=\la K_R(z,\xi,\ov u, \ov \eta),f(u, \eta)
%$$

\sm 

{\bf \punct Remark. Berezin symbols as an imitation of kernels
of integral operators.} Consider the probabilistic measure $d\mu(z)$ on $\C$
with the coordinate $z=x+iy$ given by 
$d\mu(z)=\frac1\pi e^{-|z|^2}dx\,dy$.

Next, consider `fermionic Gaussian measure'
$\exp(-\eta\ov\eta)\,d\dot\eta\,d\ddot {\ov \eta}$
defined from the following conditions:
% on the 
%space of functions $\sum_{I,J} \dot \eta^I \dot {\ov \eta}^J$
%by
$$
\int \dot\eta^I \ddot {\ov \eta}^I  \exp(-\eta\ov\eta)\,d\dot\eta\,
d\ddot {\ov \eta}=1,
$$
and integrals of all other monomials are 0.

On physical level of rigor, we have 
\begin{equation}
R f(z,\xi;\fra)=\int\limits_{\eta,\ov\eta} \int\limits_{u,\ov u}
	 K_{R}(z,\xi;\ov u, \ov\eta;\fra)\,
	 f(u,\eta;\fra)
	 \,\exp(-\eta\ov\eta)\,d\dot \eta\,
	 d\ddot {\ov \eta} \prod d\mu(u_j).
	 \label{eq:operator-vector}
\end{equation}

The symbol of products of operators is 
\begin{multline}
K_{R_1 R_2}(z,\xi;\ov w,\ov \zeta)
= \\ =
\int\limits_{u,\ov u}\int\limits_{\eta,\ov\eta}
 K_{R_1}(z,\xi;\ov u, \ov\eta;\fra)\, K_{R_2}(u,\eta; \ov w, \ov\zeta;\fra)
 	 \,\exp(-\eta\ov\eta)\,d\dot\eta\,
	 d\ddot {\ov \eta} \prod d\mu(u_j).
 %d\mu(u,\ov u, \eta,	 \ov\eta).
\label{eq:convolution}
\end{multline}
 This formula
is a precise counterpart of the usual formula for a product of
integral operators.
 For a   super-Fock with finite  degrees of freedom,
 see \cite{Ner-super}. 
 \hfill $\square$
   
\sm

{\bf \punct $\boldsymbol l$-Operators.} We say that an {\it $l$-operator} $\F^0[\cA]\to \ov\F[\cA]$ 
is a morphism of left $\cA$-modules.
If a  kernel $K_R$ satisfies the condition
\begin{equation}
K_R(z,-\xi;\ov u,-\ov \eta;-\fra)= K_R(z,\xi;\ov u,\ov \eta;\fra)
\label{eq:K-even}
\end{equation}
then $R$ is an $l$-operator sending $\F^0[\cA]_\0\to \ov\F[\cA]_\0$, $\F^0[\cA]_\1\to \ov\F[\cA]_\1$.
%and
%$$
%R (\lambda f)=\lambda Rf \quad \text{where $\lambda\in \cA$.}
%$$

If a kernel $K_Q$ satisfies
\begin{equation}
K_Q(z,-\xi;\ov u,-\ov \eta;-\fra)=- K_Q(z,\xi;\ov u,\ov \eta;\fra),
\label{eq:K-odd}
\end{equation}
then $Q$ transposes parities, and 
$$
Q (\lambda f)=\lambda^\circ Qf, \quad \text{where $\lambda\in \cA$.}
$$
So, $\sfS Q$ is an $l$-operator.
%We say an {\it  in a super-Fock space}
%is an operator  whose kernel satisfies \eqref{eq:K-even}
%or transformation $Q\sfS$, if a kernel $K_Q$ satisfies \eqref{eq:K-odd}.

\sm

{\bf\punct Creation-annihilation operators 
and the Heisenberg Lie superalgebra.%
\label{ss:creation-annihilation}}
We define (left) {\it partial derivatives} is Grassmann variables $\xi$, $\ov\xi$, $\eta$, $\ov\eta$
\dots\vphantom{.} in the usual way: if $f$ and $g$ do not depend on $\xi_j$, then
$$
\frac{\partial}{\partial \xi_j}(\xi_j f+g)=f.
$$
We have 
$$
\frac{\partial}{\partial \xi_j} \fra_k F=- \fra_k \frac{\partial}{\partial \xi_j} F.
$$
In few formulas we use {\it right partial derivatives} defined by
$$
( f \xi_j+g)\frac{\eth}{\eth \xi_j}=f.
$$
So,
$$
h(\xi, \ov\eta, \dots;\fra) \frac{\eth}{\eth \xi_j}=
-\frac{\partial}{\partial \xi_j}h(-\xi, -\ov\eta, \dots;-\fra).
$$

Consider the following differential operators ({\it creation and annihilation operators}) in the super-Fock space:
 operators $ z_j$, $\frac\partial{\partial z_j}$ of parity $\0$ and  operators $\xi_k$ and $\partial_{\xi_k}$ of parity $\1$.
Their anticommutators are
$$
[z_j,\tfrac\partial{\partial z_j}]_s=-1,\qquad [\xi_k, 
\tfrac\partial{\partial\xi_k}]_s=1,
$$ 
other supercommutators are zero. Adding scalar operators 
to this collection we get  the Heisenberg Lie superalgebra
$\frheis$.

Denote
\begin{multline}
\wh a(v)=\wh a(v^+,v^-)=
\wh a(v_\0^{+},v_\1^{+},v_\0^{-},v_\1^{-})=
\\=
\sum  v_\0^{j+} z_j+   \sum v^{k+}_\1 \xi_k   +
\sum   v_\0^{j-}\frac\partial{\partial z_j} +
\sum   v_\1^{k-} \frac\partial{\partial\xi_k},
\label{eq:wh-a}
\end{multline}
where $v_{\ov i}^{j \pm }\in \cA$.
So the  space $V[\cA]$ of possible parameters 
 splits as
$$
V=V_\0^+\oplus V_\1^+ \oplus V_\0^-\oplus V_\1^- 
$$

If sums in \eqref{eq:wh-a} are actually finite, then
the operator $\wh a(v)$ is well defined in both $\bfF^0$ and%
\footnote{But operators $z_j$ and $\frac{\partial}{\partial z_j}$
are unbounded in the Hilbert space $\bfF$.}
$\ov\bfF$.

\begin{lemma}
If $\wh a(v)$ and $\wh a(w)$ are finite, then
\begin{equation}
[\wh a(v),\wh a(w)]_s= J(v,w),
\label{eq:aaJ}
\end{equation}
where $J$ is the orthosymplectic form determined by
\begin{multline}
J\Bigl((v_\0^{+},v_\1^{+},v_\0^{-},v_\1^{-}) , (w_\0^{+},w_\1^{+},w_\0^{-},w_\1^{-}) \Bigr)
	=\\=
\frac12\begin{pmatrix}
v_\0^{+}&v_\0^{-}&v_\1^{+}&v_\1^{-}
\end{pmatrix}
\begin{pmatrix}
	0&1&0&0\\
	-1&0&0&0\\
	0&0&0&1\\
	0&0&1&0
\end{pmatrix}
\begin{pmatrix}
	(w_\0^{+})^{st}\\(w_\0^{-})^{st}\\(w_\1^{+})^{st}\\(w_\1^{-})^{st}
\end{pmatrix}.
\label{eq:J}
\end{multline}
\end{lemma}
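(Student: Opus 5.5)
The plan is a direct computation. Both sides are bilinear (with the appropriate parity signs) in $v$ and $w$, and the sums in \eqref{eq:wh-a} are finite, so it suffices to expand $\wh a(v)$ and $\wh a(w)$ into elementary terms and compute pairwise supercommutators. Write $\wh a(v)=\sum_\alpha v_\alpha E_\alpha$, where $E_\alpha$ ranges over $z_j$, $\xi_k$, $\partial/\partial z_j$, $\partial/\partial\xi_k$ and the coefficients $v_\alpha\in\cA$ satisfy $p(v_\alpha)=p(E_\alpha)$. Then every product $v_\alpha E_\alpha$ is an even operator; it commutes or anticommutes with $\fra$'s according to the rule $\xi_i\fra_j=-\fra_j\xi_i$ and $\frac{\partial}{\partial\xi_j}\fra_k=-\fra_k\frac{\partial}{\partial\xi_j}$. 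For pure $v$, $w$ (Convention b), $\wh a(v)$ and $\wh a(w)$ are even, so $[\wh a(v),\wh a(w)]_s$ is the ordinary commutator.

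The key step is the elementary identity
$$
[v_\alpha E_\alpha, w_\beta E_\beta]=(-1)^{p(E_\alpha)p(w_\beta)}\, v_\alpha w_\beta\,[E_\alpha,E_\beta]_s .
$$
To get it, move $w_\beta$ to the left past $E_\alpha$ in the first product, and move $v_\alpha$ past $E_\beta$ and then past $w_\beta$ in the second. In the second product, reorder $w_\beta v_\alpha=(-1)^{p(v_\alpha)p(w_\beta)}v_\alpha w_\beta$. The signs combine into the supercommutator of $E_\alpha,E_\beta$ because $p(v_\alpha)=p(E_\alpha)$ and $p(w_\beta)=p(E_\beta)$. Only conjugate pairs contribute, via $[z_j,\partial_{z_j}]_s=-1$ and $[\xi_k,\partial_{\xi_k}]_s=1$, and all other elementary supercommutators vanish.

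Summing the surviving terms gives:
\begin{itemize}
\item an even block $\sum_j\bigl(-v_\0^{j+}w_\0^{j-}+v_\0^{j-}w_\0^{j+}\bigr)$, which is skew in the exchange $v\leftrightarrow w$;
\item an odd block $\sum_k\bigl(\pm v_\1^{k+}w_\1^{k-}\pm v_\1^{k-}w_\1^{k+}\bigr)$, whose signs come from $(-1)^{p(E_\alpha)p(w_\beta)}=-1$ together with $[\partial_{\xi},\xi]_s=1$.
\end{itemize}

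The last step is to compare these blocks with \eqref{eq:J}. On odd components the supertransposition includes the involution $\circ$, so $(w_\1)^{st}=(w_\1^\circ)^t=-w_\1^t$ for odd coefficients. Inserting this should reproduce the symmetric pattern of the block $\begin{pmatrix}0&1\\1&0\end{pmatrix}$ with the correct signs.

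The main obstacle is purely bookkeeping of signs and normalizations. The overall sign convention (the even block $\begin{pmatrix}0&1\\-1&0\end{pmatrix}$ versus its negative), the factor $\frac12$ in \eqref{eq:J}, and the $\circ$ in the supertranspose all have to match the commutator computation. I would fix this by checking the two simplest cases, $v=z_1$, $w=\partial_{z_1}$ and $v=\fra_1\xi_1$, $w=\fra_2\partial_{\xi_1}$. If necessary I would adjust the interpretation of \eqref{eq:J}: ordering of coordinates, the normalization, or reading $J$ as the form $J(v,w)$ paired against $J(w,v)$. The general case then follows by bilinearity.
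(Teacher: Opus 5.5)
Your method is the paper's: expand $\wh a(v)$, $\wh a(w)$ and compute elementary supercommutators with coefficients in $\cA$. The paper does this for arbitrary $a=a_\0+a_\1$, $b=b_\0+b_\1$ and obtains
$[a z_k,b\,\partial_{z_k}]_s=-ab$, $[b\,\partial_{z_k},a z_k]_s=ba$, $[a\xi_k,b\,\partial_{\xi_k}]_s=ab^\circ$ and $[b\,\partial_{\xi_k},a\xi_k]_s=ba^\circ$.

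One repair is needed in your setup. The coefficients in $\wh a(v)$ are arbitrary elements of $\cA$, and a pure $v\in V[\cA]_\1$ gives an \emph{odd} operator. So it is not true that pure $v,w$ give even $\wh a(v),\wh a(w)$. Your restriction $p(v_\alpha)=p(E_\alpha)$ discards precisely the cases where the $\circ$ in $(w_\1)^{st}=(w_\1^\circ)^t$ matters. Your key identity in fact holds for all pure $v,w\in\cA$ in the form $[vE,wF]_s=(-1)^{p(E)p(w)}\,vw\,[E,F]_s$, since the total sign exponent reduces to $p(E)p(F)$ mod $2$. With $E=\xi_k$, $F=\partial_{\xi_k}$ it gives $[v\xi_k,w\,\partial_{\xi_k}]_s=vw^\circ$ directly.

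The genuine gap is the last step, which you leave at ``should reproduce''. Carried out, it fails for the formula as printed.

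Your even block $\sum_j(-v_\0^{j+}w_\0^{j-}+v_\0^{j-}w_\0^{j+})$ is $-2$ times the even part $\tfrac12\sum_j(v_\0^{j+}w_\0^{j-}-v_\0^{j-}w_\0^{j+})$ of $J$. The odd block $\sum_k\bigl(v_\1^{k+}(w_\1^{k-})^\circ+v_\1^{k-}(w_\1^{k+})^\circ\bigr)$ is $+2$ times the odd part. Your two test cases already show this: $[z_1,\partial_{z_1}]=-1$ against $J=\tfrac12$, and $[\fra_1\xi_1,\fra_2\partial_{\xi_1}]=-\fra_1\fra_2$ against $J=-\tfrac12\fra_1\fra_2$.

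Because the two ratios differ, no overall normalization of $J$ repairs this. Reading the form as $J(w,v)$ does not help either: for $v=\fra_1\xi_1$, $w=\partial_{\xi_1}$ the bracket is $\fra_1$, while $J(w,v)=-\tfrac12\fra_1$.

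What the computation actually proves, and what the paper's elementary brackets also give, is
\[
[\wh a(v),\wh a(w)]_s=-v_\0^+(w_\0^-)^t+v_\0^-(w_\0^+)^t+v_\1^+(w_\1^-)^{st}+v_\1^-(w_\1^+)^{st}.
\]
This is the displayed form with the factor $\tfrac12$ removed and the block $\begin{pmatrix}0&1\\-1&0\end{pmatrix}$ replaced by its negative (equivalently, with the even coordinates ordered $v_\0^-,v_\0^+$). You should state and verify this corrected identity instead of keeping the ``adjust if necessary'' clause. With that change and the parity fix above, your argument is complete.
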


{\sc Proof.} We partially present a straightforward calculation since its result is not a priory
obvious. Let $a=a_\0+a_\1$, $b=\b_\0+b_\1\in \cA$. Then
\begin{equation*}
\Bigl[a_\0\, z_k,b_\0\, \frac\partial{\partial z_k}\Bigr]=-a_\0\, b_\0,\quad
\Bigl[a_\0\, z_k,b_\1\, \frac\partial{\partial z_k}\Bigr]=-a_\0\, b_\1,
\quad
\Bigl[a_\1\, z_k,b_\0\, \frac\partial{\partial z_k}\Bigr]=-a_\1\, b_\0,
\end{equation*}
\begin{multline*}
\Bigl[a_\1\, z_k,b_\1\, \frac\partial{\partial z_k}\Bigr]_s
=a_\1\, z_k\, b_\1  \frac\partial{\partial z_k}+ 
b_\1 \frac\partial{\partial z_k} a_\1\, z_k
=\\=
a_\1\, b_\1\, z_k   \frac\partial{\partial z_k}+b_\1\, a_\1 
\Bigl(1+z_k   \frac\partial{\partial z_k}\Bigr)=
-a_\1\, b_\1. 	
\end{multline*}	
So,
$$
\Bigl[a\, z_k,b\, \frac\partial{\partial z_k}\Bigr]_s=-a\, b,
\quad 
\Bigl[b'z_k, a' \frac\partial{\partial z_k}\Bigr]_s=b'_\0 a'_\0+ b'_\1 a'_1+b'_\0 a'_\1- a'_\1b'_\1=b'\, a'.
$$
Next,
\begin{align*}
	\Bigl[a_\0\, \xi_k , b_\0\, \frac\partial{\partial \xi_k}\Bigr]_s&=
a_\0\, b_\0 \Bigl[\xi_k, \frac\partial{\partial \xi_k} \Bigr]_s=
a_\0\, b_\0,
\\
\Bigl[a_\0\, \xi_k , b_\1\, \frac\partial{\partial \xi_k}\Bigr]&=
a_\0\, \xi_k  b_\1\, \frac\partial{\partial \xi_k}-
 b_\1\, \frac\partial{\partial \xi_k} a_\0\, \xi_k=-a_\0\,b_\1 ,
 \\
\Bigl[a_\1\, \xi_k , b_\0\, \frac\partial{\partial \xi_k}\Bigr]&=
a_\1\, \xi_k  b_\0\, \frac\partial{\partial \xi_k}-
b_\0\, \frac\partial{\partial \xi_k} a_\1\, \xi_k=a_\1\,b_\0, \\
\Bigl[a_\1\, \xi_k , b_\1\, \frac\partial{\partial \xi_k}\Bigr]&=
a_\1\, \xi_k  b_\1\, \frac\partial{\partial \xi_k}-
b_\1\, \frac\partial{\partial \xi_k} a_\1\, \xi_k=-a_\1\,b_\1, 
\end{align*}
and we get 
$$
\Bigl[a\, \xi_k , b\, \frac\partial{\partial \xi_k}\Bigr]_s=a\,b^\circ,\quad
\Bigl[b'\, \frac\partial{\partial \xi_k},a'\, \xi_k \Bigr]_s=a_\0' b_0'+a_\0 b_\1 -a_\1 b_\0+a_\1 b_1
=b' (a')^\circ.
$$
Other supercommutators are zero.
\hfill $\square$
 
 \sm  

We need some topological versions of the Heisenberg Lie superalgebra.

\sm

1. We consider the space $\sheis_\infty$
consisting  series \eqref{eq:wh-a}
with rapidly decreasing complex  coefficients  and 
the Lie superalgebra 
$\frheis^\infty:= \sheis_\infty\oplus \C$ obtained by addition
of scalar operators%
\footnote{This Lie superalgebra is our tool below (see Sect. \ref{s:last}).}.

\sm 

2. The space $\sheis_{\bfF^0}$ consisting 
of  series \eqref{eq:wh-a} with complex coefficients, where sequences $v_{\ov i}^-$
are arbitrary and $v_{\ov i}^+$ are finitary. 
The
Lie superalgebra $\frheis_{\bfF^0}:=\sheis_{\bfF^0}\oplus \C$ acts in $\bfF^0$.

\sm

3. The space $\sheis_{\ov\bfF}$ consisting 
of series \eqref{eq:wh-a}, where sequences $v_{\ov i}^+$
are arbitrary and $v_{\ov i}^-$ are finitary. The 
Lie superalgebra $\frheis_{\ov\bfF}:=\sheis_{\ov\bfF}\oplus \C$  act in $\ov \bfF$.

\sm

{\bf \punct Superspinors and orthosymplectic Lie superalgebra.}
It is easy to see that all polynomial expressions
$$
\cH\Bigl(z_j, \frac{\partial}{\partial z_j},
 \xi_k, \frac{\partial}{\partial \xi_k}\Bigr)
$$
with summands of degree $\le 2$ determine a projective
representation of an affine orthosymplectic Lie superalgebra. 

More precisely, consider infinite-dimensional affine orthosymplectic
Lie superalgebra $\mathfrak{aosp}(2\infty,2\infty)$ consisting of block matrices of
size $(\infty+\infty+1)+(\infty+\infty)$ having the
 the form
\begin{equation}
\left(
\begin{array}{ccc|cc}
P_{11}&P_{12}&0&Q_{11}&Q_{12}\\
P_{21}&P_{22}&0&Q_{21}&Q_{22}\\
V_1&V_2&0&W_1&W_2\\
\hline
R_{11}&R_{12}&0&T_{11}&T_{12}\\
T_{21}&T_{22}&0&T_{21}&T_{22}
\end{array}\right),
\label{eq:aosp}
\end{equation}
and satisfying natural conditions of symmetry, see Subsect. \ref{ss:lie-superalgebras}.
For such a matrix we assign the following operator
in  $\F^0$ (or in $\ov\F$) 
\begin{align}
&\frac12 \sum p_{11}^{kl}\, z_k z_l+ 
\sum p_{12}^{kl}\, z_k\frac\partial{\partial z_l}+
\frac12 \sum p_{22}^{kl}\, \frac{\partial^2}{\partial z_k\partial z_l}
+
\label{eq:osp-zz}
 \\+
&\sum v_1^k\, z_k+\sum v_2^k \frac\partial{\partial z_k}
+
\label{eq:osp-z}
\\+
&\frac12\sum t_{11}^{kl}\, \xi_k \xi_l+
\sum t_{12}^{kl}\, \xi_k\frac{\partial}{\partial \xi_l}+
\frac12 \sum t_{22}^{kl}\, 
\frac{\partial}{\partial \xi_k} \frac{\partial}{\partial \xi_l}
+
\label{eq:osp-xixi}
\\+
&\sum v_1^k\,\xi_k +
\sum v_2^k \frac{\partial}{\partial \xi_k}
+
\label{eq:osp-xi}
\\+
&\sum q_{11}^{kl}\, z_k\xi_l +
\sum q_{12}^{kl}\, z_k  \frac{\partial}{\partial \xi_l}+
\sum q_{21}^{kl}\,\xi_l \frac{\partial}{\partial z_k}+
\sum q_{22}^{kl}\,\frac{\partial}{\partial z_k}\frac{\partial}{\partial \xi_l}
\label{eq:osp-zxi}
\end{align}
(summands \eqref{eq:osp-zz}--\eqref{eq:osp-xixi})
have parity $\0$, summands \eqref{eq:osp-xi}--\eqref{eq:osp-zxi} have parity
$\1$). It is easy to verify that we get a projective representation of
$\mathfrak{aosp}(2\infty,2\infty)$.

\sm

{\sc Remark.}
The operators \eqref{eq:osp-zz}--\eqref{eq:osp-z} form a projective representation of the affine symplectic Lie algebra in
the bosonic Fock space $F^0$;
the operators \eqref{eq:osp-xixi} form a projective representation
of orthogonal Lie algebra in the fermionic Fock space $\Lambda^0$.
\hfill $\boxtimes$

\sm

{\sm Remark.}
We  can write the expression \eqref{eq:osp-zz}--\eqref{eq:osp-zxi}
in the form 
$$
\begin{pmatrix}
	z&\frac{\partial}{\partial z}&1&\xi&\frac{\partial}{\partial \xi}
\end{pmatrix}
\left(
\begin{array}{ccc|cc}
	P_{11}&P_{12}&0&Q_{11}&Q_{12}\\
	P_{21}&P_{22}&0&Q_{21}&Q_{22}\\
	V_1&V_2&0&W_1&W_2\\
	\hline
	R_{11}&R_{12}&0&T_{11}&T_{12}\\
	T_{21}&T_{22}&0&T_{21}&T_{22}
\end{array}\right)
\begin{pmatrix}
	z^t\\ \bigl[\frac{\partial}{\partial z}\bigr]^t\\1\\ \xi^t\\ \bigl[\frac{\partial}{\partial \xi}\bigr]^t
\end{pmatrix},
$$
where $\bigl[\frac{\partial}{\partial z}\bigr]^t$, 
$\bigl[\frac{\partial}{\partial z}\bigr]^t$
denote columns consisting of partial differentiations.
\hfill $\boxtimes$

\sm

We apply this construction to the affine orthosymplectic Lie superalgebra
of the space $\cW$
defined in Subsect. \ref{ss:cont-osp}. We write matrices
\eqref{eq:aosp} in the basis 
$$\frac 1{\sqrt{2\pi\cdot |2k|}}e^{2ik\phi}\in \cW_\0,\qquad
\frac 1{\sqrt{2\pi}}
e^{(2l+1)i\phi}(d\phi)^{1/2}\in \cW_\1.$$ 

Consider the subalgebra   
$\cont_{\mathrm{fin}}(S^{1|1}_\bullet)\subset \cont(S^{1|1}_\bullet)$
generated by vector fields $L_n$, $M_r$
defined by \eqref{eq:LMbasis}.
It is not contained in $\mathfrak{aosp}(\cW)$, but 
for $N\in \cont_{\mathrm{fin}}(S^{1|1}_\bullet)$,
$X\in \mathfrak{aosp}(\cW)$ we have $[N,X]\in \mathfrak{aosp}(\cW)$.
So we can consider the semidirect product, say 
$\wh{\mathfrak{aosp}}(\cW)$, of these 
algebras. For  elements $L_n$, $M_r$ of 
$\cont_{\mathrm{fin}}(S^{1|1}_\bullet)$ we consider the corresponding
infinitesimal
affine  operators 
and write
$\tau_{\mu,\nu}(N)$ by
\eqref{eq:osp-zz}--\eqref{eq:osp-zxi}.
This leads to formulas \eqref{eq:fock-L}--\eqref{eq:fock-M}
for the representation of the Neveu--Schwarz algebra
in $\F^0$. Clearly, these series are well-defined
operators in $\F^0$ and we get a projective representation
of $\wh{\mathfrak{aosp}}(\cW)$ in  $\F^0$.

\sm

{\sc Remark.}
More generally, we can consider the Lie superalgebra of orthosymplectic matrices of size $(2\infty+2\infty)$ whose nonzero matrix elements
a located in a finite number of diagonals.
\hfill $\boxtimes$.

\sm

Therefore we have a representation of the surplace group
$\wh{\mathfrak{aosp}}_\vel(\cW,\cA)$ in $\F[\cA]$,
we change 
$$\xi_r\mapsto \sfS \xi_r;\qquad
\frac{\partial}{\partial \xi_r}\mapsto 
\sfS\,\frac{\partial}{\partial \xi_r};
\qquad M_r\mapsto \sfS M_r
.$$
Let $\wt X:v\mapsto vX+h$ be a pure element of 
$\wh{\mathfrak{aosp}}_\vel(\cW;\cA)$, 
let $\lambda\in \cM_+$
has  the same parity. Then 
\begin{equation}
(1+\lambda \wt X)^{-1} \wh a(v) 
(1+\lambda \wt X)=\wh a(v) -[\lambda X,\wh a(v)]
=\wh a(v+\lambda X)+ J(v, h)
\label{eq:XvX}
\end{equation}
 (we consider $v$ as a row of Fourier coefficients
 of elements of $\cW[\cA]$.

\sm

{\bf \punct Automorphisms of canonical super-commutation relations.}
Now we repeat well-known arguments of Friedrichs \cite{Friedrichs}
(which were a starting point for  Berezin in \cite{Ber-second}).

\begin{lemma}
	\label{l:commuting}
	Any l-operator $R:\F^0[\cA]\to \ov \F[\cA]$ commuting
	with all operators $z_j$, $\frac{\partial}{\partial z_j}$,
	$\xi_k$, $\frac{\partial}{\partial \xi_k}$ 
	has a form $Rf=b f$, where $b\in \cA_\0$ or $Rf= c\sfS f$,
	where $c\in\cA_-$. 
\end{lemma}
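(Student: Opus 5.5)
The plan is a Friedrichs-type argument: reduce everything to the value of $R$ on the vacuum $1\in\F^0[\cA]$, which is a single element of $\ov\F[\cA]$, and then show that commutation with the annihilation operators forces this element to be $z$- and $\xi$-free.

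\emph{Reduction to the vacuum.} Put $\Phi:=R(1)\in\ov\F[\cA]$ and expand
$$\Phi=\sum_{\alpha,I}\phi_{\alpha,I}\,z^\alpha\dot\xi^I,\qquad \phi_{\alpha,I}\in\cA .$$
Every monomial in $\F^0[\cA]$ has the form $z^\alpha\dot\xi^I\fra^K$, which is obtained from $1$ by applying creation operators and then right-multiplying by $\fra^K$ (or left-multiplying, up to a sign). Since $R$ commutes with the $z_j$ and $\xi_k$ and is a morphism of left $\cA$-modules, $R$ is determined by $\Phi$:
$$R(z^\alpha\dot\xi^I)=z^\alpha\dot\xi^I\Phi,\qquad R(\lambda f)=\lambda R(f)\ \ (\lambda\in\cA).$$
One has to track the sign rule $\xi_i\fra_j=-\fra_j\xi_i$ when moving $\lambda$ through the $\xi$'s; this is exactly where $\sfS$ will appear.

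\emph{Killing the $z$- and $\xi$-dependence.} Since $\partial_{z_j}1=0$ and $\partial_{\xi_k}1=0$, commutation gives $\partial_{z_j}\Phi=0$ and $\partial_{\xi_k}\Phi=0$ for all $j,k$. A formal series annihilated by all these partial derivatives (the $\xi$-derivative acting on the left with the sign convention of Subsect.~\ref{ss:creation-annihilation}) has no monomials of positive degree in $z$ or $\xi$, because each such monomial contains some variable whose derivative does not vanish. Hence $\Phi=b\in\cA$.

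\emph{Splitting by parity.} Write $b=b_\0+b_\1$. For a general vector $f=\sum\lambda_{\alpha,I}z^\alpha\dot\xi^I$ we get
$$Rf=\sum\lambda_{\alpha,I}z^\alpha\dot\xi^I b .$$
Moving $b$ to the left across $\dot\xi^I$: $b_\0$ passes freely, while $b_\1$ picks up the factor $(-1)^{|I|}$. The resulting sign also depends on the parity of $\lambda_{\alpha,I}$, and combining the two gives
$$Rf=b_\0 f\pm b_\1\,\sfS f .$$
The sign here comes from $\sfS$ flipping the $\fra$'s together with the combined parity convention defining $\ov\F[\cA]_{\0,\1}$.

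\emph{Main obstacle.} I expect the real difficulty to be the sign bookkeeping, not the argument itself. Concretely, one must check that for the odd part the map is genuinely the combination $f\mapsto c\,\sfS f$ with $c\in\cA_\1$ (the statement's $\cA_-$), and that it indeed commutes with $\xi_k$ and $\partial_{\xi_k}$. These are exactly the anticommutation identities computed in the proof of the commutator Lemma above. The statement's ``or'' should presumably read ``sum''. I would verify this directly on monomials and, if needed, treat the even and odd parts of $b$ separately, obtaining the two stated forms.
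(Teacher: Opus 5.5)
Your first two steps are the paper's proof. $R(1)$ is killed by all $\partial_{z_j}$ and $\partial_{\xi_k}$, so it is a phantom constant $d$. Commutation with $z_j,\xi_k$ plus left $\cA$-linearity then gives $R(\lambda z^\alpha\dot\xi^I)=\lambda z^\alpha\dot\xi^I d$, i.e.\ $R$ is right multiplication by $d$. The paper stops at exactly this point.

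The step that fails is your parity split. Take a monomial $f=\lambda z^\alpha\dot\xi^I$ with $\lambda\in\cA$ pure. Then $fd_\1=(-1)^{|I|+p(\lambda)}d_\1\lambda z^\alpha\dot\xi^I$, while $d_\1\sfS f=(-1)^{p(\lambda)}d_\1\lambda z^\alpha\dot\xi^I$. Hence $fd_\1=(-1)^{|I|}d_\1\sfS f$. The sign depends on the $\xi$-degree of the monomial: compare $f=1$ with $f=\xi_1$. So there is no uniform $\pm$, and $Rf=b_\0 f\pm b_\1\sfS f$ is false. Right multiplication by $d_\1$ is $d_\1$ composed with the full grading involution $f(z,\xi;\fra)\mapsto f(z,-\xi;-\fra)$, not with $\sfS$. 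The check you planned also fails. $\sfS$ commutes with $\xi_k$ and $\partial_{\xi_k}$, but an odd $c$ anticommutes with both. Therefore $c\,\sfS$ anticommutes with $\xi_k$ and $\partial_{\xi_k}$, so it is not in the class you are classifying.

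With the literal reading of ``commuting'' that you (and the paper's proof) use, the correct conclusion is the sum $Rf=fd=d_\0 f+d_\1\, f(z,-\xi;-\fra)$, and the $\sfS$-form never appears.

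The form $c\,\sfS$ with $c\in\cA_\1$ does arise if ``commuting'' is read in the graded sense for an odd $R$. That means $R\xi_k=-\xi_kR$ and $R\partial_{\xi_k}=-\partial_{\xi_k}R$, while $R$ still commutes with $z_j$ and $\partial_{z_j}$. Then $R(1)=c$ is again a constant, and $R(z^\alpha\dot\xi^I)=(-1)^{|I|}z^\alpha\dot\xi^I c=c\,z^\alpha\dot\xi^I$. Left linearity gives $R(\lambda z^\alpha\dot\xi^I)=\lambda c\,z^\alpha\dot\xi^I=c\,\lambda^\circ z^\alpha\dot\xi^I=c\,\sfS f$. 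Under that reading, with $R$ homogeneous, the statement's ``or'' is correct as written. Fix the reading at the outset rather than trying to reconcile $fd_\1$ with $\sfS$ afterwards.
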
	

{\sc Proof.} The function $\psi(z,\xi;\fra):=1$ is annihilated
by all operators $\frac{\partial}{\partial z_j}$, $\frac{\partial}{\partial \xi_k}$.
So, the function $ R \psi$ is annihilated by the same operators, i.e.,
it is a phantom constant, say $d$.
Since $R$ commutes with $z_j$, $\xi_k$, it sends monomials
$z^\alpha \xi^I\to z^\alpha \xi^I d$.
\hfill $\square$.

\sm 

Next consider an intermediate topological vector space $\cF$,
such that 
$$\F^0\subset \cF\subset \ov \bF$$
 and tautological
embeddings $\F^0\to \cF$ and $\cF\to \ov\F$ are continuous.
Denote by $\sheis_\cF$ the linear space of creation-annihilation
operators that are bounded%
\footnote{Below $\cF$ is the space of smooth vectors of the Neveu--Schwarz
algebra, and $\sheis_\cF$ coincides with $\sheis_\infty$.
We do not need any general abstractionism.} in $\cF$.

 Consider the set $\OSp^{\mathrm{res}}(\sheis_\cF;\cA)$ of all linear operators $g$ in $\sheis_\cF[\cA]$
 preserving the form $J(\cdot,\cdot)$, for which there exists 
 invertible operator $T(g)$ in $\cF[\cA]$ such that
 \begin{equation}
 T(g)^{-1}\wh a(v) T(g)= \wh a (vg), \qquad \text{for all $v\in \sheis_\cF[\cA]$.}
 \label{eq:aut-1}
 \end{equation}
 Denote by $\mathrm{AOSp}^{\mathrm{res}}(\sheis_\cF;\cA)$ the set of all
 affine transformations $v\mapsto vg+ p$ of $\sheis_\cF[\cA]$, for which
 there exist invertible operator $T(g,p)$ such that
 \begin{equation}
 T(g,p)^{-1} \wh a(v)  T(g,p)=\wh a(vg)+ J(vg, p)
  \qquad \text{for all $v\in \sheis_\cF[\cA]$.}
  \label{eq:aut-2}
\end{equation}
 
 \begin{proposition}
 {\rm a)}	The set
$\OSp^{\mathrm{res}}(\sheis_\cF;\cA)$ is a group and the map
$g\mapsto T(g)$ is a projective representation of this group.

\sm

{\rm b)} The set $\mathrm{AOSp}^{\mathrm{res}}(\sheis_\cF;\cA)$
is a group  and $g\mapsto T(g,p)$ is a projective representation
of this group.
\end{proposition}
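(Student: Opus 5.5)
The argument is the standard Friedrichs one: everything reduces to Lemma \ref{l:commuting}, which says that an operator intertwining all creation--annihilation operators with themselves is scalar (up to the twist $\sfS$).

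\textbf{Step 1: closure under composition.} Let $g_1,g_2\in\OSp^{\mathrm{res}}(\sheis_\cF;\cA)$ with operators $T(g_1)$, $T(g_2)$. Set $T:=T(g_1)T(g_2)$. Applying \eqref{eq:aut-1} twice gives
$$T^{-1}\wh a(v)T=T(g_2)^{-1}\wh a(vg_1)T(g_2)=\wh a(vg_1g_2),$$
so $g_1g_2$ lies in the set with $T(g_1g_2)$ equal to $T(g_1)T(g_2)$ up to normalization. For inverses, take $T(g)^{-1}$ for $g^{-1}$. Here $g^{-1}$ exists because $v\mapsto vg$ is injective: if $vg=0$, then $\wh a(v)=0$, so $v=0$.

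Surjectivity of $g$ on $\sheis_\cF[\cA]$ needs a short argument. The operator $T(g)\wh a(w)T(g)^{-1}$ is again a super-commutator-preserving operator bounded on $\cF$. I would show it is of the form $\wh a(u)$: conjugating back, it satisfies the same relations with every $\wh a(v)$ as some $\wh a(u)$ does, so Lemma \ref{l:commuting} identifies it up to a constant. If this is awkward, I would simply note that the paper's definition implicitly requires $g$ to be invertible in $\OSp$, and inherit invertibility.

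\textbf{Step 2: projectivity of $g\mapsto T(g)$.} First, $T(g)$ is determined up to a scalar. If $T,T'$ both satisfy \eqref{eq:aut-1}, then $T'T^{-1}$ commutes with every $\wh a(v)$. In particular it commutes with $z_j$, $\partial/\partial z_j$, $\xi_k$, $\partial/\partial\xi_k$. By Lemma \ref{l:commuting} it is therefore $b$ or $c\sfS$.

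The $c\sfS$ case must be excluded when both operators are $l$-operators of the same parity. I would observe that $\sfS$ anticommutes with $\xi_k$ multiplied by odd phantom constants, so it does not commute with $\wh a(v)$ for odd coefficients $v$; this forces the scalar form $b\in\cA_\0$. The element $b$ is invertible, since $T'T^{-1}$ is invertible. Hence $T(g_1)T(g_2)=b(g_1,g_2)T(g_1g_2)$.

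\textbf{Step 3: the affine case b).} I repeat the same computation with the cocycle term. Composing \eqref{eq:aut-2} for $(g_1,p_1)$ and $(g_2,p_2)$, and using that $J(\cdot,\cdot)$ is a scalar commuting with $T$, gives
$$\wh a(vg_1g_2)+J(vg_1g_2,p_1g_2)+J(vg_1g_2,p_2)=\wh a(vg_1g_2)+J(vg_1g_2,\,p_1g_2+p_2).$$
The last expression uses the $g_2$-invariance of $J$. This matches exactly the composition law of affine maps $v\mapsto vg+p$. Inverses are handled as in Step 1, and uniqueness up to scalars follows again from Lemma \ref{l:commuting}.

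\textbf{Main obstacle.} The delicate point is the domain bookkeeping. All compositions must take place in $\cF[\cA]$, and the operators $\wh a(v)$ must be bounded there, which is exactly why $\sheis_\cF$ is defined this way. A second point of care is the parity twist $\sfS$ in Lemma \ref{l:commuting}, which has to be excluded in Step 2. Everything else is formal.
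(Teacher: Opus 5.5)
Your proposal is correct and is essentially the paper's proof: uniqueness of $T(g)$ up to a scalar via Lemma~\ref{l:commuting}, the choice $T(g_1g_2):=T(g_1)T(g_2)$, and the identity $J(vg_1,p_1)=J(vg_1g_2,p_1g_2)$ for the affine term. The paper says nothing about inverses, so that part of your Step~1 is extra; the definitional fallback is the safer route, because your surjectivity sketch presupposes that $v\mapsto J(w,vg)$ is represented by some $u\in\sheis_\cF[\cA]$. One correction to Step~2: the proof of Lemma~\ref{l:commuting} actually shows that the commutant consists of right multiplications $f\mapsto fd$ with $d\in\cA$, and for odd $d$ these \emph{do} commute with every $\wh a(v)$, so the odd alternative is ruled out by the parity of $T(g)$ and $T'(g)$ (or absorbed into the projective factor), not by the non-commutation argument you give.
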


{\sc Proof.} a) First, we notice that an operator $T(g)$ satisfying
\eqref{eq:aut-1} is unique up to a scalar
factor $\in\cA_\0$, if it exists. Indeed, let $T'(g)$
be another such operator, $ T'(g)^{-1}\wh a(v) T'(g)= \wh a (vg)$.
Then
$$
 T'(g)^{-1}\wh a(v) T'(g)=  T(g)^{-1}\wh a(v) T(g).
$$ 
So $T(g)T'(g)^{-1}$ commutes with all operators $\wh a(v)$
and we apply Lemma \ref{l:commuting}.

 Let $g_1$, $g_2\in \OSp^{\mathrm{res}}(\sheis_\cF;\cA)$.
Then
\begin{equation*}
T(g_2)^{-1}T(g_1)^{-1}\wh a (v) T(g_1)T(g_2)=\\=
T(g_1)^{-1}\wh a (vg_1) T(g_1)=\wh a(vg_1g_2).
\end{equation*}
and we can set $T(g_1 g_2):=T(g_1)T(g_2)$. 

\sm 

b) Let maps $v\mapsto v g_1+p_1$, $v\mapsto v g_2+p_2$
be contained in $\mathrm{AOSp}(\sheis_\cF;\cA)$, their product
is $v\mapsto vg_1g_2 + p_1g_2+p_2$. We have
\begin{multline*}
T(g_2,p_2)^{-1}T(g_1,p_1)^{-1}\wh a (v) T(g_1,p_1)=\\=
T(g_2,p_2)^{-1}\bigl(\wh a(vg_1)+J(vg_1,p_1)\bigr)T(g_1,p_1)
=\\=
\wh a(vg_1g_2)+ \Bigl[J(vg_1g_2,p_2)+J(vg_1,p_1)\Bigr].
\end{multline*}
We have $J(v g_1,p_1)=J(v g_1g_2,p_1g_2)$. Therefore the expression
is square brackets equals to $J(vg_1g_2,\,p_2+p_1 g_1)$.
\hfill $\square$

\sm

{\bf\punct Creation-annihilation operators and relations.%
\label{ss:creation-relation}}

\begin{lemma}
	Let $L$, $M:\sheis_\cF[\cA]\tto \sheis_\cF[\cA]$ be  relations.
	Let $l$-operators $P$, $R$ satisfy conditions
	\begin{align*}
\wh a(v) P=\wh a(w) \,P,\,\text{\rm for all $w\oplus v\in L$};
\\
\wh a(w)R=\wh a(y)\,R	,\,\text{\rm for all $y\oplus w\in M$}
		\end{align*}
	Then $PR$ satisfies the condition 
	$$
	\wh a(v) PR= PR \,\wh a (y) ,\,\text{\rm for all $y\oplus v\in L\circ M$.}
$$
	\end{lemma}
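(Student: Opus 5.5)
The statement is a direct chain of intertwining identities, and I would prove it in one line once the notation is fixed. I read the hypotheses as the intertwining relations intended by the conclusion, namely
\begin{equation*}
\wh a(v)\, P = P\, \wh a(w)\quad \text{for } w\oplus v\in L, \qquad
\wh a(w)\, R = R\, \wh a(y)\quad \text{for } y\oplus w\in M,
\end{equation*}
and the product of relations $L\circ M$ as in the beginning of Sect. \ref{s:affine-relations}: first $M$, then $L$. So $y\oplus v\in L\circ M$ means that there is an intermediate $w\in \sheis_\cF[\cA]$ with $y\oplus w\in M$ and $w\oplus v\in L$.

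The argument then runs as follows. Fix $y\oplus v\in L\circ M$ and choose such a $w$. Apply the first identity, then the second:
\begin{equation*}
\wh a(v)\, PR = \bigl(\wh a(v) P\bigr) R = \bigl(P\,\wh a(w)\bigr) R = P\bigl(\wh a(w) R\bigr) = P R\, \wh a(y).
\end{equation*}
The right-hand side does not depend on the choice of $w$, so no consistency check is needed when $w$ is not unique, that is, when $\indef M\cap\ker L\neq 0$. Also, nothing beyond the existence of $w$ is required: the transversality conditions \eqref{eq:transversality} are not used.

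The only point I would check carefully is that all compositions make sense. The operators $\wh a(v)$, $\wh a(w)$, $\wh a(y)$ are taken in $\sheis_\cF[\cA]$, so they act continuously in $\cF[\cA]$. I would assume that $P$ and $R$ are $l$-operators defined on the relevant space, for instance $\cF[\cA]\to\cF[\cA]$, or at least that $R$ maps into the domain of $P$. Then associativity of composition of $l$-operators justifies the rebracketing above.

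There is one more subtlety, the parity. For odd $v$ with odd phantom coefficients, $\wh a(v)$ anticommutes with $\fra_j$. This is harmless here, because we never move $\wh a$ past a phantom constant; we only compose operators in a fixed order. I do not expect any genuine obstacle: the lemma is purely formal, and its role is to reduce the later statement ``product of Gauss--Berezin operators corresponds to product of Lagrangian relations'' to Lemma \ref{l:commuting}.
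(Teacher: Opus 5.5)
Your proof is correct and is the same argument as the paper's: pick an intermediate $w$ with $y\oplus w\in M$ and $w\oplus v\in L$, then chain $\wh a(v)PR=P\,\wh a(w)R=PR\,\wh a(y)$. You read the hypothesis as $\wh a(v)P=P\,\wh a(w)$, correcting the misprint in the statement, and this matches how the paper's proof uses it.
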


{\sc Proof.} Indeed, let $y\oplus v\in L\circ M$.
Then there exists $w\in \sheis_\cF[\cA]$ such that $y\oplus w\in M$, $w\oplus v\in L$.
Therefore, 
$$
\qquad \qquad\qquad\wh a(v) PR=P\,\wh a(w) R=PR \,\wh a(y).\qquad\qquad\qquad \square
$$

\sm 

Let $R:\F^0(\cA)\to \ov\F(\cA)$ be an $l$-operator.
Consider the equation
\begin{equation}
	\wh a(v) R=R\, \wh a(w)+ \chi R
	%	\wh a(w_\0^+,w_\1^+ , w_\0^-, w_\1^-;)
	%	\,R=R\,
	%	\wh a(v_\0^+,v_\1^+ , v_\0^-, v_\1^-),
	\label{eq:ca-relations} 	
\end{equation}
where 
$$\wh a(v)\in \sheis_{\ov\F}[\cA], \qquad
\wh a(w)\in \sheis_{\F^0}[\cA],\quad \chi\in \cA.
$$

For a  nonzero $l$-operator $R$ denote by $\frL(R)$ the set of all 
$w\oplus v\in \sheis_{\F^0}\oplus \sheis_{\ov F}$,	
for which there exist $\chi$ satisfying \eqref{eq:ca-relations}.

\begin{proposition}
	\label{pr:cLP-isotropic}
	Let  $R$ be an $l$-operator such that $R_\downarrow\ne 0$.
	Then	the linear relation $\frL(R)$ is $J$-isotropic, i.e.,
	for $w_1\oplus v_1$, $w_2\oplus v_2\in \frL(R)$
	we have 
	$$
	J(v_1,v_2)-J(w_1,w_2)=0.
	$$
\end{proposition}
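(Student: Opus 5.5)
The plan is the standard commutator argument. Take two pairs $w_1\oplus v_1$, $w_2\oplus v_2\in\frL(R)$ with constants $\chi_1,\chi_2\in\cA$, so that
$$\wh a(v_k)R=R\,\wh a(w_k)+\chi_k R,\qquad k=1,2.$$
We may assume the pairs are pure, i.e.\ of fixed parity; this uses the $\Z_2$-grading of $\frL(R)$ and superbilinearity. We then compute the supercommutator $[\wh a(v_1),\wh a(v_2)]_s R$ in two ways.

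On one side, \eqref{eq:aaJ} gives
$$[\wh a(v_1),\wh a(v_2)]_s R=J(v_1,v_2)R.$$
On the other side, we apply the relation twice:
$$\wh a(v_1)\wh a(v_2)R=\wh a(v_1)R\,\wh a(w_2)+\wh a(v_1)\chi_2R.$$
We then push $\wh a(v_1)$ through $R$ again. This yields
$$R\,\wh a(w_1)\wh a(w_2)+\chi_1 R\,\wh a(w_2)+(\text{sign})\chi_2 R\,\wh a(w_1)+(\text{sign})\chi_1\chi_2R.$$
The swapped product is handled in the same way, and we subtract with the sign $(-1)^{p(v_1)p(v_2)}$.

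The quadratic terms combine to $R\,[\wh a(w_1),\wh a(w_2)]_s=J(w_1,w_2)R$. We must check that the cross terms involving $\chi_k$ cancel. The $\chi$ terms are scalars from $\cA$, and they pass through $\wh a$ with the sign given by the rule $\lambda\wh a=\wh a\lambda^\circ$. A parity count is needed here: $\chi_k$ has the parity of $v_k$, since the terms of \eqref{eq:ca-relations} must match. Granting that, the terms $\chi_1R\wh a(w_2)$ and $\chi_2R\wh a(w_1)$ appear with opposite signs in the two orderings and cancel. The terms $\chi_1\chi_2R$ cancel too, because $\chi_1\chi_2=(-1)^{p p'}\chi_2\chi_1$ in the supercommutative algebra $\cA$.

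The result is the identity
$$\bigl(J(v_1,v_2)-J(w_1,w_2)\bigr)R=0.$$
The last step is to conclude that the scalar $c:=J(v_1,v_2)-J(w_1,w_2)\in\cA$ vanishes, and I expect this to be the main obstacle. The hypothesis $R_\downarrow\ne0$ only gives $\pi_\downarrow(cR)=c_\downarrow R_\downarrow=0$, hence $c_\downarrow=0$. It does not give $c=0$ directly, since $c$ may be nilpotent. I would instead argue by degree in the phantom generators $\fra_j$. Let $c$ have lowest-degree component $c_m\ne0$. Choose a monomial $\fra^K$ of $c_m$ and a vector on which $R_\downarrow$ acts nontrivially. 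The lowest-order component of $cR$ applied to this vector is $c_m R_\downarrow(\cdot)$. This component is nonzero, because multiplication by the monomials of $c_m$ on the $\fra$-free element $R_\downarrow(\cdot)$ is injective; the monomials of $c_m$ are distinct basis elements of $\cA$. This contradiction forces $c=0$. If the pairing is only $\cA$-valued in a graded sense, I would apply this component by component in the monomial basis $\dot\fra^I$ and again use $R_\downarrow\ne0$.
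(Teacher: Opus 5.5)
Your proposal follows the paper's proof step for step. You restrict to homogeneous pairs, which the paper also does without comment. You then expand $\wh a(v_1)\wh a(v_2)R$ and $\wh a(v_2)\wh a(v_1)R$, check that the $\chi$-terms cancel in the supercommutator, and use $l$-linearity to get $\bigl(J(v_1,v_2)-J(w_1,w_2)\bigr)R=0$. Your lowest-degree argument, showing that a possibly nilpotent $c\in\cA$ with $cR=0$ must vanish when $R_\downarrow\ne0$, is a correct expansion of the paper's one-line final step. One small correction: the general commutation rule is $\wh a(v)\lambda=(-1)^{p(v)p(\lambda)}\lambda\,\wh a(v)$, and your $\lambda\wh a=\wh a\lambda^\circ$ is only its odd case; with the general rule your cancellations are exactly the paper's.
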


{\sc Proof.} Consider homogeneous $v_j$, where $j=1,2$ and $\ov p(v_j)=:\ov \zeta_j$.  
We write
$$
\wh a(v_j) R= R\,\wh a(w_j)+\chi_j R.
$$
Here $\chi_j$ depends on $w_j\oplus v_j$, and $\ov p(w_j)=\ov p(\chi_j)=\ov p(v_j)=\ov \zeta_j$.
We have
\begin{multline*}
	\wh a(v_1)\wh a (v_2)R= \wh a(v_1)\bigl(R\wh a(w_2)+\chi_2 R \bigr)
	=\\=
	R\,\wh a(w_1) \wh a(w_2)+\Bigl(\chi_1 R\,\wh a(w_2)+(-1)^{\ov \zeta_1 \ov\zeta_2}
	\chi_2 R\, \wh a (w_1)+ (-1)^{\ov \zeta_1 \ov \zeta_2} \chi_1 \chi_2	R\Bigr),
\end{multline*}
\begin{multline*}
	\wh a(v_2)\wh a (v_1)R
	=\\=
	R\,\wh a(w_2) \wh a(w_1)+\Bigl(\chi_2 R\,\wh a(w_1)+(-1)^{\ov \zeta_1 \ov\zeta_2}
	\chi_1 R\, \wh a (w_2)+ (-1)^{\ov \zeta_1 \ov \zeta_2} \chi_1 \chi_2	R\Bigr).
\end{multline*}
If $\ov \zeta_1 \ov \zeta_2=\0$, then the expressions in big brackets coincide.
If $\ov \zeta_1 =\ov \zeta_2=\1$, then they differs by the sign.
Therefore,
$$
[\wh a(v_1),\wh a (v_2)]_s\, R= R\, [\wh a(w_1),\wh a (w_2)]_s.
$$
Since $R$ is $l$-linear, we get
$$
\Bigl([\wh a(v_1),\wh a (v_2)]_s-[\wh a(w_1),\wh a (w_2)]_s\Bigr)\, R= 0.
$$
Since $R_\downarrow\ne 0$, the expression in big brackets is 0.
\hfill $\square$

\sm

{\bf \punct Multiplications by creation-annihilation operators.}
For an operator $R$ with kernel $K(z,\xi,\ov u, \ov \eta;\fra)$ we have the following correspondences
between operators  and their  kernels:
\begin{gather}
	z_i\cdot R\,\longleftrightarrow\, z_i K(z,\xi,\ov u, \ov \eta;\fra),\quad
	\frac\partial{\partial z_i}\cdot R\,\longleftrightarrow\, 
	\frac\partial{\partial z_i} K(z,\xi,\ov u, \ov \eta;\fra), 
	\label{eq:corr-1}
	\\ 
	\xi_j\cdot R\,\longleftrightarrow\, \xi_j K(z,\xi,\ov u, \ov \eta;\fra),
	\quad \frac\partial{\partial\xi_j}\cdot R\,\longleftrightarrow\, \frac\partial{\partial \xi_j} K(z,\xi,\ov u, \ov \eta;\fra),
		\label{eq:corr-1.1}
	\\
	R\cdot u_i \,\longleftrightarrow\, \frac\partial{\partial\ov u_i} K(z,\xi,\ov u, \ov \eta;\fra),
	\quad R\cdot \frac\partial{\partial u_i} \,\longleftrightarrow\, {\ov u_i} K(z,\xi,\ov u, \ov \eta;\fra),
		\label{eq:corr-1.2}
	\\
	R\cdot \eta_j \,\longleftrightarrow\,  K(z,\xi,\ov u, \ov \eta;\fra) \frac\eth{\eth\ov \eta_j},\quad R\cdot \frac\partial{\partial\eta_j} \,\longleftrightarrow\,  K(z,\xi,\ov u, \ov \eta;\fra)\,{\ov  \eta_j},
	\label{eq:corr-2}
	\\
	\sfS \cdot R\cdot \sfS \,\longleftrightarrow\,  K(z,\xi,\ov u, \ov \eta;-\fra).
		\label{eq:corr-3}
\end{gather}

{\bf \punct Gauss--Berezin operators.%
\label{ss:Gauss-Berezin}}
Here we define a superhybrid of Gaussian operators 
in bosonic Fock space (see, \cite{Ner-boson}, \cite{Ner-book}, Sect.V.4.1)
and Berezin (fermionic Gaussian) operators in fermionic Fock space
\cite{Ner-fermion}, \cite{Ner-book}, Sect. II.4).
This topic was partially discussed in \cite{Ner-super}.

We define a {\it Gauss--Berezin operator in the narrow sense}
$$
\lambda \cdot \cB\left[\begin{array}{cc|c}
A&B&a^t\\
C&D&b^t
\end{array}\right]:\,
\F^0[\cA]\to \ov\F[\cA]$$
as an operator whose kernel is a formal series of the form
\begin{multline}
K(z,\xi;\ov u,\ov\eta;\fra)
=\\=
	\lambda \cdot \exp\left\{\frac12
	 \begin{pmatrix}
		z&\xi&\ov u&\ov\eta
	\end{pmatrix} 
\begin{pmatrix}
	A_{11}&A_{12}&B_{11}&B_{12}\\
	A_{21}&A_{22}&B_{21}&B_{22}\\
	C_{11}&C_{12}&D_{11}&D_{12}\\
	C_{21}&C_{22}&D_{21}&D_{22}
\end{pmatrix}
\begin{pmatrix}
	z^t\\ \xi^t\\ \ov u^t\\ \ov\eta^t
\end{pmatrix}
\right.%\},
\times\\\times
\left.
	 \begin{pmatrix}
	z&\xi&\ov u&\ov\eta
\end{pmatrix} 
\begin{pmatrix}
	a_1^t\\ a_2^t\\ b_1^t\\ b_2^t
\end{pmatrix}
\right\}
\label{eq:GB}
\end{multline}
where

\sm

---  $\lambda\in \cA_\0$ is invertible;

\sm 

--- the expression has parity $\0$ in the set $\xi_1$, $\xi_2$ \dots,
$\eta_1$, $\eta_2$, \dots, $\fra_1$, $\fra_2$, \dots;

\sm

--- the first summand in the exponential 
is 
 a qudratic form in variables $z$, $\xi$, $\ov u$, $\ov\eta$;
 this matrix must satisfy the natural condition of symmetry.
 
 \sm
 
 {\sc Remark. } 
In more details:    

\sm 

---
 matrices $A_{ij}$, $B_{ij}$, $C_{ij}$, $D_{ij}$ are composed of elements of $\cA$; if $i+j$ is even, then their elements are $\in \cA_\0$,
 if $i+j$ is odd, then their elements are $\in\cA_\1$; elements of vectors $a_1$, $b_1$
 are $\in \cA_\0$, elements of $a_2$, $b_2$ are $\in \cA_\1$.

 \sm
 
--- the matrix 
$\begin{pmatrix} A_{11}&B_{11}\\C_{11}&D_{11} \end{pmatrix}$
is symmetric; the matrix 
$\begin{pmatrix} A_{22}&B_{22}\\C_{22}&D_{22} \end{pmatrix}$
is skew symmetric and 
$\begin{pmatrix} A_{12}&B_{12}\\C_{12}&D_{12} \end{pmatrix}^t
=-\begin{pmatrix} A_{21}&B_{21}\\C_{21}&D_{21} \end{pmatrix}$.
\hfill $\boxtimes$
 
\sm

\sm

Next, we define $l$-operators
$$\frD_j=\Bigl(\xi_j+\frac\partial{\partial\xi_j}\Bigr)\sfS.$$
If $f_1(z,\xi;\fra)$, $f_2(z,\xi;\fra)$ do not depend
on a variable $\xi_j$, then
\begin{equation}
\frD_j (f_1(z,\xi;\fra) +\xi_j f_2(z,\xi;\fra))=\xi_j f_1(z,\xi;\fra) +f_1(z,\xi;\fra).
\label{eq:D-changes}
\end{equation}
So, 
$$\frD_j^2=1, \qquad \frD_k\frD_l=-\frD_l\frD_k \quad \text{for $k\ne l$.}$$
Notice that  operators $\frD_j$ send $\F^0[\cA]\to \F^0[\cA]$,
 $\F[\cA]\to \F[\cA]$, $\ov\F[\cA]\to \ov\F[\cA]$.

A {\it Gauss--Berezin operator} is an operator of the form
\begin{equation}
Q=\frD_{i_1}\dots \frD_{i_\alpha} R\,\frD_{j_1}\dots \frD_{j_\beta},  
\label{eq:GB-general}
\end{equation}
where $R$ is a Gauss--Berezin operator in the narrow sense.

\sm

{\sc Remarks.}
a) Usually, Gauss--Berezin operators have many different representations 
in the form \eqref{eq:GB-general}. See the next subsection.

\sm

b) For Gaussian operators in bosonic Fock spaces
 and Berezin operators in fermionic Fock spaces
 conditions of boundedness are well investigated
 (see, respectively, \cite{Ner-boson}, \cite{Ner-book},
Theorems IV.4.10, IV.4.14, V.3.1, V.3.2, V.4.6, 
 \cite{Olsh}and  \cite{Ner-fermion}, \cite{Ner-book}, 
 Theorems IV.2.1-4). In  cases, which can be interesting
for representation theory, Gauss--Berezin operators are unbounded 
as operators  $\bfF[\cA]\to \bfF[\cA]$
and are not defined as operators $\bfF^0[\cA]\to \bfF^0[\cA]$
and $\ov\bfF[\cA]\to \ov \bfF[\cA]$. I have no sufficient
experimental data for  understanding which topologies in super-Fock space are
natural. For this reason, I do not try to find general conditions
for boundedness.

\sm

c) We can evaluate a product of two Gauss--Berezin operators in the narrow sense
$R_1$, $R_2$ by formula \eqref{eq:convolution}. We come to a super-Gaussian
integral in the right hand side. On evaluation
of such integrals, see \cite{Ner-super}, Sect.~4.  It may happen that
a kernel of a product satisfies property 
$K_{R_1R_2}(z,\xi,\ov u, \ov \eta;\fra)_\downarrow=0$,
then $R_1R_2$ is not a Gauss--Berezin operator
(at least in the sense of our definition). I am not sure that this
obstacle is unique.
\hfill $\boxtimes$

\sm

{\bf\punct Intersections of charts.}

\begin{proposition}
\label{pr:atlas-GB}
	A Gauss--Berezin operator $T$ with a kernel
	$L(z,\xi;\ov u, \ov\eta;\fra)$ can be represented in
	the form \eqref{eq:GB-general} with given $i_1<\dots<i_\alpha$,
	$j_1<\dots<j_\beta$ if and only if 
	$
	L_T(z,\xi;\ov u, \ov\eta;\fra)_\downarrow
	$
	contains
	a non-zero  term of the form
	$$
	c\cdot
	\xi_{i_1}\dots \xi_{i_\alpha} \ov\eta_{j_1} \dots \ov\eta_{j_\beta} 
	%z^\alpha \ov u^\beta, 
	\qquad\text{where $c\in \C$}.
	$$
\end{proposition}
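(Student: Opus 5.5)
The plan is to translate multiplication by $\frD_j$ into an operation on kernels, and then reduce the statement to an elementary fact about Gaussian exponentials. By \eqref{eq:D-changes} and the correspondences \eqref{eq:corr-1.1}--\eqref{eq:corr-3}, left multiplication by $\frD_j$ acts on a kernel $K$ as follows: $K$ is replaced by $(\xi_j+\partial/\partial\xi_j)K$, and $\fra$ is replaced by $-\fra$. Concretely, a kernel $K=K_0+\xi_jK_1$, with $K_0,K_1$ independent of $\xi_j$, goes (up to the sign twist $\fra\mapsto-\fra$, which is harmless for bodies) to $\xi_jK_0+K_1$. This is a Fourier-type swap in the single odd variable $\xi_j$. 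Right multiplication by $\frD_j$ acts in the same way on $\ov\eta_j$, using \eqref{eq:corr-2}. Since $\frD_j^2=1$ and the $\frD$'s anticommute, $T=\frD_{I}R\,\frD_{J}$ holds if and only if $R=\pm\frD_{I}T\frD_{J}$. So the statement reduces to the following claim.

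\emph{Claim:} the operator $\frD_I T\frD_J$ is a Gauss--Berezin operator in the narrow sense if and only if the body of $L_T$ contains the monomial $\xi_I\ov\eta_J$ with a nonzero coefficient.

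I will prove the claim in three steps.

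\textbf{Step 1 (body criterion for the narrow sense).} A formal series $K$ of parity $\0$ is a narrow-sense kernel \eqref{eq:GB} exactly when $K_\downarrow$ has a nonzero constant term and $\ln K$ is a polynomial of degree at most $2$. One direction is immediate: if $K=\lambda\exp(\dots)$, then the coefficient at $1$ is $\lambda$, and $\lambda_\downarrow\neq0$. For the converse I expect to need some input on the full kernel, not only on its body. The natural source is Proposition \ref{pr:cLP-isotropic}, or rather the fact that $T$, being Gauss--Berezin, satisfies relations \eqref{eq:ca-relations} for a Lagrangian relation. Through the correspondences \eqref{eq:corr-1}--\eqref{eq:corr-2}, these relations become first-order linear differential equations for the kernel, and these equations persist after applying $\frD$'s, because conjugation by $\frD_j$ just exchanges $\xi_j\leftrightarrow\partial/\partial\xi_j$ (up to $\sfS$). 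Once the constant term of the kernel is invertible, these differential equations force the kernel to be $\lambda$ times the exponential of a quadratic-plus-linear expression. This is the standard Friedrichs--Berezin argument, and I take it over essentially unchanged from the bosonic and fermionic cases.

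\textbf{Step 2 (tracking the body monomial).} Under the $\frD$-swaps, the coefficient of $\xi_I\ov\eta_J$ in $L_T$ becomes, up to sign, the constant term of the kernel of $\frD_IT\frD_J$. This is because each swap in $\xi_j$ exchanges the $\xi_j$-free part with the $\xi_j$-linear part. Taking bodies commutes with these swaps, since $\fra\mapsto-\fra$ does not change the body. Hence the constant term of the new kernel has nonzero body if and only if $L_{T\downarrow}$ contains $c\,\xi_I\ov\eta_J$ with $c\neq0$.

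\textbf{Step 3 (conclusion).} Combining Step 1 and Step 2 gives both directions of the claim, and hence the proposition.

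The main obstacle is Step 1. Showing that a kernel with invertible constant term which satisfies the super creation–annihilation relations is Gaussian requires solving the differential system coherently in the presence of odd coefficients from $\cA$. My approach there is to work degree by degree in $\fra$: at the body level, use the classical bosonic and fermionic results; then correct the higher-degree terms inductively, using the nilpotence of $\cA_+$.
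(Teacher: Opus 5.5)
Your reduction is sound and matches the easy half of the paper's argument. The description of $\frD_j$ acting on kernels, the reformulation as ``$\frD_IT\frD_J$ is narrow-sense iff the body of $L_T$ contains $\xi_I\ov\eta_J$'', and Step~2 (that coefficient becomes, up to sign, the constant term) are all correct, and together they give the ``only if'' direction.

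\textbf{The gap is in Step~1.} Step~1 is the entire content of the ``if'' direction. Creation--annihilation relations determine a kernel only if, for \emph{every} variable $x$ among $z_k,\xi_k,\ov u_k,\ov\eta_k$, the relation $\frL(Q)$ with $Q=\frD_IT\frD_J$ contains an element whose derivative components $(v^-,w^+)$ are exactly the coordinate vector of $x$. In other words, $\frL(Q)$ must be a graph over the derivative coordinates, i.e.\ lie in the Potapov chart. Only then do you get $\partial_x K=(\text{linear})\cdot K$ for all $x$. After that, the argument of statement c) of the theorem following Theorem~\ref{th:operator-relation} ($S=LK^{-1}$ has all partial derivatives zero) shows $K$ is Gaussian.

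This graph property is not automatic. For example, $K=\xi_1$ satisfies $\xi_1K=0$ but $\partial K/\partial\xi_1=1$. It is exactly the place where the hypothesis on $\xi_I\ov\eta_J$ must be used. You assert the conclusion (``once the constant term is invertible, these equations force\dots'') without this link. The remedy you sketch, correcting degree by degree in $\fra$, addresses a non-issue: once the system is complete, odd coefficients from $\cA$ cause no difficulty.

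\textbf{How to close it.} Write $T=\frD_{I'}R'\frD_{J'}$ with $R'$ narrow-sense, so $Q=\pm\frD_FR'\frD_G$ with $F=I\triangle I'$ and $G=J\triangle J'$. The $\frD$'s swap only finitely many odd coordinates and leave the bosonic block, already a graph, untouched. Hence $\frL(Q)$ is a graph iff the principal submatrix, on the variables $\xi_F,\ov\eta_G$, of the skew-symmetric body matrix $\bigl(\begin{smallmatrix}A_{22}&B_{22}\\ C_{22}&D_{22}\end{smallmatrix}\bigr)_\downarrow$ of $R'$ is invertible. Invertibility of the body suffices, since $\cA_+$ is nilpotent. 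The Pfaffian of this submatrix is, up to a nonzero factor, precisely the body coefficient of $\xi_I\ov\eta_J$ in $L_T$.

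Equivalently: if the graph property failed, there would be a nonzero element of the fermionic part of the body of $\frL(Q)$ with vanishing derivative components. That gives $\ell K_\downarrow=0$ for a nonzero linear form $\ell$ in $\xi,\ov\eta$, which is impossible when the constant term of $K_\downarrow$ is nonzero. With this added, yours is a valid and more conceptual route.

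\textbf{Comparison with the paper.} The paper avoids Lagrangian relations entirely and works directly with kernels. It reduces to narrow-sense $T$. It then observes that a nonzero body coefficient of $\xi_I\ov\eta_J$ forces some coefficient $\sigma$ in the exponent to have $\sigma_\downarrow\neq0$, where $\sigma$ multiplies one of $\xi_{i_p}\xi_{i_q}$, $\xi_{i_p}\ov\eta_{j_r}$ or $\ov\eta_{j_r}\ov\eta_{j_s}$. A direct two-variable computation shows that the corresponding pair of factors $\xi+\partial/\partial\xi$ turns $\lambda e^{U}$ into $\lambda\sigma\exp\{\sigma^{-1}[\dots]\}\,e^{U'}$. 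This is again Gaussian, and the monomial is shortened by two, so induction on $\alpha+\beta$ finishes the proof.
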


{\sc Remark.} For a kernel \eqref{eq:GB} we have
\begin{align}
K(z,\xi;&\ov u,\ov \xi;\fra)_\downarrow=\lambda_\downarrow\times
\notag
\\
\times
\exp&\left\{\frac12\begin{pmatrix}z&\ov u \end{pmatrix}
\begin{pmatrix}
A_{11}&B_{11}\\ C_{11}&D_{11}
\end{pmatrix}_\downarrow
\begin{pmatrix}z^t\\\ov u^t \end{pmatrix}+
\begin{pmatrix}z&\ov u \end{pmatrix}
\begin{pmatrix}a_1^t\\ b_1^t \end{pmatrix}_\downarrow
\right\}
\times 
\label{eq:Kdown1}
\\ 
&\times
\exp\left\{\frac12\begin{pmatrix}\xi&\ov \eta \end{pmatrix}
\begin{pmatrix}
A_{22}&B_{22}\\ C_{22}&D_{22}
\end{pmatrix}_\downarrow
\begin{pmatrix}\xi^t\\\ov \eta^t \end{pmatrix}
\right\}.
\label{eq:Kdown2}
\end{align}
The expansion of the  factor \eqref{eq:Kdown1}
contains the summand 1. Therefore
a presence of the summand $\dot \xi^I\dot {\ov\eta}^J$
depends only on the factor \eqref{eq:Kdown2}.
\hfill $\boxtimes$

\sm

It is sufficient to prove our statement for Gauss--Berezin operators
in the narrow sense (so $\alpha+\beta$ is even).
\begin{lemma}
Let a Gauss--Berezin operator has a kernel \eqref{eq:GB},
\begin{equation}
K(z,\xi;\ov u,\ov\eta;\fra)=\lambda\cdot 
\exp\bigl\{U(z,\xi;\ov u,\ov\eta;\fra)\bigr\}
\label{eq:K-U}
\end{equation}
Let the decomposition of $K(z,\xi;\ov u,\ov\eta;\fra)_\downarrow$
contains a nonzero term 	$c\cdot
	\xi_{i_1}\dots \xi_{i_\alpha} \ov\eta_{j_1} \dots \ov\eta_{j_\beta}$
	with $\alpha+\beta\ne 0$.
	Then  at least one of the following statements holds:
	
\sm 	
	
{\rm (i)} There are $i_p<i_q$ such that
the kernel 
$$
\Bigl(\xi_{i_p}+\frac\partial{\partial \xi_{i_p}}\Bigr)
\Bigl(\xi_{i_q}+\frac\partial{\partial \xi_{i_q}}\Bigr)
K(z,\xi;\ov u,\ov\eta;\fra)
$$
has the form 
\begin{equation}
\sigma\cdot \exp\Bigl\{\text{\rm quadratic expression in $z$, $\xi$, $\ov u$,
$\ov\eta$ }
\Bigr\}.
\label{eq:for-lemma}
\end{equation}

{\rm (ii)} There are $i_p$, $j_r$ such that
the following expression has the form \eqref{eq:for-lemma}
$$
\Bigl(\xi_{i_p}+\frac\partial{\partial \xi_{i_p}}\Bigr)
K(z,\xi;\ov u,\ov\eta;\fra)
\Bigl(\ov\eta_{j_r}+\frac{\eth}{\eth \ov\eta_{j_r}} \Bigr)
.$$

{\rm (iii)} There are  $j_r<j_s$ such that
the following expression has the form \eqref{eq:for-lemma}
$$
K(z,\xi;\ov u,\ov\eta;\fra)
\Bigl(\ov\eta_{j_r}+\frac{\eth}{\eth \ov\eta_{j_r}} \Bigr)
\Bigl(\ov\eta_{j_s}+\frac{\eth}{\eth \ov\eta_{j_s}} \Bigr)
.$$
	\end{lemma}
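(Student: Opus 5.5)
The plan is to reduce everything to a single odd variable pair. Write the quadratic form in the exponent of \eqref{eq:K-U} as
$$
U=U_0+\sum_k \zeta_k\,\ell_k+\tfrac12\sum_{k,l}\zeta_k S_{kl}\zeta_l ,
$$
where $\zeta=(\xi,\ov\eta)$ is the full collection of odd variables, $S$ is the skew-symmetric odd--odd block (entries in $\cA_\0$), $\ell_k$ are linear in the even variables $z,\ov u$ with odd coefficients plus odd constants, and $U_0$ collects the purely even terms.

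The key step is the following one-variable computation. For a single odd variable $\zeta_k$, write $K=\lambda e^{U'}(1+\zeta_k\,m_k)$, where $U'$ does not contain $\zeta_k$ and $m_k=\ell_k+\sum_l S_{kl}\zeta_l$. Then
$$
\Bigl(\zeta_k+\tfrac{\partial}{\partial\zeta_k}\Bigr)K=\lambda e^{U'}(\zeta_k+m_k)
$$
(up to signs from commuting past $\lambda e^{U'}$, which is even). This has the form \eqref{eq:for-lemma} only if $m_k$ contains an invertible body part. Since $m_k$ is odd, it is never invertible itself, so a single operator cannot suffice. This is why the lemma pairs two variables: $\alpha+\beta$ is even in the narrow sense, and for $\alpha+\beta\ne0$ at least two indices are present.

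So I take two variables $\zeta_k,\zeta_l$ among the ones occurring in the given monomial. From \eqref{eq:Kdown2}, the body part of $K$ is $\lambda_\downarrow e^{(\cdot)}\exp\{\tfrac12\zeta S_\downarrow\zeta^t\}$. The monomial $c\,\zeta_{i_1}\cdots$ occurs there iff the Pfaffian of the corresponding principal submatrix of $S_\downarrow$ is nonzero. Expanding this Pfaffian along the first index, I find a partner $l$ in the index set with $(S_{kl})_\downarrow\ne0$ and with the complementary Pfaffian nonzero. Applying $(\zeta_k+\partial_{\zeta_k})(\zeta_l+\partial_{\zeta_l})$ (or the right derivatives $\eth$ when $\zeta$ is an $\ov\eta$, which gives cases (i), (ii), (iii) according to the types of $k$ and $l$) to
$$
K=\lambda e^{U''}(1+\zeta_k m_k+\zeta_l m_l+\zeta_k\zeta_l\,s), \qquad s=S_{kl}+\text{(nilpotent/odd corrections)},
$$
produces $\lambda e^{U''}\bigl(s'+\text{terms linear in }\zeta_k,\zeta_l+\zeta_k\zeta_l\bigr)$, whose leading coefficient $s'$ satisfies $s'_\downarrow=\pm(S_{kl})_\downarrow\ne0$. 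Hence $s'$ is invertible in the even subalgebra, and the result can be rewritten as
$$
\lambda s'\exp\{U''+\zeta_k\tilde m_k+\zeta_l\tilde m_l+\zeta_k\zeta_l/s'\},
$$
which is of the form \eqref{eq:for-lemma}. The last step uses the identity $e^{x}=1+x+x^2/2$ for nilpotent quadratic pieces, where here $(\zeta_k\tilde m_k)(\zeta_l\tilde m_l)$ must be matched with the $\zeta_k\zeta_l$ coefficient. Matching these coefficients is a short linear solve, possible because $s'$ is invertible.

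The main obstacle is bookkeeping of signs and of the odd parts $\ell_k$: the cross terms $\zeta_k\ell_k\zeta_l\ell_l$ contribute to the $\zeta_k\zeta_l$ coefficient, and this coefficient must be shown to remain of exponential (Gaussian) form. I would handle this by first completing the square: a linear change of odd variables $\zeta\mapsto\zeta+(\text{odd shift})$ is valid because the relevant $2\times2$ block is invertible, and it removes the linear $\ell$ terms in $\zeta_k,\zeta_l$. After that, the computation reduces to the pure $2\times2$ fermionic (Berezin) case, where the claim is the classical fact that the Berezin operators $\frD_k\frD_l$ move between charts of the orthogonal Grassmannian exactly when the corresponding $2\times2$ minor is nonzero.
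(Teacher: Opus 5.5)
Your overall route is the same as the paper's, and it is correct up to the last step. You pick two odd variables $\zeta_k,\zeta_l$ of the monomial whose coefficient $S_{kl}\in\cA_\0$ in $U$ has nonzero body. The Pfaffian argument is fine for this; the nonvanishing of the complementary Pfaffian is not needed, and it is automatic anyway by \eqref{eq:D-changes}. You then apply the two operators to $\lambda e^{U''}\bigl(1+\zeta_k m_k+\zeta_l m_l+\zeta_k\zeta_l s\bigr)$, with $s=S_{kl}-m_km_l$, and get $\lambda e^{U''}\bigl(s'+\zeta_k m_l-\zeta_l m_k+\zeta_k\zeta_l\bigr)$.

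The gap is in the only nontrivial step: showing that this expression has the form \eqref{eq:for-lemma}.

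\textbf{Why your final rewriting is not justified.}
\begin{itemize}
\item The element $s'=-S_{kl}+m_km_l$ is not a constant. Here $m_k,m_l$ are affine in $z$, $\ov u$ and the remaining odd variables. Even after passing to the body ($\fra\mapsto 0$), $s'$ still contains $(m_km_l)_\downarrow$, a quadratic form in the other $\xi,\ov\eta$ with coefficients $(S_{kj})_\downarrow(S_{li})_\downarrow$. So $\lambda s'$ cannot play the role of $\sigma$.
\item The quantities $m_l/s'$ and $1/s'$ that enter your exponent are a priori not affine and not constant. Your ``short linear solve'' is exactly where one must prove that the exponent stays quadratic, and that proof is missing.
\item Completing the square does not fix this. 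If you substitute $\zeta_k=\zeta_k'+a_k$ with $a_k$ depending on the other variables, then $\partial_{\zeta_k}=\partial_{\zeta_k'}$, but multiplication by $\zeta_k$ becomes multiplication by $\zeta_k'+a_k$. The operator $\zeta_k+\partial_{\zeta_k}$ therefore acquires an odd shift, and you are not reduced to the pure $2\times 2$ Berezin case.
\item That classical case does not cover odd phantom coefficients or mixed $z\xi$ terms in any event.
\end{itemize}

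\textbf{The missing idea (the paper's).} Factor out only the constant $-S_{kl}$. With the signs of your own computation, the result is $-\lambda S_{kl}\,e^{U''}\bigl(1-S_{kl}^{-1}X\bigr)$, where
\[
X=\zeta_k\zeta_l+\zeta_k m_l-\zeta_l m_k+m_km_l .
\]
Then $X^2=0$:
\begin{itemize}
\item Each summand squares to zero, since $\zeta_j^2=0$ and $m_j^2=0$ for odd $m_j$.
\item Four of the six cross products vanish because they repeat a $\zeta$ or an $m$.
\item The remaining two cancel: $(\zeta_k\zeta_l)(m_km_l)$ and $(\zeta_km_l)(-\zeta_lm_k)=-\zeta_k\zeta_l m_km_l$.
\end{itemize}
Hence $1-S_{kl}^{-1}X=\exp\{-S_{kl}^{-1}X\}$. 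Since $m_k,m_l$ are affine, $U''-S_{kl}^{-1}X$ is a quadratic expression. This gives \eqref{eq:for-lemma} with the genuine constant $\sigma=-\lambda S_{kl}$.
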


{\sc Proof of the lemma.} Consider the sum  in curly brackets in
\eqref{eq:Kdown2} and its subsum $S$ consisting of 
all summand with $\xi_{i_p}\xi_{i_q}$, $\xi_{i_p}\ov \eta_{j_r}$,
$\ov \eta_{j_r} \ov \eta_{j_s}$ with $p$, $q\le\alpha$
and $r$, $s\le\beta$. Clearly, this sum is nonzero. We take its
nonzero summand. There are 3 variants. To be definite, let the 
summand with $\xi_{i_p}\xi_{i_q}$ be nonzero.

To simplify notation, we can assume $\xi_{i_p}\xi_{i_q}=\xi_1\xi_2$.
So, the sum $U(z,\xi;\ov u, \ov \eta;\fra)$
contains a summand $\sigma \xi_1\xi_2$ with $\sigma_\downarrow\ne 0$.
We split the sum $U$ in \eqref{eq:K-U} as
$$U_{12}+U':=
\bigl( \sigma \xi_1\xi_2+ \xi_l \ell_1 +\xi_2\ell_2\bigr) + U' 
$$
where $U'$ does not depend on $\xi_1$, $\xi_2$ and
$\ell_{1}$, $\ell_{2}$ are expression of parity $\1$ depending
on $\fra$, $z$, $\ov u$, $\ov \eta$ and $\xi_3$, $\xi_4$, \dots.
So,
$$
\Bigl(\xi_2+\frac\partial{\partial \xi_2}\Bigr)
\Bigl(\xi_1+\frac\partial{\partial \xi_1}\Bigr)
K=
\Bigl(\xi_2+\frac\partial{\partial \xi_2}\Bigr)
\Bigl(\xi_1+\frac\partial{\partial \xi_1}\Bigr)\exp\{U_{12}\}
\cdot \exp\{U'\}
$$
We have
$$
\exp\{U_{12}\}=1+\sigma \xi_1\xi_2+ \xi_l \ell_1 +\xi_2\ell_2-\boxed{\xi_1\xi_2\ell_1\ell_2}.
$$
So, 
\begin{multline*}
\Bigl(\xi_2+\frac\partial{\partial \xi_2}\Bigr)
\Bigl(\xi_1+\frac\partial{\partial \xi_1}\Bigr)
\exp\{U_{12}\}=\xi_2\xi_1+\sigma +\xi_2\ell_1-\xi_1\ell_2-\boxed{\ell_1\ell_2}
=\\\sigma\cdot\Bigl(1+\sigma^{-1}\,
\bigl[\xi_2\xi_1+\xi_2\ell_1-\xi_1\ell_2-\ell_1\ell_2 \bigr]\Bigr)=
\sigma\cdot\exp\Bigl\{ \sigma^{-1}\,
\bigl[\xi_2\xi_1+\xi_2\ell_1-\xi_1\ell_2-\ell_1\ell_2 \bigr]\Bigr\}.
\end{multline*}
We emphasize that 
$$
\frac12\bigl[\dots\bigr]^2=
-\xi_2\xi_1\cdot \ell_1\ell_2-\xi_2\ell_1\cdot \xi_1\ell_2=0. 
$$ 

The cases of non-zero summands $\xi_{i_p}\ov\eta_{j_r}$,
or $\ov\eta_{j_r}\ov\eta_{j_s}$
 are similar.
\hfill $\square$

\sm 

{\sc Proof of Proposition \ref{pr:atlas-GB}.}
 By \eqref{eq:D-changes}, the lemma justifies the inductive step $\alpha+\beta\mapsto \alpha+\beta+2$.
\hfill $\square$

\sm

Proposition \ref{pr:atlas-GB} implies the following corollary.

\begin{corollary}
	\label{cor:unit-not-in-kernel}
{\rm a)} Let a Gauss--Berezin operator satisfies $(R\cdot 1)_\downarrow\ne 0$. Then it can be represented in the form
$\frD_{i_1}\dots \frD_{i_\alpha} R$, where $R$ is a Gauss--Berezin operator
in the narrow sense.

\sm

{\rm b)} Let for  
a  Gauss--Berezin operator the projection of $(\im R)_\downarrow$ to the vacuum vector $1$
is not zero.
Then it can be represented in in the form
$R \frD_{j_1}\dots \frD_{j_\beta}$, where
$R$ is a Gauss--Berezin operator in the narrow sense.
\end{corollary}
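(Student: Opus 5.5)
The plan is to reduce both statements to Proposition \ref{pr:atlas-GB} by reading off one specific coefficient of the body of the kernel. Write a Gauss--Berezin operator $T$ in the general form \eqref{eq:GB-general} with kernel $L_T(z,\xi;\ov u,\ov\eta;\fra)$. We are free to choose which product $\frD_{i_1}\dots\frD_{i_\alpha}$ appears on the left and which product $\frD_{j_1}\dots\frD_{j_\beta}$ appears on the right. By Proposition \ref{pr:atlas-GB}, a representation with a prescribed pair $(I,J)$ exists exactly when $(L_T)_\downarrow$ contains a nonzero monomial $c\,\dot\xi^I\dot{\ov\eta}^J$. So for a) it suffices to find some $I$ such that $(L_T)_\downarrow$ contains $c\,\dot\xi^I$ with $c\ne 0$, i.e. with $J=\varnothing$.

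For a), use the definition of the Berezin symbol:
$$K_T=\sum_{\alpha,I}T(z^\alpha\dot\xi^I)\,\tfrac1{\alpha!}\ov u^\alpha\ddot{\ov\eta}^I.$$
Setting $\ov u=0$ and $\ov\eta=0$ kills every term except the one with $\alpha=0$, $I=\varnothing$. Hence
$$L_T(z,\xi;0,0;\fra)=T(1).$$
Taking bodies commutes with this substitution, so $(T\cdot 1)_\downarrow$ is exactly the $\ov u,\ov\eta$-free part of $(L_T)_\downarrow$. The hypothesis $(T\cdot1)_\downarrow\ne0$ then says this part is a nonzero element of $\ov\F$. By the Remark after Proposition \ref{pr:atlas-GB}, applied to the narrow-sense factor, the body is a product of a bosonic Gaussian \eqref{eq:Kdown1} and a fermionic Gaussian \eqref{eq:Kdown2}, with the $\frD$'s only permuting fermionic monomials via \eqref{eq:D-changes}. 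The bosonic factor contains the summand $1$. Therefore the nonzero $\ov\eta$-free part must contain a monomial $z^\gamma\dot\xi^I$ with nonzero coefficient, and multiplying by the bosonic expansion shows it contains the pure fermionic monomial $c\,\dot\xi^I$, $c\ne0$. Proposition \ref{pr:atlas-GB} with $J=\varnothing$ then gives $T=\frD_{i_1}\dots\frD_{i_\alpha}R$ with $R$ narrow-sense.

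For b), argue symmetrically on the $\ov\eta$ side. The projection of $T(z^\alpha\dot\xi^I)$ onto the vacuum $1$ is the coefficient of $z^0\xi^0$ in it. So "the projection of $(\im T)_\downarrow$ to $1$ is nonzero" means that $L_T(0,0;\ov u,\ov\eta;\fra)_\downarrow\ne0$. As before, the Gaussian structure forces a nonzero monomial $c\,\dot{\ov\eta}^J$ with no $\xi$. Proposition \ref{pr:atlas-GB} with $I=\varnothing$ yields $T=R\,\frD_{j_1}\dots\frD_{j_\beta}$.

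The main obstacle is the step passing from "nonzero $\ov\eta$-free (resp. $\xi$-free) part" to "nonzero pure fermionic monomial with no bosonic factor". This relies on the body of the kernel factoring as a bosonic exponential times a fermionic one even after the $\frD$'s are applied. I would check this directly: by \eqref{eq:D-changes}, each $\frD_j$ acts on $\xi_j$ only and does not touch the $z,\ov u$ dependence, so setting $z=\ov u=0$ in the body reduces everything to a fermionic function with a nonzero coefficient.
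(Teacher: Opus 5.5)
Your proposal is correct and follows the paper's route. The paper only states that Proposition~\ref{pr:atlas-GB} implies the corollary. You supply the omitted details: the identification $T(1)=L_T(z,\xi;0,0;\fra)$, and for b) the identification of the vacuum components of the image with $L_T(0,0;\ov u,\ov\eta;\fra)$. You also observe that the body of the kernel is the bosonic factor \eqref{eq:Kdown1}, with nonzero constant term $\lambda_\downarrow$, times a purely fermionic function (the $\frD$'s act only on fermionic variables). So a nonzero $\ov\eta$-free (resp.\ $\xi$-free) part forces a nonzero pure monomial $c\,\dot\xi^I$ (resp.\ $c\,\dot{\ov\eta}^J$), to which the proposition applies.
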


\sm

{\bf \punct Gauss--Berezin operators and linear relations.}
In Subsect. \ref{ss:creation-relation}, we assigned a linear relation
$\frL(R):\sheis_{\F^0}[\cA]\tto \sheis_{\ov F}[\cA]$ for any operator
$R: \F^0[\cA]\to \ov\F[\cA]$. By Proposition  \ref{pr:cLP-isotropic},
 this relation is isotropic. We intend to show that
for Gauss--Berezin operators these relations are Lagrangian. 

\sm

\begin{theorem}
	\label{th:operator-relation}
For a Gauss--Berezin operator $R=\cB\left[\begin{array}{cc|c}
	A&B&p^t\\
	C&D&q^t
\end{array}\right]:\,
\F^0[\cA]\to \ov\F[\cA]$  the linear relation $\frL(R)$ consists of $v\oplus w$
satisfying the equation
\begin{multline}
		\begin{pmatrix}
	 v^+_\0& v^+_\1	&	w^-_\0& w^-_\1 
	\end{pmatrix}=\\=
			\begin{pmatrix}
		v^-_\0& v^-_\1	&	w^+_\0& w^+_\1 
	\end{pmatrix}
	\begin{pmatrix}
		-A_{11}&-A_{12}&B_{11}&B_{12}\\
		-A_{21}&-A_{22}&B_{21}&B_{22}\\
		C_{11}&C_{12}&-D_{11}&-D_{12}\\
		-C_{21}&-C_{22}&D_{21}&D_{22}
	\end{pmatrix}
	\label{eq:gauss-relation}
.\end{multline}
For a given $v\oplus w\in\frL(R)$ we have 
\begin{equation}
\chi= v_\0^- p_1^t+ v_\1^- p_2^t - w_\0^+ q_1^t+w_\1^+ q_2^t. 	
	\label{eq:gauss-constant}
\end{equation}

\end{theorem}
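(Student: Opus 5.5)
The plan is to translate the defining equation \eqref{eq:ca-relations}, $\wh a(v)R=R\,\wh a(w)+\chi R$, into an identity between formal series for the Berezin symbol, and then compare coefficients. Let $K=\lambda\exp\{U\}$ be the kernel \eqref{eq:GB}, with $U$ the quadratic-plus-linear expression in $z,\xi,\ov u,\ov\eta$.

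First, by the dictionary \eqref{eq:corr-1}--\eqref{eq:corr-2}, the left-hand side $\wh a(v)R$ has kernel
$$\Bigl(\sum v_\0^{j+}z_j+\sum v_\1^{k+}\xi_k+\sum v_\0^{j-}\tfrac{\partial}{\partial z_j}+\sum v_\1^{k-}\tfrac{\partial}{\partial \xi_k}\Bigr)K .$$
The term $R\,\wh a(w)$ has kernel
$$\sum w_\0^{j+}\tfrac{\partial}{\partial\ov u_j}K+\sum w_\0^{j-}\ov u_jK+\bigl(\text{right derivatives }K\tfrac{\eth}{\eth\ov\eta_k}\text{ and right multiplications }K\ov\eta_k\text{ with coefficients }w_\1^{k\pm}\bigr).$$
Here one must move the phantom coefficients $w_\1^{k\pm}$ past $K$. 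Since $K$ is even in the total collection $\{\xi,\ov\eta,\fra\}$, this only produces controlled signs, which I will track using $h\frac{\eth}{\eth\xi}=-\frac{\partial}{\partial\xi}h(-\xi,\dots;-\fra)$.

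Next, I use that $\exp\{U\}$ is an exponential. Each derivative applied to $K$ produces a factor (linear form)$\cdot K$, for example $\frac{\partial}{\partial z}K=(A_{11}z^t+A_{12}\xi^t+B_{11}\ov u^t+B_{12}\ov\eta^t+a_1^t)\,K$ up to transposition conventions, and similarly for the other variables. The whole equation then takes the form $\bigl(\Phi(z,\xi,\ov u,\ov\eta)-\chi\bigr)K=0$. Here $\Phi$ is an affine form whose coefficients are combinations of $v,w$ with the blocks $A,B,C,D$ and the vectors $a,b$. Because $\lambda$ is invertible and $\exp\{U\}$ is invertible as a formal series, $K$ is invertible, so $\Phi-\chi\equiv 0$.

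Equating the coefficients of $z,\xi,\ov u,\ov\eta$ to zero gives four block identities. These express $v^+_\0,v^+_\1,w^-_\0,w^-_\1$ linearly through $v^-_\0,v^-_\1,w^+_\0,w^+_\1$. After using the symmetry conditions of the matrix (symmetric even block, skew-symmetric odd block, $(\cdot)_{12}^t=-(\cdot)_{21}$), I expect them to assemble into \eqref{eq:gauss-relation}. The constant term gives \eqref{eq:gauss-constant}. Conversely, any $v\oplus w$ satisfying \eqref{eq:gauss-relation} makes the linear part vanish, so with $\chi$ defined by \eqref{eq:gauss-constant} the relation \eqref{eq:ca-relations} holds. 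Hence $\frL(R)$ is exactly the stated graph.

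The free parameters are $(v^-,w^+)$, and these range over $\sheis_{\ov\F}$ and $\sheis_{\F^0}$ as required, because only finitely many derivatives act on each coefficient. Together with Proposition~\ref{pr:cLP-isotropic}, this shows $\frL(R)$ is Lagrangian. I would state the theorem for operators in the narrow sense and handle the factors $\frD_j$ in \eqref{eq:GB-general} separately: they act on $\frL$ as the involutions $\sigma_j$.

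The main obstacle is the sign bookkeeping in the odd sector. The signs in the last row of \eqref{eq:gauss-relation} (e.g.\ $-C_{21}$ versus $C_{12}$) come from three sources: left versus right Grassmann derivatives on the $\ov\eta$ side, commuting odd phantom coefficients through $\xi,\ov\eta$, and the asymmetric supercommutators computed in the Lemma giving \eqref{eq:aaJ}, for instance $[a\xi_k,b\partial_{\xi_k}]_s=a\,b^\circ$. I would first verify the formula in the one-boson/one-fermion case with a single phantom generator, and then argue that the general computation splits blockwise.
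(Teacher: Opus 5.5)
Your proposal is correct and is essentially the paper's proof: you translate $\wh a(v)R=R\,\wh a(w)+\chi R$ into a kernel identity via \eqref{eq:corr-1}--\eqref{eq:corr-2} and differentiate the Gaussian exponential to get $(\dots)K=0$. You then read off \eqref{eq:gauss-relation} from the coefficients of $z,\xi,\ov u,\ov\eta$ and \eqref{eq:gauss-constant} from the constant term. Your explicit use of the invertibility of $K$ and of the converse inclusion just makes precise what the paper leaves implicit; the coefficients $w$ already sit to the left of $K$ by left $\cA$-linearity ($K$ is even in any case), so the only signs come from Grassmann derivatives (left in $\xi$, right in $\ov\eta$) passing odd matrix entries, not from the supercommutator formulas in \eqref{eq:aaJ}.
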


{\sc Proof.} We evaluate
$\wh a(v)R$ and $R\wh a(w)$ using correspondences 
\eqref{eq:corr-1}--\eqref{eq:corr-2} and
come to the equation
\begin{multline}
	\Bigl( v_\0^+ z^t+v_\1^+ \xi^t+ v_\0^- \Bigl[\frac{\partial}{\partial z}\Bigr]^t+
	 v_\1^- \Bigl[\frac{\partial}{\partial \xi}\Bigr]^t
	\Bigr)	 K(z,\xi;\ov u,\ov\eta;\fra)-
	\\-\Bigl( w_\0^+ \Bigl[\frac{\partial}{\partial \ov u}\Bigr]^t
	-w_\0^+ \Bigl[\frac{\partial}{\partial \ov\eta}\Bigr]^t
	+w_\1^-\, \ov u+ w_\1^-\,\ov \eta
	\Bigr)	 K(z,\xi;\ov u,\ov\eta;\fra)+\chi
	K(z,\xi;\ov u,\ov\eta;\fra)
	=0.
\end{multline}
Here a term $v_\0^- \bigl[\frac{\partial}{\partial z}\bigr]^t$
denotes the product of a matrix-row $v_\0^-$ and a matrix column
consisting of partial derivatives $ \frac{\partial}{\partial z_j}$.
The minus in brackets  in the second line arises from a right derivative
in formula \eqref{eq:corr-2}.

We  differentiate an  exponential
of a quadratic form  in the usual way,
\begin{align*}
	\frac{\partial}{\partial \xi_1} \exp\Bigl\{\frac 12 \xi A_{22}\xi^t\Bigr\}
	&=\Bigl(\sum {a_{22}^{1j} \xi_j^t}\Bigr) \exp\Bigl\{\frac 12 \xi A_{22}\xi^t\Bigr\},
	\\	
	\frac{\partial}{\partial \xi_1} \exp\Bigl\{\frac 12 (\xi B_{12}\ov u^t+
	\ov u C_{12} \xi^t)\Bigr\}	
	&=\Bigl(\sum b_{12}^{1j}\ov u_j^t \Bigr)
	 \exp\Bigl\{\frac 12 (\xi B_{12}\ov u^t+
\ov u C_{12} \xi^t)\Bigr\},	
\end{align*}
etc.  We have
$$
\Bigl(v_\0^-\Bigl[\frac{\partial}{\partial z}\Bigr]^t\Bigr) K=
\Bigl( v_\0^- \bigl(A_{11} z^t +A_{12}\xi^t+B_{11} \ov u^t+B_{12}\ov\eta^t+
p_\0^+
 \bigr)\Bigr) K
$$
and  corresponding terms with $v_\0^-$, $w_\0^+$, $w_\1^+$.
We get an equation of the type $(\dots)K=0$ with    20 summands in the brackets.
Reducing similar terms in brackets with $z$, $\xi$, $\ov u$, $\ov\eta$ we
come to \eqref{eq:gauss-relation}.  Constant terms give \eqref{eq:gauss-constant}.
Cf. \cite{Ber-second}, Sect. 4-5, \cite{Ner-book}, Subsect. II.6.4,
Subsect. IV.4.6.
\hfill $\square$

\sm

{\sc Remark.} So, we see that
$$
\chi=J(v,\pi)-J(w,\kappa),
$$
where $\pi=0\oplus 0\oplus p_1\oplus (0-p_2)$, $\kappa=q_1\oplus q_2\oplus 0\oplus 0$.
So we have a super-affine relation 
$$
\frL^{\rm{aff}}(R):\,\,\pi\oplus \kappa +\frL(R):\sheis_{\F^0}[\cA]\tto \sheis_{\ov\F}[\cA].
$$
Heuristically, we claim that
$$
\frL^{\rm{aff}}(R)\circ \frL^{\rm{aff}}(Q)=\frL^{\rm{aff}}(RQ).
$$
But here we have  formal difficulties with boundedness of 
Gausss--Berezin operators and good properties of products
of super-affine relations.
\hfill $\boxtimes$

\begin{lemma}
$\frL(\frD_j)$ is the graph of the reflection $\sigma_j$
defined be \eqref{eq:sigmasf}. 
\end{lemma}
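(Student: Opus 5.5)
We compute directly how $\frD_j$ intertwines the creation--annihilation operators. Since $\frD_j=(\xi_j+\partial_{\xi_j})\sfS$ touches only the $j$-th fermionic degree of freedom, the plan is to check the commutation rule of $\frD_j$ with each generator $z_i$, $\partial_{z_i}$, $\xi_k$, $\partial_{\xi_k}$, each multiplied by a phantom coefficient. We then read off the pairs $w\oplus v$ with $\wh a(v)\frD_j=\frD_j\wh a(w)$ (and $\chi=0$).

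For the bosonic generators and for $\xi_k$, $\partial_{\xi_k}$ with $k\ne j$, we use the fact that $\sfS$ sends $\lambda\mapsto\lambda^\circ$. For odd generators we also use that $\xi_j+\partial_{\xi_j}$ anticommutes with $\xi_k$, $\partial_{\xi_k}$. Consider a term $\lambda X$ in $\wh a(v)$, where $X$ is a generator and the parity of $\lambda\in\cA$ equals that of $X$, so the term is even. If $X$ is even, then $\frD_j$ commutes with $\lambda X$. If $X$ is odd, the sign from $\sfS$ acting on the odd $\lambda$ cancels the sign from the anticommutation. So all these coordinates are transported identically. The same bookkeeping as in the proof of the lemma giving \eqref{eq:aaJ} handles mixed-parity coefficients, using the decomposition $\lambda=\lambda_\0+\lambda_\1$ by linearity.

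The only nontrivial step concerns the $j$-th fermionic pair. From \eqref{eq:D-changes} we find that $\frD_j$ interchanges the roles of $f_1$ and $f_2$ in $f_1+\xi_j f_2$. A direct check on $f_1+\xi_jf_2$ then gives $\xi_j\frD_j=\frD_j\partial_{\xi_j}$ and $\partial_{\xi_j}\frD_j=\frD_j\xi_j$, again with phantom coefficients transported correctly by $\sfS$. Hence $\wh a(v)\frD_j=\frD_j\wh a(v\sigma_j)$, where $\sigma_j$ swaps the coordinates $v_\1^{j+}$ and $v_\1^{j-}$. By \eqref{eq:sigmasf} this is exactly the reflection $\sigma_j$ of the orthosymplectic space, with $g_j,h_j$ corresponding to $\xi_j$, $\partial_{\xi_j}$.

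It follows that $\frL(\frD_j)$ contains $\graph(\sigma_j)$. For the reverse inclusion, suppose $w\oplus v\in\frL(\frD_j)$. Subtracting the element $(v\sigma_j)\oplus v$ of the graph, we get $\frD_j\bigl(\wh a(v\sigma_j)-\wh a(w)\bigr)=\chi\frD_j$. Since $\frD_j^2=1$, this gives $\wh a(v\sigma_j)-\wh a(w)=\chi'$, a phantom scalar. Applying both sides to $1$ and to the monomials $z_i$, $\xi_k$, we conclude that $w=v\sigma_j$ and $\chi=0$. The main care needed is sign bookkeeping for odd phantom coefficients; no analytic difficulty arises, since all operators involved preserve $\F^0[\cA]$.
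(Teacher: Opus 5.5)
Your strategy is the natural one: commute $\frD_j$ past each generator, read off the transported parameter, then get the reverse inclusion from $\frD_j^2=1$. The paper itself only calls the statement obvious. However, your sign bookkeeping for the fermionic coordinates with $k\neq j$ is wrong.

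Since $\sfS\lambda=\lambda^\circ\sfS$ and $(\xi_j+\partial_{\xi_j})\lambda^\circ=\lambda(\xi_j+\partial_{\xi_j})$, the operator $\frD_j$ commutes with left multiplication by every $\lambda\in\cA$; this is exactly why it is an $l$-operator. So the sign that $\sfS$ produces on an odd $\lambda$ is cancelled by moving $\xi_j+\partial_{\xi_j}$ across $\lambda^\circ$, a step your count omits. The sign from anticommuting $\xi_j+\partial_{\xi_j}$ with $\xi_k$, $\partial_{\xi_k}$ then survives. Hence for $k\neq j$ and all $\lambda\in\cA$
\[
\lambda\xi_k\,\frD_j=-\frD_j\,\lambda\xi_k,\qquad \lambda\,\partial_{\xi_k}\frD_j=-\frD_j\,\lambda\,\partial_{\xi_k}.
\]
For example, take $\wh a(v)=\fra_1\xi_k$. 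Then $\wh a(v)\frD_j(1)=\fra_1\xi_k\xi_j$ but $\frD_j\wh a(v)(1)=-\fra_1\xi_k\xi_j$, and no $\chi\in\cA$ satisfies $\chi\xi_j=2\fra_1\xi_k\xi_j$. So $v\oplus v\notin\frL(\frD_j)$, although $\sigma_j$ fixes $v$. Your own case split shows the same inconsistency: for an even (e.g. complex) coefficient on $\xi_k$ your count has no $\sfS$-sign at all, so different parity components of $\lambda$ would acquire different signs.

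Combine the correct sign with your correct identities $\xi_j\frD_j=\frD_j\partial_{\xi_j}$ and $\partial_{\xi_j}\frD_j=\frD_j\xi_j$, and the trivial commutation with $z_i$, $\partial_{z_i}$. The computation then gives $\wh a(v)\frD_j=\frD_j\wh a(v\tau_j)$. Here $\tau_j=\sigma_j\epsilon_j$, where $\epsilon_j$ multiplies all coordinates $v_\1^{k\pm}$, $k\neq j$, by $-1$ and fixes the rest. Your reverse-inclusion argument carries over verbatim and yields $\frL(\frD_j)=\graph(\tau_j)$.

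This differs from $\graph(\sigma_j)$ with the other basis vectors fixed, as in \eqref{eq:sigmasf}, whenever there is more than one fermionic variable. So the lemma holds only after replacing $\sigma_j$ by $\sigma_j\epsilon_j$. The correction is harmless for the paper's later uses, because $\epsilon_j$ is a diagonal orthosymplectic involution that preserves the splitting into creation and annihilation parameters, and hence the Potapov charts. Still, as written your proof asserts a false identity.

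Note also that the left $\cA$-linearity of $\frD_j$ makes your splitting $\lambda=\lambda_\0+\lambda_\1$ unnecessary: it handles all coefficients at once.
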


This statement is obvious.

\begin{theorem}
{\rm a)}	For each Gauss--Berezin operator $R:\F^0[\cA]\to \ov\F[\cA]$ 
the set $\frL(R)$
	is a super-Lagrangian relation $\sheis_{\F^0[\cA]}\tto \sheis_{\ov\F^0[\cA]}$.
	
\sm 

{\rm b)} For an operator $Q$ given by \eqref{eq:GB-general} we have
$$
v\oplus w\in \frL(Q)\qquad \leftrightarrow
  \qquad (\sigma_{j_1}\dots \sigma_{j_\beta} v)\oplus (\sigma_{j_1} \dots \sigma_{j_\beta}w)\in \frL(R),
$$
where $\sigma_j$ are the reflections \eqref{eq:sigmasf}.
 
 \sm 
 
{\rm c)}    Let $R$, $Q:\F^0(\cA)\to \ov\F(\cA)$ be 
$l$-operators,  for which the sets 
of solutions  of \eqref{eq:ca-relations}
coincide. Let $R$ be a Gauss--Berezin operator.
Then $Q=\mu R$ for some $\mu\in\cA$. 
\end{theorem}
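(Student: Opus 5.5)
Parts a) and b) reduce to Theorem \ref{th:operator-relation}; part c) is a Friedrichs-type uniqueness argument built on Lemma \ref{l:commuting}.

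\emph{Part a).} First let $R$ be a Gauss--Berezin operator in the narrow sense with kernel \eqref{eq:GB}. By Theorem \ref{th:operator-relation}, $\frL(R)$ is exactly the graph of the linear map
\[
(v^-_\0, v^-_\1, w^+_\0, w^+_\1)\mapsto (v^+_\0, v^+_\1, w^-_\0, w^-_\1)
\]
given by the matrix in \eqref{eq:gauss-relation}. So $\frL(R)$ is the graph of an operator from the $(-,+)$ half to the $(+,-)$ half, and is therefore a subspace in the sense of the super-Grassmannian: its body projection is a graph of the same dimension. Proposition \ref{pr:cLP-isotropic} already gives $J$-isotropy, because $K_\downarrow\ne0$ ($\lambda$ is invertible) and hence $R_\downarrow\ne 0$. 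It remains to check maximality. Following Subsect.~\ref{s:affine-relations}, a graph over the ``half'' coordinates has half dimension and is Lagrangian as soon as it is isotropic. Alternatively, I would verify directly that the symmetry conditions on $\begin{pmatrix}A&B\\C&D\end{pmatrix}$, with the sign changes in \eqref{eq:gauss-relation}, are exactly the Lagrangian conditions $A=A^t$, $D=-D^t$, $B+C^t=0$ of \cite{Ner-super}, Proposition 7.3, for the form $J\ominus J$.

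\emph{Part b).} Take general $Q=\frD_{i_1}\dots\frD_{i_\alpha}R\,\frD_{j_1}\dots\frD_{j_\beta}$. The Lemma on products of relations in Subsect.~\ref{ss:creation-relation} gives $\frL(Q)\supset \frL(\frD_{i})\circ\dots\circ\frL(R)\circ\dots\circ\frL(\frD_{j})$. Each $\frL(\frD_j)$ is the graph of the reflection $\sigma_j$, and $\frD_j^2=1$, so $\frD_j$ is invertible. Applying the same inclusion to $R=\frD\dots Q\dots\frD$ gives the reverse inclusion. This yields statement b), and since the $\sigma_j$ lie in $\OSp$, Lagrangianity transfers from $\frL(R)$ to $\frL(Q)$, completing a) in general.

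\emph{Part c).} By b) I can reduce to the narrow case by replacing $Q$ with $\frD\dots Q\dots\frD$. Suppose $Q$ satisfies $\wh a(v)Q=Q\wh a(w)+\chi'Q$ for all $w\oplus v\in\frL(R)$. The main obstacle is that $R$ need not be invertible, so I cannot simply form $R^{-1}Q$ and apply Lemma \ref{l:commuting}. Instead I would work with kernels. Using the correspondences \eqref{eq:corr-1}--\eqref{eq:corr-2}, the relations with $v^-,w^+$ free, solved for $v^+,w^-$ via \eqref{eq:gauss-relation}, become first-order equations
\[
\frac{\partial}{\partial z}K_Q=(\dots)K_Q,\quad \frac{\partial}{\partial\xi}K_Q=(\dots)K_Q,
\]
together with the analogous equations in $\ov u$ and $\ov\eta$. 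The right-hand sides are the same linear forms that $K_R$ satisfies. The constants $\chi'$ must agree with \eqref{eq:gauss-constant} up to a term, and I would show this discrepancy vanishes by the isotropy argument of Proposition \ref{pr:cLP-isotropic}. Then $K_Q\,K_R^{-1}$, which makes sense because $K_R$ is an invertible exponential with invertible $\lambda$, is annihilated by all derivatives in $z,\xi,\ov u,\ov\eta$. Hence it is a constant $\mu\in\cA$, and $Q=\mu R$. The delicate points are the formal-series legitimacy of this division and the ordering signs for the odd right derivatives.
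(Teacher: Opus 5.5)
Your treatment of a) and b), and the division $S=K_QK_R^{-1}$ in c), follow the paper's route. The gap is in c). There you allow $Q$ its own constants $\chi'$, so you effectively assume only $\frL(Q)=\frL(R)$. You then propose to show that the discrepancy $\chi'-\chi$ vanishes by the isotropy argument of Proposition~\ref{pr:cLP-isotropic}.

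That argument cannot see $\chi$. In its proof the $\chi$-terms cancel in the supercommutator whatever their values, so it constrains only the linear relation. The claim is also false. Take $R=1$, whose kernel $\exp\{z\ov u^t+\xi\ov\eta^t\}$ is Gauss--Berezin in the narrow sense. Take $Q$ to be multiplication by $e^{az_1}$ with $a\in\C\setminus\{0\}$; its kernel is $\exp\{z\ov u^t+\xi\ov\eta^t+az_1\}$. Then $\wh a(v)\,Q=Q\,\wh a(v)+a\,v_\0^{1-}\,Q$. So $\frL(Q)=\frL(R)$ is the diagonal, while $\chi'=a\,v_\0^{1-}\ne 0=\chi$ and $Q\ne\mu R$.

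In general, $\chi'-\chi$ is just a linear functional on $\frL(R)$. It reflects the linear part $p,q$ of the kernel in \eqref{eq:gauss-constant}, which does not enter $\frL$. Under the weaker hypothesis your division argument therefore gives only $\partial S=c\,S$ in each variable, with constants $c$ read off from $\chi'-\chi$. That makes $S$ equal to $\mu$ times the exponential of a linear form, exactly as in the example, not $S=\mu$.

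So c) holds only if \emph{the sets of solutions of \eqref{eq:ca-relations} coincide} is read as equality of the triples $(w,v,\chi)$, equivalently of the affine relations $\frL^{\rm aff}$. This is how the paper's proof uses the hypothesis: one and the same $\chi$ appears in the first-order operator that annihilates both kernels. Under that reading there is no discrepancy to remove, and the isotropy step should simply be dropped. Your equation for $S$ then holds with zero right-hand side for all free $v^-,w^+$. Hence every partial derivative of $S$ in $z,\xi,\ov u,\ov\eta$ vanishes, and $S=\mu\in\cA$.
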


This statement in a super-copy of similar fermionic
 and   bosonic statements
   (\cite{Ner-fermion}, Theorem 3, \cite{Ner-book}, Theorem II.6.4, 
   Subsect. V.4.6).

\sm

{\sc Proof.}  Let $R$ be a Gauss--Berezin operator in the narrow
sense. Then $\frL(\R)$ is defined by  equation \eqref{eq:gauss-relation}.
A straightforward calculation shows  that such matrices are precisely  Potapov transforms 
of super-Lagrangian relations.

Next, the statement holds for the operators $\frD_j$. The corresponding 
super-linear relations are graphs of reflections $\sigma_j$. This implies
statements a) and b).

Statement c) reduces to the case then $R$ is a Gauss--Berezin operator
in narrow sense. We denote by $L$ and $K$ %(z,\xi;\ov u,\ov\eta;\fra)
the kernels of $Q$ and $R$ and set
$S:=L K^{-1}$. Consider the differential operator
$$
\cD=v_\0^- \Bigl[\frac{\partial}{\partial z}\Bigr]^t+v_\0^- \Bigl[\frac{\partial}{\partial \xi}\Bigr]^t
-
w_\0^+ \Bigl[\frac{\partial}{\partial \ov u}\Bigr]^t
+w_\0^+ \Bigl[\frac{\partial}{\partial \ov\eta}\Bigr]^t
$$
Let $v$, $w$, $\chi$  conditions \eqref{eq:ca-relations}.
Then
\begin{multline*}
0=\Bigl( v_\0^+ z^t+v_\1^+ \xi^t 
+\cD
-w_\1^-\, \ov u- w_\1^-\,\ov \eta-\chi
\Bigr)\,\bigl( S \cdot K\bigr)=\\=
(\cD S)\cdot K+S\cdot \Bigl( v_\0^+ z^t+v_\1^+ \xi^t 
+\cD
-w_\1^-\, \ov u- w_\1^-\,\ov \eta-\chi
\Bigr)K= (\cD S)\cdot K+0.
\end{multline*}
But coefficients $v_\0^-$, $v_\1^-$, $w_\0^+$, $v_\1^+$
can be arbitrary. Therefore all partial derivatives of $S$ are 0.
\hfill $\square$

\sm

\begin{corollary}
	\label{cor:one-side}
For a Gauss--Berezin operator in the narrow sense 
we have 
\begin{align}
&\dom \frL(R)\supset \sheis_{\F^0}^+[\cA],
\qquad  \im \frL(R)\supset \sheis_{\ov\F}^-[\cA],\\
&\ker \frL(R)\subset \sheis_{\F^0}^-[\cA],
\qquad  \indef \frL(R)\subset \sheis_{\ov\F}^+[\cA].
\end{align}
\end{corollary}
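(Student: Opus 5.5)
Since $R$ is a Gauss--Berezin operator in the narrow sense, Theorem \ref{th:operator-relation} describes $\frL(R)$ explicitly by \eqref{eq:gauss-relation}. The relation is the graph of a linear map: the free parameters are $(v^-_\0,v^-_\1,w^+_\0,w^+_\1)$, and the remaining coordinates $(v^+_\0,v^+_\1,w^-_\0,w^-_\1)$ are obtained from them by the matrix in \eqref{eq:gauss-relation}. So $\frL(R)$ is a graph over the Potapov ``input'' coordinates, and all four inclusions are read off from this parametrization. I keep the paper's convention that the first summand of a pair in $\frL(R)$ lies in $\sheis_{\F^0}$ (domain side) and the second in $\sheis_{\ov\F}$ (image side). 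I also follow the paper's labelling of the $\pm$ halves, so that the free input coordinates on each side are the ones appearing in the corollary.

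\emph{Domain and image.} Given an arbitrary element of $\sheis^+_{\F^0}[\cA]$, I set it as the domain-side input block and put the image-side input block to $0$. Then \eqref{eq:gauss-relation} determines the remaining coordinates and produces a pair in $\frL(R)$. Hence its domain-side component fills $\sheis^+_{\F^0}[\cA]$, which gives $\dom\frL(R)\supset\sheis^+_{\F^0}[\cA]$. Symmetrically, I choose the image-side input block to be an arbitrary element of $\sheis^-_{\ov\F}[\cA]$ with zero domain-side input. This gives $\im\frL(R)\supset\sheis^-_{\ov\F}[\cA]$.

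\emph{Kernel and indefinity.} Suppose a pair lies in $\frL(R)$ and its image-side component is zero. Then in particular the image-side input block is zero. The domain-side component of the pair then has, in its input half, the free parameter, and in its other half the value computed by the matrix; I need that this forces the $+$ block to vanish. Equivalently, I will show that if the image-side component is $0$, then the free domain-side input must also be $0$. The equation for the image-side output, with the image-side input set to zero, is linear in the free domain-side input via the blocks $C_{\cdot\cdot}$, and that output is required to vanish. This is not automatic. I would instead argue directly from the graph structure: a pair whose whole input vector vanishes is zero, so the kernel consists of pairs whose domain-side input is the only nonzero input, and this input lies in the $\pm$ half labelled as in the corollary. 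Concretely, the kernel is contained in the set of pairs with zero image-side coordinates. Its domain-side component is $(\text{input},\ \text{input}\cdot(-A))$. Projecting onto the half that the corollary calls $\sheis^-_{\F^0}$ identifies it. The indefinity is handled by the same argument with the roles of the domain side and the image side exchanged.

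\emph{Main obstacle.} The main difficulty is careful bookkeeping of which half ($+$ or $-$) is the free coordinate on each side. The signs and super-transposes in \eqref{eq:gauss-relation} only affect the output formulas, not the inclusions. The topological questions are harmless, since the finiteness conditions in $\sheis_{\F^0}$ and $\sheis_{\ov\F}$ match exactly the finitary and arbitrary components in the parametrization.
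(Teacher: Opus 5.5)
You are on the paper's route: the paper just calls the inclusions immediate from Theorem \ref{th:operator-relation}. Your reading of \eqref{eq:gauss-relation} as a graph with free inputs $w^+$ on the domain side and $v^-$ on the image side is correct. But the conclusions you draw from it are not the stated inclusions.

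In the domain step, take $w^+$ arbitrary and $v^-=0$. This produces the domain-side vector $(w^+,w^-)$, whose $w^-$-block is $w^+$ multiplied by the $D$-blocks of \eqref{eq:gauss-relation}, and that is nonzero in general. So you have shown only that $\dom\frL(R)$ projects onto $\sheis^+_{\F^0}[\cA]$ along $\sheis^-_{\F^0}[\cA]$, not that it contains $\sheis^+_{\F^0}[\cA]$. Containment would need a $v^-$ whose $B$-block contribution cancels the $D$-term, which is impossible e.g. when all $B_{ij}=0$ but $D\neq 0$. The image step has the same defect.

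In the kernel step, your own formula shows that an element of $\ker\frL(R)$ is determined by its $+$-component, which is generally nonzero. That gives $\ker\frL(R)\cap\sheis^-_{\F^0}[\cA]=0$. This is the opposite of $\ker\frL(R)\subset\sheis^-_{\F^0}[\cA]$: together with the graph structure, that inclusion would force $\ker\frL(R)=0$. The same applies to the indefinity.

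These gaps cannot be closed, because the inclusions fail as written. If all $B_{ij}$, $C_{ij}$ vanish, \eqref{eq:gauss-relation} splits and $\frL(R)=\ker\frL(R)\oplus\indef\frL(R)$. Then $\dom\frL(R)=\ker\frL(R)$, and the two domain-side claims would give $\sheis^+_{\F^0}[\cA]\subset\sheis^-_{\F^0}[\cA]$, which is absurd.

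Concretely, take $K\equiv 1$, the projection onto the vacuum. Then $\frL(R)=\{w\oplus v:\ w^-=0,\ v^+=0\}$, since $R$ kills every creation operator placed on its right and every annihilation operator kills $R$ from the left. So $\ker\frL(R)=\sheis^+_{\F^0}[\cA]$ and $\indef\frL(R)=\sheis^-_{\ov\F}[\cA]$.

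What Theorem \ref{th:operator-relation} does give immediately, and what your computations actually prove, is that $\frL(R)$ lies in the Potapov chart:
\begin{gather*}
\dom\frL(R)+\sheis^-_{\F^0}[\cA]=\sheis_{\F^0}[\cA],\qquad
\im\frL(R)+\sheis^+_{\ov\F}[\cA]=\sheis_{\ov\F}[\cA],\\
\ker\frL(R)\cap\sheis^-_{\F^0}[\cA]=0,\qquad
\indef\frL(R)\cap\sheis^+_{\ov\F}[\cA]=0.
\end{gather*}
This weaker form does not make two such relations transversal automatically. In the product $PR$ with $K_R=e^{\frac12 a z_1^2}$ and $K_P=e^{\frac12 d\ov u_1^2}$, $ad=1$, the lines $\im\frL(R)$ and $\dom\frL(P)$ coincide in the $(z_1,\partial/\partial z_1)$-plane. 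This matches the invertibility of $1-PD$ required in \eqref{eq:PiPi}.
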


These statements are immediate consequences
of Theorem \ref{th:operator-relation}.

\section{Representation of the semigroup of superannuli%
\label{s:last}}

\COUNTERS 
  
{\bf \punct The subspace $\boldsymbol{\cF^\infty}$.}
Fix complex parameters $\mu$, $\nu$.
Consider the representation \eqref{eq:fock-L}-\eqref{eq:fock-M}
of $\ns$  in 
$\F^0=F^0\otimes \Lambda^0$ (see \cite{IK1}).
The `vacuum vector' $1\in \F^0$ satisfies the conditions
$$
L_0 1= h \cdot 1, \qquad \zeta 1= c\cdot 1,
$$
where 
$$
h=\frac12(\mu^2+\nu^2), \qquad c=1+12 \nu^2.
$$
%Recall that for $h$, $c$ in general position
% we get an irreducible $\ns$-module $L(h,c)$;
% the exceptional set is a union
% of a countable family of quadrics, the explicit condition was 
% obtained by Kac \cite{Kac}. 

% For the correspondence with Subsects \ref{ss:super-Fock}, \ref{ss:Gauss-Berezin} 
We shift subscripts of the fermionic variables and redenote
$$\xi_{j}\mapsto \xi_{-1/2+j}.$$
Consider the operator 
$$\cL=\boxed{1}+\sum_{j>0}j z_j\frac{\partial}{\partial z_j}+ 
\sum_{l>0, l-1/2\in \Z} l \xi_l \frac{\partial}{\partial \xi_l}. 
$$
Denote by $\ccF^p$ the set of all vectors  $f\in \F=F\otimes\Lambda$
such that 
$$
\la \cL^p f,f\ra_{\F}<\infty.
$$
So $\ccF^{2p}$ is the domain of the operator $\cL^p$ in $\F$.
%% as in Subsect. \ref{ss:integration},
%$\cH_p$ is the domain of $\cL^p$, and
Denote
 $$\ccF^\infty:=\cap\, \ccF^p.$$
 It is the space $\cH_\infty$ as in Subsect. \ref{ss:integration},
i.e.,   the space of smooth vectors of the Neveu--Schwarz algebra $\ns$.
 
The monomials 
$$
z^\alpha \xi^\epsilon=z_1^{\alpha_1} z_2^{\alpha_2}\dots
\xi_{1/2}^{\epsilon_{1/2}} \xi_{3/2}^{\epsilon_{3/2}}
\dots, \text{where $\alpha_j=0$, $\epsilon_l=0,1$,} 
$$
 form an eigenbasis of $\cL$,
$$
\cL z^\alpha \xi^\epsilon=\bigl(1+\sum j\alpha_j+ \sum l \epsilon_l\bigr) z^\alpha \xi^\epsilon,
$$
So $\ccF^p$ is a Hilbert space with norm
$$
\|\sum_{\alpha,\epsilon} c_{\alpha,\epsilon} z^\alpha \xi^\epsilon\|^2_p=
\sum_{\alpha,\epsilon} |c_{\alpha,\epsilon}|^2 \, 
\bigl(1+\sum j\alpha_j+ \sum l\epsilon_l\bigr)^{p}\, \alpha!
$$
%\begin{align}
% L_\alpha&:= \frac 12 \sum_{i,j:\,i+j=\alpha}\sgrt{ij}: T_i T_j:+\frac 12 %\sum_{r,s:\,r+s=\alpha} :A_r A_s:,
% \label{eq:fock-L}
% \\
%M_r:&= \sum_{\alpha,s: \, \alpha+s=r} T_\alpha A_s, 
%\label{eq:fock-M}
%\end{align}

{\sc Remark.} Denote by $\cL_A$ the operator $\cL$, where the boxed unit is replaced
by $A>0$. Clearly, for all such operators subspaces $\ccF^p$ coincide. Indeed,
for $A>B$ we have $\cL_A\ge \cL_B\ge  \frac B A \cL_A$.
% In particular, this means
%that for unitary case ($h\ge 0$, $c\ge 3/2$) these subspaces $\ccF^p$
%coincide with subspaces $\cH_p$ defined in Subsect. \ref{ss:integration}.
\hfill $\square$

\sm

In Subsect. \ref{ss:creation-annihilation} we defined the space
$\sheis_\infty$ of creation-annihilation operators with rapidly decreasing
coefficients.

\begin{proposition}
{\rm a)} Any operator $\wh a(v)\in \sheis_\infty$  is continuous in
  the space $\ccF^\infty$. 	

\sm

{\rm b)} Any operator $\wh a(v)\in \sheis_\infty[\cA]$ 
is continuous in the space $\ccF^\infty[\cA]$. 
	\end{proposition}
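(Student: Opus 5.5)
The plan is to reduce everything to estimates for the individual generators $z_j$, $\partial/\partial z_j$, $\xi_l$, $\partial/\partial\xi_l$ acting between the Hilbert spaces $\ccF^{p}$, and then to sum these estimates using the rapid decay of the coefficients of $v$. The monomials $z^\alpha\xi^\epsilon$ form an orthogonal eigenbasis of $\cL$. Each generator is a weighted shift in this basis, changing the $\cL$-eigenvalue $E=1+\sum j\alpha_j+\sum l\epsilon_l$ by $\pm j$ (resp.\ $\pm l$). So its norm $\ccF^{p+1}\to\ccF^{p}$ can be computed exactly, as in the proof of Lemma \ref{l:norms}.

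For the bosonic operators I will use $\|z^\alpha\|^2=\alpha!$. This gives
\begin{equation*}
\frac{\|z_j z^\alpha\|_p^2}{\|z^\alpha\|_{p+1}^2}=\frac{(E+j)^p(\alpha_j+1)}{E^{p+1}},\qquad
\frac{\|\partial_{z_j} z^\alpha\|_p^2}{\|z^\alpha\|_{p+1}^2}=\frac{(E-j)^p\alpha_j}{E^{p+1}}.
\end{equation*}
Since $j\alpha_j\le E$, we have $\alpha_j+1\le E$. Also $(E+j)/E\le 1+j$. Hence both norms are bounded by $(1+j)^{p/2}$. For the fermionic operators the factor $\alpha_j+1$ is replaced by $1$, and the same bound $(1+l)^{p/2}$ follows. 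Thus every generator $g_k$ with index $k$ satisfies $\|g_k\|_{p+1\to p}\le C(1+k)^{p/2}$, with $C$ independent of $k$.

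For $\wh a(v)\in\sheis_\infty$ the coefficients satisfy $|v^{k\pm}_{\ov i}|=o(k^{-M})$ for every $M$. The series $\sum_k |v_k|(1+k)^{p/2}$ therefore converges. Consequently $\wh a(v)$, first defined on $\F^0$, extends to a bounded operator $\ccF^{p+1}\to\ccF^p$ for each $p$. These extensions are compatible, so $\wh a(v)$ maps $\ccF^\infty$ to itself continuously in the Fr\'echet topology given by the seminorms $\|\cdot\|_p$. This proves a).

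For b), write $v=\sum_K \dot\fra^K v_K$ as a finite sum with each $v_K$ having complex rapidly decreasing coefficients. Then $\wh a(v)=\sum_K\dot\fra^K\wh a(v_K)$. The fermionic terms have to be handled with a sign operator, i.e.\ with $\sfS$ inserted as in the convention of Sect.~\ref{s:super-fock}. Neither multiplication by $\dot\fra^K$ nor $\sfS$ changes the seminorms on $\ccF^\infty[\cA]=\bigoplus_J\ccF^\infty\dot\fra^J$, which carries the topology of finite direct sums. Part a) then applies to each term.

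The main point needing care is the unboundedness of $z_j$ and $\partial_{z_j}$: one needs the bound $\alpha_j\le E$, which exactly absorbs the loss of one unit of smoothness. Everything else is routine.
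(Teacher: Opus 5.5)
Your proposal is correct and follows essentially the paper's route. The paper derives the proposition from Lemma~\ref{l:norms-creation}, which gives weighted-shift estimates for $z_m$, $\partial/\partial z_m$, $\xi_m$, $\partial/\partial\xi_m$ in the $\cL$-eigenbasis with bounds polynomial in $m$; these are then summed against the rapidly decreasing coefficients. The differences are cosmetic: you lose one step ($\ccF^{p+1}\to\ccF^{p}$) where the paper loses two ($\ccF^{2p}\to\ccF^{2p-2}$), and you spell out part b), which the paper leaves implicit.
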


The proposition follows from the lemma: 

\begin{lemma}
	\label{l:norms-creation}
	{\rm a)}  The norm of operator $z_m:\ccF^{2p}\to\ccF^{2p-2}$
	is $\le  (m+1)^{p-1}$
	
	\sm
	
	{\rm b)} The norm of operator $\frac\partial{\partial {z_m}}:\ccF^{2p}\to\ccF^{2p-2}$
	is $\le 1$
	
	\sm 
	
{\rm c)} The norm of operator $\xi_m:\ccF^{2p}\to\ccF^{2p-2}$	is $\le (1+s)^{2p-2}$.

\sm 

{\rm d)} The norm of operator $\frac\partial{\partial \xi_m}:\ccF^{2p}\to\ccF^{2p-2}$	
is $\le 1$.	
\end{lemma}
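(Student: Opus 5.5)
All four operators map monomials to multiples of single monomials, and the monomials $z^\alpha\xi^\epsilon$ are orthogonal eigenvectors of $\cL$. So each operator is a weighted shift in this eigenbasis. It therefore suffices to compare $\|Tz^\alpha\xi^\epsilon\|_{2p-2}^2$ with $\|z^\alpha\xi^\epsilon\|_{2p}^2$ monomial by monomial. Orthogonality is preserved because distinct monomials go to distinct monomials, or to zero. Write $E=E(\alpha,\epsilon):=1+\sum j\alpha_j+\sum l\epsilon_l\ge 1$ for the $\cL$-eigenvalue. Then $\|z^\alpha\xi^\epsilon\|_q^2=E^q\,\alpha!$, and the required bound is a supremum over $(\alpha,\epsilon)$ of an explicit ratio.

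For a), $z_m z^\alpha\xi^\epsilon$ has eigenvalue $E+m$ and factorial $\alpha!(\alpha_m+1)$. The ratio of squared norms is
$$\frac{(E+m)^{2p-2}(\alpha_m+1)}{E^{2p}}.$$
I use $\alpha_m+1\le 1+(E-1)/m\le E$ and $(E+m)/E\le m+1$, which bounds the ratio by $(m+1)^{2p-2}$. Hence the norm is at most $(m+1)^{p-1}$.

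For b), $\partial_{z_m}$ sends the monomial to $\alpha_m z^{\alpha-e_m}\xi^\epsilon$ with eigenvalue $E-m$. The squared ratio is
$$\frac{\alpha_m^2(E-m)^{2p-2}\,\alpha!/\alpha_m}{E^{2p}\alpha!}=\frac{\alpha_m(E-m)^{2p-2}}{E^{2p}}.$$
Since $\alpha_m\le E$ and $E-m<E$, this is at most $E^{-1}\le 1$.

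Parts c) and d) go the same way, with no factorial factors. For $\xi_m$ the eigenvalue becomes $E+m$ (or the vector is killed, up to sign), so the squared ratio is $(E+m)^{2p-2}/E^{2p}\le (1+m)^{2p-2}$. This gives a norm at most $(1+m)^{p-1}$, which is in particular at most $(1+m)^{2p-2}$; the $s$ in the statement should be read as $m$. For $\partial_{\xi_m}$ the eigenvalue drops to $E-m$, and the ratio is at most $1$.

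The only point needing care is that for $p<1$ the exponent $2p-2$ is negative, which reverses the comparison of $E\pm m$ with $E$. In the raising cases the factor $(E+m)^{2p-2}\le E^{2p-2}$ then only helps, so a) and c) hold even more easily. In the lowering cases a negative exponent could make $(E-m)^{2p-2}$ large, so for b) and d) I restrict to $p\ge 1$, which is all that is needed to pass to $\ccF^\infty$. That is the part I would check.

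The Proposition then follows. For $v$ with rapidly decreasing coefficients, $\wh a(v)=\sum v^{j\pm}(\cdot)$ converges in operator norm $\ccF^{2p}\to\ccF^{2p-2}$ for every $p$, because the coefficients decay faster than the polynomial growth $(m+1)^{p-1}$. Hence $\wh a(v)$ is continuous on $\ccF^\infty=\bigcap_p\ccF^{p}$. Tensoring with $\cA$ (finite sums of $\dot\fra^J$) gives b).
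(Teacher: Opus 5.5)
Your proof is correct and is essentially the paper's own argument: reduce to the monomial eigenbasis of $\cL$ and bound the same explicit ratios of squared norms. Your remark that c) actually gives $(1+m)^{p-1}$ is exactly what the paper's computation yields. The one inaccuracy is the aside on $p<1$. There the target $(m+1)^{p-1}$ is smaller than $1$, so discarding $\bigl((E+m)/E\bigr)^{2p-2}\le 1$ does not prove a) or c). This is harmless, because the paper also only needs $p\ge 1$.
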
	

{\sc Proof Lemma \ref{l:norms-creation}.} All these operators $z_m$, $\partial/\partial z_m$,
$\xi_m$, $\partial/\partial \xi_m$  send elements of the eigenbasis $z^\alpha \xi^\epsilon$ of  $\cL$
to elements the eigenbasis. Therefore it is sufficient to examine
$\sup_{\alpha,\epsilon}\|z_m z^\alpha \xi^\epsilon\|_{p-1}/ 
\|z^\alpha \xi^\epsilon\|_p$, etc.

\sm

{\it The statement  a)}.
\begin{multline*}
\frac{\|\cL^{p-1}z_m z^\alpha \xi^\epsilon\|^2}
{\|\cL^p z^\alpha \xi^\epsilon\|^2}=
\frac{(1+\sum j\alpha_j+m+\sum l\epsilon_l)^{2p-2}\,\alpha!\cdot(\alpha_m+1)}
{(1+\sum j\alpha_j+\sum l\epsilon_l)^{2p}\,\alpha!}
=\\=
\Bigl(\frac{1+\sum j\alpha_j+m+\sum l\epsilon_l}
{1+\sum j\alpha_j+\sum l\epsilon_l} \Bigr)^{2p-2} 
\cdot \frac{\alpha_m+1}{1+\sum j\alpha_j+\sum l\epsilon_l}\cdot
\frac{1}{1+\sum j\alpha_j+\sum l\epsilon_l}.
\end{multline*} 
We estimate the first and the second factors (the last factor is $\le 1$)
$$
1+\frac{\sum j\alpha_j+m+\sum l\epsilon_l}
{1+\sum j\alpha_j+\sum l\epsilon_l}
=1+\frac{m}{{1+\sum j\alpha_j+\sum l\epsilon_l}}\le 1+m,
$$
$$
\frac{\alpha_m+1}{1+\sum j\alpha_j+\sum l\epsilon_l}\le \frac{\alpha_m+1}{1+m\alpha_m}\le 1,
$$
and we get our estimate.

\sm

{\it The statement  b)}. If $\alpha_m> 0$ (otherwise $\frac{\partial}{\partial z_m} z^\alpha \xi^\epsilon=0$),
then
\begin{multline*}
	\frac{\bigl\|\cL^{p-1} 
	\frac{\partial}{\partial z_m} z^\alpha \xi^\epsilon\bigr\|^2}
	{\|\cL^p z^\alpha \xi^\epsilon\|^2}=
	\frac{(1+\sum j\alpha_j-m+\sum l\epsilon_l)^{2p-2}\,\alpha!\cdot\alpha_m^2/\alpha_m }
	{(1+\sum j\alpha_j+\sum l\epsilon_l)^{2p}\,\alpha!}
	=\\=
	\Bigl(\frac{1+\sum j\alpha_j-m+\sum l\epsilon_l}{1+\sum j\alpha_j+\sum l\epsilon_l} \Bigr)^{2p-2} 
	\cdot 
	\frac{\alpha_m}{1+\sum j\alpha_j+\sum l\epsilon_l}\cdot \frac{1}{1+\sum j\alpha_j+\sum l\epsilon_l}
\end{multline*}
The middle factor 
$$
\frac{\alpha_m}{1+\sum j\alpha_j+\sum l\epsilon_l}\le
 \frac{\alpha_m}{1+\alpha_m}\le 1,
$$
the first and the last factor are $\le 1$.

\sm

{\it The statement  c)}. Let $\epsilon_s=0$ (otherwise $\xi_s z^\alpha \xi^\epsilon=0$). Then
\begin{multline*}
	\frac{\|\cL^{p-1}\xi_s z^\alpha \xi^\epsilon\|^2}
	{\|\cL^p z^\alpha \xi^\epsilon\|^2}=	
		\frac{(1+\sum j\alpha_j+\sum l\epsilon_l+s)^{2p-2}\,\alpha!}
	{(1+\sum j\alpha_j+\sum l\epsilon_l)^{2p}\,\alpha!}
	=\\=
	\Bigl(\frac{1+\sum j\alpha_j+\sum l\epsilon_l+s}
	{1+\sum j\alpha_j+\sum l\epsilon_l} \Bigr)^{2p-2} 
	\cdot \frac{1}{(1+\sum j\alpha_j+\sum l\epsilon_l)^2}
	\le (s+1)^{2p-2}\cdot 1.
\end{multline*}

{\it The statement d).} Let $\epsilon_s=1$,
otherwise $\frac\partial{\partial\xi_s}
	 z^\alpha \xi^\epsilon=0$
\begin{equation*}
	\frac{\|\cL^{p-1}\frac\partial{\partial\xi_s}
	 z^\alpha \xi^\epsilon\|^2}
	{\|\cL^p z^\alpha \xi^\epsilon\|^2}=	
		\frac{(1+\sum j\alpha_j+\sum l\epsilon_l-s)^{2p-2}\,\alpha!}
	{(1+\sum j\alpha_j+\sum l\epsilon_l)^{2p}\,\alpha!}.
	\end{equation*}
Clearly, the expression is $\le 1$.
\hfill $\square$

\sm

{\bf \punct Gauss--Berezin operators and products of affine relations.}
Let $P$ be a Gauss--Berezin operator, which is bounded in 
$\ccF^\infty[\cA]$.
Then we have 
$$
\wh a(v) P=P\wh a(w)+ \chi P %J(v\oplus w, p\oplus q)
$$
for $v\oplus w$ ranging in a certain Lagrangian relation 
$\frL(P):\sheis_{\ccF^\infty}[\cA]\tto \sheis_{\ccF^\infty}[\cA]$.
and $\chi\in \cA_\0$ depends on $w\oplus v$.
We say that $P$ is {\it sufficiently good} if
$$
\chi(w\oplus v)=J(w,\pi)-J(v,\rho)\qquad
 \text{for some $\pi$, $\rho\in \sheis_{\ccF^\infty}[\cA]$.}
$$
Notice that $\pi\oplus \rho$ is defined upto
addition of elements of $\frL(P)$. We assign to $P$
the super-affine relation 
$$\wt \frL(P)=\pi\oplus\rho+\frL(P): \, \cW[\cA]\tto\cW[\cA].$$

Let $R$ be another Gauss--Berezin operator in $\ccF^\infty[\cA]$,
let $\wt \frL(R)=\phi\oplus\psi+ \frL(R)$ be the corresponding super-affine relation,
$$
\wh a(w) R=R\wh a(y)+ J(y,\phi)P-J(w,  \psi)R
$$

\begin{proposition}
	\label{pr:product-good-relations}
	Now let $P$, $R$, $Q$ be sufficiently good.
	Let $\frL(P)$, $\frL(R)$ be transversal and
	$\wt \frL(Q)=\wt\frL(P)\circ \wt\frL(R)$ 
	Then $Q=PR$.
\end{proposition}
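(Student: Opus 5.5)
The plan is to show that the product $PR$ satisfies the same intertwining identities as $Q$, and then to invoke the uniqueness statement c) of the theorem on Gauss--Berezin operators and Lagrangian relations (operators with the same solution set of \eqref{eq:ca-relations} differ by a phantom scalar factor). Since $P$ and $R$ are bounded in $\ccF^\infty[\cA]$, the product $PR$ is a well-defined bounded operator there. All manipulations $\wh a(v)PR$ are also legitimate, because $\wh a(v)\in\sheis_\infty[\cA]$ acts continuously in $\ccF^\infty[\cA]$ by the proposition based on Lemma \ref{l:norms-creation}.

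The first step is an affine version of the lemma in Subsect. \ref{ss:creation-relation}. Let $y\oplus v\in\frL(R)\circ\frL(P)$; transversality of the directrices lets us pick the intermediate $w$. We then compute
$$
\wh a(v)PR=P\wh a(w)R+\chi_P(w,v)PR=PR\,\wh a(y)+\bigl(\chi_P(w,v)+\chi_R(y,w)\bigr)PR .
$$
With $\chi_P=J(w,\pi)-J(v,\rho)$ and $\chi_R=J(y,\phi)-J(w,\psi)$, the total constant is $J(y,\phi)-J(v,\rho)+J(w,\pi-\psi)$. Using the description of the product of affine relations (the lemma on affine relations, formula \eqref{eq:rr}), I will rewrite this by shifting the base points. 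Choose $l,m$ with $\pi+l=\psi+m$ in the middle space, so that the middle term becomes $J(w,\pi-\psi)=J(w,m-l)$. Isotropy of $\frL(P)$ and $\frL(R)$ (Proposition \ref{pr:cLP-isotropic}, with the minus sign in $\Lambda_{V\ominus W}$) then converts $J(w,l)$ and $J(w,m)$ into $J(v,\cdot)$ and $J(y,\cdot)$ terms of the outer components. The result is that the constant equals $J(y,\phi')-J(v,\rho')$, where $\phi'\oplus\rho'$ is exactly the shift vector $r$ of $\wt\frL(P)\circ\wt\frL(R)$. Hence $PR$ is sufficiently good, with affine relation containing $\wt\frL(P)\circ\wt\frL(R)=\wt\frL(Q)$.

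Next, $\frL(P)\circ\frL(R)$ is Lagrangian as a transversal product of Lagrangian relations (\cite{Ner-super}, Theorem 8.4), and it is contained in $\frL(PR)$. That relation is isotropic by Proposition \ref{pr:cLP-isotropic}, provided $(PR)_\downarrow\ne0$, so by maximality the two coincide, and they also coincide with $\frL(Q)$. The affine constants $\chi$ for $PR$ and $Q$ agree on this common relation. Thus $PR$ and $Q$ have the same solution sets of \eqref{eq:ca-relations}, and statement c) gives $PR=\kappa Q$ with $\kappa\in\cA$. This yields $Q=PR$ up to the scalar normalization that is implicit everywhere in the paper, since everything is projective.

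The main obstacle is the nondegeneracy $(PR)_\downarrow\neq0$, which is needed both for isotropy and for applying c) in the right direction, i.e. showing that $PR$ is nonzero and that $Q$ is recovered from $PR$ rather than the converse. I would first try to obtain it from the body level. There $P_\downarrow$ and $R_\downarrow$ are bosonic Gaussian $\otimes$ fermionic Berezin operators, and transversality of the bodies is exactly the condition under which their product is a nonzero Gaussian/Berezin operator (\cite{Ner-book}, Ch. IV--V). If needed, I would first conjugate by the $\frD_j$ so that all three operators are in the narrow sense, using Proposition \ref{pr:atlas-GB} and Corollary \ref{cor:unit-not-in-kernel}.
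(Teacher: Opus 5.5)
Your proposal follows the paper's proof. Transversality is used to shift the base points so that the middle-space terms $J(w,\cdot)$ cancel, and computing $\wh a(v)PR$ through the intermediate $w$ then shows that $PR$ satisfies the intertwining identities of $\wt\frL(P)\circ\wt\frL(R)=\wt\frL(Q)$; you also justify via isotropy the change of shift vectors that the paper performs tacitly.

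The paper stops at that computation. Your closing step via statement c) gives $PR=\kappa Q$, with $\kappa$ invertible exactly when $(PR)_\downarrow\neq0$, and so makes explicit the normalization and nondegeneracy caveats that the paper leaves unstated.
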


{\sc Proof.} Since $\im R+\dom P=\sheis_{\ccF^\infty}[\cA]$,
So, $\psi-\pi$ can be represented as $w'-w''$ (in a unique way), where
$w'\in \im \frL(Q)$, $w'' \in \dom \frL(P)$.
So there is $y'$ such that $y'\oplus w'\in \frL(Q)$ and 
$v''$ such that $w''\oplus V''\in \frL(P)$.
Now we can correct $\phi\oplus \psi$ and 
get new equation of the type
\begin{align*}
\wh a(v) P=P\wh a(w)+J(w,\pi^\circ)P-J(v,\rho^\circ)P,
\\
 \wh a(w)R=R\wh a(y)+J(y,\phi^\circ)R- J(w,\pi^\circ)R.
\end{align*}
Now let $\daleth\oplus \beth\in \wt\frL(P)\circ\wt\frL(R)$.
Then there is $\gimel$ such that $\daleth\oplus\gimel\in \wh\frL(R)$,
$\gimel\oplus \daleth\in \wh\frL(P)$.
So we have 
\begin{align*}
\gimel\oplus \daleth-\pi^\circ\oplus \rho^\circ
=:w\oplus v\in \frL(P);
\\
\daleth\oplus\gimel-\phi^\circ\oplus\pi^\circ  =:y\oplus w\in \frL(R)
\end{align*} 
(two $w$ in the right hand part are the same, $w=\gimel-\pi^\circ$).
So we have
\begin{multline*}
\wh a (v)PR= \bigl(\wh a(w) P+ J(w,\pi^\circ)P-J(v,\rho^\circ)P \bigr)R
=\\=
PR \wh a(y)+ J(y,\phi^\circ)PR- J(w,\pi^\circ)PR+
J(w,\pi^\circ)PR-J(v,\rho^\circ)PR
\end{multline*}

\begin{corollary}
Let $P$, $R:\ccF^\infty[\cA]\to \ccF^\infty[\cA]$ be sufficiently good Gauss--Berezin operators in the narrow sense. Then $PR$
is sufficiently good and  $\wt\frL(PR)=\wt\frL(P)\wt\frL(R)$. 
\end{corollary}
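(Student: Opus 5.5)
The corollary should follow from Proposition \ref{pr:product-good-relations}, provided three things hold. First, $\frL(P)$ and $\frL(R)$ are transversal. Second, the product relation $\wt\frL(P)\circ\wt\frL(R)$ is realized by some sufficiently good Gauss--Berezin operator. Third, that operator coincides with $PR$. I would run the argument directly on $PR$ rather than through an auxiliary $Q$.

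\textbf{Step 1: transversality.} Since $P$ and $R$ are Gauss--Berezin operators in the narrow sense, Corollary \ref{cor:one-side} gives
$\dom\frL(P)\supset\sheis^+[\cA]$ and $\im\frL(R)\supset\sheis^-[\cA]$.
It also gives
$\ker\frL(P)\subset\sheis^-[\cA]$ and $\indef\frL(R)\subset\sheis^+[\cA]$.
Hence $\im\frL(R)+\dom\frL(P)$ is the whole space, and $\indef\frL(R)\cap\ker\frL(P)\subset\sheis^+\cap\sheis^-=0$. Both conditions \eqref{eq:transversality} therefore hold, and they survive reduction by $\pi_\downarrow$, so the relations are transversal in the super sense. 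All operators are bounded in $\ccF^\infty[\cA]$, so these relations may be regarded inside $\sheis_{\ccF^\infty}[\cA]=\sheis_\infty[\cA]$.

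\textbf{Step 2: $PR$ intertwines along the product relation, with the right scalar.} Take $y\oplus v\in\frL(P)\circ\frL(R)$, and choose $w$ with $y\oplus w\in\frL(R)$ and $w\oplus v\in\frL(P)$. Chaining the two identities, as in the lemma of Subsect. \ref{ss:creation-relation}, gives
$$\wh a(v)PR=PR\,\wh a(y)+\bigl(\chi_P(w\oplus v)+\chi_R(y\oplus w)\bigr)PR .$$
By sufficient goodness, $\chi_P+\chi_R$ is a sum of terms $J(\cdot,\pi),J(\cdot,\rho),J(\cdot,\phi),J(\cdot,\psi)$. The terms in $w$ must be eliminated, and for this I follow the computation in the proof of Proposition \ref{pr:product-good-relations}. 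Using transversality, write $\psi-\pi=w'-w''$ with $w'\in\im\frL(R)$ and $w''\in\dom\frL(P)$, and shift the base points $\pi\oplus\rho$ and $\phi\oplus\psi$ by the corresponding elements of $\frL(P)$ and $\frL(R)$. After this shift the $w$-terms cancel. The scalar then becomes $J(y,\phi')-J(v,\rho')$, so $PR$ is sufficiently good with respect to the relation $\frL(P)\circ\frL(R)$. The affine relation it defines is exactly $\wt\frL(P)\circ\wt\frL(R)$, by the affine product lemma of Section \ref{s:affine-relations}.

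\textbf{Step 3: $\frL(PR)$ is the whole product and not larger.} Step 2 gives the inclusion $\frL(P)\circ\frL(R)\subset\frL(PR)$. The left-hand side is Lagrangian, being a transversal product of Lagrangian relations. By Proposition \ref{pr:cLP-isotropic}, $\frL(PR)$ is isotropic as soon as $(PR)_\downarrow\neq0$, and maximality of a Lagrangian subspace then forces equality.

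\textbf{Main obstacle.} The hardest point is the claim that $PR$ is again a (sufficiently good) Gauss--Berezin operator with $(PR)_\downarrow\ne0$. Isotropy needs this, and the paper warns in a remark that $K_{PR\downarrow}$ could vanish. I would handle it at the body level. $P_\downarrow$ and $R_\downarrow$ are products of a bosonic Gaussian operator and a fermionic Berezin operator, and in the narrow sense the vacuum satisfies $\langle R_\downarrow1,1\rangle\neq0$, and likewise for $P_\downarrow$. The conditions $\|B_{11}\|,\|C_{11}\|<1$ make the bosonic Gaussian integral \eqref{eq:convolution} converge. The fermionic factor is nondegenerate because $\indef\cap\ker=0$. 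Together these give $\langle P_\downarrow R_\downarrow1,1\rangle\neq0$, so $(PR)_\downarrow\ne0$ and the product kernel is a Gaussian exponential of narrow type. Once this is known, Theorem \ref{th:operator-relation}(c) identifies $PR$, up to a scalar in $\cA$, with the Gauss--Berezin operator attached to the product relation.
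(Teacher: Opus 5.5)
Your route is the paper's. Its whole proof is your Step~1 (transversality read off from Corollary~\ref{cor:one-side}) plus an appeal to Proposition~\ref{pr:product-good-relations}, whose computation you redo in Step~2. There are two genuine problems.

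\textbf{Step 1 rests on inclusions that \eqref{eq:gauss-relation} does not give.} That formula exhibits $\frL(P)$ as the graph of a map $(v^-,w^+)\mapsto(v^+,w^-)$. Setting $v=0$ shows that $\ker\frL(P)$ is a graph over a subspace of $\sheis^+$; dually, $\indef\frL(R)$ is a graph over a subspace of $\sheis^-$. One only gets $\ker\frL(P)\cap\sheis^-=0$ and $\indef\frL(R)\cap\sheis^+=0$, which does not force $\indef\frL(R)\cap\ker\frL(P)=0$. Similarly, $\dom\frL(P)$ and $\im\frL(R)$ merely project onto $\sheis^+$ and $\sheis^-$, and need not span.

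In fact the corollary fails without a further hypothesis. For two fermionic variables $\xi_1,\xi_2$, let $Rf=f_\varnothing(1+a\xi_1\xi_2)$ and $Pf=(f_\varnothing+d\,f_{12})\cdot 1$, where $f_\varnothing$, $f_{12}$ are the coefficients of $1$ and $\xi_1\xi_2$ in $f$. These have kernels $\exp\{a\xi_1\xi_2\}$ and $\exp\{d\,\ov\eta_2\ov\eta_1\}$. They are narrow, bounded on $\ccF^\infty$, and sufficiently good with $\chi\equiv 0$. But $PRf=(1+ad)f_\varnothing\cdot 1$, which vanishes for $ad=-1$, and one checks that there $\indef\frL(R)\cap\ker\frL(P)\neq 0$. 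So transversality has to be added as a hypothesis. It holds, for instance, under the invertibility condition accompanying \eqref{eq:PiPi}, and securing it is what the injectivity and density of images is used for in Section~\ref{s:last}.

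\textbf{Your resolution of the ``main obstacle'' is false, even granting transversality.} $\indef\cap\ker=0$ does not make the fermionic vacuum pairing nonzero, and $PR$ need not be of narrow type. Take $R$ to be multiplication by $1+a\xi_1\xi_2$ and $P=1+d\,\partial_{\xi_2}\partial_{\xi_1}$, with $ad=-1$. Their kernels are $\exp\{a\xi_1\xi_2\}K_1$ and $K_1\exp\{d\,\ov\eta_2\ov\eta_1\}$, where $K_1$ is the kernel of the identity. Both operators are invertible, so their relations are graphs of invertible maps and are trivially transversal. Yet $PR\,1=(1+ad)+a\xi_1\xi_2=a\xi_1\xi_2$, so $\langle PR\,1,1\rangle=0$, and $PR$ is a Gauss--Berezin operator only in the broader sense \eqref{eq:GB-general}.

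The corollary does not claim narrowness, so do not aim for it. Instead:
\begin{enumerate}
\item Take the Gauss--Berezin operator $Q$ of the form \eqref{eq:GB-general} whose relation is the Lagrangian relation $\frL(P)\circ\frL(R)$. It exists as soon as this relation lies in one of the $\sigma_j$-charts, by the theorem following Theorem~\ref{th:operator-relation}.
\item Your Step~2, combined with the argument of part c) of that theorem, gives $PR=cQ$ with $c\in\cA$. That argument uses only that $PR$ satisfies the intertwining identities of $\frL(Q)$ with the same scalars $\chi$.
\item What is then left is $c_\downarrow\neq 0$, i.e.\ $P_\downarrow R_\downarrow\neq 0$. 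This is a statement about products of Gaussian and Berezin operators, not about vacuum-to-vacuum coefficients. Bosonically, the convolution produces a nonvanishing factor of type $\det(1-\cdot)^{-1/2}$ wherever it converges. Fermionically, a product of Berezin operators vanishes only if their relations fail to be transversal.
\end{enumerate}
With this, your Step~3 (isotropy plus maximality) becomes unnecessary.
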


{\sc Proof.} In this case the transversality
holds automatically, see Corollary \ref{cor:one-side}.  \hfill $\square$

 \sm

{\bf \punct Construction of representations of the semigroup $\Gamma(\cA)$.}
Consider an element $\wh\cP\in\Gamma_\bullet[\cA]$ and the corresponding
super-affine relation $\Delta_{\mu,\nu}(\wh\cP)$, see Proposition \ref{pr:long}.
Comparing it with condition of Theorem \ref{th:operator-relation}
we see that this linear relation determines a
Gauss--Berezin operator $\frB[\wh\cP]:\F^0[\cA]\to \ov\F[\cA]$.

We can construct this $\frB[\wh\cP]$ as a bounded operator
in $\ccF^\infty$ in another way.
We decompose $\wh \cP$ as 
$$
	\wh\cP=
\pi_-\circ \bigl[ (p_-)_\downarrow\circ \cC_t\circ 
(p_+)_\downarrow \bigr] \pi_+,
$$
The middle factor is an element of $\Gamma_\bullet$.
We  have its representation in the Hilbert boson Fock space $\F$
and the Hilbert fermion Fock space $\Lambda$. These  operators have trivial kernels and dense images.
%Notice, that in the both cases the corresponding linear relation
%has a trivial kernel and trivial indefinity and dense image and domain
Left and right multiplications by surplace contactomorphisms preserves
these properties. 

By Proposition, it is sufficient to show that 
for any $\wh \cP$, $\wh\cQ\in \Gamma_\bullet(\cA)$
$R=\frL(\frB[\wh\cP])$ and $T=\frL(\frB[\wh\cQ])$ are transversal.
By Corollary \ref{cor:unit-not-in-kernel}, we can represent $R$, $T$ in the form
$$
R=\frD_{i_1}\dots \frD_{i_\alpha} R', \quad
T=T' \frD_{j_1}\dots \frD_{j_\beta}.
$$
where $R'$, $T'$ are Gauss--Berezin operators in the narrow sense.
By Corollary \ref{cor:one-side},
 $\frL(P')$, $\frL(Q')$ the transversality take place.
and therefore $\frL(P)$, $\frL(Q)$ also are transversal.
Now we can refer to Proposition  \ref{pr:product-good-relations}.

\section{Final remarks%
\label{s:final}}

\COUNTERS

{\bf \punct Notion of a supergroup.%
\label{ss:formalities}}
There are different ways to formalize the notions of  supergroups and supermanifolds.
For instance, definitions in Berezin, Leites  \cite{BerL},
DeWitt \cite{DeW}, Schwarz \cite{Schw} are not equivalent, 
see comparison in Schmitt \cite{Schm} and Rogers \cite{Rog} (Introduction).
 Berezin and Leites \cite{BerL} consider  supergroups
as  functors from the category of supercommutative algebras to
the category of groups (in my opinion, the paper by Berezin, Kats \cite{BK} 
demands this point of view).
 Of course, different approaches are not antipodes, and  translations
(or partial translation) are possible.

Let us try to replace our Grassmann algebra $\cA$ to an arbitrary
supercommutative algebra $\cB$. First, we need a body map $\downarrow$
from $\cB$ to $\C$. Let $b\in\cB_+$ satisfies
$b_\downarrow=0$. We need $\ln (1+b)$ for a definition of logarithmic densities,
see \eqref{eq:annulus-logarithmic}. We need $\exp(b)$ in the definition of Gauss--Berezin operators.
We need $(1+b)^{-1/2}$ for an evaluation of a super-Gaussian integrals
(see \cite{Ner-super}, Theorem 4.3). In numerous places, we need $(1+b)^{-1}$.

All such operations are well defined for nilpotent algebras $\cB$ 
or for pro-nilpotent $\cB$ (for instance, we can easily replace
the Grassmann algebra $\cA$ by the algebra of formal series
in the variables $\fra_j$, we also can add a countable 
family of commuting generators $\epsilon_1$, $\epsilon_2$, \dots).  
Choosing one of these variants,
we can consider our supergroups as functors from 
a certain category of supecommutative algebras to the category
of groups.

In this work, we discuss certain explicit questions (which are relatively far
from usual considerations of super-mathematics). In our context,
 I try to reduce a super-formalism to a minimally necessary
 level.

\sm

Some authors (see \cite{VV}, \cite{Mol}) consider certain
Banach or Fr\'echet 
completions of $\cA$. In Sections \ref{s:cont}--\ref{s:embedding},
our $\fra_j$ are kind of actual infinitesimals (and a unique natural topology here
is the topology of formal series). In Sections 
\ref{s:integration}--\ref{s:proof}, \ref{s:last},
an attempt to introduce  Banach or Fr\'echet topology produces series 
of unbounded operators and overtaking of artificial difficulties%
\footnote{The author of the present work has no ideas about infinite-dimensional supermanifolds (which are the topic of \cite{Mol}).
However, in representation theory of  `infinite-dimensional groups',
a `Lie group' is an important heuristic notion but `infinite-dimensional groups' usually are not Lie groups in any formal sense. On the other  hand,
there is a nontrivial theory of `infinite dimensional Lie groups', which
is interesting for PDE and global analysis.}. 

\sm 

{\bf \punct Other super-Virasoro algebras.}
Apparently, our statements can be easyly extended to
the Ramond Lie superalgebra. We prefer to work with spinors over
$\mathfrak{aosp}(2p|2q)$ for $p=\infty$ and $q=\infty$ and the Ramond superalgebra
requires spinors over $\mathfrak{aosp}(2p|2q+1)$ for $p=\infty$ and $q=\infty$
(these spinors are slightly different).

May be, other variants of Virasoro superalgebra related to 
contact supercicles $S^{1|k}$ are more interesting,
see \cite{KvL}, \cite{GLS}, \cite{IK2}. 

\sm

{\bf\punct Super-Riemann surfaces.} Our paper is a partial superization
of work \cite{Ner-holom}. But this work contains also constructions
of operators corresponding to more complicated Riemann surfaces, which
apparently can be superized (on the formal level such superization
exists, see Barron \cite{Bar1}--\cite{Bar2}).

Also, a question about superization of affine Lie superalgebras can be interesting (but our approach based on superspinors does not work).

\tt 

University of Graz,
\\
\phantom{.}
\hfill Department of Mathematics and Scientific computing;

Higher School of Modern Mathematics MIPT,

%\phantom{.}
%\hfill 1 Klimentovskiy per., Moscow; 

Moscow State University, MechMath. Dept;

 University of Vienna, Faculty of Mathematics.
 
 \sm

e-mail:yurii.neretin(dog)univie.ac.at

URL: https://www.mat.univie.ac.at/$\sim$neretin/ 

\phantom{URL:} https://imsc.uni-graz.at/neretin/index.html

%       {\tt Math.Dept., University of Vienna;\\
%Insitute for Information Transmission Problems;\\
% Moscow State University, MechMath Department.\\
%URL:www.mat.univie.ac.at/$\sim$neretin

\end{document}